\title[Topological types of polynomial maps on the plane: A survey]{Around the topological classification problem of polynomial maps: A survey}
\author{Boulos El Hilany}
\thanks{MSC 2020: Primary  14R25, Secondary: 14R15, 26C05, 32H02, 32H35, 58D05, 58D15}
\begin{document}

\maketitle

\begin{abstract} 
The study of the topology of polynomial maps originates from classical questions in affine geometry, such as the Jacobian Conjecture, as well as from
works of Whitney, Thom, and Mather in the 1950-70s on diffeomorphism types of smooth maps. 
During that period, Thom came up with a famous construction of a one-dimensional family of real polynomial maps all sharing the same degree, but in which every polynomial map has its unique topological type.
According to his convention, the topological type of a map is preserved precisely when it is composed with homeomorphisms on both source and target spaces.

Thom also conjectured that for each pair $(n,d)$, any family of $n$--variate, degree--$d$ (complex, or real) polynomial functions has at most finitely-many topological types. Soon after, a collection of results by several mathematicians throughout the 1970s and 1980s settled this conjecture, and solved its subsequent generalization to polynomial maps. 

In this survey, we outline the historical context and highlight a range of significant works
from the 1950s to the present day that lead to the current state of the art in the study of polynomial maps' topology.
The focal point of this survey is to shed some light on the ensuing classification problem of topological types of polynomial maps.
The presentation is achieved by making a gentle introduction to several other prominent questions in affine geometry, all of which
are recounted through the lens of this classification problem.

\end{abstract}
 \markleft{}
 \markright{}
\setcounter{tocdepth}{1}
\tableofcontents
 
\section{Introduction}\label{sec:intro}
Polynomial maps between affine varieties are foundational in affine geometry. 
As efforts to investigate them increased, a number of open problems and emerging research trends came about, including topics related to their stability, singularities, and  classifications. These involve topological, analytic, bilipschitz, or semi-algebraic perspectives, which find their roots in the pioneering works of Whitney, Thom, Boardman, and Mather on smooth maps between Euclidean spaces~\cite{whitney1955singularities,boardman1967singularities,Tho69,
mather1973stratifications,mather1973thom}. Modern questions on polynomial maps evolved from problems in affine geometry, and can be found in abundance. For instance,~\textit{What would be an algebraic description of the group of (polynomial) automorphisms of $\mathbb{A}^n$?} \textit{Does an isomorphism $Y\times \mathbb{A}^k\simeq \mathbb{A}^{n+k}$ imply that $Y$ is isomorphic to $\mathbb{A}^n$~\cite{kraft1996challenging}?} \textit{How do images $f(\C^n)$ look like~\cite{chevalley1955schemas,arzhantsev2023images,Fer03}?} 
\textit{For which dimensions $n$ and $p$ does $f$ preserve a set of $k$ linearly-independent points~\cite{borsuk1957k}?} etc. 

At the same time, many classical questions in affine geometry prevail. 
It is not clear, for example, whether a polynomial map $\Cntn$ is an isomorphism, provided that the determinant of its Jacobian matrix is nowhere vanishing. 
This problem, known as the \emph{Jacobian Conjecture}, was initially proposed by Keller in the 1930s and remains unresolved for $n\ge 2$. 
A great deal of effort has been invested in this conjecture, which, while yielding many erroneous proofs, also led to a wealth of significant results that have advanced our understanding on the topic~\cite{dEss12}. Another classical question is about a characterization of the set of points in $\C^p$ outside of which a polynomial map $f:\Cntp$ is a locally-trivial $\cC^{\infty}$-- fibration. The smallest such set is called the \emph{bifurcation set} of $f$.  Wallace showed that it is bounded by a Zariski closed set for any $p\geq 1$~\cite{wallace1971linear}. 
As analytic degenerations of fibers ``at infinity'' are difficult to characterize algebraically, 
a method for computing the bifurcation set symbolically, or numerically, remains out of reach.

This survey is aimed at recounting the historical background, main results, and state of the art of various open problems in the study of the topology of complex and real polynomial maps. The materialization of this field of study can be traced back to fundamental questions posed by Whitney in the 1950s regarding singularities and the diffeomorphic stability of smooth maps between manifolds. Shortly thereafter, Thom and Mather achieved groundbreaking developments in the 1960s and 1970s, elucidating the subject~\cite{thom192singularities,mather2006stability}. The focus subsequently shifted from a diffeomorphic characterization of smooth maps to a topological one: 
\textit{In the space of all smooth maps between a fixed pair of manifolds, how large is the largest family containing topologically equivalent maps?}
In this context, one obtains a topologically equivalent map by composing with a homeomorphism on the source space, and one on the target space. 
Thom provided an approach for proving that such a family is dense, under the Whitney topology on the space of smooth maps above~\cite{goreskymemoirs}. 
The details were later published by Mather in~\cite{mather1973stratifications}.

Thom also considered topologically equivalent maps in the space of polynomial maps and map-germs;
he constructed a family of degree-nine polynomial maps $\R^3\longrightarrow\R^3$ having infinitely-many topological types~\cite{thom1962stabilite}.
This motivated the following question: \textit{For which $(d,n,p)\in\N^3$ the space $\PKknp$ of degree-$d$ (complex, or real) polynomial maps $\Kntp$ contains infinitely-many topological types?} 
He also conjectured that there are no more than finitely-many topological types of functions $\Knto$ of any fixed degree $d$. This was proven to be true by Fukuda~\cite{Fuk76}, and thus sparked the problem of generalizing this conjecture to any values of $n$ and $p$. 
Fukuda's results used newly-developed tools in Whitney's stratification theory, expanded by Thom, Mather, and Boardman~\cite{boardman1967singularities,mather1973thom}. Later, Aoki and Noguchi~\cite{aoki1980topological} proved that there are at most finitely-many topological types of maps in $\PCktt$ for any fixed $d$. This result was generalized by Sabbah~\cite{Sab83} to maps in $\Cttp$ for any $p\geq 2$. As for the remaining cases for triples $(n,p,d)$, Nakai~\cite{Nak84} proved that $\PKknp$ has infinitely-many topological types of maps whenever $d\geq 4$. Consequently, the following question emerged as the next step following the resolution of the above finiteness question.

\begin{question}\label{que:main00}
Given a sequence of integers $d,n,p\in\N$, and a field $\Bbbk\in\{\C,\R\}$, how to classify polynomial maps $\Kntp$ of degree $d$ according to their topological type?
\end{question} 
All results that are presented in this survey contribute to a common research direction, which can be summarized by Question~\ref{que:main00}. 
The \emph{degree}, $ \deg f$, of $f$ is 
 defined as the maximum of the degrees of the polynomials forming the coordinates of $f$. Its \emph{topological type}, or, \emph{equivalence} is understood as follows: Two maps $f,g:\Kntp$ are said to be \emph{topologically equivalent} if there are homeomorphisms $\varphi:\Kntn$ and $\psi:\Kptp$ satisfying
\begin{equation*}\label{eq:C-stab}
\psi\circ f = g\circ \varphi.
\end{equation*} For instance, $(x,~y)\mapsto (x+y,~y)$ is topologically equivalent to the identity, but not to $(x,~y)\mapsto (y,~y)$. 
Any significant results pertaining to Question~\ref{que:main00} would yield a more lucid depiction of polynomial map topology, thus producing a new toolkit tailored to addressing open various other questions in affine geometry.

\subsection*{Topological types beyond affine geometry}
Several fields in science and engineering profit from any progress on the study of polynomial maps. We mention computer vision~\cite{DKLT-PLMP-19}, polynomial optimization~\cite{Las15}, robotics~\cite{grasegger2024coupler}, and chemical reaction networks~\cite{craciun2005multiple}, among others. Commonly in those applications a polynomial map \( f : X \rightarrow Y \) models a problem where the space $Y$ corresponds to the collection of all inputs of a given problem, while its preimage $f^{-1}(y)$ describes its solution. In this setting, the ability to distinguish different topological types is key when comparing the sets of solutions to two different inputs. 

In robotics, for example, a \emph{linkage} refers to the assembly of rigid components that are used to transmit motion or force between different parts of a robotic system. Linkages can be represented as a \emph{realization} of a graph $G = (V, E)$ in the real three-dimensional space. That is, a map $V\longrightarrow \R^3$ taking a vertex to its set of coordinates, and determines a set of distances between vertexes connected by edges~\cite{Lam70,schicho2022and,grasegger2024coupler}. If $X=\R^{2\#V-3}$ and $Y = \R^{\# E}$, then the \emph{linkage map} $f:\XtY$, for given coordinates of the vertices, computes the edge lengths via Euclidean distance. Then, for any $y \in Y$, that is for an assignment of edge lengths, the preimage $f^{-1}(y)$ describes all possible motions, and positions this linkage may attain. 
Accordingly, comparing the topological types of linkage maps determines whether two linkages give rise to similar sets of motions.

\subsection*{Challenges and methods towards classifying topological types} 
Answering Question~\ref{que:main00} requires developing a theory, or a collection of methods, that describe all discrete objects that distinguish a polynomial map topologically.
These include, for instance, the topological degree, singularities, fundamental group of the complement of critical loci, etc.
In general, describing topological equivalence directly from the polynomial expressions of the maps proves to be an extremely challenging.
For instance, the identity map has degree one, whereas the map $(x,~y)\longmapsto (y,~y^2-x)$ has degree two, but is also invertible.
Mostly for this reason, Question~\ref{que:main00} remains unresolved except for quadratic maps on the plane~\cite{FJ17,FJM18}. 
Accordingly, one requires an effective, or combinatorial, description of exactly those objects that are topologically relevant to $f$.
These include the set of singularities of a map, its bifurcation set, the set of missing points not in the image, etc. 
One approach to achieve this is to understand their topological invariants.

Due to the finiteness of their topological classes in $\PKknp$, 
a special emphasis is put on polynomial functions, and maps on the plane. 
For polynomial functions, the bifurcation set is finite~\cite{Tho69}, and is exceptionally well understood whenever the domain is two-dimensional~\cite{vui2008critical}. 
As for polynomial maps $f$ on the plane, the bifurcation set forms the union of the \emph{discriminant} and the \emph{non-properness set}. 
The discriminant is the set of images of critical points of a map, whereas
 the non-properness set is formed by points in $f(\Bbbk^2)$ over which the map is not a locally ramified covering. 
Both those sets play a role in distinguishing topological types.
For instance, the map $(x,~y)\longmapsto (x^2,~y)$ is a degree-two ramified cover of $\C^2$, whose discriminant is a vertical line forming its branching locus. 
Hence, it cannot be topologically equivalent to the identity map.
The same can be said about he map $(x,~y)\longmapsto (x,~xy)$ as it has a non-empty non-properness set formed by the same vertical line as above.

As for methods towards Question~\ref{que:main00} in a more general setting, most of them are inherited from classical techniques developed for analytic maps; the quintessential approach is the use of jet spaces developed by Boardman, Malgrange, Mather,  Thom, and Whitney in the 1960s and 1970s~\cite{whitney1955singularities,thom1959singularities,
malgrange1962theoreme,thom1964local,boardman1967singularities,
mather1973thom}. 
Other known approaches to the study of map singularities have tools and methods from differential geometry in common. Their early developments go back to Malgrange and Thom in the 1960s~\cite{thom1959singularities,malgrange1962theoreme,
thom1964local,thom1962stabilite}. This naturally includes the study of flow behavior between manifolds, where integration on through-the-map lifted differential flows is used to create diffeomorphisms that bind together topologically equivalent maps. 
Some of these tools would later see extensions to the space of polynomial maps~\cite{FJR19}.

\subsection*{Organization of the survey} After a section devoted to presenting the main notions, notations, and conventions, we divide the survey into three parts: The first of which is~\S\ref{sec:Thom-conj}, the second consists of~\S\ref{sec:affine_singul}, and~\S\ref{sec:sing_infty}, whereas the third part is made up of~\S\ref{sec:functions} and~\S\ref{sec:top-types_construct}.  

\subsubsection*{Part I} We present in the first part the history of Question~\ref{que:main00}, which originates in the 1950s from questions of Whitney on diffeomorphic stability of smooth maps. The combined effort that ensued culminated in the classification of all triples $d,n,p\in\N$ for which the space $\PKknp$ of polynomial maps $\Kntp$ of degree $d$ has infinitely-many topological types. We make a chronological recount of the breakthroughs above that start from Whitney's early works, and spans up to the present day where new results are still being produced. 
In the course of our description, we elaborate on the main results, the novel ideas, and pertinent tools; it encompasses works of Thom, Mather, Fukuda, Sabbah, Aoki, Noguchi, Nakai, and Jelonek.

\subsubsection*{Part II} 
Studying the topology of polynomial maps implies understanding the behavior of its preimages over points in its range.
Consequently, the bifurcation set of a polynomial map plays a key role in this theory.
The behavior of the fibers in the vicinity of the bifurcation set is dictated by singularities of the maps.
These singularities can either occur inside the domain, or outside it.
We call the former ``affine singularities" and the latter ``singularities at infinity".
We will elaborate on the bifurcation set for polynomial functions later in the survey, as Part II will handle this topic for the other special case of dominant polynomial maps $f:\Kntn$. For such maps, the bifurcation set becomes more tractable as $f$ is a locally trivial fibration with a finite number of points in the fibers over generic points.
Consequently, affine singularities form a set $\cCf$ encoding the ramification points of $f$, that is, points where it is a locally ramified cover, and its discriminant $\cDf:=f(\cCf)$ forms the corresponding branching locus.
As for singularities at infinity, these are captured by the non-properness set.

The study of affine singularities can be traced back to Morse and Whitney in the 1920s and 1930s. 
They were the first to characterize smooth maps on the plane and functions that satisfy certain genericity conditions. 
In contrast, studying the non-properness set is a relatively recent endeavor, and with benefits extending beyond Question~\ref{que:main00}~\cite{Jel99,Jel99a,jelonek2005effective,Jel10}; it was initiated by Jelonek in the 1990s~\cite{Jel93} for the sake of better understanding the Jacobian Conjecture. 
The non-properness set, was first thoroughly investigated only much later by Jelonek in the 1990s, and early 2000s.
In~\S\ref{sec:affine_singul}, we survey the history, state of the art, open problems, and applications that involve singularities of affine polynomial maps. 
We then devote~\S\ref{sec:sing_infty} to laying out the intriguing history, results and applications around the non-properness set.

\subsubsection*{Part III} In this part, we illustrate how to apply some of the known results on the topology of polynomial maps for the sake of constructing complex polynomial functions $\Cnto$ and maps $\Cttt$. 
Even though there is no systematic method for achieving this, a topological classification has been obtained for the quadratic case~\cite{FJ17,FJM18}.

The methods and strategies for Question~\ref{que:main00} differ significantly depending on whether the range has dimension one or not. 
We separate Part III of this paper into sections~\S\ref{sec:functions} and~\S\ref{sec:top-types_construct} accordingly. 
In~\S\ref{sec:functions}, we elaborate on classical results that compute the bifurcation set for polynomial functions, and pertinent to Question~\ref{que:main00}. We then provide a new non-trivial lower bound on the number of topological types of polynomial functions with a given degree $d$, and mention two strategies towards further constructing topological types of functions. 
Finally, in~\S\ref{sec:top-types_construct}, we introduce known results and methods for distinguishing topological types of complex polynomial maps on the plane. Furthermore, we present known methods for constructing topological types of polynomial maps $\Cttt$, and for giving lower bounds on their numbers.

The difference in topologies between real and complex maps presents a challenge when addressing Question~\ref{que:main00}. 
Therefore, our primary focus is on complex polynomial maps, with some discussion of results for real maps.

\section{Preliminaries and Notation}\label{sec:preliminaries}
In this section, we recall some main definitions and notation that will be used in the rest of this survey. 
More nuanced details are introduced later when needed.

\subsection{Basic notions on smooth maps and map-germs}\label{sub:basic_notions}
Let $X$ and $Y$ be smooth or analytic manifolds of dimensions $n$ and $p$ respectively. 
In what follows, we use $\mathcal{C}^\infty(X, Y)$ to denote the space of all smooth (holomorphic) maps $f \colon X \to Y$. 
That is, for each $k\in\mathbb{N}$, the map $f$ is $k$-times differentiable at every $x \in X$, and we use
\begin{align*}
(df)_x \colon\; T_x X \longrightarrow T_{f(x)} Y
\end{align*}
to denote the differential at $x$ of $f$.

\begin{definition}[{{\cite[Chap.~II,~\S 1]{golubitsky2012stable}}}]\label{def:singularities}
Let $f \colon X \to Y$ be a $\mathcal{C}^{\infty}$ map between two manifolds, and let $x \in X$. Then,
\begin{enumerate}
    \item $\corank (df)_x := \min(n, p) - \rank (df)_x$,
    \item $x$ is a \emph{critical point of $f$}, or a \emph{singularity}, if $\corank (df)_x > 0$. Denote by $\cC_f$ the set of critical points of $f$.
    \item The set $f(\cC_f)$ of critical values of $f$, which we denote by $\cD_f$, is called the \emph{discriminant}.
    \item If $x \notin \cC_f$, then $x$ is called \emph{regular} and $f(x)$ is called a \emph{regular value}.
\end{enumerate}
\end{definition}

Let us describe the behavior of $f$ over its critical values and its regular values.
Thanks to Sard's Theorem~\cite[Chap.~II,~\S 1]{golubitsky2012stable}, it follows that the discriminant of a smooth map $f \in \mathcal{C}^\infty(X, Y)$ has Lebesgue measure zero in $Y$.
As for regular values, a variation of the implicit function theorem states that for each regular value $y \in Y$, its preimage $f^{-1}(y)$ is a submanifold of $X$.
Furthermore, if $y \in \operatorname{im}(f)$, then the codimension of $f^{-1}(y)$ in $X$ is equal to the dimension of $Y$, and the tangent space of $f^{-1}(y)$ at $x$ is equal to $\ker ((df)_x)$. In fact, around regular points $x$, the map $f$ is a \emph{submersion} if $n \geq p$, and an \emph{immersion} if $n \leq p$. Namely, the differential $(df)_x$ is a linear map that is surjective in the first case, and injective in the second case.

Whenever $n=p$, over regular values, the map $(df)_x$ is a bijection. Then we call $f$ a \emph{diffeomorphism}. Equivalently, a diffeomorphism is a smooth map that is a homeomorphism and whose inverse is also smooth.

\subsubsection{Fibrations}\label{sss:fibrations} 
In this part, we briefly define the bifurcation set of a smooth map.
\begin{definition}
 Let $\mathcal{G}$ be the graph in $X \times Y$ of a smooth (analytic) map $f \colon X \to Y$ between smooth (analytic) manifolds and consider the map
\begin{align*}
P := \left. \pi \right|_{\mathcal{G}} \colon\; \mathcal{G} \longrightarrow Y,
\end{align*}
where $\pi \colon X \times Y \rightarrow Y$, $(x, y) \mapsto y$. We say that $f$ is a trivial $\mathcal{C}^{\infty}$-fibration (hereafter, simply 'trivial fibration') if for every $y \in \pi(\mathcal{G})$, the set $Y \times f^{-1}(y)$ is diffeomorphic to $P^{-1}(\pi(\mathcal{G}))$. 
The map $f$ is said to be a \emph{locally trivial fibration} if, for each $x \in X$, there exists a neighborhood $U$ of $x$ for which the restricted map $\restr{f}_{U} \colon U \to Y$ is a trivial fibration.
\end{definition}

For the map $f$ in the definition above, there is a set $B \subset Y$ such that the restricted map 
\begin{align}\label{eq:restricted_map}
\restr{f}_{X \setminus f^{-1}(B)} \colon\; X \setminus f^{-1}(B) \longrightarrow Y \setminus B
\end{align}
is a trivial fibration.
The smallest such subset $B \subset Y$ for which this restriction is a trivial fibration is called the \emph{bifurcation set} of $f$ and is denoted by $\mathcal{B}_f$.

A point in the bifurcation set of $f$ can lie either in the discriminant of $f$, or in the \emph{bifurcation set at infinity}, $\mathcal{B}^\infty_f$. Namely, the set $\mathcal{B}^\infty_f$ consists of points $y_0 \in Y$ at which for any neighborhood $\mathcal{U}\ni y_0$, and for any compact subset $K \subset X$, the restricted map
\begin{align*}
\restr{f}_{X \setminus K} \colon\; X \setminus K \longrightarrow \mathcal{U},
\end{align*}
is not a locally trivial $\mathcal{C}^{\infty}$-fibration.

\subsubsection{Polynomial maps}\label{sss:pol_maps}
Given any field $\mathbb{K}$ we consider dominant polynomial maps $f \colon X \to Y$ which are given by restrictions of the form $f := g|_X$ of polynomial maps\\
 $g := (g_1,\ldots, g_p) \colon \mathbb{K}^n \to \mathbb{K}^p$ onto a smooth affine variety $X \subset \mathbb{K}^n$, where $Y \subset \mathbb{K}^p$ is the Zariski closure of $g(X)$.
The variety $X$ is defined as the vanishing locus of the ideal generated by some polynomials $h_1, \ldots, h_k \in \mathbb{K}[x_1, \ldots, x_n]$.

If~$\mathbf{0}$ denotes the point $(0, \ldots, 0)$ in any vector space, then the inverse image of $\mathbf{0}$, $f^{-1}(\mathbf{0})$, is the affine subvariety in $X$, given as the vanishing locus of the ideal $\langle g_1, \ldots, g_p, h_1, \ldots, h_k \rangle$.
We thus write $f^{-1}(\mathbf{0}) = V_{\mathbb{K}}(f)$.
As the torus $(\mathbb{K}^*)^n$ will be a prominent object later, we adopt the notation $V^*_{\mathbb{K}}(f) := f^{-1}(\mathbf{0}) \cap (\mathbb{K}^*)^n$.

Due to a collection of results of Wallace, Varchenko, and Verdier~\cite{Tho69,wallace1971linear,varchenko1972theorems,verdier1976stratifications}, it is known that the bifurcation set of $f$ is a proper algebraic (or semi-algebraic in the real case) subset of $Y$.
This result implies that the fiber of a generic point under $f$ is a smooth algebraic subvariety of $X$.
The notion of generic here is given as follows.

\begin{definition}\label{def:generic}
The statement ``a generic point of a space $S$ has the property $W$'' means that there exists an open dense subset $U \subset S$ such that every element of $U$ has the property $W$.
We take the Zariski topology whenever $S$ is an algebraic variety.
\end{definition}

Several notions of analytic maps can be expressed symbolically using the equations of the polynomial.
An important object associated to the map $f$ is its \emph{Jacobian matrix} in $\mathbb{K}[x_1, \ldots, x_n]^{n \times (k + p)}$, defined as
\[
\operatorname{Jac}_x f :=
\begin{bmatrix}
\frac{\partial h_1}{\partial x_1} & \cdots & \frac{\partial h_k}{\partial x_1} & \frac{\partial g_1}{\partial x_1} & \cdots & \frac{\partial g_p}{\partial x_1} \\
\vdots & & \vdots & \vdots & & \vdots \\
\frac{\partial h_1}{\partial x_n} & \cdots & \frac{\partial h_k}{\partial x_n} & \frac{\partial g_1}{\partial x_n} & \cdots & \frac{\partial g_p}{\partial x_n}
\end{bmatrix},
\]
where the tuple $(x_1, \ldots, x_n)$ denotes the local coordinates of the Euclidean space $\mathbb{K}^n$.
For any $p \in X$, we use $(\operatorname{Jac}_x f)(p)$ to denote the matrix in $\mathbb{K}^{n \times (k+p)}$ obtained by evaluating all entries of $\operatorname{Jac}_x f$ at $p$.

In analogy to Definition~\ref{def:singularities}, we have that  $f$ is dominant whenever $\operatorname{Jac}_x f$ has full rank.
Furthermore, the critical locus $\cC_f$ of $f$ is the set of all $x\in X$ at which the rank of $\operatorname{Jac}_x f$ drops.
Indeed, it is enough to check that the differential of $f$ is obtained as the Jacobian matrix of the map $(g_1, \ldots, g_p, h_1, \ldots, h_k) \colon \mathbb{C}^n \to \mathbb{C}^{p+k}$.
The Zariski closure of $\cD_f = f(\cC_f)$ is a proper algebraic subvariety of $Y$.
Furthermore, the preimage $f^{-1}(y)$ of a regular value $y \notin \cD_f$ is a smooth algebraic subvariety of $X$.
This fact is sometimes referred to as the \emph{Bertini Theorem}~\cite{Jean-Pierre_Jouanolou}.

\subsubsection{Map-germs and singularities}\label{sss:map-germs_sing}
With the notation above, let $x$ be a point in $X$.
Two maps $f$ and $g$, defined on neighborhoods of $x$, are equivalent near $x$ iff $f = g$ on some neighborhood of $x$.
If $f \colon U \to Y$ is defined over a neighborhood $U$ of $x$, 
then the equivalence class of $f$ in the equivalence relation defined above is called the \emph{germ of $f$ at $x$}.
We use $(f, x) \colon (X, x) \longrightarrow (Y, f(x))$ to denote this germ.

For any subset $Z \subset X$, a point $x_0 \in Z$ is called a \emph{regular point} of $X$ (of dimension $n$) if $(Z, x_0)$ is a submanifold of $(X, x_0)$ of dimension $n$.
If $x_0$ is not regular (of any dimension), then $x_0$ is called a \emph{singular point} of $X$.
Next, we define an important invariant of singularities for algebraic varieties.

\begin{definition}\label{def:Milnor-number}
Assume that $V := \{f = 0\}$ is an irreducible algebraic hypersurface in $\mathbb{C}^n$, with an isolated singularity at the origin $\mathbf{0} \in \mathbb{C}^n$.
The \emph{Milnor number} of $V$ at $\mathbf{0}$ is the value
\begin{align*}
\mu_0(V) :=\; & \frac{\dim_{\mathbb{C}} \mathscr{O}_0}{\left\langle \frac{\partial f}{\partial x_1}, \ldots, \frac{\partial f}{\partial x_n} \right\rangle} \in \mathbb{N} \cup \{\infty\}
\end{align*}
where $\mathscr{O}_0$ is the local ring of analytic functions at $\mathbf{0}$. If $V$ has an isolated singular point at $x \neq \mathbf{0}$, then $\mu_x(V)$ is defined to be the Milnor number of the resulting hypersurface after translating the singularity to the origin.
\end{definition}

Germs are used to study smooth and analytic maps, as well as singularities~\cite{golubitsky}. 
For instance, an \emph{ordinary cusp} of $\mathbb{C}^2$ is a singularity of the form $(V, \mathbf{0})$, where $V$ is the curve defined by $u^2 = v^3$. 
It can be obtained as a germ $(\mathbb{C}^2, \mathbf{0}) \longrightarrow (\mathbb{C}^2, \mathbf{0})$, $(x, y) \longmapsto (x,\, y^3 + x y)$~\cite{whitney1955singularities}.

\subsection{Newton tuples and restrictions to coherent faces}\label{sub:prel_newton}
We start by recalling some background on polyhedra. For any two sets $A, B \subset \mathbb{R}^n$, their \emph{Minkowski sum}, $A \oplus B$, is the resulting set by taking the coordinate-wise vector sums of all pairs of points from $(A, B)$:
\begin{align*}
A \oplus B &:= \left\lbrace a + b\,|\, a \in A,\, b \in B \right\rbrace.
\end{align*}

\subsubsection{Tuples of polytopes}
In what follows, we use the abbreviation $[n]$ for $\{1, 2, \dots, n \}$. A convex subset $A \subset \mathbb{R}^n$ is called a \emph{polyhedron} if it is the intersection of finitely many closed half-spaces of $\mathbb{R}^n$. The boundary of one such half-space is called a \emph{supporting hyperplane} of $A$. A set $\Gamma$ is called a \emph{face} of $A$ if $\Gamma$ is the intersection of a supporting hyperplane $H$ of $A$ with its boundary, i.e., $\Gamma = H \cap \partial A$. We use the notation $\Gamma \prec A$ to indicate that $\Gamma$ is a face of $A$.

All polyhedra that are pertinent to this survey are bounded. A bounded polyhedron is referred to as a \emph{polytope}. A \emph{tuple of polytopes}, or a \emph{tuple} for short, is a map $\bm{A}$ from a finite set $K \subset \mathbb{N}$ to the set of all polytopes in $\mathbb{R}^n$. We call $K$ the \emph{support} of $\bm{A}$ and we denote it by $[\bm{A}]$. For any $k \in[\bm{A}]$, the $k$-th polytope in the tuple is denoted by $A_k$.
We call $\bm{A}_K:= \{\left. A_k \right|\, k \in K\}$ the \emph{sub-tuple} (of $\bm{A}$ associated to $K$). If $K = \{k_1, \ldots, k_p\}$, we use $\sum \bm{A}$ to denote the Minkowski sum of all elements in $\bm{A}$:
\begin{align*}
\sum \bm{A} &:=A_{k_1} \oplus \cdots \oplus A_{k_p}.
\end{align*}
Clearly, this results in a polytope in $\mathbb{R}^n$. The \emph{summands} of $\sum \bm{A}$ refers to the collection of polytopes $A_k$ forming the tuple $\bm{A}$.
A tuple $\bm{\Gamma}$ is a \emph{coherent tuple-face} of $\bm{A}$ (or, simply, \emph{face} whenever it is clear from the context) if $[\bm{\Gamma}] = [\bm{A}]$, $\Gamma_k \prec A_k$ for each $k \in [\bm{A}]$, and $\sum\bm{\Gamma} \prec \sum \bm{A}$.

The tuples that we are interested in are formed by polytopes $A \subset \mathbb{R}^n$ contained in the non-negative orthant, $\mathbb{R}_{\geq 0}^n$, and are \emph{lattice} polytopes. That is, all vertices of $A$ have integer coordinates. In what follows, we use $\cInk$ to denote the set of all tuples $\bm{A}$ of polytopes in $\mathbb{R}_{\geq 0}^n$ for which $[\bm{A}] = \{1, \ldots, p\}$.
We call them \emph{$p$-tuples}.

\subsubsection{Newton tuples}\label{sub:Newton_tuples}
A polynomial $f \in \mathbb{K}[x_1, \ldots, x_n]$ is a finite linear combination $\sum c_a x^a$ of monomials, such that $x^a = x_1^{a_1} \cdots x_n^{a_n}$, and $a = (a_1, \ldots, a_n) \in \mathbb{N}^n$. The set of all exponents $a$ appearing in $f$ and satisfying $c_a \neq 0$ is called the \emph{support} of $f$. We denote this set $\operatorname{supp}(f)$. Its convex hull, $\NP(f)$, is an integer polytope in $\mathbb{R}_{\geq 0}^n$ called the \emph{Newton polytope}.

Now, if $f$ is a tuple $f = (f_1, \ldots, f_p)$ of polynomials in $\K[x_1, \ldots, x_n]$, its \emph{Newton tuple}, $\NP(f) \in \cInk$, is given by the collection of Newton polytopes of $f_i$:
\begin{align*}
\NP(f) &:= \left(\NP(f_1), \ldots, \NP(f_p)\right).
\end{align*}
Given a collection $\bm{A} \in \cInk$, we denote by $\KA$ the space of all polynomial tuples $f$ above such that $\NP(f_i) \subset A_i$.
\begin{align*}
\KA := \big\{ (f_1, \ldots, f_n): \mathbb{K}^n \to \mathbb{K}^p \;\big|\; \NP(f_i) \subset A_i,\; i = 1, \ldots, p \big\}.
\end{align*}

The coordinates of each point $f \in \KA$ is given as the list of coefficients (with respect to some fixed ordering) in the polynomials $f_i$. This identifies $\KA$ as a Euclidean space $\mathbb{K}^N$, where $N := \#A_1 \cap \mathbb{N}^n + \cdots + \#A_n \cap \mathbb{N}^n$.

\begin{notation}\label{not:restricted}
For any subset $\sigma \subset \mathbb{R}^n$ and for any polynomial $P \in \mathbb{K}[x_1, \ldots, x_n]$, the \emph{restriction of $P$ to $\sigma$}, denoted by $P_\sigma$, is the polynomial
$
\sum_{\bm{a} \in \sigma \cap \operatorname{supp}(P)} c_{\bm{a}}\, x^{\bm{a}}.
$
\end{notation}

If $\bm{\Gamma} \prec \bm{A}$ is a face, and $f \in\KA$ is a polynomial map,
we define the collection of polynomials $f_{\bm{\Gamma}} := (f_{\Gamma_k})_{k \in [\bm{\Gamma}]}$, where for any $k \in [\bm{\Gamma}]$, the expression $f_{\Gamma_k}$ denotes the restricted polynomial $f_{k, \Gamma_k}$.

\subsubsection{Bernstein theorem}\label{sub:Bernstein}
If $A$ is a convex set in $\mathbb{R}^n$, we use $\operatorname{Vol}_n(A)$ to denote its Euclidean volume.
For any collection of $n$ convex sets $A_1, \ldots, A_n \subset \mathbb{R}^n$, their \emph{mixed volume} is the unique symmetric real-valued function 
satisfying both
\begin{align*}
V(A + B, A_2, \ldots, A_n) = V(A, A_2, \ldots, A_n) + V(B, A_2, \ldots, A_n),
\end{align*}
and $V(A, \ldots, A) = \operatorname{Vol}_n(A)$. We say that the elements of $\bm{A}$ are \emph{independent} if
\[
\dim \left( \sum A_I \right) \geq \# I
\]
for all $I \subset [\bm{A}]$.
A pair is \emph{dependent} if it is not independent.
A classical theorem of Minkowski (see, e.g.,~\cite[pp.~265]{Kho16}) states that dependence of $\bm{A}$ is equivalent to the vanishing of the Mixed volume $V(\bm{A})=0$. 
We have the following well-known version of Bernstein's Theorem~\cite{Ber75} (see, e.g.~\cite{rojas1999toric}).
\begin{theorem}\label{th:Bernstein}
Let $\bm{A} \in \cInn$ be an independent tuple, and let $f \in \KA$ be any collection of polynomials defined over an infinite algebraically closed field $\mathbb{K}$.
Then, the number of isolated points in $V_{\mathbb{K}}^*(f)$, counted with multiplicities, cannot exceed the mixed volume $V(\bm{A})$. Furthermore, we have equality if and only if the system $f_{\bm{\Gamma}} = \mathbf{0}$ has no solutions in $(\mathbb{K}^*)^n$ for any proper face $\bm{\Gamma} \prec \bm{A}$.
\end{theorem}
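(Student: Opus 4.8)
The plan is to establish the exact count first — a generic $f\in\KA$ has precisely $V(\bm{A})$ isolated zeros in $(\mathbb{K}^*)^n$ — and then to extract both the inequality for arbitrary $f$ and the equality criterion from the geometry behind it, using a toric compactification of the torus adapted to $\bm{A}$. Fix a complete simplicial fan $\Sigma$ in $\mathbb{R}^n$ refining the inner normal fans of $A_1,\dots,A_n$, and let $X$ be the associated complete toric variety, with $(\mathbb{K}^*)^n$ its dense orbit. Each $A_i$ determines a globally generated torus-invariant Cartier divisor $D_i$ on $X$ with polytope $A_i$, and the monomial basis $\{x^a : a\in A_i\cap\mathbb{Z}^n\}$ of the sections of $\mathcal{O}_X(D_i)$ lets one extend each coordinate $f_i$ of $f$ to a section $s_i$ whose zero scheme $Z_i$ meets $(\mathbb{K}^*)^n$ exactly in $\{f_i=\mathbf{0}\}$. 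A standard computation in toric intersection theory identifies the intersection number $D_1\cdots D_n$ with the mixed volume $V(\bm{A})$ — polarise the identity $D_P^{\,n}=V(P,\dots,P)$ for a single polytope $P$, using that both sides are additive in each argument under Minkowski sum — and, since $\bm{A}$ is independent, the Minkowski criterion recalled above gives $V(\bm{A})>0$.

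Next I would analyse the toric strata. For a cone $\sigma\in\Sigma$ the restriction of $s_i$ to the orbit $O_\sigma\cong(\mathbb{K}^*)^{\,n-\dim\sigma}$ is, up to a monomial unit, the face polynomial $f_{i,\Gamma_i(\sigma)}$, where $\Gamma_i(\sigma)\prec A_i$ is the face of $A_i$ supported by the relative interior of $\sigma$; when $\sigma\neq\{\mathbf{0}\}$ this is a system of $n$ Laurent polynomials on a torus of dimension $n-\dim\sigma<n$, which for generic coefficients has no torus solution (a dimension count on the incidence variety of coefficient tuples and torus points — it is trivially solution-free when every $\Gamma_i(\sigma)$ is a vertex). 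Hence for generic $f$ the scheme $Z_1\cap\dots\cap Z_n$ avoids the boundary $X\setminus(\mathbb{K}^*)^n$, and a further generic torus translation makes it transversal there — Kleiman transversality applies to the torus acting on itself and, tori being special algebraic groups, remains valid in positive characteristic. So the generic count equals $D_1\cdots D_n=V(\bm{A})$. For an arbitrary $f$ with $Z_1\cap\dots\cap Z_n$ finite, its points carry local intersection multiplicities summing to $D_1\cdots D_n=V(\bm{A})$, and the isolated zeros of $f$ in $(\mathbb{K}^*)^n$ — with the same multiplicities — form a subset of these; passing to the zero-dimensional part when $V^*_{\mathbb{K}}(f)$ is infinite, the number of isolated points of $V^*_{\mathbb{K}}(f)$ counted with multiplicity is at most $V(\bm{A})$.

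For the equality criterion, write $\bm{\Gamma}(\sigma):=(\Gamma_1(\sigma),\dots,\Gamma_n(\sigma))$; this is a coherent face of $\bm{A}$, and it is proper precisely when $\sigma\neq\{\mathbf{0}\}$ (using that $\sum\bm{A}$ is $n$-dimensional). Since $D_1\cdots D_n=V(\bm{A})$ for every $f$, the isolated torus zeros of $f$ account for the full multiplicity $V(\bm{A})$ exactly when $Z_1\cap\dots\cap Z_n$ is finite and avoids every orbit $O_\sigma$ with $\sigma\neq\{\mathbf{0}\}$: any intersection point on such an orbit carries positive local multiplicity — the $D_i$ being nef — which is then missing from the torus count, and a positive-dimensional component of $V^*_{\mathbb{K}}(f)$, being affine hence non-proper, likewise drags a boundary orbit into $Z_1\cap\dots\cap Z_n$. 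Moreover $Z_1\cap\dots\cap Z_n$ meets $O_\sigma$ exactly when the face system $f_{\bm{\Gamma}(\sigma)}=\mathbf{0}$ has a torus solution, and conversely every proper face $\bm{\Gamma}\prec\bm{A}$ equals $\bm{\Gamma}(\sigma)$ for a ray $\sigma$ of a suitable refinement of $\Sigma$ (adjoin the ray spanned by an inner normal that exposes $\sum\bm{\Gamma}$ as a face of $\sum\bm{A}$), a torus zero of $f_{\bm{\Gamma}}$ then lying on $O_\sigma$. Therefore $V(\bm{A})$ is attained if and only if $f_{\bm{\Gamma}}=\mathbf{0}$ has no solution in $(\mathbb{K}^*)^n$ for every proper face $\bm{\Gamma}\prec\bm{A}$.

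The delicate part, and the main obstacle, is the second step: arranging uniformly over the finitely many cones of $\Sigma$ that generic coefficients keep $Z_1\cap\dots\cap Z_n$ off the toric boundary, and securing transversality over an arbitrary algebraically closed field rather than invoking generic smoothness in characteristic zero. A route that bypasses toric intersection theory — essentially Bernstein's original argument — replaces $X$ by a regular mixed subdivision of $A_1\oplus\dots\oplus A_n$ induced by generic height functions $\omega_i$ together with the one-parameter degeneration $f_i^{(t)}:=\sum_{a}c_a\,t^{\omega_i(a)}x^a$: each mixed cell $\Gamma_1\oplus\dots\oplus\Gamma_n$ of the subdivision supplies, after a monomial substitution and rescaling by powers of $t$, exactly $V(\Gamma_1,\dots,\Gamma_n)$ Puiseux branches of solutions governed by the corresponding face system, and these volumes sum to $V(\bm{A})$ over the mixed cells; there the technical core is checking that distinct cells produce distinct branches and that none is lost, which again singles out the face systems as the sole obstruction to equality.
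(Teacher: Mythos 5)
The paper itself does not prove this statement: it is quoted as a known result with references to Bernstein and to Rojas, so there is no internal proof to compare you against, and I can only judge your argument on its own terms. Your skeleton is the standard toric one (compactify, identify $D_1\cdots D_n$ with the mixed volume, match boundary orbits with coherent faces, and read off both the bound and the equality criterion), and your closing paragraph correctly identifies Bernstein's mixed-subdivision/Puiseux degeneration as the alternative route. But two steps that carry the real weight of the theorem are asserted rather than proved.

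First, the bound for an arbitrary, possibly degenerate, $f$. When $Z_1\cap\cdots\cap Z_n$ is not finite (which happens whenever $V^*_{\mathbb{K}}(f)$ is infinite, and also when some boundary stratum lies in every $Z_i$), the phrase ``passing to the zero-dimensional part'' is precisely the nontrivial content: you need the refined B\'ezout/excess-intersection statement that for nef Cartier divisors the distinguished varieties contribute non-negatively, so the isolated points contribute at most $D_1\cdots D_n$ (or, alternatively, a conservation-of-number/specialization argument). Second, and more seriously, the necessity direction of the equality criterion. Your justification is that an intersection point on a boundary orbit ``carries positive local multiplicity''; that is valid only if the intersection is zero-dimensional at that point. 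If the face system has toric roots sweeping out a positive-dimensional boundary component of $Z_1\cap\cdots\cap Z_n$, nefness gives only a non-negative excess contribution, so your argument does not exclude the scenario in which the isolated torus roots still absorb all of the intersection number; ruling that out is exactly the delicate half of Bernstein's second theorem and needs a genuine argument (Bernstein's Puiseux deformation, or a toric argument that a boundary root forces strictly positive multiplicity off the torus). Two smaller points: the Kleiman-transversality step is both unnecessary (for the count with multiplicities you only need the generic intersection to avoid the boundary, which your dimension count over the finitely many cones already gives) and not correctly justified — ``special'' in Serre's sense does not salvage generic transversality in positive characteristic; and the identity $D_P^{\,n}=V(P,\dots,P)$ is false under the paper's normalization $V(P,\dots,P)=\operatorname{Vol}_n(P)$, since $D_P^{\,n}=n!\operatorname{Vol}_n(P)$ — with the paper's convention the intersection number is $n!\,V(\bm{A})$, a normalization mismatch you should at least flag.
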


A consequence of Bernstein's Theorem is that any square polynomial system with a dependent Newton tuple has no isolated solutions in the torus.

\section{Finiteness of topological types: A brief history}\label{sec:Thom-conj}

\S\ref{sub:history_cusps} is devoted to the historical background on the stability of smooth maps $\XtY$ between manifolds. It begins with the early questions on diffeomorphic stability and concludes with topological stability. 
In an effort to make sense of topologically stable maps, Thom introduced his famous Isotopy Lemmas, which played a key role in Mather's proof that these maps form an open dense subset in $\CXY$. 
This, in turn, popularized Thom's Isotopy Lemmas, which have since proven to be effective in further contexts such as stratification and fibration theories.
~\S\ref{sub:Thom-isotopy} is devoted to the original formulation of these results.

Thom conjectured in the 1960s that polynomial functions of a given degree admit only finitely many topological types. 
His conjecture was subsequently validated by Fukuda in the 1970s, and then by Sabbah, Aoki and Noguchi in the 1980s who generalized the result to certain families of polynomial maps. 
Further details on these topics can be found in~\S\ref{sub:Thom-conj}, whereas~\S\ref{sub:uncountably} is dedicated to the complementary families that exhibit infinitely many topological types. 
Finally, in~\S\ref{sub:alternative}, we present some questions regarding potential alternatives to topological equivalence for polynomial maps which have also been a topic of interest in the literature.

\subsection{History of the problem}\label{sub:history_cusps}
A smooth map $f:\XtY$ between two manifolds is called $\cC^{\infty}$-\emph{stable}, if for any sufficiently nearby map $g:X\longrightarrow Y$ there are diffeomorphisms $\varphi: X\longrightarrow X$ and $\psi:Y\longrightarrow Y$ that transform $g$ into $f$. 
That is, the following diagram is commutative
\begin{equation}\label{eq:commutative_differential}
\begin{tikzcd}
X \arrow{r}{f} \arrow[swap]{d}{\phi} & Y \arrow{d}{\psi} \\%
X \arrow{r}{g}&  Y,
\end{tikzcd}
\end{equation} where the term ``sufficiently nearby'' is made rigorous “in what follows. 
Motivated by the smooth classification of map singularities, Whitney first posed the following question in the early 1950s.

\begin{question}[\cite{goreskymemoirs}]\label{que:Whitney-dense}
Do stable maps form a dense subset in $\CXY$?
\end{question}

Let us first introduce the topology most commonly used on the space $\CXY$ (see e.g.,~\cite[Chap. II~\S 2 and~\S 5]{golubitsky2012stable}). 
Let $f,g\in\CXY$ be smooth maps such that $f(x) = g(x) = :y$ for some $x\in X$. We say that $f$ has \emph{first order of contact with $g$} at $x$ if $(df)_x $ and $(dg)_x$, viewed as maps $T_xX\longrightarrow T_yY$ between their respective tangent spaces, are equal.  Accordingly, for any non-negative integer $k$, the map $f$ has \emph{$k$-th order contact with $g$ at $x$} if $(df):TX\longrightarrow TY$ has $(k-1)$--st order contact with $(dg)$ at every point in $T_xX$. This gives rise to an equivalence relation, denoted by $f\sim_k g$ at $x$. We use $J^k(X,Y)_{x,y}$ to denote the set of equivalence classes under $\sim_k $ at $x$ of maps $f:\XtY$ where $f(x) = y$. 

The space $J^k(X,Y)$ of \emph{$k$-jets of maps $\XtY$} is given by the union 
\[
\bigcup_{(x,y)\in X\times Y} J^k(X,Y)_{x,y }.
\] An element $\sigma\in J^k(X,Y)$ is called a \emph{$k$-jet of maps} (or simply a \emph{$k$-jet}) from $X$ to $Y$. Using this description, for each map $f\in\CXY$, we define the \emph{$k$-jet of the map $f$} as the application $j^kf:  X\longrightarrow J^k(X,Y)$, taking a point $x$ onto the equivalence class of $f$ in $J^k(X,Y)$.

With $J^k(X,Y)$ being a smooth manifold (see e.g.,~\cite[Chap. II, Theorem 2.7]{golubitsky2012stable}), we consider the Euclidean topology on it. Then, for any open $U\subset J^k(X,Y)$, we define the set 
\begin{align*}
M(U):= &~\left\lbrace f\in\CXY~|~j^{k}f(X)\subset U \right\rbrace.  
\end{align*} The family of sets $\{M(U)\}$ that runs over open subsets of $J^k(X,Y)$ forms a basis for a topology on $\CXY$. This topology is called the \emph{Whitney $\cC^k$-topology}, and  $W_k$ is the set of its open subsets. Finally, the \emph{Whitney $\cC^{\infty}$-topology} on $\CXY$ is the topology whose basis is $W:=\cup_{k\geq 1} W_k$ (see e.g.,~\cite[Definition II.3.1]{golubitsky2012stable}). 

The term ``dense'' in Question~\ref{que:Whitney-dense} was in terms of the Whitney $\cC^k$-topology; the latter is compatible with a metric $D$ that measures the distance between two maps in $\CXY$ using their partial derivatives (\cite[Chap. II,~\S3]{golubitsky2012stable}). Namely, an open ball around a smooth map $f\in\CXY$, is formed by all smooth $g$ satisfying $D(j^kf(x),~j^kg(x))<\delta(x)$ for all $x\in X$, where $\delta:X\longrightarrow\R^+$ is a continuous map. 
\begin{definition}\label{def:generic_smooth}
Analogously to Definition~\ref{def:generic}, we use the term \emph{generic point in $\CXY$} with respect to the Whitney $\cC^{\infty}$-topology.
\end{definition}

Question~\ref{que:Whitney-dense} has a positive answer in the cases where $\dim Y = 1 $ due to the well-established Morse theory in the 1930s~\cite{morse1925relations,morse1931critical}. Later, in the 1950s, Whitney gave a positive answer to this question whenever $\dim Y\geq 2~ \dim X$ (Whitney Immersion Theorem), and $\dim X = \dim Y = 2$ (see~\S\ref{sub:singularities_analytic} below). The full picture seemed to depend only on the pair $(n,p):=(\dim X,~\dim Y)$. Soon after, Thom showed that whenever $n=p=9$, the subset of stable maps is not dense~\cite{thom192singularities}. His example is thoroughly elaborated upon in~\cite[~\S6]{ruas2022old}, and it stands as a testament to the utility of jet spaces in the examination of the stability and singularity of maps. 

Thom's construction initiated a multitude of works, the goals of which was to distinguish all pairs $(n,p)$ for which Question~\ref{que:Whitney-dense} has a positive answer. Most notable of those were Mather's groundbreaking series of influential papers~\cite{mather1968stability1,mather1969stability2,
mather1970stability3,mather1969stability4,MATHER1970301,
mather2006stability} spanning from the 1960s till the late 1970s  that completely solved the problem.

\begin{theorem}[Mather's nice dimensions]\label{thm:Mather}
Stable maps $\XtY$ form a dense subset of $\CXY$ if and only if $(\dim X,~\dim Y)$ is one of the red dots of Figure~\ref{fig:nice_dimensions}.
\end{theorem}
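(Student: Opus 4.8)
This is the culmination of Mather's program, and in a survey one would only state it and cite~\cite{mather1968stability1,mather1969stability2,mather1970stability3,mather1969stability4,MATHER1970301,mather2006stability}; still, the strategy can be sketched. The plan is to replace $\cC^{\infty}$-stability by an algebraic condition that can be tested on jets, and then to determine for which $(\dim X,\dim Y)$ a generic map automatically meets that condition. First I would introduce \emph{infinitesimal stability}: a map $f\in\CXY$ is infinitesimally stable if every vector field along $f$ is the sum of a vector field on $X$ composed with $(df)$ and a vector field on $Y$ precomposed with $f$. Differentiating a one-parameter family of trivializing diffeomorphisms gives stable $\Rightarrow$ infinitesimally stable; the converse is the technical heart, proved via Mather's division/preparation theorem (a $\cC^{\infty}$ analogue of the Weierstrass--Malgrange preparation theorem), which integrates the infinitesimal splitting to honest diffeomorphisms on source and target, with properness used to globalize (so one works with $X$ compact, or $f$ proper, and reduces the general case to this one).

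Next I would localize: infinitesimal stability of $f$ is equivalent to that of all its multigerms, and Mather's finite-determinacy theory shows each stable multigerm is determined by a finite jet and classified, up to contact equivalence (Mather's $\mathcal{K}$-equivalence), by the isomorphism type of a finite-dimensional local algebra. This recasts the problem in terms of the \emph{Thom--Boardman/Mather canonical stratification} $\{W_i\}$ of the space of $k$-jets of germs $(\R^n,\mathbf{0})\to(\R^p,\mathbf{0})$, whose strata are unions of contact orbits. By the Thom jet-transversality theorem a generic $f\in\CXY$ has $j^kf$ transverse to every $W_i$; and one proves that $j^kf$ being transverse to this stratification at $x$ is \emph{equivalent} to $f$ being locally stable at $x$. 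Hence stable maps are dense in $\CXY$ precisely when a generic $j^kf$ can avoid every stratum that is not made up of stable-germ orbits.

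It then remains to estimate the codimension in jet space of the stratum of germs carrying a prescribed local algebra, and to identify the set of pairs $(n,p)$ for which all such "non-stable" strata have codimension $>n$: this set is exactly the region of red dots in Figure~\ref{fig:nice_dimensions}. Just outside it the count breaks down --- Thom's example at $n=p=9$ exhibits a stratum of codimension exactly $9$ carrying a one-parameter modulus of inequivalent contact orbits, so a generic map $\R^9\to\R^9$ meets it and cannot be perturbed to a stable one. Conversely, inside the region, generic transversality forces $j^kf$ into the open dense union of stable orbits everywhere, i.e.\ $f$ is stable.

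The main obstacle is the equivalence \emph{infinitesimally stable $\Leftrightarrow$ stable} --- the reverse implication requires the preparation theorem together with a delicate integration and globalization argument --- and, on the combinatorial side, the precise codimension bookkeeping of the last step, which is what actually carves out the exact boundary of the nice-dimensions region rather than merely a sufficient condition for density.
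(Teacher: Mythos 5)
The paper does not prove this theorem: it is quoted as Mather's result, with the series of papers \cite{mather1968stability1,mather1969stability2,mather1970stability3,mather1969stability4,MATHER1970301,mather2006stability} cited and the reader referred to Ruas~\cite{ruas2022old} and Goresky~\cite{goreskymemoirs} for proof outlines, so there is no internal argument to compare against. Your sketch faithfully reproduces the standard strategy of those references (stable $\Leftrightarrow$ infinitesimally stable via the preparation theorem, reduction to contact classes of multigerms, jet transversality, and the bookkeeping that the non-stable jets have codimension $>\dim X$ exactly in the nice dimensions); the only refinements worth noting are that global stability is characterized by \emph{multi}-jet transversality, and that the ``only if'' direction in every non-nice pair follows from the same codimension count (an open set of maps meets the invariant bad stratum transversally and hence cannot be approximated by stable maps), not just from Thom's $(9,9)$ example.
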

Mather's classification results, and his overall legacy made a substantial impact on singularity, fibration, and stratification theories. For an outline of the proofs of Theorem~\ref{thm:Mather}, we refer the reader to the recent paper of Ruas~\cite{ruas2022old}, and to Goresky's biographical memoir on Mather's work~\cite{goreskymemoirs}.

\begin{figure}[htb]\label{fig:nice_dimensions}
\center
\includegraphics[scale=.6]{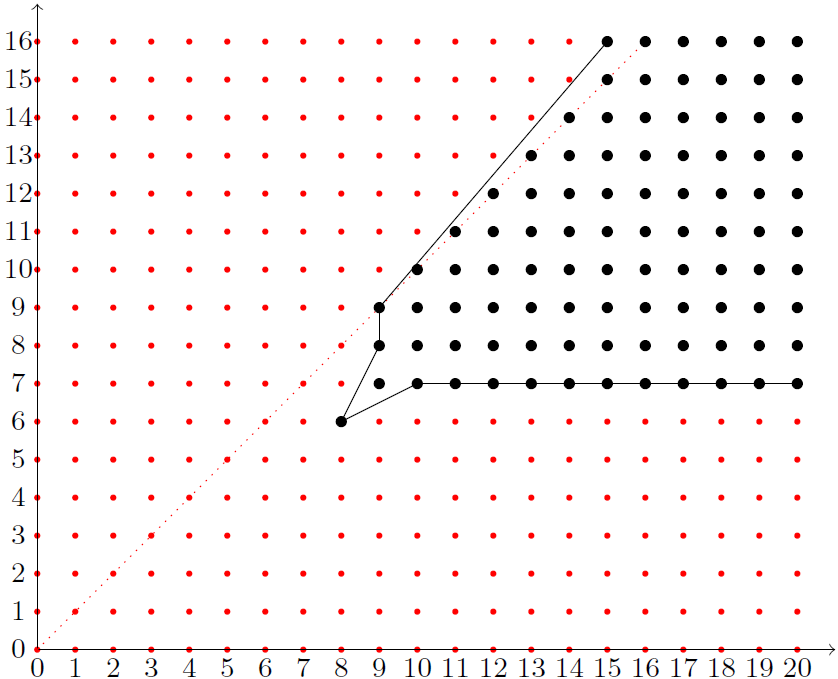}
\caption{Taken from~\cite{goreskymemoirs}: The pair of dimensions $(\dim X,~\dim Y)$ are plotted as dots, where $f:\XtY$ is a smooth map between smooth manifolds. The \emph{nice dimension} is the set of red dots.}
\end{figure}

\subsubsection{Topologically stable maps}\label{sss:diff-homeo}
Thom considered in parallel a weaker version for Question~\ref{que:Whitney-dense}; he outlined in~\cite[~\S4]{Tho69} a novel strategy on how to tackle this question if the diffeomorphisms $\varphi$ and $\psi$ in Diagram~\eqref{eq:commutative_differential} are replaced with homeomorphisms. His approach relies on constructing a stratification of the jet space using orbits under the group $\{\text{homeomorphisms }X\longrightarrow X\} \times \{\text{homeomorphisms }Y\longrightarrow Y\}$ acting on $J(X,Y)$. 
Each orbit thus represents a topological type of a smooth map. 
Inspired by this strategy, Mather later formulated in the 1970s a complete proof that topologically stable maps are dense in $\CXY$~\cite{mather1973stratifications}.

Measuring contact order of two maps at a point is done via comparing their partial derivatives (see e.g.~\cite[Chap. II, Lemma 2.2 ]{golubitsky2012stable}). 
Accordingly, whenever $X=\R^n$ and $Y=\R^p$, the space $J^k(n,p):=J^k(X,Y)$ forms the set $\PRknp$ of polynomial maps $\Rntp$ of degree at most $d:=k$. Consequently, the next logical step is to understand the behavior of polynomial maps and map-germs under topological equivalence. To this end, Thom posed the following question: 

\begin{question}\label{que:how-many}
For exactly which triples $(d,n,p)\in\N^3$ the spaces $\PRknp$ and $\PCknp$ produce infinitely-many topological types?
\end{question} 

In the pursuit of tackling this question, Thom achieved yet another groundbreaking construction;
he discovered  a one-dimensional degree-nine family of maps $\R^3\longrightarrow\R^3\in\Pol_{\R}(9\!:\!3,3)$ that contains infinitely-many topological types\\
 (see~\S\ref{sub:uncountably}). 
 In the same paper, he formulated the following conjecture.

\begin{conjecture}\label{conj:thom}
Let $\Bbbk$ be the field $\C$ or $\R$, and let $d,n\in\N$ be any two positive integers. Then, the space $\PKkno$ gives rise to at most finitely-many topological types.
\end{conjecture}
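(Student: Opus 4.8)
The plan is to prove Conjecture~\ref{conj:thom} by reducing the problem to a finiteness statement about semi-algebraic (or constructible) families and then invoking a stratification argument. The starting observation is that the space $\PKkno$ of degree-$d$ polynomial functions $\Bbbk^n\to\Bbbk$ is itself an affine space $\Bbbk^N$ with $N=\binom{n+d}{n}$, parametrized by the coefficients. The key point is that ``having a given topological type'' should cut this parameter space into finitely many pieces, each of which is semi-algebraic; the number of topological types is then bounded by the number of pieces, which is finite.

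First I would set up the \emph{incidence variety} or \emph{jet-theoretic} framework: consider the map that sends a coefficient vector $c\in\Bbbk^N$ to the pair consisting of the function $f_c$ and its bifurcation data (the discriminant $\cD_{f_c}$, the behavior at infinity, and, crucially, the family of nearby fibers). Since $\dim Y=1$, by Thom's result the bifurcation set $\cB_{f_c}\subset\Bbbk$ is finite, and by the results of Wallace--Varchenko--Verdier quoted in the excerpt it varies algebraically with $c$ in the sense that $\{(c,t)\;:\;t\in\cB_{f_c}\}$ is a constructible subset of $\Bbbk^N\times\Bbbk$. Next I would compactify: pass from $\Bbbk^n$ to a suitable projective (or toric) compactification and view each $f_c$ as a rational function on it, so that the singularities at infinity are captured by a proper algebraic family. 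This is where Thom's Isotopy Lemmas (Part I, \S\ref{sub:Thom-isotopy}) enter: one stratifies the total space of the family $\{(c,x)\}$ together with the map $(c,x)\mapsto(c,f_c(x))$ so that the stratification is \emph{Whitney regular} (in the real case) or an algebraic stratification, and so that the fibers of the projection to the parameter space $\Bbbk^N$ are compatible with it.

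The heart of the argument is then: by the First Isotopy Lemma, over each stratum $S\subset\Bbbk^N$ of this stratification the family $\{f_c\}_{c\in S}$ is locally trivial as a family of maps, meaning that all $f_c$ with $c$ in one connected component of $S$ are topologically equivalent. Since an algebraic variety admits a \emph{finite} Whitney stratification with finitely many connected components (in the real semi-algebraic sense, this is the finiteness of the number of connected components of a semi-algebraic set; in the complex case strata are irreducible and connected), we conclude that $\Bbbk^N$ is partitioned into finitely many equivalence classes, proving the conjecture. The real and complex cases are handled uniformly by this scheme, with the real case using semi-algebraic triviality and Hardt's theorem as the technical backbone.

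The main obstacle I anticipate — and the reason this was a genuine theorem of Fukuda rather than a routine corollary — is the \textbf{control at infinity}. The First Isotopy Lemma gives local triviality along strata only for \emph{proper} maps, but $f_c\colon\Bbbk^n\to\Bbbk$ is typically not proper, and the topological type of a polynomial function is sensitive to how fibers degenerate at infinity (the bifurcation set at infinity $\cB^\infty_{f_c}$). So the genuinely hard step is to produce a stratification of a \emph{proper} model — e.g. the graph closure in $(\text{compactification of }\Bbbk^n)\times\Bbbk^1$, or a blow-up resolving the indeterminacy of $f_c$ at infinity — that is simultaneously (i) algebraic/semi-algebraic in the parameter $c$, (ii) Whitney regular, and (iii) fine enough that triviality of the proper model descends to triviality of the original map on $\Bbbk^n$. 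Establishing that such a ``uniform'' stratification exists with only finitely many pieces, and that crossing from the proper model back to $\Bbbk^n$ does not destroy triviality, is where the real work lies; everything else is bookkeeping once the Thom--Mather machinery is in place.
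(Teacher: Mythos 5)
Your overall architecture---the coefficient space $\Bbbk^N$, passage to the closure of the graph in $\Bbbk^N\times\P^n_{\Bbbk}\times\Bbbk$ to gain properness, a stratification compatible with the projection to the parameter space, and finiteness of strata---is exactly the skeleton of Fukuda's proof as recounted in \S\ref{sub:Thom-conj}. The genuine gap is in the step you call the heart of the argument: Thom's First Isotopy Lemma (Theorem~\ref{thm:Thom-lemma-first}) trivializes a proper stratified \emph{set} over a stratum, i.e.\ it produces homeomorphisms between fibers of a single map; it does not produce, for two parameters $a,b$ in the same stratum, homeomorphisms $\Phi$ of the sources and $\Psi$ of the targets satisfying $\Psi\circ f_a=f_b\circ\Phi$. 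Triviality ``as a family of maps'' is precisely the content of the \emph{Second} Isotopy Lemma (Theorem~\ref{thm:Thom-lemma}), applied to the two-step sequence $\overline{\cG}\longrightarrow\Bbbk^N\times\Bbbk\longrightarrow\Bbbk^N$ of~\eqref{eq:sequence-1}: a vector field on the parameter space is lifted first to $\Bbbk^N\times\Bbbk$ and then, \emph{through the map}, to $\overline{\cG}$, and only the two integrated flows together give the commuting pair $(\Phi,\Psi)$. Applying the First Isotopy Lemma separately to source and target yields two unrelated trivializations and no commutative square, so your deduction ``all $f_c$ with $c$ in one connected component of $S$ are topologically equivalent'' does not follow as written.

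Relatedly, the hypothesis that makes the Second Isotopy Lemma applicable is not properness alone (the projective closure of the graph already handles that, as the paper notes) but the non-blowup-like condition~\eqref{eq:blow-up-like}, which is what allows the lifted vector fields to be glued into global ones along the strata. This is exactly where the restriction to functions enters: since the target is one-dimensional, no fiber of a polynomial function can exceed dimension $n-1$, so the family is not blowup-like; for targets of higher dimension this fails (Thom's family~\eqref{eq:map-infinite}) and the finiteness statement is false. Your ``main obstacle'' paragraph locates the difficulty in non-properness and descent from the proper model back to $\Bbbk^n$; that is a real but secondary issue. The essential work in Fukuda's argument is constructing compatible stratifications of~\eqref{eq:sequence-1} making both arrows stratified morphisms without blowup, so that the Second Isotopy Lemma yields one topological type per stratum of $\Bbbk^N$, and finiteness of the stratification then closes the proof.
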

Whenever the range has dimension one, the smooth maps do not produce fibers having dimension greater than $\dim X - \dim Y$ ($=\dim X - 1$).
Whenever $\dim Y$ increases, however, this threshold may be crossed.
When this happens, the corresponding map having a fiber whose dimension is  greater than $\dim X - \dim Y$ will be referred to as ``blow-up like'' (see~\S\ref{sub:Thom-isotopy}).
Thom believed that any family of maps that are not blow-up like must have finitely-many topological types.
Accordingly, Thom's Conjecture~\ref{conj:thom} was the first step towards verifying this claim, which remains an open problem.

As for Question~\ref{que:how-many}, several mathematicians provided partial results throughout the 1970s and 1980s. 
This culminated in clarifying the vast majority of cases for $(d,n,p)$. 
\begin{theorem}[\cite{Fuk76,aoki1980topological,Sab83,Nak84}]\label{thm:top-types}
Let $d,n,p\in\N$ be any sequence of integers. Then, each of the spaces $\PCkno$, $\PRkno$ and $\PCktp$ contains at most finitely-many topological types. Furthermore, if either $d,n,p\geq 3$ or $d\geq 4$, $n\geq 3$, and $p\geq 2$, then each of the spaces $\PCknp$ and $\PRknp$ contains infinitely-many topological types.
\end{theorem}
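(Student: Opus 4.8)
\emph{Overview.} The statement collects four results: a single idea drives the finiteness assertions — a Thom--Mather stratification of the space of coefficients — while the infiniteness assertion is obtained by an explicit construction. I would prove the finiteness of topological types in $\PCkno$ and $\PRkno$ (Fukuda) and in $\PCktp$ (Aoki--Noguchi for $p=2$, Sabbah for all $p\ge 2$) by a single template, and then outline Nakai's construction.

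\emph{The finiteness template.} Regard $\PKknp$ as a constructible (complex case) or semialgebraic (real case) subset of the affine space $\mathbb{K}^N$ of coefficient vectors, and form the tautological family $F\colon\mathbb{K}^n\times\mathbb{K}^N\to\mathbb{K}^p\times\mathbb{K}^N$, $(x,c)\mapsto(f_c(x),c)$. The plan is to partition $\mathbb{K}^N$ into finitely many connected pieces over each of which $F$ is topologically trivial; then the number of topological types is at most the number of pieces. Concretely I would: (i) compactify the fibres, replacing $\mathbb{K}^n$ by $\mathbb{P}^n$ (or by a suitable simultaneous resolution of the family) and $\mathbb{K}^p$ by $\mathbb{P}^p$, and extend $F$ to a proper map $\bar F$ over $\mathbb{K}^N$; (ii) build a Whitney stratification of the source and target of $\bar F$, compatible with $\bar F$ and with the divisors at infinity, whose trace on the base $\mathbb{K}^N$ is again constructible/semialgebraic — the strata being assembled from $\cC_{f_c}$, $\cD_{f_c}$, the non-properness set $\mathcal{B}^\infty_{f_c}$, and their analogues at infinity; (iii) over each connected stratum of $\mathbb{K}^N$ apply a relative form of Thom's first isotopy lemma to produce, simultaneously for all parameters in that stratum, homeomorphisms of source and target trivializing $\bar F$, hence $F$, hence the original family. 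Finiteness follows because constructible/semialgebraic subsets of $\mathbb{K}^N$ have finitely many connected components.

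\emph{Where the hypotheses enter.} Everything rests on step (ii), and this is exactly where $p=1$ or $n=2$ is used. For $p=1$ (Fukuda): the bifurcation set $\mathcal{B}_{f_c}\subset\mathbb{K}$ is finite for every $c$ by Thom's theorem and splits into the affine critical values $\cD_{f_c}$ and finitely many atypical values at infinity, and $\bigcup_c\mathcal{B}_{f_c}\times\{c\}$ is constructible/semialgebraic in $\mathbb{K}\times\mathbb{K}^N$, so stratifying and projecting to $\mathbb{K}^N$ yields the desired finite partition. For $n=2$, $p\ge2$ and $\mathbb{K}=\C$ (Aoki--Noguchi, then Sabbah): $\cD_{f_c}$ and $\mathcal{B}^\infty_{f_c}$ are now curves in $\C^p$, and one must control their topology and the monodromy of $f_c$ around them uniformly in $c$; the key is that $\dim X=2$ forces $0$-dimensional generic fibres (when $p=2$) and excludes the ``blow-up like'' degenerations of \S\ref{sub:Thom-isotopy}, so a good stratification with constructible trace in $\C^N$ still exists — this is Sabbah's contribution, carried out for every $p\ge2$ and building on Aoki--Noguchi's $p=2$ case.

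\emph{Infiniteness, and the main obstacle.} For the infiniteness assertion I would, for each triple $(d,n,p)$ in the stated range, imitate Thom's prototype in $\Pol_{\R}(9\!:\!3,3)$ (see \S\ref{sub:uncountably}): write down an explicit positive-dimensional family $\{f_t\}_{t\in I}\subset\PKknp$ whose discriminant, or an associated critical configuration, contains a tuple of hyperplanes or points whose cross-ratio varies nontrivially with $t$; since $d\ge4$, $n\ge3$, $p\ge2$ (or $d,n,p\ge3$) leave enough free coefficients, such a family exists, and distinct $t$ give non-homeomorphic configurations, hence inequivalent maps. The main obstacle is twofold. On the finiteness side it is step (ii): manufacturing a stratification that is at once Whitney-regular, adapted to $\bar F$, controls the fibres at infinity, and varies constructibly/semialgebraically with the parameter — this is where the restriction to $p=1$ or $n=2$ becomes unavoidable and where the real content of the three papers lies. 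On the infiniteness side the obstacle is proving that the chosen modulus is invariant under \emph{arbitrary} homeomorphisms of the source and target, not merely under diffeomorphisms, which requires a genuinely topological invariant (of cross-ratio or linking type, or the local topology of the singularities involved) rather than a differential one.
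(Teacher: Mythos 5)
Your finiteness template (stratify the coefficient space $\Bbbk^N$, compactify the tautological family, trivialize over each stratum by an isotopy lemma) is the same architecture as Fukuda's and Sabbah's arguments recalled in \S\ref{sub:Thom-conj}, and your infiniteness plan follows the same idea as Thom's and Nakai's constructions. However, two points in your finiteness sketch would fail as written. First, trivializing the \emph{map} $\bar F$ over a stratum of the base --- producing homeomorphisms of source and target that commute with $\bar F$ --- is the content of Thom's \emph{Second} Isotopy Lemma, not the First; and that lemma requires the sequence $\overline{\cG}\to\Bbbk^N\times\P^p\to\Bbbk^N$ to be proper \emph{and not blowup-like}. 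Second, and more seriously, your claim that $\dim X=2$ ``excludes the blow-up like degenerations'' is false: planar maps $\C^2\to\C^p$ can be blowup-like (the blowup $(x,y)\mapsto(x,\,xy)$ already is), and this is exactly the obstruction Sabbah had to overcome. His stratification argument first yields only finiteness of topological \emph{blowup} equivalence classes (Theorem~\ref{thm:Sabbah-blowup}); to obtain genuine topological finiteness for $n=2$ one must pass to blowups of $\overline{\cG}$ and of $\sF\times\P^p$ as in diagram~\eqref{eq:diag-Sabbah}, and the saving feature of a two-dimensional domain is that the centers of these blowups are zero-dimensional, which is what makes all morphisms in the diagram stratified and allows the lifted flow to be integrated. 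Without this step the stratification you require in (ii) need not exist, so your argument for $\PCktp$ has a real gap; your $p=1$ case is fine, since polynomial functions are not blowup-like and the Second Isotopy Lemma applies directly to~\eqref{eq:sequence-1}.

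On the infiniteness side your proposal is a plan rather than a proof: ``enough free coefficients'' does not by itself produce a family together with a modulus that is invariant under arbitrary homeomorphisms, and you explicitly defer that point, which is where the actual content lies. The cited route is concrete: Thom's degree-nine family~\eqref{eq:map-infinite} in $\Pol_{\R}(9\!:\!3,3)$, whose modulus is a rotation of a distinguished circle (excluded by van Kampen's theorem on homeomorphisms of the circle), and Nakai's family $G_{\bm{t}}:\R^3\to\R^2$, whose modulus is a rational function $\Psi$ of the slopes of a line configuration that any conjugating pair of homeomorphisms must preserve; the full range of $(d,n,p)$ in the statement, for both $\C$ and $\R$, is then reached by adding trivial coordinates and raising degrees rather than by a fresh construction for each triple.
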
 
Theorem~\ref{thm:top-types} extends to polynomial map-germs as well, yet leaves out some gaps in the literature. For example, it is not known whether or not there are at most finitely-many topological types of quadratic complexor real polynomial maps $\Kntp$ for high enough $n$ and $p$.

\subsection{Thom's Isotopy Lemmas}\label{sub:Thom-isotopy}  
Ever since their inception in the 1960s, Thom's famous Isotopy Lemmas~\cite[Theorems 1.G.1--2]{Tho69} still play an essential role in statements on the topological types of maps. One such examples concern questions posed by Whitney on the finiteness of homotopy types of proper analytic maps~\cite{lojasiewicz1964triangulation}. For simplicity, we introduce in this section their earlier versions (Theorems~\ref{thm:Thom-lemma-first} and~\ref{thm:Thom-lemma}) as they first appeared in~\cite{thom1962stabilite}.  

\begin{definition}[\cite{thom1962stabilite,Tho69,Fuk76}]\label{def:strat-set}
A closed $k$-dimensional subspace $X\subset\R^n$ is called a \emph{stratified set} if it can be expressed as a sequence of strict inclusions $X^k\supset X^{k-1}\supset\cdots \supset X^0$, such that $X^{i}\setminus X^{i-1}$ is a smooth manifold of pure dimension $j$ in $\R^n$, with finitely-many connected components called \emph{strata}. All together, these strata satisfy the following axioms:
\begin{enumerate}

	\item the closure (in the Euclidean topology) of each stratum is a stratified set, 
	
	\item the boundary $\partial S$ of a stratum $S$ is a stratified subset having dimension smaller than $\dim S$, and

	\item the union and the intersection of stratified subsets are themselves stratified sets. 
\end{enumerate}
\vspace{-.3cm}
\end{definition} 
From Definition~\ref{def:strat-set}, the preimage $f^{-1}(Y)$ of a stratified set under a continuous map is itself a stratified set, where preimages of strata of $Y$ form a stratification of $f^{-1}(Y)$. This motivates the following notion.

\begin{definition}\label{def:stratified_map}
A smooth map $f:\XtY$ between two stratified subspaces is called \emph{stratified} if $f$ maps a stratum $S$ onto a stratum $T$, and its restriction on every stratum is a submersion. 
\end{definition} 

\begin{example}\label{exp:stratification+blow-up}
Let $B$ denote the box $\{|x|\leq 1,~|y|\leq 1\}$, and let $\mathscr{S}:=\{S_\alpha\}$ be the stratification of $B$ formed by $4$ squares, $12$ segments and $9$ points as shown in Figure~\ref{fig:strat-map}. If $f:\RtR$ denotes the map $(x,~y)\longmapsto (x^2y^2,~y)$, then $f(B)$ has a stratification $\mathscr{T}:=\{f(X_\alpha)\}$. Therefore, the restriction $g:=\fr_B$ is a stratified map. 
\end{example}

\begin{figure}[htb]
\center
\input{stratified_map}
\end{figure}

\begin{theorem}[Thom's First Isotopy Lemma]\label{thm:Thom-lemma-first}
For any proper stratified map $f:\XtY$, the restriction $\fr_S:S\longrightarrow T$ of $f$ at the preimage $S:=f^{-1}(T)$ of a stratum $T$ is a locally trivial fibration. 
\end{theorem}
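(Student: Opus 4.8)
The plan is to establish local triviality of $\restr{f}_S\colon S\to T$ in a neighborhood of each point of $T$ by sweeping out a trivialization with the flows of suitably lifted vector fields. Write $S=f^{-1}(T)$; since $f$ is a stratified map (Definition~\ref{def:stratified_map}), $S$ is precisely the union of those strata $S_\alpha$ of $X$ with $f(S_\alpha)=T$, each mapped submersively onto $T$, and these strata restrict to a stratification of $S$. As is standard for this circle of results, I would assume --- refining if necessary --- that this stratification satisfies Whitney's regularity conditions; these are what make the constructions below continuous across strata of different dimensions. Note also that $\restr{f}_S$ inherits properness from $f$, since $(\restr{f}_S)^{-1}(K)=f^{-1}(K)$ for $K\subset T$.

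First I would fix $t_0\in T$, choose a chart identifying a neighborhood $U$ of $t_0$ with an open ball in $\R^{m}$ ($m=\dim T$) with coordinates $(t_1,\dots,t_m)$, $t_0=\mathbf 0$, and aim to produce a stratum-preserving homeomorphism $U\times(\restr{f}_S)^{-1}(\mathbf 0)\to(\restr{f}_S)^{-1}(U)$ over $U$. The crux is to lift each coordinate field $\partial/\partial t_i$ on $U$ to a vector field $v_i$ on $(\restr{f}_S)^{-1}(U)$ that is (i) tangent to every stratum, (ii) $f$-related to $\partial/\partial t_i$, and (iii) \emph{controlled} with respect to a fixed system of control data --- a family of tubular neighborhoods of the strata with projections $\pi_\alpha$ and tube functions $\rho_\alpha$ satisfying Mather's commutation relations. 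I would build $v_i$ by induction on $\dim S_\alpha$: lift $\partial/\partial t_i$ on a stratum using the submersion hypothesis, then extend over higher-dimensional strata compatibly with the tube projections using a partition of unity; Whitney's condition $(b)$ forces the resulting stratified vector field to be continuous on $S$. This is the analogue for the map $\restr{f}_S$ of the controlled-vector-field lemma at the core of Thom's argument.

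With the $v_i$ available I would integrate. Because $\restr{f}_S$ is proper and an integral curve of $v_i$ projects under $f$ to a straight segment in $U$, such integral curves remain in a compact subset of $S$, so the flows $\phi_i^{s}$ exist for all $s$ keeping the $f$-image in $U$; control data guarantees these flows are \emph{continuous} maps, and tangency to the strata makes them stratum-preserving. Then
\[
h(t_1,\dots,t_m;x):=\phi_m^{t_m}\circ\cdots\circ\phi_1^{t_1}(x),\qquad x\in(\restr{f}_S)^{-1}(\mathbf 0),
\]
satisfies $f(h(t;x))=t$ because $\phi_i^{s}$ translates the $i$-th coordinate of $f$ by $s$ and fixes the others; flowing backwards shows $h$ is a bijection onto $(\restr{f}_S)^{-1}(U)$ with continuous inverse, hence a fiber-preserving, stratum-preserving homeomorphism. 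Covering $T$ by such charts finishes the proof.

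The hard part is item (iii): constructing the controlled lifts $v_i$ and knowing that their flows are continuous. This is exactly the technical payload of Thom's Isotopy Lemmas --- the place where one cannot argue stratum-by-stratum naively and where Whitney regularity (through the existence and compatibility of control data) is indispensable. Properness, by contrast, is only used to make the flows complete; the reduction to a chart, the bookkeeping with the composed flows, and the verification that $h$ is the desired homeomorphism are then routine.
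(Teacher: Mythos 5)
Your sketch is the standard Thom--Mather controlled-vector-field argument, which is exactly the route the paper points to: the survey gives no proof of this lemma itself, only citing Thom's outline and Mather's detailed completion, and the strategy you describe (lifting the coordinate fields to stratum-tangent, $f$-related vector fields and integrating, with properness ensuring completeness of the flows) matches the flow-lifting sketch the paper records for the Second Isotopy Lemma. The only caveats are ones you already flag yourself: the genuine technical content is the existence of control data and controlled lifts with continuous flows (requiring Whitney regularity, which the paper's earlier Definition of stratified sets does not impose but the Thom--Mather formulation does), so your proposal is a correct outline in essentially the same spirit rather than a divergent proof.
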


This result would later foresee some generalizations and extensions~\cite{mather2012notes,dhinh2021thom}; following Thom's outline in~\cite{Tho69}, Mather later detailed a complete proof~\cite{mather2012notes}. As a by-product, he obtains the famous refined version of this result~\cite[\S 1.9]{goreskymemoirs}.
 
\begin{theorem}[Thom-Mather First Isotopy Lemma~\cite{mather2012notes}]
Let $W$ be a Whitney stratified subset of some smooth manifold $X$, let $f:\XtY$ be a smooth map whose restriction to $W$ is proper and suppose that the restriction $\fr_S:S\longrightarrow f(S)$ to each stratum $S$ is a submersion. Then $\fr_{W}:W\longrightarrow Y$ is a locally trivial fibration.
\end{theorem}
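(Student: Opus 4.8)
The plan is to prove this by reducing to a local statement on $Y$ and then building an explicit stratum-preserving trivialization by integrating \emph{controlled vector fields}. Since being a locally trivial fibration is local on the base, fix a point $y_0 \in Y$; after composing with a chart we may assume $Y$ is an open ball in $\mathbb{R}^n$ centered at $y_0 = \mathbf{0}$, and it then suffices to produce a homeomorphism $h \colon f^{-1}(Y) \to Y \times f^{-1}(\mathbf{0})$ preserving strata with $\mathrm{pr}_Y \circ h = f|_W$. The essential preliminary input is that a Whitney stratified set admits a \emph{system of control data}: a family of tubular neighborhoods $T_S$ of the strata $S$, together with retractions $\pi_S \colon T_S \to S$ and tubular functions $\rho_S \colon T_S \to \mathbb{R}_{\geq 0}$, subject to the compatibilities $\pi_{S'} \circ \pi_S = \pi_{S'}$ and $\rho_{S'} \circ \pi_S = \rho_{S'}$ wherever both sides make sense. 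Existence of such data is Mather's theorem, proved by induction on the depth of the stratification, and it is precisely there that the Whitney conditions (a) and (b) are used.

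Next I would lift vector fields. For each coordinate direction $e_i$ on $Y$ I construct a \emph{controlled stratified vector field} $\xi_i$ on $W$: a collection $\{\xi_i|_S\}$ of smooth vector fields, one per stratum, that are $f$-related to $e_i$, i.e. $(df)_x(\xi_i|_S(x)) = e_i$ for every $x \in S$, and that are compatible with the control data, meaning that near a smaller stratum $S' < S$ one has $(d\pi_{S'})(\xi_i|_S) = \xi_i|_{S'} \circ \pi_{S'}$ and $\xi_i|_S(\rho_{S'}) = 0$. These are built by induction on $\dim S$: on a stratum of minimal dimension the hypothesis that $f|_S$ is a submersion furnishes local lifts of $e_i$, patched together with a partition of unity; on a higher-dimensional stratum $S$ one first \emph{prescribes} the field inside the tubes around lower strata by pulling the already-constructed lifts back through the $\pi$'s (so the control relations hold there automatically and $f$-relatedness is inherited), and then extends it over the rest of $S$ to an $f$-related field using the submersion property of $f|_S$ and a further partition of unity.

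Then comes integration and assembly. Since $f|_W$ is proper and $\xi_i$ is $f$-related to the complete field $e_i$ on the ball $Y$, the flow of $\xi_i$ is defined for all relevant times on $f^{-1}(Y)$, and its time-$t$ map covers the translation $y \mapsto y + t e_i$. The control relations force the flow on a large stratum to restrict compatibly to the flows on every smaller stratum in its frontier; the Thom--Mather fact that a controlled vector field integrates to a flow which is \emph{continuous on all of $W$}, not merely smooth stratum by stratum, then shows that each time-$t$ map is a stratum-preserving homeomorphism of $f^{-1}(Y)$. Composing these, the map sending $(y,z) \in Y \times f^{-1}(\mathbf{0})$ to the point obtained from $z$ by flowing successively along $\xi_1, \dots, \xi_n$ for times $y_1, \dots, y_n$ is the sought trivialization, its inverse given by the reverse flows; undoing the chart recovers the statement for arbitrary $Y$.

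The main obstacle is everything hidden in the word \emph{controlled}: both the existence of a system of control data and the existence of the controlled lifts $\xi_i$. This is a delicate induction on the depth of the stratification with careful partition-of-unity patching, and it is the only place the Whitney regularity conditions genuinely enter --- condition (b) is what lets one choose the lift on a large stratum $S$ near a smaller stratum $S'$ so that $d\pi_{S'}$ carries it to the prescribed field on $S'$ and $\rho_{S'}$ stays constant, while remaining smooth and $f$-related, whereas the naive $\pi_{S'}$-pullback need not extend. The second subtle point, again resting on control together with properness, is upgrading the stratumwise-smooth flow to a globally continuous one, which is exactly what welds the separate diffeomorphisms of the strata into a single homeomorphism of $W$.
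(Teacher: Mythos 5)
The paper does not actually prove this statement: it records it as Mather's refinement of Thom's First Isotopy Lemma and simply cites Mather's \emph{Notes on topological stability}, so there is no in-paper argument to compare against line by line. Your outline is, in substance, exactly Mather's proof: reduce to a chart on $Y$, invoke the existence of control data $(T_S,\pi_S,\rho_S)$ on the Whitney stratified set, lift the coordinate fields $e_i$ to controlled stratified vector fields that are $f$-related to them by induction on dimension/depth with partitions of unity, use properness of $f|_W$ to integrate these to complete flows, use controlledness to get continuity of the flow on all of $W$ rather than stratum by stratum, and assemble the flows into a stratum-preserving trivialization over the chart. That is the correct architecture, and you correctly locate where Whitney condition (b) enters. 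The one point you gloss over is that for the $\pi_{S'}$-prescription of the lift near a lower stratum to be $f$-related to $e_i$, the control data must itself be chosen \emph{compatible with} $f$, i.e.\ $f\circ\pi_{S'}=f$ on the relevant tube (shrunk as needed); plain control data does not give this, and its construction uses the hypothesis that $f$ restricted to each stratum is a submersion. This is part of Mather's induction and should be stated as an extra requirement on the control data rather than absorbed into ``the control relations hold there automatically''; with that amendment your sketch is a faithful account of the standard proof the paper points to.
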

It should be mentioned that a Whitney stratification is similarly defined as in Definition~\ref{def:strat-set} with key differences where some tangency conditions between adjacent strata are assumed (called Whitney Condition B~\cite[Definition 1.4.3]{pflaum2002analytic}), and with no restriction on the finiteness of strata.

Some strata of a stratified map have a degenerate topological behavior compared to other ones. 
For instance, the map $g$ in Example~\ref{exp:stratification+blow-up} satisfies $\dim g^{-1}(0,0) = 1$, whereas $\dim g^{-1}(w) = 0$, $\forall w\in Y \setminus\{\unze\}$. More generally, we say that a stratified map $f:\XtY$ is \emph{blowup-like}~\cite{thom1962stabilite} if a pair of adjacent strata $(S,~S')$ of $X$ satisfies
\begin{equation}\label{eq:blow-up-like}
 \dim S - \dim f(S)\neq  \dim S' - \dim f(S').
\end{equation} Blow-up like maps play an important role in proving finiteness of topological types for some families of maps. This is achieved through the Second Isotopy Lemma; consider a sequence of stratified maps $X\overset{p}{\longrightarrow} Y\overset{q}{\longrightarrow}  \R^n$, where $\R^n$ is equipped with a trivial stratification. For any given $t\in \R^n$, we take the restriction map $p_t:=\left. p\right|_{X_t}:X_t\longrightarrow Y_t$ where $Y_t:=q^{-1}(t)$ and $X_t:=p^{-1}(Y_t)$.

\begin{theorem}[Thom's Second Isotopy Lemma]\label{thm:Thom-lemma}
If a sequence of stratified maps $X\overset{p}{\longrightarrow} Y\overset{q}{\longrightarrow}  \R^n$ is proper and not blowup-like, then for any $a$ and $b$ in $\R^n$, the maps $p_a$ and $p_b$ are topologically equivalent.
\end{theorem}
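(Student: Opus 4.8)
\medskip
\noindent\textbf{Proof plan.}
The plan is to trivialize the family $t\mapsto p_t$ by integrating stratified vector fields. Concretely, I would construct a one-parameter group of homeomorphisms $\Phi_s$ of $X$ and $\Psi_s$ of $Y$ fitting into a commuting ladder
\[
p\circ\Phi_s=\Psi_s\circ p,\qquad q\circ\Psi_s=\tau_s\circ q,
\]
where $\tau_s$ denotes translation by $s$ on $\R^n$; restricting $\Phi_{b-a}$ and $\Psi_{b-a}$ to the fibers over $a$ then gives homeomorphisms $\varphi:X_a\to X_b$ and $\psi:Y_a\to Y_b$ with $\psi\circ p_a=p_b\circ\varphi$, which is exactly the asserted topological equivalence. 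Since $\R^n$ is connected and the statement is local in $\R^n$ (cover a path from $a$ to $b$ by small balls, trivialize over each and compose the resulting equivalences), it suffices to carry this out in a single coordinate direction, so one may as well assume $n=1$ and produce a trivialization over an interval.

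The core of the argument is a two-step lifting of the constant field $\partial/\partial t$ on $\R$. Since $q$ restricted to each stratum $T\subset Y$ is a submersion onto an open subset of $\R$, the field $\partial/\partial t$ lifts to a smooth vector field tangent to $T$; using Thom--Mather control data on $Y$ (a compatible system of tubular neighbourhoods of the strata together with their tube functions) these stratum-wise lifts glue, by induction over the skeleta, into a single controlled, continuous, integrable stratified vector field $\eta$ on $Y$ with $dq(\eta)=\partial/\partial t$ --- this is precisely the mechanism underlying the First Isotopy Lemma (Theorem~\ref{thm:Thom-lemma-first}). One then repeats the construction one storey higher: on each stratum $S\subset X$ the map $p$ is a submersion onto the stratum $p(S)\subset Y$, so the restriction of $\eta$ to $p(S)$ lifts to a vector field tangent to $S$, and these lifts are glued --- using control data on $X$ chosen compatibly with that on $Y$ --- into a controlled stratified vector field $\xi$ on $X$ with $dp(\xi)=\eta$.

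Properness of the composite $X\to Y\to\R^n$ then forces the local flows of $\xi$ and $\eta$ to be complete (orbits stay in compact sets over bounded time), so they integrate to global isotopies $\Phi_s$ of $X$ and $\Psi_s$ of $Y$ realizing the commuting ladder above. Taking $s=b-a$ and restricting to the fibers over $a$ finishes the proof. Assembling the compatible control data on $X$ and $Y$, and checking that the glued fields are genuinely integrable with continuous flow, are the remaining technical points, but they are routine once the lift $\xi$ exists.

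I expect the real obstacle to be the \emph{second} lift --- producing $\xi$ from $\eta$ --- and it is exactly here that the non-blowup-like hypothesis is essential. For the glued field $\xi$ to be continuous across the frontier of a pair of adjacent strata $S'\subset\overline{S}$, the controlled lift must match from both sides, and in general it cannot when the relative fiber dimensions $\dim S-\dim p(S)$ and $\dim S'-\dim p(S')$ disagree: near such an exceptional stratum the fibers of $p$ collapse --- as for the map $g$ in Example~\ref{exp:stratification+blow-up}, where the central fiber jumps from dimension $0$ to dimension $1$ --- and no lift of $\eta$ extends continuously there, so the attempted trivialization breaks down. The assumption that~\eqref{eq:blow-up-like} never holds, i.e.\ that $\dim S-\dim p(S)$ is constant along adjacent strata, is precisely what secures the existence of the controlled lift $\xi$, and hence makes the whole scheme go through.
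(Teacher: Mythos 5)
Your proposal follows essentially the same route as the paper's sketch: lift the constant field on $\R^n$ first to a stratified vector field on $Y$ (the mechanism of the First Isotopy Lemma) and then through $p$ to $X$, use the non-blowup-like hypothesis to glue the stratum-wise lifts into a global controlled field, and integrate the resulting flows --- properness giving completeness --- to obtain the commuting homeomorphisms $\Phi_{ab}$, $\Psi_{ab}$. Your identification of the second lift (producing $\xi$ with $dp(\xi)=\eta$) as the precise place where blowup-likeness would obstruct continuity of the glued field is consistent with, and slightly sharper than, the paper's outline.
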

The second isotopy lemma was later re-formulated to fit its modern widely-used versions~\cite{Tho69,mather1973stratifications} by making further  technical assumptions. For example, the definition of stratified sets was later extended to include Whitney stratifications, whereas blowup-likeness was replaced by a more elaborate notion, later called Thom's $a_f$ condition~\cite{Tho69}. 

In his paper~\cite[Theorem 3]{thom1962stabilite}, Thom gives an outline for the proof of Theorem~\ref{thm:Thom-lemma}: The goal is to construct homeomorphisms $\Psi_{ab}:Y_a\longrightarrow Y_b$ and $\Phi_{ab}:X_a\longrightarrow X_b$ in which the following diagram commutes
\begin{equation}\label{dia:commutative}
\begin{tikzcd}
X_a \arrow{r}{p_a} \arrow[swap]{d}{\Phi_{ab}} & Y_a \arrow{d}{\Psi_{ab}} \\%
X_b \arrow{r}{p_b}&  Y_b
\end{tikzcd}
\end{equation} Consider a vector field $\partial/\partial t_i$ on $\R^n$.   Since $q:Y\longrightarrow \R^n$ and $p:X\longrightarrow \R^n$ are proper, using the First Isotopy Lemma twice, this vector field is lifted to smooth vector fields on any stratum $Y$ and, subsequently, to its preimage $f^{-1}(Y)$. 
Thanks to $p$ and $q$ being not blowup-like, lifted vector fields can be glued to global smooth vector fields $(H)$ on $Y$ and $(G)$ on $X$. 
Using properness of $p$ and $q$ one more time allows one to integrate the smooth flows corresponding to $(H)$ on $Y$ and to $(G)$ on $X$, which results in homeomorphisms $\Psi_{ab}$ by $\Phi_{ab}$. Here, $(G)$ is sent to $(H)$ via the gradient ``dot'' map $\overset{\bm{\cdot}}{p}$ corresponding to $p$.

\subsection{A conjecture of Thom}\label{sub:Thom-conj} 
Let $\Bbbk$ be the field $\C$ or $\R$. 
We identify a polynomial $f\in\Bbbk[x_1,\ldots,x_n]$ with a point in the space $\Pol_{\Bbbk}(d\!:\!n,1)\simeq \Bbbk^{N}$ ($N\in\N$) of degree-$d$ polynomials over $\Bbbk$ in $n$ variables. 

The theory of map stratifications was used in the context of polynomial maps thanks to the following setup. 
Consider the projection $P:\Bbbk^N\times\P^n_{\Bbbk}\times\Bbbk\longrightarrow \Bbbk^N\times\Bbbk$ and a sequence of two maps
\begin{equation}\label{eq:sequence-1}
\overline{\cG}\overset{\left. P\right|_{\overline{\cG}}}{\longrightarrow} \Bbbk^N\times\Bbbk\overset{\pi}{\longrightarrow}  \Bbbk^N,
\end{equation} where $\overline{\cG}\subset\Bbbk^N\times\P^n_{\Bbbk}\times\Bbbk$ is the closure of the graph $\{(f,x, y): f(x) = y\}$,
 and $\pi$ is the projection $(f,y)\longmapsto f$. The projective closure of the domain guarantees that the maps in~\eqref{eq:sequence-1} are proper.  
Thanks to the Second Isotopy Lemma, each stratum of $\Bbbk^N$ represents a unique topological type~\cite{Fuk76}. 
Furthermore, as polynomial functions are not blowup-like, one should be able to prove that maps appearing in~\eqref{eq:sequence-1} must be stratified morphisms. Fukuda elaborated on these observations, to prove Thom's conjecture in~\cite{Fuk76} by constructing the necessary compatible stratifications of~\eqref{eq:sequence-1}. 

\begin{theorem}[\cite{Fuk76}]\label{thm:Fukuda}
Thom's Conjecture~\ref{conj:thom} is true.
\end{theorem}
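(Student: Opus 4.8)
The plan is to package the entire parameter space $\PKkno\simeq\Bbbk^N$ — where $N$ is the (finite!) number of monomials of degree at most $d$ in $n$ variables — as the base of a single proper, stratified, non-blowup-like family, and then let Thom's Second Isotopy Lemma do the work: it forces all functions lying in one stratum of $\Bbbk^N$ to be topologically equivalent, and there are only finitely many strata. One works with the sequence~\eqref{eq:sequence-1}, that is, with the closure $\overline{\cG}\subset\Bbbk^N\times\P^n_{\Bbbk}\times\Bbbk$ of the universal graph $\{(f,x,y):f(x)=y\}$, the projection $P|_{\overline{\cG}}\colon\overline{\cG}\to\Bbbk^N\times\Bbbk$, and $\pi\colon\Bbbk^N\times\Bbbk\to\Bbbk^N$. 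Compactifying the source by $\P^n_{\Bbbk}$ (and, if one also wants the second arrow proper, the target factor $\Bbbk$ by $\P^1_{\Bbbk}$) makes the relevant maps proper, while one arranges the loci ``at infinity'' — the hyperplane at infinity times $\Bbbk$ inside $\overline{\cG}$, and the point $\infty$ in the target — to be unions of strata, so that the original affine function $f\colon\Bbbk^n\to\Bbbk$ is recovered simply by deleting them.

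First, one constructs \emph{compatible} stratifications $\mathscr{X}$ of $\overline{\cG}$, $\mathscr{Y}$ of $\Bbbk^N\times\Bbbk$, and $\mathscr{Z}$ of $\Bbbk^N$, each with finitely many strata — invoking the existence of Whitney (resp. semialgebraic, over $\R$) stratifications of algebraic sets and maps — arranged so that both maps in~\eqref{eq:sequence-1} become stratified morphisms in the sense of Definition~\ref{def:stratified_map}: every stratum is sent onto a stratum, submersively. This is the part that Fukuda carries out explicitly in~\cite{Fuk76}.

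Next — and I expect this to be the main obstacle — one refines these stratifications until the sequence~\eqref{eq:sequence-1} is not blowup-like, that is, condition~\eqref{eq:blow-up-like} fails for every pair of adjacent strata (the modern surrogate being Thom's $a_f$ condition). This is exactly where the one-dimensionality of the range is indispensable: a non-constant polynomial function $f\colon\Bbbk^n\to\Bbbk$ has \emph{every} fiber a hypersurface of dimension exactly $n-1$, and its bifurcation set in $\Bbbk$ is finite, so no fiber-dimension jump of the kind that produces Thom's degree-nine example in $\Pol_{\R}(9\!:\!3,3)$ can occur, and the relative dimensions $\dim S-\dim f(S)$ can be matched across adjacent strata. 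Producing a single stratification that is simultaneously a stratified-morphism structure, adapted to the loci at infinity, and non-blowup-like is the technical heart of the proof.

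Finally, one applies Thom's Second Isotopy Lemma (Theorem~\ref{thm:Thom-lemma}, whose proof uses the First Isotopy Lemma, Theorem~\ref{thm:Thom-lemma-first}, twice) to the restriction of~\eqref{eq:sequence-1} over each stratum $Z\in\mathscr{Z}$: since $Z$ is connected, all of its fiber maps $p_a$ ($a\in Z$) are topologically equivalent, through homeomorphisms respecting the affine/infinity splitting. Deleting the loci at infinity turns $p_a$ into $f_a\colon\Bbbk^n\to\Bbbk$ and those homeomorphisms into the pair $(\varphi,\psi)$ witnessing topological equivalence of $f_a$ and $f_b$. Hence the number of topological types realized in $\PKkno$ is at most the number of strata of $\mathscr{Z}$, which is finite, and Conjecture~\ref{conj:thom} follows; the real case runs identically with semialgebraic stratifications and the semialgebraic isotopy lemmas.
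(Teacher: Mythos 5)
Your proposal follows essentially the same route as the paper's own sketch: the compactified universal graph $\overline{\cG}$ in the sequence~\eqref{eq:sequence-1}, compatible finite stratifications making both arrows proper stratified morphisms, non-blowup-likeness coming from the one-dimensional range, and Thom's Second Isotopy Lemma forcing each stratum of $\Bbbk^N$ to carry a single topological type, so finiteness follows from finiteness of strata. One small caveat: over $\R$ it is not true that every fiber of a non-constant polynomial function has dimension exactly $n-1$ (e.g.\ $x^2+y^2$), but the relevant point — that no fiber exceeds dimension $n-1$, so condition~\eqref{eq:blow-up-like} cannot be triggered — is exactly the one the paper uses.
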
 

One may ask whether the blowup-like property is sufficient for ensuring no more than finitely-many topological types.
\begin{question}\label{que:blow-up_like}
Let $d,n,p\in\N$ be a sequence of integers. Are there infinitely-many topological types of maps in $\PCknp$ that are not blowup-like?
\end{question}

In an effort to tackle this question, Sabbah showed in~\cite{sabbah1983morphismes} that for any polynomial map $\Cntp$, there is a sequence of blowups $Bl:\tilde{\C^p}\longrightarrow \C^p$ so that, after an appropriate change of bases, the resulting map $\tilde{f}:\C^n\longrightarrow\tilde{\C^p}$ satisfies $f= Bl\circ\tilde{f}$ and is not blowup-like. 
The \emph{center} of $Bl$ is the union of $f(X)$ at which the stratum $X$ has ``atypical'' dimension in the sense of~\eqref{eq:blow-up-like}. 
We say that two polynomial maps $f,g:\Cntp$ are \emph{topologically blowup  equivalent} if the corresponding maps $\tilde{f}$ and $\tilde{g}$ are topologically equivalent. 
\begin{theorem}[\cite{Sab83}] \label{thm:Sabbah-blowup}
The space $\PCknp$ give rise to at most finitely-many topological blowup equivalence classes.
\end{theorem}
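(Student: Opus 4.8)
\emph{Proof proposal.} The plan is to run Fukuda's strategy for Theorem~\ref{thm:Fukuda} one level higher: replace the sequence~\eqref{eq:sequence-1} by a blown-up counterpart over the parameter space and invoke Thom's Second Isotopy Lemma (Theorem~\ref{thm:Thom-lemma}) there. First I would form the universal family. Let $\overline{\cG}\subset\C^N\times\P^n_\C\times\C^p$ be the closure of $\{(f,x,y):f(x)=y\}$, let $P|_{\overline{\cG}}\colon\overline{\cG}\to\C^N\times\C^p$ be the restriction of the projection forgetting $x$, and let $\pi\colon\C^N\times\C^p\to\C^N$ be the projection onto the parameter. Exactly as in~\eqref{eq:sequence-1}, passing to the projective fibre $\P^n_\C$ guarantees that every map in sight is proper.

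Next I would globalize Sabbah's blowup construction over $\C^N$. For a single $f$, the map $f=Bl\circ\tilde f$ of the statement is obtained by blowing up $\C^p$ (iteratively) along the locus where the graph has atypical fibre dimension in the sense of~\eqref{eq:blow-up-like}; the key point is that this locus is constructible and varies algebraically with $f$. Hence, after refining $\C^N$ into finitely many locally closed algebraic pieces $\C^N=\bigsqcup_\alpha S_\alpha$, one obtains a \emph{relative} sequence of blowups $Bl\colon\widetilde{\C^N\times\C^p}\to\C^N\times\C^p$ whose restriction over each $f$ recovers (a modification of) the blowup attached to $f$. Pulling back $\overline{\cG}$ gives a sequence $\widetilde{\cG}\to\widetilde{\C^N\times\C^p}\to\C^N$, and over each $S_\alpha$ (possibly after a further finite refinement) the fibrewise map is not blowup-like.

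Then I would produce compatible Whitney stratifications of $\widetilde{\cG}$, of $\widetilde{\C^N\times\C^p}$, and of $\C^N$ making all three arrows stratified morphisms, in the manner Fukuda treats~\eqref{eq:sequence-1}: stratify the base $\C^N$ first, lift to $\widetilde{\C^N\times\C^p}$ so that fibre dimension and the non-blowup-like condition are constant along strata, then lift to $\widetilde{\cG}$; properness is inherited from the $\P^n_\C$-compactification. With this in place, Thom's Second Isotopy Lemma applies to $\widetilde{\cG}\to\widetilde{\C^N\times\C^p}\to\C^N$: for any two parameters $a,b$ in the same stratum $S\subset\C^N$, the fibre maps $\tilde f_a$ and $\tilde f_b$ are topologically equivalent, i.e. $f_a$ and $f_b$ are topologically blowup equivalent. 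Since the stratification of $\C^N$ is finite, only finitely many topological blowup equivalence classes occur, which is the claim. The same argument with semialgebraic stratifications covers the real case.

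The hard part will be the second step: making the blowups uniform over $\C^N$ and checking stratifiability. Sabbah's $Bl$ for an individual $f$ is not canonical, so one must verify that the successive centers form constructible families over $\C^N\times\C^p$, and then that after blowing up the total map satisfies the required condition — non-blowup-likeness, or in the modern formulation Thom's $a_f$ condition — along the strata of $\C^N$. The other delicate point is controlling behaviour at infinity of the \emph{target} $\C^p$ (the $\P^n_\C$ factor only handles the domain) so that the relevant maps remain proper; one expects to compactify the target analogously, or to argue that the blowup centers stay in a fixed affine chart. Everything else — assembling the compatible stratifications and quoting Theorem~\ref{thm:Thom-lemma} — is routine in the style of~\cite{Fuk76}.
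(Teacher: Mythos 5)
Your architecture is the one the survey itself attributes to Sabbah: pass to the universal family as in~\eqref{eq:sequence-1}, blow up the target in a family, stratify compatibly, and apply Thom's Second Isotopy Lemma (Theorem~\ref{thm:Thom-lemma}) so that finitely many strata of the parameter space yield finitely many classes; this is also how the survey presents the planar case via the diagram~\eqref{eq:diag-Sabbah}. Bear in mind, though, that the paper gives no proof of Theorem~\ref{thm:Sabbah-blowup} --- it is quoted from~\cite{Sab83} --- so the comparison can only be made against that sketch.

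The two points you defer are not routine bookkeeping in the style of~\cite{Fuk76}; they are the actual content of Sabbah's papers~\cite{Sab83,sabbah1983morphismes}. First, the existence of a relative sequence of blowups over constructible pieces of $\C^N$ after which the total proper morphism is stratified and not blowup-like over each stratum is precisely Sabbah's theorem on stratified morphisms \emph{sans \'{e}clatement}; for $p\geq 2$ the centers are in general positive-dimensional (the survey stresses that zero-dimensionality of the centers is special to the planar case and is exactly what makes the morphisms in~\eqref{eq:diag-Sabbah} stratified), so stratifiability of the blown-up family cannot simply be asserted. Second, there is a mismatch you flag but do not close: topological blowup equivalence is defined through the blowup attached to each individual map, whose center is determined by its atypical strata in the sense of~\eqref{eq:blow-up-like}, whereas your isotopy argument compares fibers of a relative blowup; one must show that the relative centers specialize, along each stratum of $\C^N$, to admissible per-map centers, i.e.\ that the fiberwise restriction of the family blowup computes $\tilde{f}_a$ up to equivalence. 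Without these two steps the proposal is a correct skeleton of Sabbah's argument rather than a proof.
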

In the same paper, Sabbah~\cite{Sab83} addressed the extension Thom's conjecture for polynomial maps on the plane. Namely, consider any smooth family $\sF\subset\PCktp$ of above maps with given degree $d$. Then, thanks to a theorem of Mather~\cite{mather2012notes}, a smooth vector field can be constructed so that its flow is lifted to the closure of the graph 
\begin{align*}
\overline{\cG}:= & \left\{(f,x, y)\in\sF\times\P^{2}\times\P^p~\left|~f_1(x) - y_1=\cdots = f_p(x) - y_p = 0\right\}\right.
\end{align*}through a sequence of maps similar to the one appearing in~\eqref{eq:sequence-1}. As the maps in question are blowup-like, this lifting can only be achieved by passing through respective blowups of $\overline{\cG}$ and of $\sF\times \P^p$. We thus obtain the following commutative  diagram
\begin{equation}\label{eq:diag-Sabbah}
\begin{tikzcd}
\tilde{\overline{\cG}} \arrow[rr, "\tilde{\Proj}"] \arrow[d] &  & \tilde{\sF\times \P^p}  \arrow[r, "\tilde{\pi}"] \arrow[d] & \sF \\
\overline{\cG} \arrow[rr, "\Proj"]           &  & \sF\times \P^p \arrow[ru, "\pi"]          &  
\end{tikzcd}
\end{equation} where the vertical maps are blowups, and $\Proj$ and $\pi$ are projections forgetting $z$ and $w$ respectively, with $\tilde{\Proj}$ and $\tilde{\pi}$ being their respective extensions. 
Unlike polynomial maps defined over spaces of dimension higher than two, the relevant blowups $\tilde{f(\C^2)}\longrightarrow f(\C^2)$, used to construct $\tilde{\sF\times\P^p}\longrightarrow \sF\times\P^p$, have zero-dimensional centers. This observation turned out to be useful for showing that all the morphisms in~\eqref{eq:diag-Sabbah} are stratified~\cite[proof of Theorem 2]{Sab83}. Then, the relevant homeomorphisms are obtained by lifting a smooth flow. This culminated in the following result which was proven by Aoki for the special case of maps $\CtC$.
\begin{theorem}[\cite{aoki1980topological,Sab83}]\label{thm:fin-many-Sab}
Let $d,p\in\N$ be any two positive integers. Then, the space $\PCktp$ gives rise to at most finitely-many topological types.
\end{theorem}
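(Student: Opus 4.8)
The plan is to stratify the parameter space $\sF:=\PCktp\simeq\C^N$ into finitely many locally closed pieces along which the topological type of the corresponding map $\C^2\to\C^p$ is constant; a finite stratification then forces finitely many topological types. Following the template of~\eqref{eq:sequence-1}, I would first compactify source and target, passing to the closure of the graph $\overline{\cG}\subset\sF\times\P^2\times\P^p$ and the sequence of projections $\overline{\cG}\longrightarrow\sF\times\P^p\longrightarrow\sF$; the projective closures make both projections proper, which is the hypothesis under which Thom's Isotopy Lemmas operate. Were the composite $\overline{\cG}\to\sF$ not blowup-like, a Whitney stratification of $\sF$ together with Thom's Second Isotopy Lemma (Theorem~\ref{thm:Thom-lemma}) would conclude the proof at once. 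The obstruction is exactly that planar polynomial maps are in general blowup-like: the fibre of $f$ jumps in dimension over the non-properness set and over target points arising from degenerations at infinity, so $\overline{\cG}\to\sF$ has pairs of adjacent strata violating~\eqref{eq:blow-up-like}.

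To remove this obstruction I would follow Sabbah and blow up the target. The center is the subvariety $Z\subset\sF\times\P^p$ whose fibre over $f$ is the union of those pieces $f(X)\subset\overline{f(\C^2)}$ supported on strata $X$ of atypical dimension; taking the proper transform $\tilde{\overline{\cG}}$ of the graph produces the commutative diagram~\eqref{eq:diag-Sabbah}, with $\tilde{\Proj}$ and $\tilde{\pi}$ the lifts of the two projections and the vertical maps blow-ups. The decisive input special to $n=2$ is that, for each fixed $f$, the atypical locus is \emph{zero-dimensional}: the bifurcation set of $f$ is a proper subvariety, and beyond the ramification carried by the discriminant curve the genuine dimension jumps are concentrated at finitely many points of $\C^p$. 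Hence $Z\to\sF$ has finite fibres, $Z$ is a tame subvariety, and after the blow-up the sequence $\tilde{\overline{\cG}}\to\tilde{\sF\times\P^p}\to\sF$ is no longer blowup-like. For $n\ge 3$ the analogous centers are positive-dimensional and this step collapses — consistently with Nakai's examples of infinitely many topological types there.

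With the blow-up in hand, I would invoke the existence of Whitney stratifications of complex algebraic (indeed semialgebraic) varieties, refined via the Thom--Mather machinery and the vector-field lifting it provides, to endow all six spaces in~\eqref{eq:diag-Sabbah} with mutually compatible stratifications along which every morphism is a stratified submersion; the zero-dimensionality of the blow-up centers is precisely what makes this simultaneous compatibility attainable. Thom's Second Isotopy Lemma, applied to the proper non-blowup-like sequence $\tilde{\overline{\cG}}\to\tilde{\sF\times\P^p}\to\sF$, then yields, for any two parameters $a,b$ in the same stratum of $\sF$, homeomorphisms — produced by integrating a lifted smooth flow — that topologically identify the restricted maps over $a$ and over $b$. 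Since the vertical blow-ups are isomorphisms away from $Z$, and the stratifications may be chosen compatible with the loci at infinity $\P^2\setminus\C^2$ and $\P^p\setminus\C^p$, these homeomorphisms descend to a topological equivalence of the original affine maps $f_a,f_b\colon\C^2\to\C^p$. As $\sF$ carries only finitely many strata, only finitely many topological types occur, and the low cases $p=1,2$ recover the earlier theorems of Fukuda and of Aoki.

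The step I expect to be genuinely hard is the middle one, and it has two faces: (i) identifying the blow-up center $Z$ precisely and proving that its fibres over $\sF$ are finite — that is, that on the plane the only source of atypical fibre dimension sits above finitely many target points, uniformly in $f$; and (ii) turning this into honest Whitney stratifications of the six spaces of~\eqref{eq:diag-Sabbah} along which all morphisms are simultaneously stratified while the composite to $\sF$ ceases to be blowup-like. By comparison, the remaining ingredients — properness from the compactification, the two Isotopy Lemmas, and descent of the homeomorphisms through the blow-ups — are essentially formal once that geometric core is in place.
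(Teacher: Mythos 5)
Your proposal follows essentially the same route as the paper's account of Sabbah's argument: compactify the graph, blow up the target along the atypical locus (whose fibrewise centers are zero-dimensional precisely because the source is a plane), arrive at the diagram~\eqref{eq:diag-Sabbah} with all morphisms stratified, and conclude via Thom's Second Isotopy Lemma and lifted flows that each of the finitely many strata of $\sF$ carries a single topological type. The two steps you single out as the genuine technical core — pinning down the blow-up center with finite fibres over $\sF$ and producing compatible Whitney stratifications of all six spaces — are exactly the points where the survey defers to Sabbah's original proof, so your sketch matches the paper's.
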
 

From Sabbah's above results, it took around 40 years to find another non-trivial family containing finitely-many topological types. 
Note that complex proper polynomial maps are not blowup-like. Recently, Jelonek showed that any affine family of such maps cannot produce infinitely-many topological types.

\begin{theorem}[\cite{jelonek2016semi}]\label{thm:Jelonek-proper-finite}
Let $d,n,p\in\N$ be a sequence of integers satisfying $n\leq p$. Then, the space $\PCknp$ gives rise to at most finitely-many topological types of proper polynomial maps. 
\end{theorem}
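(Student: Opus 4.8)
The plan is to reduce the statement to a finiteness theorem in real algebraic geometry by passing to a compactification in which properness of $f$ forces the "part at infinity" to stay well-behaved in families. Since $n \leq p$, a proper polynomial map $f : \mathbb{C}^n \to \mathbb{C}^p$ of degree $d$ is generically an immersion onto its image $X = \overline{f(\mathbb{C}^n)}$, which is an $n$-dimensional affine variety; properness here means $f^{-1}(K)$ is compact for every compact $K$, equivalently that $f$ has no points at infinity mapping into the affine chart. First I would fix the degree $d$ and consider the universal family: parametrize all $f \in \mathrm{Pol}_{\mathbb{C}}(d\!:\!n,p) \simeq \mathbb{C}^N$ by their coefficient vectors, form the closure $\overline{\mathcal{G}} \subset \mathbb{C}^N \times \mathbb{P}^n \times \mathbb{P}^p$ of the graph $\{(f,x,y) : f(x)=y\}$, and the projections $\overline{\mathcal{G}} \xrightarrow{P} \mathbb{C}^N \times \mathbb{P}^p \xrightarrow{\pi} \mathbb{C}^N$ exactly as in~\eqref{eq:sequence-1}. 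The subset $U \subset \mathbb{C}^N$ of proper maps is a constructible set (properness of $f$ is the condition that the fiber of $P$ over $(f,\infty)$ does not meet $\mathbb{C}^n \times \mathbb{C}^p$, i.e.\ $\overline{\mathcal{G}} \cap (\{f\} \times \mathbb{P}^n \times \mathbb{P}^p)$ has empty intersection with the "affine part of the boundary"), so it decomposes into finitely many connected strata, and it suffices to show the topological type is constant along each.

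The key point is that on the locus of proper maps, the family $\overline{\mathcal{G}} \xrightarrow{P} \mathbb{C}^N \times \mathbb{P}^p$ restricted over $U$ is genuinely \emph{proper and not blowup-like}: because each $f$ is proper, no stratum of the graph collapses an affine fiber onto a lower-dimensional image in the way that creates the atypical dimension jump of~\eqref{eq:blow-up-like} over the affine part of the target. This is precisely the hypothesis that was missing in the non-proper case and that forced Sabbah to introduce blowups with positive-dimensional centers in diagram~\eqref{eq:diag-Sabbah}. Here, by contrast, one should be able to construct, by the methods of~\cite{Fuk76,mather1973stratifications,mather2012notes}, a Whitney stratification of $\mathbb{C}^N$ (or of the constructible set $U$) compatible with $P$ and $\pi$, such that all the relevant maps are stratified morphisms in the sense of Definition~\ref{def:stratified_map} and the composite $\overline{\mathcal{G}} \xrightarrow{P} \mathbb{C}^N \times \mathbb{P}^p \xrightarrow{\pi'} \mathbb{C}^N$ is not blowup-like. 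Once this compatible stratification is in place, Thom's Second Isotopy Lemma (Theorem~\ref{thm:Thom-lemma}, or its refined Thom–Mather version) applies along each stratum: for any two parameters $a, b$ in the same stratum, the maps $f_a$ and $f_b$ are topologically equivalent. Finiteness of strata then yields finiteness of topological types.

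The main obstacle is verifying the "not blowup-like" claim uniformly over the proper locus and, relatedly, that the stratification of $\mathbb{C}^N$ can be taken so that properness is a union of strata and the boundary behavior of $\overline{\mathcal{G}}$ at infinity varies equisingularly. Concretely: even if each individual proper $f$ is not blowup-like, the family could acquire blowup-like behavior at parameter points where the geometry of $X = f(\mathbb{C}^n)$ at infinity degenerates (e.g.\ the points at infinity of $X$ jump in number or in local structure). One must show that such degenerations happen only along a proper algebraic subset, refine the stratification accordingly, and check that within each refined stratum the dimension function $S \mapsto \dim S - \dim P(S)$ is locally constant across adjacent strata of $\overline{\mathcal{G}}$ — this is where the hypothesis $n \leq p$ and the properness are both used, since they control the fibers of $f$ (finite, or more precisely $0$-dimensional generically when $n=p$, and empty-or-points when $n<p$) and prevent the target-side collapse. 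Handling the real case is then a matter of replacing "Whitney stratification of a complex variety" by the semialgebraic analogue and invoking the real versions of the isotopy lemmas, exactly as in Fukuda's original argument; the finiteness of semialgebraic strata is what ultimately bounds the number of topological types. I would expect the technical heart of~\cite{jelonek2016semi} to be exactly this equisingularity-at-infinity analysis, likely carried out via a careful choice of compactification of the target (a weighted or toric completion adapted to $d$) in which the non-properness set and the structure at infinity become algebraic and stratifiable in families.
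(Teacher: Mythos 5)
Your route (compactify the universal family, stratify the parameter space, and apply Thom's Second Isotopy Lemma after checking the family is proper and not blowup-like) is genuinely different from the paper's, and it has a real gap exactly where you park the difficulty. Properness of each individual $f$ only controls the \emph{affine} part of the target: the boundary $\overline{\mathcal{G}}_f\setminus\mathcal{G}_f$ of the graph closure in $\P^n\times\P^p$ has dimension $n-1$ but is mapped into the hyperplane at infinity of $\P^p$, typically onto sets of much smaller dimension, so dimension jumps in the sense of~\eqref{eq:blow-up-like} occur at infinity no matter how nice $f$ is on $\C^n$, and they vary with the parameter. Thus your key claim that the compactified family over the proper locus is ``not blowup-like'' is unsubstantiated, and making the Second Isotopy Lemma applicable there is essentially the same difficulty that forced Sabbah to pass to the blowups of diagram~\eqref{eq:diag-Sabbah} (manageable only because for $p=2$ the centers are zero-dimensional); in general, the assertion that non-blowup-like families have finitely many types is precisely the open problem recorded around Question~\ref{que:blow-up_like}, so your argument defers the theorem to a statement that is not available.

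The paper's proof (Jelonek's, \cite{jelonek2016semi}) avoids equisingularity at infinity altogether. For $n=p$, one uses Thom's \emph{First} Isotopy Lemma only to extract from any affine family a Zariski open subset $U$ on which the covering degree $k$ is constant and the discriminants are pairwise homeomorphic; each $f\in U$ then induces an unramified cover $Q=\C^n\setminus f^{-1}(\cDf)\to R=f(\C^n)\setminus\cDf$ of degree $k$, i.e.\ an index-$k$ subgroup $f_*\pi_1(Q)\leq\pi_1(R)$. Since $\pi_1(R)$ is finitely generated, Hall's theorem~\cite{hall1950topology} gives only finitely many subgroups of index $k$, hence finitely many homeomorphism types of these covers, and properness is what allows the homeomorphisms of covers to be extended to the full maps $\Cntn$; Noetherian induction on the complement of $U$ finishes the argument. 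So the finiteness comes from group theory applied to fundamental groups of the complements of the discriminants, not from a stratification of the compactified family; if you want to salvage your approach you would need to actually prove the uniform non-blowup-like (Thom $a_f$-type) condition at infinity over the proper locus, which is not supplied by the methods of Fukuda or Mather you invoke.
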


\begin{proof}[Sketch of proof]
We present the proof only for the case $n=p$. It is enough to show that any affine family $\sF\subset \PCknn$  of proper polynomial maps contains a Zariski open subset with at most finitely-many topological types. Applied recursively, this claim yields the proof since Zariski closed subsets have finitely-many irreducible affine components. First, using Thom's first isotopy Lemma, one obtains (see~\cite[Proof of Theorem 3.5]{jelonek2016semi}) that $\sF$ contains a Zariski open subset $U$ in which
\begin{enumerate}
	\item every element $f\in U$, representing an analytic cover $\Cntn$ of covering degree $k$, and 
	\item any two maps $f,g\in U$ have homeomorphic discriminants, i.e. there is a homeomorphism $\psi:f(\C^n)\longrightarrow g(\C^n)$ satisfying $\cD_g=\psi(\cDf)$
\end{enumerate} Then, every map $f\in U$ induces an unramified covering $P_f:Q:=\C^n\setminus f^{-1}(\cDf)\longrightarrow f(\C^n)\setminus \cDf =:R$. We obtain this way a homomorphism between the fundamental groups
\[
f_*:\pi_1(Q,a)\longrightarrow \pi_1(R,b),
\] for some $b\in R$ and $a\in f^{-1}(b)$. Hence the subgroup $f_*(\pi_1(Q,a))$ has index $k$ in $\pi_1(R,b)$. Note that the fundamental group of an algebraic variety, including $\pi_1(R,b)$, is finitely generated. Consequently, by a theorem of Hall~\cite{hall1950topology}, the latter has at most finitely-many subgroups with the same index $k$. This implies that at most finitely-many homeomorphism types of such topological covers $P_f$ exist. The proof follows by extending these homeomorphisms between topological covers to the whole maps $\Cntn$. 
This follows from the properness criterion (see~\cite[Proof of Theorem 3.5]{jelonek2016semi} for details). 
\end{proof}
Theorem~\ref{thm:Jelonek-proper-finite} constitutes further evidence that families of blow-up like maps should not contain infinitely-many topological types.

\begin{remark}\label{rem:counting_types_group}
The proof of Theorem~\ref{thm:Jelonek-proper-finite} also provides a two-step recipe for constructing topological types of proper maps $\Cntp$. Namely, for each value $1\leq k\leq d^p$, the first step is to produce a list of homeomorphism types of subsets $S\subset\C^p$, such that there exists $f:\Cntp$ having covering degree $k$, and $S$ is the discriminant of $f$. Then, for each such $S$, one computes all the subgroups $f_*(\pi_1(Q,a))\leq\pi_1(R,b)$ of index $k$. Later, in~\S\ref{sec:top-types_construct} we discuss further methods for constructing topological types. 
\end{remark}

\subsection{Families of infinitely-many topological types}\label{sub:uncountably}  In his famous paper~\cite{thom1962stabilite}, Thom provided an example illustrating that his Conjecture~\ref{conj:thom} does not hold true for arbitrary dimensions; 
consider the following family $\{F_t\}_{t\in\R}$ of polynomial maps $\R_{x,y,z}^3\longrightarrow\R_{u,v,w}^3$,
\begin{equation}\label{eq:map-infinite}
\begin{cases}
	u & = \left(x~\big(x^2+y^2 - 1\big)~-2~y~z\right)^2~\cdot~\left(~(t~y+x)\cdot \big(x^2+y^2 - 1 \big)~-2~z~(y-t~x)\right) \\
v& =x^2+y^2 - 1 \\
  w & =z.
\end{cases}
\end{equation}
Thom showed in~\cite{thom1962stabilite} that the family $\{F_t\}_{t\in\R}$ contains infinitely-many topological types of polynomial maps: 
Each member $F_t$ of this family induces a stratification $\mathscr{S}_t$ of the source space $\R^3$, which includes the set of critical points of $F_t$, denoted by $\cC_t$. The latter contains a circle $S$ given by the equations
\[
S := \begin{cases}
	x^2 + y^2 -1& =0 \\
  z & =0.
\end{cases}
\] For every $t\in\R$, the map $F_t$ is blowup-like since $S\subset F_t^{-1}(\unze)\subset\R^3$. As $S$ is the only subset of $\R^3$ satisfying $\dim S - \dim F_t(S)=1$, a homeomorphism $\varphi:\R^3\longrightarrow\R^3$ relating two different maps $F_t$ to $F_{t'}$ should send $S$ to itself. The map $F_t$ sends its critical locus $F_t(\cC_t)$ onto the plane $\{u=0\}$. It follows that each $p\in\cC_t$ determines the slope $m_p$ of a line $L_p\subset\{u=0\}$ containing the origin $\unze$. In turn, from $\varphi(S) = S$, each $F_t$ determines a rotation $r_t:S\longrightarrow S$, $\theta\longmapsto\theta -\arctan t$. This implies that $\varphi$ should also commute with $r_t$. However, a homeomorphism between circles cannot be represented by a rotation~\cite{van1935topological}. This implies that $F_t$ and $F_{t'}$ are not topologically equivalent if $t\neq t'$.

 The family of polynomial maps~\eqref{eq:map-infinite} above illustrates two observations: First, homeomorphisms are often too rigid to transform one blowup-like polynomial map onto another. 
 The second observation is that the number of topological types of maps $\Kntp$ of a given degree $d$ is not expected to be finite as the dimension $p$ of the range increases.
 
  A similar construction was achieved by Nakai~\cite{Nak84} with the following family $G_{\bm{t}}:\R_{x,y,z}^3\longrightarrow\R_{u,v}^2$,
\begin{align*}\label{eq:map-infinite2}
u:=~&~(t_1~x - y)\cdot (t_2~x - y)\cdot (t_3~x - y)\\
v:=~&~(t_4~x - y)\cdot (t_5~x - y)\cdot (t_6~x - y)\cdot z,
\end{align*} for some vector $\bm{t}:=(t_1,\ldots,t_6)\in\R^6$. There exists a union $\cL({\bm{t}}):=\ell_1({\bm{t}})\cup\cdots\cup\ell_6({\bm{t}})$ of lines in $\{z=0\}$, passing through the origin, and whose slopes depend on ${\bm{t}}$. Some of them belong to the critical locus of $G_{\bm{t}}$, others form the pre-image of $(0,0)$, and all of them are mapped to the line $\{v=0\}$. The homeomorphisms corresponding to two topologically equivalent maps $G_{\bm{t}}$ and $G_{{\bm{t}}'}$ should send $\cL({\bm{t}})$ to $\cL({\bm{t}}')$, while $\{v=0\}$ remains fixed. Nakai shows that there exists a rational function $\Psi:\{t_1<\cdots < t_6\}\longrightarrow\R$ describing a relation on the slopes of the lines in $\cL(\bm{t})$ 
satisfying $\Psi({\bm{t}})\neq\Psi({\bm{t}}')$ if $G_{\bm{t}}$ is not topologically equivalent to $G_{{\bm{t}}'}$. Hence, for any two distinct $\lambda$ and $\lambda'$ outside a finite set of $\R$, we have $\Psi^{-1}(\lambda)\cap \Psi^{-1}(\lambda')=\emptyset$. This shows that $G_{\bm{t}}$ does not have the same topological type as $G_{\bm{t}'}$ if $\bm{t}\in\Psi^{-1}(\lambda)$ and $\bm{t}'\in\Psi^{-1}(\lambda')$.

By adding trivial coordinates, maps $F_t$ and $G_{\bm t}$ can be extended to construct polynomial maps for arbitrary large $n$, $p$, and $d$, that have infinitely-many topological types. This approach was the basis of Nakai's proof of the following result, which also contributed to Theorem~\ref{thm:top-types}.

\begin{theorem}[\cite{Nak84}]
Let $d,n,p\in\N$ be a sequence of integers such that either $d,n,p\geq 3$ or $d\geq 4$, $n\geq 3$, and $p\geq 2$. Then, each of the spaces $\PCknp$ and $\PRknp$ contains infinitely-many topological types.
\end{theorem}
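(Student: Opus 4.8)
The plan is to reduce the statement to the two \emph{boundary triples} $(d,n,p)=(3,3,3)$ and $(d,n,p)=(4,3,2)$, and then to propagate the conclusion from these to every larger value of $d$, $n$ and $p$ by adjoining trivial coordinates. Suppose $\{f_s\}\subset\Pol_{\Bbbk}(d_0\!:\!n_0,p_0)$ is a continuously parametrised family realising infinitely many topological types over $\Bbbk\in\{\R,\C\}$. Raising the degree bound is free, since a degree-$d_0$ map is a degree-$d$ map for every $d\ge d_0$ and topological equivalence does not see the ambient bound; raising $n_0$ is achieved by replacing $f_s$ with $f_s\circ\pi$ for a coordinate projection $\pi\colon\Bbbk^{n_0+1}\to\Bbbk^{n_0}$, and raising $p_0$ by replacing $f_s$ with $(f_s,0)\colon\Bbbk^{n_0}\to\Bbbk^{p_0+1}$. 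In either enlargement the distinguishing invariant introduced below can still be read off from the topologically intrinsic features of the new map --- its fibre over $\mathbf{0}$, which remains the unique fibre of maximal dimension, together with the relevant corank strata of its critical set --- so inequivalent members stay inequivalent; this is an elementary, if somewhat fiddly, verification. As $\{d,n,p\ge 3\}\cup\{d\ge4,\,n\ge3,\,p\ge2\}$ is exactly the set of triples obtained from $(3,3,3)$ or $(4,3,2)$ by such enlargements, this reduction suffices, and Thom's degree-$9$ family $F_t$ of~\eqref{eq:map-infinite} is recovered merely as the case $d\ge9$, $n,p\ge3$.

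The triple $(4,3,2)$ over $\R$ is covered by Nakai's degree-$4$ family $G_{\bm{t}}\colon\R^3\to\R^2$ of~\S\ref{sub:uncountably}, whose distinguishing device serves as the template for the remaining cases. The map $G_{\bm{t}}$ carries a ``fan'' $\mathcal{L}(\bm{t})=\ell_1(\bm{t})\cup\dots\cup\ell_6(\bm{t})$ of lines through the origin in the plane $\{z=0\}$, split canonically into those lying in $\cC_{G_{\bm{t}}}$ and those contained in $G_{\bm{t}}^{-1}(\mathbf{0})$, and the whole fan collapses onto the single target line $\{v=0\}$. Hence a topological equivalence $(\varphi,\psi)$ between $G_{\bm{t}}$ and $G_{\bm{t}'}$ must carry $\mathcal{L}(\bm{t})$ onto $\mathcal{L}(\bm{t}')$ compatibly with this splitting, with $\psi$ fixing $\{v=0\}$; one then produces a non-constant rational function $\Psi$ of the ordered slopes $t_1<\dots<t_6$ that is invariant under every such pair of homeomorphisms, so that $G_{\bm{t}}\sim G_{\bm{t}'}$ forces $\Psi(\bm{t})=\Psi(\bm{t}')$ and the non-constancy of $\Psi$ yields infinitely many topological types. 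The triple $(3,3,3)$ over $\R$ requires a separate degree-$3$ family $H_{\bm{t}}\colon\R^3\to\R^3$, built on the same principle but genuinely three-dimensional and blowup-like in the sense of~\eqref{eq:blow-up-like}: it cannot be a planar map extended by the identity, since $\PCktt$ has only finitely many topological types by Theorem~\ref{thm:fin-many-Sab}. By the same fan argument $H_{\bm{t}}$ admits a non-constant invariant that is constant along topological equivalence classes.

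Over $\C$ the same polynomial formulas are used, but the ordering of the slopes is no longer available, so the distinguishing function must be replaced by a genuinely topological invariant of the complex line configuration $\mathcal{L}(\bm{t})$ --- for instance a cross-ratio of four of the complex slopes, or the homeomorphism type of the pair consisting of the ambient space and the union of the images of the fan lines --- which is again preserved by any self-homeomorphism of the source respecting the critical-versus-fibre splitting of $\mathcal{L}(\bm{t})$ and compatible with a self-homeomorphism of the target fixing the relevant lines. Once a continuous non-constant function $\Psi_{\C}$ on the parameter space with $G_{\bm{t}}\sim G_{\bm{t}'}\Rightarrow\Psi_{\C}(\bm{t})=\Psi_{\C}(\bm{t}')$ has been produced, infinitely many topological types follow from the elementary remark that a continuous non-constant function cannot be constant on each block of a \emph{finite} partition. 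Together with the reduction of the first paragraph this settles both fields and all triples in the stated range.

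The heart of the argument --- and the step I expect to be the main obstacle --- is the rigidity lemma that underlies everything above: that a topological equivalence between two members of a family is \emph{forced} to restrict to a map of the line-fans preserving the chosen modulus. This demands, first, an intrinsic description of $\mathcal{L}(\bm{t})$ inside the source, supplied by its decomposition into $\cC_{G_{\bm{t}}}$ and $G_{\bm{t}}^{-1}(\mathbf{0})$, both topological invariants of the map; second, control of the induced target homeomorphism on the union of the images of the fan lines, forcing it to fix each relevant line through the origin of the real, respectively complex, projective line; and third, the exclusion of the exotic rearrangements that a bare homeomorphism would a priori permit --- over $\R$ via the cyclic order of points on the real projective line, in the spirit of the circle-rotation obstruction of~\cite{van1935topological} used for $F_t$, and over $\C$ via the finer topological invariants of the complex configuration. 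Checking that the families are blowup-like, so that $f_s^{-1}(\mathbf{0})$ is genuinely positive-dimensional and anchors the argument, and that the invariants are non-constant, is routine by comparison.
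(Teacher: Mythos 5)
Your overall route coincides with the survey's: exhibit explicit low-dimensional families whose members are separated by a modulus read off a fan of lines collapsing onto a single target line (Thom's $F_t$ and Nakai's $G_{\bm{t}}$ of \S\ref{sub:uncountably}), then stabilize by trivial coordinates to reach all admissible $(d,n,p)$. As a proof, however, the proposal has genuine gaps. First, the case $d=3$, $n,p\geq 3$ is not actually handled: neither $F_t$ (degree $9$) nor $G_{\bm{t}}$ (degree $4$) can be stabilized \emph{down} in degree, and your family $H_{\bm{t}}$ is only postulated (``built on the same principle''), with neither its formula nor its distinguishing invariant exhibited; this is precisely the content one must take from Nakai's paper, so the boundary triple $(3,3,3)$ remains unproved. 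Second, the complex half rests on invalid invariants: any two unions of $k$ distinct concurrent complex lines (in $\{z=0\}\subset\C^3$, or their images in $\C^2$) are ambient homeomorphic, so ``the homeomorphism type of the pair'' carries no slope information, and a cross-ratio of complex slopes is not preserved by homeomorphisms that merely respect the configuration and its splitting into critical and fibre lines. Over $\C$ the rigidity has to be extracted from the commutation with the maps themselves (in the spirit of Thom's rotation obstruction for $F_t$), and this is exactly the step you defer as ``the main obstacle''; without it the statement for $\PCknp$ is unsubstantiated.

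Third, and more minor but still a real step in a blind proof: the claim that inequivalence survives the stabilizations $f\mapsto f\circ\pi$ and $f\mapsto (f,0)$ is asserted as ``elementary, if somewhat fiddly''. A homeomorphism of $\Bbbk^{n+1}$ intertwining the suspended maps need not respect the product structure, so one must re-identify the fan, its splitting into $\cC_f$ and $f^{-1}(\mathbf{0})$, and the modulus intrinsically for the suspended map before the two-dimensional argument can be quoted. The survey compresses this into a single sentence and defers to Nakai, which is legitimate for a survey; a standalone proof should carry out this verification, together with the degree normalization needed if $\Pol_{\Bbbk}(d\!:\!n,p)$ is read as maps of degree exactly $d$ rather than at most $d$.
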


\subsection{Alternative equivalence classes}\label{sub:alternative}  If we replace topological equivalence in Question~\ref{que:how-many} by smooth equivalence (as in Diagram~\ref{dia:commutative}), the problem becomes trivial as Whitney demonstrates~\cite[Example 1]{Fuk76}: The family $\{H_t\}_{t\in\R}:\Rtto$, 
\[
(x,~y)\longmapsto x\cdot (y + t~x)\cdot(x^2 - y^2)
\] gives rise to infinitely-many $\cC^\infty$-types. Indeed, for any $t\in\R$, the preimage $H_t^{-1}(0)$ is the only one having four lines $L(t)$. Then, any pair of diffeomorphisms $\varphi:\RtR$, $\psi:\R\longrightarrow\R$ relating two maps $H_t$ and $H_{t'}$ should send $L(t)$ to $L(t')$. Now, if $t\neq t'$, then the set of ratios of slopes obtained from the lines $L(t)$ is not equal to that from $L(t')$. 
It is shown in~\cite{Fuk76} that this set of slopes should be preserved between diffeomorphic maps. This implies that $H_t$ and $H_{t'}$ are not $\cC^\infty$-equivalent if $t\neq t'$.

Let us briefly mention another type of equivalence that has been considered recently.

\subsubsection{Regular types}\label{sss:aut-type} 
As we will see next, imposing that the homeomorphisms be polynomial results in a condition that is too restrictive for obtaining finitely-many equivalence classes with fixed degrees; two polynomial maps $f,g:\XtY$ are \emph{regularly equivalent} (or \emph{R-equivalent}) if there are polynomial automorphisms $\varphi:X\longrightarrow X$ and $\psi: Y\longrightarrow Y$ such that $\psi\circ f\circ\varphi = g$ is satisfied. 
  Lamy showed in~\cite{lamy2005structure} that a proper polynomial map $\CtC$ of topological degree two is R-equivalent to $(x,~y)\longmapsto (x^2,~y)$. In the pursuit to generalize Lamy's Theorem, Bisi and Polizzi recently showed the following result.

If $\dim X= \dim Y$, then there is a Zariski open subset $V\subset Y$ at which $f^{-1}(y)$ has exactly $k$ simple points for some $k\in\N$. 
The value $k$ is called the \emph{topological degree}, which is denoted by $\tau(f)$.

\begin{theorem}[\cite{bisi2010proper,bisi2011proper}]\label{thm:Bisi}
Let $d,k\in\N$ be such that $d\geq 4$ and $k\geq 3$. Then, there are infinitely-many R-equivalence classes of polynomial maps $\CtC$ of a given topological degree $k$. Furthermore, there are infinitely-many R-equivalence classes of polynomial maps $\CtC$ of a given (algebraic) degree $d$.
\end{theorem}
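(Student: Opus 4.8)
The plan is to mimic the Thom--Nakai constructions recalled in \S\ref{sub:uncountably}, but now inside the group $\mathrm{Aut}(\mathbb{C}^2)$ of polynomial automorphisms of the plane: I would exhibit a one-parameter family of dominant maps $\mathbb{C}^2\to\mathbb{C}^2$, all of the prescribed (algebraic, resp.\ topological) degree, and attach to each member a continuous modulus extracted from the local geometry of its discriminant, which is preserved under regular equivalence. This is compatible with the finiteness of \emph{topological} types in this range (Theorem~\ref{thm:fin-many-Sab}): regular equivalence is realized by polynomial automorphisms, which are biholomorphic near every point and therefore respect invariants --- cross-ratios of infinitely-near points --- that a general homeomorphism destroys. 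For the degree statement, fix $d\geq 4$ and put, for $t\in\mathbb{C}$,
\begin{equation*}
f_t(x,y):=\Bigl(x,\ \textstyle\prod_{i=1}^{d}\bigl(y-a_i(t)x\bigr)\Bigr),
\end{equation*}
where $a_1(t),\dots,a_d(t)$ are pairwise-distinct affine functions of $t$ in general position; a direct check shows $f_t$ is a proper dominant polynomial map with $\deg f_t=d$ and topological degree $\tau(f_t)=d$.

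Write $f_t=(x,q_t)$ with $q_t$ homogeneous of degree $d$. The critical locus $\mathcal{C}_{f_t}=\{\partial q_t/\partial y=0\}$ is a union of $d-1$ lines through the origin, so
\begin{equation*}
\mathcal{D}_{f_t}=f_t(\mathcal{C}_{f_t})=\bigcup_{\ell=1}^{d-1}\bigl\{\,v=c_\ell(t)\,u^{d}\,\bigr\},\qquad c_\ell(t)\in\mathbb{C}^{*},
\end{equation*}
with the $c_\ell(t)$ pairwise distinct for generic $t$. Thus $\mathcal{D}_{f_t}$ is a plane curve whose unique singular point is $\mathbf 0$, where its $d-1$ branches all share one tangent direction. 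Blowing up $\mathbf 0$ successively $d$ times resolves the configuration: the strict transforms of the branches become disjoint and meet the last exceptional curve $E\cong\mathbb{P}^1$ transversally, at $d-1$ distinct points which, in a suitable affine coordinate $V$ on $E$, sit at $V=c_1(t),\dots,c_{d-1}(t)$, while $V=\infty$ is the point where $E$ meets the preceding exceptional curve. The invariant I would use is the $\mathrm{Aff}(\mathbb{C})$-equivalence class of the set $\{c_1(t),\dots,c_{d-1}(t)\}\subset\mathbb{C}$ --- which, since $d\geq 4$, is a non-trivial modulus $\lambda(t)$ (for instance the cross-ratio $\lambda(t):=(c_1(t)-c_3(t))/(c_2(t)-c_3(t))$ when $d=4$, taken modulo the relabelling action of $S_{d-1}$) --- and a computation shows $\lambda$ is a non-constant algebraic function of $t$, so it takes uncountably many values.

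Now suppose $f_t\sim_R f_{t'}$, say $\psi\circ f_t\circ\varphi=f_{t'}$ with $\varphi,\psi\in\mathrm{Aut}(\mathbb{C}^2)$. Then $\mathcal{D}_{f_{t'}}=\psi(\mathcal{D}_{f_t})$; since $\psi$ is biholomorphic near $\mathbf 0$ and $\mathbf 0$ is the unique singular point of each discriminant, $\psi(\mathbf 0)=\mathbf 0$, and $\psi$ lifts to an isomorphism between the chains of $d$ blow-ups over $\mathbf 0$ on the two sides, carrying the exceptional configuration and the strict transforms of the branches of $f_t$ onto those of $f_{t'}$. Restricting to the last exceptional curve, $\psi$ induces a projective isomorphism between the two copies of $\mathbb{P}^1$, fixing the node $V=\infty$ and carrying $\{V=c_\ell(t)\}$ onto $\{V=c_\ell(t')\}$; hence it acts on $\mathbb{C}=\mathbb{P}^1\setminus\{\infty\}$ by an affine transformation, and $\lambda(t)=\lambda(t')$. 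It follows that the family $\{f_t\}$ represents infinitely many $R$-equivalence classes, all of degree $d$, which proves the second assertion.

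For a prescribed topological degree $k\geq 3$ the same scheme applies, but a homogeneous $q_t$ of degree $k$ is no longer allowed (it would produce only $k-1$ branch points on $E$, and for $k=3$ two branch points together with the node carry no $\mathrm{Aff}(\mathbb{C})$-modulus); instead one takes $f_t(x,y)=\bigl(x,\ y^{k}+a_t(x)y^{k-2}+\dots+a_{0,t}(x)\bigr)$ with coefficient polynomials of sufficiently high degree, so that $\mathcal{D}_{f_t}$ acquires at least three independent cusps whose infinitely-near structure supplies the required modulus already at $k=3$, while $k=3$ is the first degree at which this succeeds --- for topological degree $2$, Lamy's theorem~\cite{lamy2005structure} shows the proper maps are all $R$-equivalent to $(x^2,y)$. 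The main obstacle is precisely this engineering of the topological-degree family --- arranging, at $\tau=k$ with $k\geq 3$, a discriminant whose local singularities carry a non-constant $\mathrm{Aff}(\mathbb{C})$-modulus --- together with the routine but not entirely trivial verification that the modulus is non-constant along the chosen family; the $R$-invariance step itself, resting only on blow-ups and the biholomorphy of polynomial automorphisms, is comparatively soft.
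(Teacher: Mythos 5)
Your second assertion (fixed algebraic degree $d\geq 4$) is argued along a route that is different in packaging but close in spirit to the paper's: Bisi--Polizzi take the discrete family $B_n$ whose critical locus is the union of $d$ concurrent lines $\{y^d+n\,x^{d-1}y+x^d=0\}$ and invoke Kang's analytic classification of such germs to conclude that no automorphism can carry the germ of $\cC_{B_n}$ onto that of $\cC_{B_m}$ when the moduli differ; your blow-up extraction of the $\mathrm{Aff}(\C)$-class of $\{c_1(t),\dots,c_{d-1}(t)\}$ from the discriminant is a hands-on version of the same cross-ratio-type invariant, applied to $\cDf$ instead of $\cCf$ and to a continuous family instead of a countable one. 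That part is essentially viable, modulo the point you only assert: that $\lambda(t)$ is a non-constant function of $t$ for your choice of $a_i(t)$ (a Lyashko--Looijenga-type genericity statement that does need a check, since the $c_\ell(t)$ are critical values of $\prod_i(\lambda-a_i(t))$ and the affine class could a priori be constant along a badly chosen one-parameter family).

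The genuine gap is the first assertion, fixed topological degree $k\geq 3$. You correctly observe that your homogeneous model carries no modulus when $k=3$, but the replacement you propose --- coefficient polynomials ``of sufficiently high degree'' producing ``three independent cusps whose infinitely-near structure supplies the required modulus'' --- is not a construction: no family is written down, no invariant is computed, and you yourself flag this engineering as the main obstacle. So half the theorem is unproved in your plan. The paper closes it with a much lighter mechanism than a continuous modulus: the explicit family $A_n\colon (x,y)\mapsto (x,\;y^k-k\,x^n y)$, $n\in\N$, consists of maps $\CtC$ all of topological degree $k$, each with exactly one isolated singularity whose multiplicity grows with $n$; since the automorphisms realizing R-equivalence are biholomorphic, they preserve that multiplicity, so the $A_n$ lie in pairwise distinct R-equivalence classes. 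The lesson is that ``infinitely many'' does not require an $\mathrm{Aff}(\C)$-modulus at all --- a single integer invariant along a countable family suffices --- and this is precisely the shortcut your topological-degree plan is missing.
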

The first statement is proven by constructing the family $\{A_n\}_{n\in\N}$ of polynomial maps 
\[
(x,~y)\longmapsto (x,~y^k - k~x^ny)
\] having the same topological degree, and with exactly one isolated singularity, whose multiplicity is different for each $n$. 
The arguments for the second statement are similar as above: 
One constructs a similar family $\{B_{n}\}_{n\in \N}$ of polynomial maps of same degree $d$ and whose critical locus is $\{y^d + n~x^{d-1}~y + x^d=0\}$ (see~\cite[Theorem B1]{bisi2010proper}). 
Due to a theorem of Kang~\cite{kang1993analytic}, there is no automorphism sending the germ singularity of $\cC_{B_n}$ onto $\cC_{B_m}$ if $m^d\neq n^d$.

In contrast to Theorem~\ref{thm:Bisi}, the family of polynomial maps $\CtC$ that are Galois covers with finite Galois group $G$ contains only finitely many distinct R-equivalence classes. 
Bisi and Polizzi~\cite[Theorem 3.8]{bisi2010proper} achieved a complete classification, up to R-equivalence, of all above Galois maps. 

Several results appeared later concerning different types of equivalence classes such as \emph{holomorphic}~\cite{jelonek2017finite} and \emph{bilipschitz classes}~\cite[~\S 9]{ruas2022old}. As for the Theorem of Lamy, Jelonek later proved several generalizations. One of them~\cite[Theorem 1.12]{jelonek2017finite} goes as follows.

\begin{theorem}\label{thm:Jel-Lamy}
Let $f:\CtC$ be a proper polynomial map of topological degree $k$. 
If its discriminant $\cDf$ is isomorphic to the complex affine line $\mathbb{A}(\C)$, then $f$ is R-equivalent to the map $(x,y)\longmapsto (x^k,~y)$.
\end{theorem}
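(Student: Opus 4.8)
The plan is to rectify the discriminant to a coordinate axis by a polynomial automorphism of the target, then identify $f$ over the complement of that axis with a standard cyclic cover, and finally promote the resulting isomorphism to a global automorphism of the source. To begin, I would observe that $\cD_f$, with its reduced structure, is a closed subvariety of $\C^2$ isomorphic to $\mathbb{A}(\C)$, hence a smooth curve, and that the isomorphism $\mathbb{A}(\C)\to\cD_f$ is a closed embedding $\mathbb{A}(\C)\hookrightarrow\C^2$. By the Abhyankar--Moh--Suzuki theorem this embedding is rectifiable, so there is a polynomial automorphism $\psi_0$ of $\C^2$ carrying $\cD_f$ onto $L:=\{x=0\}$; replacing $f$ by $\psi_0\circ f$ (which does not change the $R$-equivalence class) I may assume $\cD_f=L$, $\tau(f)=k$, and, since a proper morphism of affine varieties is finite, that $f$ is a finite morphism.

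Next I would analyse the unramified locus. Set $V:=\C^2\setminus L\cong\C^*\times\C$ and $U:=f^{-1}(V)$. Because $f(\cC_f)\subset\cD_f=L$, the restriction $f|_U\colon U\to V$ has no critical points, so it is a finite \'etale cover of degree $k$, and $U$ is irreducible (a nonempty Zariski-open subset of $\C^2$), so $f|_U$ is a connected \'etale $k$-cover of $\C^*\times\C$. Since $\pi_1(\C^*\times\C)\cong\mathbb{Z}$ has a unique subgroup of index $k$, there is, up to isomorphism of covers, only one such cover, namely $(u,v)\mapsto(u^{k},v)$. Hence I obtain an isomorphism of varieties $\Phi\colon\C^*\times\C\to U$ with $f\circ\Phi=(u^{k},v)$, where $(x,y)$ now denote target coordinates so that $L=\{x=0\}$.

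The remaining point is to globalize. Write $f=(f_1,f_2)$ and $C:=\C^2\setminus U=\{f_1=0\}$. Using $\operatorname{Pic}(\C^2)=0$, the group $\mathcal{O}(U)^*/\C^*$ is free abelian on the irreducible components of $C$; comparing with $\mathcal{O}(\C^*\times\C)^*/\C^*\cong\mathbb{Z}$ forces $C$ to be irreducible, with reduced defining polynomial $h$. Then $\tilde u:=u\circ\Phi^{-1}$ is a unit on $U$, hence $\tilde u=c\,h^{m}$ for some $c\in\C^*$, $m\in\mathbb{Z}$, and the identity $\tilde u^{\,k}=f_1|_U$ forces $m\ge 1$ and $f_1=q^{k}$ with $q:=c\,h^{m}\in\C[x,y]$. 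After absorbing a $k$-th root of unity into $\Phi$ one has $q\circ\Phi=u$ and $f_2\circ\Phi=v$, i.e. $(q,f_2)\circ\Phi=\mathrm{id}$, so $\varphi:=(q,f_2)\colon\C^2\to\C^2$ restricts to an isomorphism $U\to\C^*\times\C$ and is in particular birational. If $\varphi$ contracted the irreducible curve $C$, then $f=(x^{k},y)\circ\varphi$ would contract $C$ to a point as well, contradicting the finiteness of $f$; hence $\varphi$ has finite fibres, and a birational morphism with finite fibres onto the normal variety $\C^2$ is an open immersion by Zariski's Main Theorem, necessarily surjective because no nonempty proper closed subset of $\C^2$ has complement isomorphic to $\C^2$. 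Thus $\varphi$ is an automorphism of $\C^2$, and since $(x^{k},y)\circ\varphi=(q^{k},f_2)=f$, the map $f$ is $R$-equivalent to $(x,y)\mapsto(x^{k},y)$.

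I expect the main obstacle to be this last globalization step. Recognizing the unramified locus as the standard Kummer cover is immediate once the discriminant has been rectified, since the relevant fundamental group is then just $\mathbb{Z}$; but upgrading the isomorphism $U\cong\C^*\times\C$ to an automorphism of all of $\C^2$ requires the unit-group and unique-factorization bookkeeping that extracts the $k$-th root $q$ of $f_1$, together with a genuine use of the properness of $f$ (through Zariski's Main Theorem) to exclude a contraction of $C$.
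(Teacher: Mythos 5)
The survey states Theorem~\ref{thm:Jel-Lamy} only by citation to Jelonek and reproduces no proof, so there is no internal argument to compare you against; judged on its own merits, your proof is correct, and it is built on the same covering-space philosophy that the paper sketches around Remark~\ref{rem:counting_types_group}. All the main steps check out: properness makes $f$ finite and $\mathcal{D}_f$ closed, and since $\mathcal{D}_f\cong\mathbb{A}^1(\mathbb{C})$ is smooth and closedly embedded, Abhyankar--Moh rectifies it to $L=\{x=0\}$; the restriction $f^{-1}(\mathbb{C}^2\setminus L)\to\mathbb{C}^2\setminus L$ is a connected finite \'etale cover of degree $k$, and uniqueness of the index-$k$ subgroup of $\pi_1(\mathbb{C}^*\times\mathbb{C})\cong\mathbb{Z}$ identifies it with the Kummer cover (here you silently invoke the Riemann existence theorem to make the covering isomorphism $\Phi$ algebraic rather than merely topological -- worth one sentence); the unit-group/UFD computation correctly forces $f^{-1}(L)$ to be an irreducible curve and extracts $q$ with $f_1=q^k$; and the non-contraction argument plus Zariski's Main Theorem makes $\varphi=(q,f_2)$ an open immersion. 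Two small polish points. First, your final surjectivity assertion should carry its one-line justification: a curve in $\mathbb{C}^2\setminus\varphi(\mathbb{C}^2)$ would produce a nonconstant unit on $\varphi(\mathbb{C}^2)\cong\mathbb{C}^2$, and a finite nonempty complement would make $\varphi(\mathbb{C}^2)$ non-affine (or topologically, would give it nontrivial $H^3$), so the complement is empty. Second, you can bypass ZMT entirely: $\varphi$ is proper, because if $x_n\to\infty$ while $\varphi(x_n)$ stays bounded then $f(x_n)=(q(x_n)^k,f_2(x_n))$ stays bounded, contradicting properness of $f$; a proper quasi-finite birational morphism onto the normal variety $\mathbb{C}^2$ is then an isomorphism, which streamlines the globalization step you correctly identified as the crux.
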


\section{Affine singularities of polynomial maps}\label{sec:affine_singul} 

Singularities were among the first objects examined in the study of the topology of a smooth map. 
This direction was mainly initiated by Morse in the 1920s before Whitney continued the subject in the 1950s by introducing the early versions of jet spaces. 
His famous results on the characterization of singularities of stable maps from the plane to the plane were later extended and refined and extended to analytic and polynomial maps. 
We recount in~\S\ref{sub:singularities_analytic} the early results of Morse and Whitney on the singularities of analytic maps.
 We then present in~\S\ref{sub:polynomial_singularities} the various extensions of Whitney's works to the polynomial setting, together with the quantitative results counting isolated singularities.

In~\S\ref{sub:Jacobian_conjecture} we shift our focus to polynomial maps without singularities; this study was initiated by Keller in the 1930s, when he formulated his famous \emph{Jacobian Conjecture}, suggesting that such a complex polynomial map $\Cntn$ should be invertible~\cite{keller1939ganze}. 
Although much work has been done on this topic, the Jacobian Conjecture remains open. 
Surprisingly, this line of research remained separate from the topic of studying topological types of maps. 
Nevertheless, many results discovered for the topological problem turned out to be useful qualitative statements for the Jacobian Conjecture (see e.g.~\S\ref{sec:sing_infty}). \S\ref{sub:Jacobian_conjecture} is devoted to a brief history of the Jacobian Conjecture and its current state of the art. 

\subsection{Singularities of stable maps}\label{sub:singularities_analytic}
One of the main questions in the theory of smooth maps in $\CXY$ is to describe the stability of their singularities. Thanks to classical Morse theory, smooth functions $X\longrightarrow\R$ were among the earliest cases to be well-understood.
\begin{theorem}[Morse~\cite{morse1925relations,morse1931critical}]\label{thm:Morse}
Assume that $\dim X = n$. 
Then, a generic function $f\in\CXR$ has  isolated singularities, and, in local coordinates $z:=(z_1,\ldots,z_n)$, each singularity $x\in X$ of $f$ can be expressed as
\begin{align*}
f(z) = & f(x) - z_1^2 -\cdots - z^2_k +z_{k+1}^2 +\cdots + z_{n}^2
\end{align*} for some $k\in\{1,\ldots,n\}$.
\end{theorem} The next well-understood situation for maps in $\CXY$ was first discovered by Whitney in the 1930s.
\begin{theorem}[Whitney Immersion Theorem~\cite{whitney1992analytic}]\label{thm:Whitney_immersion}
Let $X$ and $Y$ be two manifolds such that $\dim Y\geq 2~\dim X $. Then, a generic map $f\in\CXY$ is an immersion.
\end{theorem}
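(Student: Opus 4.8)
The plan is to use a transversality argument in jet space, which is exactly the kind of reasoning that the preceding section on jet spaces and the Whitney $\cC^\infty$-topology was designed to support. The statement to prove is that when $\dim Y \geq 2\dim X$, a generic $f \in \cC^\infty(X,Y)$ is an immersion, i.e.\ $(df)_x$ is injective for every $x \in X$. First I would recall that the condition ``$(df)_x$ fails to be injective'' is a closed condition on the $1$-jet $j^1 f(x)$: inside $J^1(X,Y)$, let $\Sigma^i \subset J^1(X,Y)$ denote the subset of $1$-jets whose linear part has corank $i$ (in the sense of Definition~\ref{def:singularities}, here with $n \leq p$ so $\min(n,p) = n$). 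Each $\Sigma^i$ is a smooth submanifold of $J^1(X,Y)$, and a standard computation (the Thom--Boardman codimension count) gives $\operatorname{codim} \Sigma^i = i(p - n + i)$ where $n = \dim X$, $p = \dim Y$.

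The key step is then the Thom Transversality Theorem: for a generic $f \in \cC^\infty(X,Y)$ in the Whitney $\cC^\infty$-topology, the $1$-jet extension $j^1 f \colon X \to J^1(X,Y)$ is transverse to each stratum $\Sigma^i$. Now $f$ is an immersion precisely when $j^1 f(X)$ avoids $\bigcup_{i \geq 1} \Sigma^i$ entirely, so I want transversality to force this avoidance. If $j^1 f \pitchfork \Sigma^i$, then $(j^1 f)^{-1}(\Sigma^i)$ is a submanifold of $X$ of codimension equal to $\operatorname{codim}\Sigma^i = i(p - n + i)$; whenever this number exceeds $\dim X = n$, the preimage must be empty. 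Under the hypothesis $p \geq 2n$ we have, for every $i \geq 1$,
\[
i(p - n + i) \;\geq\; 1 \cdot (p - n + 1) \;\geq\; (2n - n + 1) \;=\; n + 1 \;>\; n,
\]
so every stratum $\Sigma^i$ with $i \geq 1$ is missed by $j^1 f$, and hence $(df)_x$ is injective for all $x$. Finally I would invoke that the set of $f$ for which $j^1 f$ is transverse to all of the finitely many strata $\Sigma^1, \ldots, \Sigma^n$ is a residual (hence dense, by Baire) subset of $\cC^\infty(X,Y)$ in the Whitney $\cC^\infty$-topology, which is exactly the meaning of ``generic'' in Definition~\ref{def:generic_smooth}.

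The main obstacle is establishing the two ingredients that I have treated as black boxes: that the corank loci $\Sigma^i$ are genuinely submanifolds with the stated codimensions, and the Thom Transversality Theorem itself for jet extensions. The codimension count is a linear-algebra exercise on the space of $p \times n$ matrices (the variety of matrices of corank $\geq i$ has the claimed codimension, and $\Sigma^i$ is the locally trivial bundle over $X \times Y$ with this fibre), so it is routine though not trivial. The transversality theorem is the deeper input; rather than reprove it I would cite the standard reference (e.g.\ \cite[Chap.~II]{golubitsky2012stable}), since it belongs to the foundational jet-space toolkit described earlier in this section. One cosmetic point worth flagging: the hypothesis as stated only needs $p \geq 2n - 1$ for the argument above when $i=1$ gives $p - n + 1 \geq n$, i.e.\ $p \geq 2n-1$; the sharper bound $p \geq 2n-1$ is in fact what Whitney's theorem uses, but I will not belabor this and will simply work with $p \geq 2n$ as stated.
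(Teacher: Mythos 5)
Your argument is correct and follows exactly the route the paper indicates for this theorem: apply the Thom Transversality Theorem~\ref{thm:Thom-transv} to the corank strata $\Sigma^i\subset J^1(X,Y)$ and observe that $\operatorname{codim}\Sigma^i=i(p-n+i)>n$ whenever $p\geq 2n$ and $i\geq 1$, so the $1$-jet of a generic map misses every $\Sigma^i$; the survey itself only cites this dimension-counting simplification rather than reproving Whitney's original argument, so your write-up matches the intended proof.

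One correction to your closing remark: the hypothesis cannot be weakened to $p\geq 2n-1$. Your avoidance argument needs the strict inequality $\operatorname{codim}\Sigma^1>\dim X$; when $p=2n-1$ one has $\operatorname{codim}\Sigma^1=p-n+1=n=\dim X$, so transversality only forces $(j^1f)^{-1}(\Sigma^1)$ to be a discrete set, not empty, and generic maps of a surface into $\R^3$ do have isolated cross-cap (corank-one) points. The bound $p\geq 2n-1$ belongs to Whitney's separate, non-generic immersion theorem (every $n$-manifold admits \emph{some} immersion into $\R^{2n-1}$), whereas the genericity statement proved here genuinely requires $\dim Y\geq 2\dim X$.
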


By the 1970s, the theory behind the space of jets for $\CXY$ was already well established thanks Thom, Mather, and Boardman. In turn, this lead to significant simplifications of proofs for classical theorems on the singularities of maps. One of the key results in this context was the famous Thom Transversality Theorem~\cite{thom1954quelques}. 

\begin{definition}[\cite{golubitsky2012stable}]\label{def:transversality}
Let $X$ and $Y$ be smooth manifolds and $f:\XtY$ be a smooth map. Let $W$ be a submanifold of $Y$ and $x$ a point in $X$. Then $f$ intersects $W$ \emph{transversally at} $x$ if either 
\begin{enumerate}
	\item $f(x)\not\in W$ or
	
	\item $f(x)\in W$ and $T_{f(x)}Y = T_{f(x)}W \oplus (df)_x(T_xX)$. 
\end{enumerate} 
We say that $f$ intersects $W$ \emph{transversally} (and we write $f~\overline{\pitchfork}~W$) if $f$ intersects $W$ transversally at each $x\in X$.
\end{definition}

\begin{theorem}[Thom Transversality Theorem]\label{thm:Thom-transv}
Let $X$ and $Y$ be smooth manifolds and $W$ a submanifold of the jet space $J^k(X,Y)$. 
Then, for each $k\in\N$, the set 
\[
\{f\in\CXY~|~j^kf~\overline{\pitchfork}~W\}
\] is open dense in $\CXY$ in the $\cC^{\infty}$-topology.
\end{theorem}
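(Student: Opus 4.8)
The plan is to deduce the statement from the parametric transversality theorem (often called the Thom–Abraham parametric transversality theorem, or the Sard–Smale / transversality density argument) together with a local-to-global patching argument. First I would recall the underlying analytic tool: given a smooth map $F \colon M \times S \to N$ which is transverse to a submanifold $W \subset N$, the set of parameters $s \in S$ for which the restricted map $F(\cdot, s) \colon M \to N$ is transverse to $W$ is residual (hence dense) in $S$; this follows by applying Sard's Theorem to the projection of $F^{-1}(W)$ onto $S$. The key construction is therefore to manufacture, locally on $X$, a finite-dimensional family of smooth maps $X \to Y$ that is rich enough to move $j^k f$ into general position with respect to $W$.

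The second step is the local density statement. Working in a coordinate chart $U \subset X$ diffeomorphic to an open subset of $\mathbb{R}^n$ and a chart $V \subset Y$ diffeomorphic to an open subset of $\mathbb{R}^p$, I would perturb a given $f$ by adding polynomial maps of degree $\le k$; equivalently, I would consider the family $f_{s} := f + \sum s_\alpha x^\alpha$ parametrized by the coefficient vector $s$ lying in a Euclidean space $S$. The crucial computation is that the associated evaluation map $(x, s) \mapsto j^k(f_s)(x) \in J^k(U, V)$ is a submersion — because varying the polynomial coefficients freely sweeps out the entire jet at each point $x$ — and hence is transverse to $W$. Parametric transversality then yields a dense (indeed residual) set of parameters $s$ for which $j^k(f_s) \overline{\pitchfork} W$ on $U$, and moreover this can be arranged on a slightly smaller relatively compact subset $U' \Subset U$ so that the perturbation is small in the Whitney $\mathcal{C}^k$-topology. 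Openness of the set $\{f : j^k f \overline{\pitchfork} W\}$, restricted to maps whose $k$-jet meets $W$ inside a compact set, is a separate and easier point: transversality is an open condition on $k$-jets, $j^k f$ depends continuously on $f$, and properness/compactness lets one upgrade pointwise openness to openness in the Whitney topology.

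The third step is the globalization. I would cover $X$ by countably many charts $U_i$ with relatively compact $U_i' \Subset U_i$ whose union is still $X$ and which are locally finite. Using the local density result one perturbs successively on $U_1', U_2', \dots$, each time by an arbitrarily small amount supported near $\overline{U_i'}$, so that after the $i$-th perturbation $j^k f$ is transverse to $W$ over $\overline{U_1'} \cup \cdots \cup \overline{U_i'}$; local finiteness guarantees that only finitely many perturbations affect any given point, so the infinite process converges to a smooth map arbitrarily close to the original $f$ in the Whitney $\mathcal{C}^\infty$-topology whose $k$-jet is transverse to $W$ everywhere. Combined with openness this gives that $\{f \in \mathcal{C}^\infty(X,Y) : j^k f \overline{\pitchfork} W\}$ is open and dense.

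The main obstacle I anticipate is not any single deep idea but the careful bookkeeping needed to make the patching compatible with the Whitney $\mathcal{C}^\infty$-topology rather than merely a $\mathcal{C}^k$ or compact-open topology: one must choose the successive perturbations small enough — in every $\mathcal{C}^r$ norm, $r \ge 0$, and with rapidly shrinking bounds dictated by the continuous function $\delta$ defining a Whitney neighborhood — so that the infinite sum of perturbations still lands inside the prescribed neighborhood of $f$, while simultaneously ensuring each perturbation has support localized enough not to destroy transversality already achieved on earlier pieces. Handling non-proper $W$ (so that $j^k f$ may meet $W$ on a non-compact set) also requires a mild exhaustion argument. These are standard but technical; the conceptual engine is entirely the parametric transversality theorem plus Sard.
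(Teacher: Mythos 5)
The survey itself gives no proof of this statement; it is quoted as a classical result, with the relevant argument found in the cited source~\cite[Chap.~II]{golubitsky2012stable}. Your outline is essentially that standard proof: the parametric transversality lemma obtained from Sard's Theorem applied to the projection of $F^{-1}(W)$ onto the parameter space, the observation that the jet-evaluation map of the family $f_s = f + \sum s_\alpha x^\alpha$ of polynomial perturbations of degree $\le k$ is a submersion into $J^k(U,V)$, and a globalization over a countable, locally finite atlas. The one point to tighten is openness: the set $\{f\in\CXY \,:\, j^kf~\overline{\pitchfork}~W\}$ is open in the Whitney topology only when $W$ is closed in $J^k(X,Y)$ (or under a comparable properness hypothesis); for general $W$ one only obtains a residual, hence dense, set. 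Your hedge (``restricted to maps whose $k$-jet meets $W$ inside a compact set'') acknowledges this but does not resolve it, so either add the closedness hypothesis or weaken the conclusion to residual -- an imprecision the statement as quoted in the survey shares. Note also that the cited source organizes the global step differently: rather than an infinite sequence of successive perturbations converging in the Whitney topology, it writes the transversal maps as a countable intersection of open dense sets indexed by chart pairs and compact subsets, and invokes the Baire property of $\CXY$; this avoids the convergence bookkeeping you describe, although your successive-perturbation scheme does work once the supports are locally finite and the $\cC^r$-bounds are chosen summable against the Whitney control function $\delta$.
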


For any smooth map $f:\XtY$ and any $r\in\N$, we can define the set $\Sigma^r(f)$ of all points $x\in X$ at which the linear map $(df)_x$ drops rank by $r$. 
We may then associate to it the subset $\Sigma^r\subset J^1(X,Y)$ formed by all $j^1f(x)$ such that $x\in \Sigma^r(f)$~\cite{whitney1955singularities}. 
From this setup, if $\Sigma^r(f)$ is generically a manifold, we denote by $\Sigma^{r,s}(f)$ the set of all points $x\in \Sigma^r$ at which the map $\restr{f}_{\Sigma^{r}(f)}$ drops rank by some $s\in\N$. Analogously, we define $\Sigma^{r,s}\subset J^2(X,Y)$ to be the subset of all $j^2f(x)$ such that $x\in \Sigma^r(f)$. One can more generally for each sequence $r_1\geq \cdots\geq r_k$, define these strata $\Sigma^{r_1,\ldots,r_k}\subset J^k(X,Y)$ formed by all $j^kf(x)$ such that $\restr{f}_{\Sigma^{r_1,\ldots,r_{k-1}}(f)}$ drops rank by $r_k$. 

Thom introduced these strata inthe 1960s~\cite{thom}, and conjectured that for generic $f\in\CXY$, subsets $\Sigma^{r_1,\ldots,r_{k-1}}(f)$ are manifolds, and subsequent subsets $\Sigma^{r_1,\ldots,r_{k}}(f)$ are well-defined. Thom's conjecture was later proved by Boardman in the 1960s~\cite{boardman1967singularities}, and, combined with Thom's Transversality Theorem, his result stated that for every sequence of integers $r_1\geq \cdots\geq r_k$, for a generic map $f\in\CXY$, the set $\Sigma^{r_1,\ldots,r_{k}}(f)$ is a well-defined sub-manifold of $X$, and there exists a fiber subbundle $\Sigma^{r_1,\ldots,r_{k}}$ of $J^k(X,Y)$ (relative to the fibration $J^k(X\times Y)\rightarrow X\times Y$), such that
\[
x\in\Sigma^{r_1,\ldots,r_{k}}(f)\Longleftrightarrow j^kf(x)\in\Sigma^{r_1,\ldots,r_{k}}.
\] Altogether, subsets $\Sigma^{r_1,\ldots,r_{k}}$ are called the \emph{Thom-Boardman strata} (see e.g.,~\cite[Chap. VI]{golubitsky2012stable}), and can be used to vastly simplify the proofs of Theorems~\ref{thm:Morse}, and~\ref{thm:Whitney_immersion} that appear in their original papers.
Namely, by the Thom Transversality Theorem, a generic $f\in\CXY$ has to be transversal to all Thom-Boardmann strata.
Accordingly, the existence of certain ``bad'' singularities for generic maps is dictated purely by counting the dimensions of these strata (see~\cite[Theorem 6.2]{golubitsky2012stable} and~\cite[Theorem 5.6]{golubitsky2012stable}).

Theorem~\ref{thm:Thom-transv} can also be used to prove the following famous theorem of Whitney for the case where $\dim X = \dim Y = 2$. In this setting, the singularities of a smooth map $f:\XtY$ arise as the result of projecting its graph onto the target space. This produces geometrical  ``folds'', where two of them join to create a ``crease'' (see Figure~\ref{fig:forlds-cusps}). 
Informally speaking, the projection of folds creates a curve in $\R^2$, called the \emph{discriminant}, and the creases create special singular points of the discriminant, called cusps.

In an open neighborhood $U$ (in the Euclidean topology) of a point $x\in X$, we may express $f$ as $(x_1,~x_2)\longmapsto (f_1(x_1,x_2),~f_2(x_1,x_2))$. Then, the set $\cCf$ of its critical points is a curve, whose equation in $U$ is given by 
\[
\frac{\partial f_1}{\partial x_1}\cdot\frac{\partial f_2}{\partial x_2 }- \frac{\partial f_1}{\partial x_2}\cdot\frac{\partial f_2}{\partial x_1 }.  
\] 
Accordingly, ordinary cusps are isolated singular points of $\cCf$. Intuitively, folds and cusps are the only singularities that are stable under perturbations of the map $f$ (and thus its graph), while all others are expected to split into simple ones or disappear. Whitney used the term \emph{excellent} in reference to maps $\Rttt$, whose only singularities are folds and ordinary cusps.

\begin{figure}[htb]
\center
\tikzset{every picture/.style={line width=0.75pt}} %set default line width to 0.75pt        

\tikzset{every picture/.style={line width=0.75pt}} %set default line width to 0.75pt        

\begin{tikzpicture}[x=0.75pt,y=0.75pt,yscale=-1,xscale=1]
%uncomment if require: \path (0,292); %set diagram left start at 0, and has height of 292

%Curve Lines [id:da4945932633542056] 
\draw [color={rgb, 255:red, 155; green, 155; blue, 155 }  ,draw opacity=0.5 ][line width=1.5]    (94.67,144.97) .. controls (85.24,143.54) and (70.38,136.87) .. (61.83,120.72) ;
%Curve Lines [id:da6661687104076888] 
\draw    (61.83,120.72) .. controls (61.83,108.72) and (77.5,103.92) .. (115.25,102.17) ;
%Curve Lines [id:da6879338674694402] 
\draw    (100.17,150.26) .. controls (79.75,146.05) and (65.83,136.22) .. (61.83,120.72) ;
%Curve Lines [id:da26149476104597225] 
\draw    (137.83,142.72) .. controls (88.83,163.72) and (72.59,138.14) .. (135.33,127.72) ;
%Curve Lines [id:da2699640780896647] 
\draw    (135.33,127.72) .. controls (123,119.67) and (127.5,111.67) .. (115.25,102.17) ;
%Curve Lines [id:da9787051074414996] 
\draw    (137.83,142.72) .. controls (135.25,138.42) and (129,135.17) .. (126.5,131.92) ;
%Straight Lines [id:da12147701015943813] 
\draw [color={rgb, 255:red, 0; green, 0; blue, 0 }  ,draw opacity=1 ]   (80.27,158.97) -- (80.27,173.48) ;
\draw [shift={(80.27,175.48)}, rotate = 270] [color={rgb, 255:red, 0; green, 0; blue, 0 }  ,draw opacity=1 ][line width=0.75]    (4.37,-1.32) .. controls (2.78,-0.56) and (1.32,-0.12) .. (0,0) .. controls (1.32,0.12) and (2.78,0.56) .. (4.37,1.32)   ;
%Shape: Parallelogram [id:dp642291793342641] 
\draw  [color={rgb, 255:red, 155; green, 155; blue, 155 }  ,draw opacity=0.5 ] (61.48,185.08) -- (145.83,185.08) -- (109.68,225.08) -- (25.33,225.08) -- cycle ;
%Curve Lines [id:da9455027325064311] 
\draw [color={rgb, 255:red, 0; green, 0; blue, 0 }  ,draw opacity=1 ][line width=1.5]    (102.67,219.63) .. controls (93.24,218.2) and (78.38,211.54) .. (69.83,195.38) ;
%Curve Lines [id:da7905616227558524] 
\draw    (226.17,105.38) .. controls (244.78,109.8) and (255.44,126.8) .. (270.17,129.38) ;
%Curve Lines [id:da7278620766844122] 
\draw    (226.17,105.38) .. controls (221.11,111.47) and (196.4,113.66) .. (183.19,136.3) ;
%Curve Lines [id:da3704302787076904] 
\draw    (243.17,170.38) .. controls (197.67,169.63) and (171.44,162.72) .. (183.44,158.47) .. controls (195.44,154.22) and (276.69,141.3) .. (183.19,136.3) ;
%Curve Lines [id:da4825449396966318] 
\draw    (210.5,140.93) .. controls (207.08,144.18) and (183.24,154.76) .. (181.08,159.68) ;
%Straight Lines [id:da18938424641933882] 
\draw [color={rgb, 255:red, 0; green, 0; blue, 0 }  ,draw opacity=1 ]   (220.27,175.97) -- (220.27,190.48) ;
\draw [shift={(220.27,192.48)}, rotate = 270] [color={rgb, 255:red, 0; green, 0; blue, 0 }  ,draw opacity=1 ][line width=0.75]    (4.37,-1.32) .. controls (2.78,-0.56) and (1.32,-0.12) .. (0,0) .. controls (1.32,0.12) and (2.78,0.56) .. (4.37,1.32)   ;
%Curve Lines [id:da6332206507379653] 
\draw [color={rgb, 255:red, 0; green, 0; blue, 0 }  ,draw opacity=1 ][line width=1.5]    (230.33,224.01) .. controls (225.33,218.86) and (221.83,205.01) .. (221.58,200.99) ;
%Curve Lines [id:da7656957857223577] 
\draw    (228.8,125.77) .. controls (229.8,131.77) and (229.99,139.01) .. (227.83,143.93) ;
%Curve Lines [id:da05525454624829751] 
\draw    (243.17,170.38) .. controls (251.17,150.38) and (267.67,148.38) .. (270.17,129.38) ;
%Curve Lines [id:da010975358975137905] 
\draw [color={rgb, 255:red, 0; green, 0; blue, 0 }  ,draw opacity=1 ][line width=1.5]    (192.58,224.26) .. controls (208.98,214.84) and (221.83,205.01) .. (221.58,200.99) ;
%Shape: Parallelogram [id:dp13541283419445072] 
\draw  [color={rgb, 255:red, 155; green, 155; blue, 155 }  ,draw opacity=0.5 ] (200.48,185.08) -- (284.83,185.08) -- (248.68,225.08) -- (164.33,225.08) -- cycle ;

\end{tikzpicture}

\caption{Two folds join to make a crease. A cusp is created by projecting a crease.}\label{fig:forlds-cusps}
\end{figure}


\begin{theorem}[\cite{whitney1955singularities}]\label{thm:Whitney-cusps}
Let $X$ and $Y$ be manifolds of dimension two. Then, a generic map in $\CXY$ is excellent.
\end{theorem}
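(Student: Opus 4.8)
The plan is to derive Theorem~\ref{thm:Whitney-cusps} as a consequence of the Thom Transversality Theorem (Theorem~\ref{thm:Thom-transv}) applied to the Thom--Boardman stratification of the jet bundle $J^2(X,Y)$, exactly as indicated in the text surrounding the statement. The strategy is a dimension count: since $\dim X = \dim Y = 2$, one lists all Thom--Boardman strata $\Sigma^I \subset J^k(X,Y)$, computes their codimensions, and shows that a generic $f$ is transversal to all of them. For strata of codimension $> 2 = \dim X$, transversality forces $\Sigma^I(f) = \emptyset$; for strata of codimension $\le 2$, transversality forces $\Sigma^I(f)$ to be a submanifold of the expected dimension, and a local normal-form argument identifies the resulting singularities as folds (along the codimension-$1$ stratum $\Sigma^{1,0}$) and ordinary cusps (at the isolated points of $\Sigma^{1,1,0}$).

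\textbf{Step 1: Enumerate the relevant Thom--Boardman strata.} For a map between surfaces, the rank of $(df)_x$ is $0$, $1$, or $2$. The stratum $\Sigma^2$ (corank $2$) in $J^1(X,Y)$ has codimension $2 \cdot 2 = 4 > 2$, so by Theorem~\ref{thm:Thom-transv} a generic $f$ misses it: $\Sigma^2(f) = \emptyset$. Hence one only needs to analyze $\Sigma^1$, which is a submanifold of $J^1(X,Y)$ of codimension $1$, so $\cC_f = \Sigma^1(f)$ is generically a smooth curve in $X$ (recovering the description of $\cC_f$ by the Jacobian equation given in the text). Next, restricting $f$ to this curve, one distinguishes $\Sigma^{1,0}(f)$ (where $\restr{f}_{\cC_f}$ is an immersion) from $\Sigma^{1,1}(f)$. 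A computation of the codimension of $\Sigma^{1,1} \subset J^2(X,Y)$ shows it equals $2$, so $\Sigma^{1,1}(f)$ is generically a finite set of points; and $\Sigma^{1,1,1}$ has codimension $> 2$, so $\Sigma^{1,1,1}(f) = \emptyset$ generically, meaning every point of $\Sigma^{1,1}(f)$ lies in $\Sigma^{1,1,0}(f)$.

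\textbf{Step 2: Local normal forms.} One then shows that transversality to the Thom--Boardman strata pins down the local structure of $f$. Near a point of $\Sigma^{1,0}(f)$ one produces, after composing with local diffeomorphisms of source and target, the fold normal form $(x_1, x_2) \mapsto (x_1, x_2^2)$; near a point of $\Sigma^{1,1,0}(f)$ one produces the Whitney cusp normal form $(x_1, x_2) \mapsto (x_1, x_1 x_2 + x_2^3)$ (equivalently the cusp germ from Section~\ref{sss:map-germs_sing}). These normal forms are exactly the ``folds'' and ``ordinary cusps'' in the statement, and establishing them is a finite-jet computation using the transversality condition to control the relevant partial derivatives. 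Finally, since $\Sigma^{1,1}(f)$ is finite and consists only of cusps, while $\Sigma^{1,0}(f)$ consists only of folds, and all other strata are empty, the map $f$ is excellent. The set of such $f$ is the intersection of finitely many open dense sets (one transversality condition per stratum), hence open dense in $\CXY$; in particular generic $f$ is excellent, which is precisely Definition~\ref{def:generic_smooth}.

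\textbf{The main obstacle} I expect is not the dimension bookkeeping — that is routine once the codimension formula for Thom--Boardman strata is in hand — but rather the passage from ``transversal to $\Sigma^I$'' to the honest local normal forms. Transversality gives the right dimensions and the vanishing/non-vanishing of a specific list of derivatives, but upgrading this to a genuine change of coordinates realizing the fold or cusp form requires either the Malgrange preparation theorem or an explicit inductive coordinate change, and verifying that the higher-order terms can be absorbed is the delicate part. An alternative, cleaner route — which I would flag as an option — is to cite Mather's classification of stable germs in the nice dimensions (Theorem~\ref{thm:Mather}, valid since $(2,2)$ is a nice dimension): stable germs $(\R^2,0) \to (\R^2,0)$ are exactly immersions, folds, and cusps, and then Theorem~\ref{thm:Whitney-cusps} follows immediately from density of stable maps together with the dimension count ruling out all germs except fold and cusp at critical points.
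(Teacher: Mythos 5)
Your proposal is correct and follows exactly the route the survey itself indicates: the paper cites Whitney's original result and points to the Thom Transversality Theorem applied to the Thom--Boardman strata (dimension count ruling out corank~$2$ and $\Sigma^{1,1,1}$, with folds along $\Sigma^{1,0}$ and cusps at $\Sigma^{1,1,0}$) as the modern proof, which is precisely your argument. Your flagged obstacle (upgrading transversality to the fold/cusp normal forms) and the alternative via Mather's nice dimensions are both standard and consistent with the references the paper gives.
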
 

Clearly, the discriminant $\cDf$ of an excellent map $f\in\CXY$ is a curve in $Y$. Furthermore, its singular points are either \emph{ordinary cusps}or \emph{simple nodes}. The local representation of an ordinary cusp is the singularity of the curve given by $x^2 + y^3$, which has Milnor number equal to two. 
A simple node, on the other hand, has Milnor number equals to one, and is locally given as a smooth intersection of two lines resulting from the discriminants of the multigerm
\begin{equation}
\left\{ \begin{aligned}  (x,y) &\longmapsto (x,~y^2)\\
 (x',y') & \longmapsto (x'^2,~y')
\end{aligned} \right.
\end{equation}

\subsection{Polynomial analogue for Whitney theorem}\label{sub:polynomial_singularities}
Whitney's results extend to map germs $(\C^2,0)\longrightarrow (\C^2,0)$, and complex polynomial maps $\Cttt$. 
Indeed, Gaffney and Mond showed in~\cite{gaffney1991cusps} that, up to generic perturbations, the discriminant of a map germ on the complex plane is a reduced curve which has only ordinary cusps and simple nodes as singularities. 
Later on, Ciliberto and Flamini~\cite{ciliberto2011branch} showed the same result for a generic projection $S\longrightarrow\P^2$ of any algebraic surface $S\subset\P^4$. 
If $S$ is the closure of the graph of a polynomial map, one obtains a description of the singularities of $\cDf$ for generic maps $f\in\PCktt$ for any degree $d$. 
This statement was later refined by Farnik, Jelonek and Ruas in~\cite{farnik2020whitney}: For any collection of positive integers $d_1\geq\cdots\geq d_n$, we use $\PdCnp$ 
to denote the space of polynomial maps $(f_1,\ldots, f_n):\Cntp$ satisfying $d_i\leq \deg f_i$ for $i=1,\ldots,p$. 
The following result is an effective analogue of Whitney's Theorem~\ref{thm:Whitney-cusps} for polynomial maps on the plane. 

\begin{theorem}[\cite{farnik2020whitney}]\label{thm:polynomial-simple-cusps1}
Let $d_1,d_2\in\N$ such that $d_1\geq d_2$, and let $D:=\gcd(d_1,d_2)$.
 Then, every generic polynomial map $f\in \PdCt$ has only ordinary cusps and simple nodes as singularities.
  Furthermore, if $\kappa(f)$ and $\nu(f)$ denote their respective numbers, then
\begin{equation}\label{eq:cusps_nodes}
\begin{array}{lll}
\kappa(f) & = & d_1^2 + d_2^2 + 3~d_1d_2 - 6~d_1 - 6~d_2 + 7\\
2~\nu(f) & = & (d_1d_2 - 4)\big((d_1 + d_2 - 2)^2 - 2\big) - (D - 5)(d_1 + d_2 - 2) - 6.
\end{array}
\end{equation}
\end{theorem}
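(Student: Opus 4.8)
The plan is to reduce the statement to a combination of three ingredients: a genericity argument showing that the only singularities of $f$ are ordinary cusps and simple nodes, a projective compactification that replaces $f$ by a generic linear projection of a surface, and an enumerative computation of the two invariants $\kappa(f)$ and $\nu(f)$ via classical double-point and cuspidal-edge formulas. First I would set up the compactification: writing $f=(f_1,f_2)$ with $\deg f_i=d_i$, consider the closure $S\subset\P^3$ (or a suitable weighted projective space) of the graph $\{(x,y,u,v): u=f_1(x,y),\,v=f_2(x,y)\}$, and realize $f$ as the restriction to the affine chart of the projection $\pi\colon S\to\P^2$, $(x,y,u,v)\mapsto(u,v)$, forgetting the source coordinates. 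For generic coefficients of $f$ (an open dense condition in $\PdCt$), the surface $S$ is smooth away from infinity and the projection is a generic one, so Theorem~\ref{thm:Whitney-cusps} together with the results of Gaffney--Mond~\cite{gaffney1991cusps} and Ciliberto--Flamini~\cite{ciliberto2011branch} applies: the discriminant $\cDf$ is a reduced curve whose only singularities are ordinary cusps and simple nodes. This establishes the qualitative part of the statement.

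For the count of cusps $\kappa(f)$, I would use the fact that cusps of $\pi$ correspond to the locus $\Sigma^{1,1}$ in the Thom--Boardman stratification — equivalently, the ramification curve $\cC_f$ (defined by the Jacobian $\tfrac{\partial f_1}{\partial x_1}\tfrac{\partial f_2}{\partial x_2}-\tfrac{\partial f_1}{\partial x_2}\tfrac{\partial f_2}{\partial x_1}=0$) is tangent to the fibers of $\pi$. Restricting $\pi$ to $\cC_f$ gives a map $\cC_f\to\P^2$ whose own ramification points are exactly the cusps; counting them is a Riemann--Hurwitz / Plücker-type computation once one knows the degree and genus of (the closure of) $\cC_f$ and the degree of its image. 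The Jacobian is a polynomial of degree $d_1+d_2-2$, so $\cC_f$ has degree $d_1+d_2-2$ as a plane curve in the source, and chasing the class of the image curve through the projection yields the stated polynomial $\kappa(f)=d_1^2+d_2^2+3d_1d_2-6d_1-6d_2+7$. The appearance of $D=\gcd(d_1,d_2)$ in the node formula signals that the behavior of $\cC_f$ and of $f$ \emph{at infinity} — the intersection of the closures of the coordinate curves with the line at infinity, whose combinatorics depend on $D$ — must be analyzed carefully and its contribution subtracted off.

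For the node count $\nu(f)$, the approach is the standard double-point formula: the number of nodes of $\cDf$ equals the number of (unordered) pairs of distinct points of $\cC_f$ with the same image under $\pi$, i.e. half the degree of the double-point cycle of $\pi|_{\cC_f}$, minus the contributions already accounted for by cusps (each cusp ``absorbs'' a predictable amount of the double-point scheme) and minus contributions from the points at infinity. Concretely, one computes the class of the image discriminant curve $\cDf$ — its degree is controlled by $d_1 d_2$ (a generic fiber has $d_1 d_2$ points by Bézout, and the discriminant is the branch locus of a degree-$d_1 d_2$ covering) — uses the genus formula $g(\widetilde{\cDf})$ together with $\kappa$ and $\nu$ to relate arithmetic and geometric genus, and solves for $\nu(f)$. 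The main obstacle I anticipate is precisely the \textbf{bookkeeping at infinity}: the projective closure $S$ is in general singular along the line at infinity, the closures of $\cC_f$ and $\cDf$ acquire singularities there, and one must compute the local intersection multiplicities and the number of branches through these points — which is exactly where the $\gcd$ enters — in order to correctly subtract the infinite contributions from the naive global enumerative numbers. Everything else is a routine, if lengthy, application of Plücker formulas and the double-point formula.
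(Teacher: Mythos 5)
The qualitative half of your argument has a genuine gap. You pass to the closure $S$ of the graph (which, incidentally, sits in $\P^4$ rather than $\P^3$) and assert that for generic coefficients of $f$ the tautological projection $\pi\colon S\to\P^2$ ``is a generic one'', so that Theorem~\ref{thm:Whitney-cusps} together with \cite{gaffney1991cusps} and \cite{ciliberto2011branch} applies. But the Ciliberto--Flamini result concerns a generic \emph{linear} projection of a fixed surface in $\P^4$: to invoke it you must be free to move the centre of projection, and a projection from a moved centre restricts on the affine chart to a rational (not polynomial) map; even the polynomial substitutions available do not stay inside $\PdCt$ when $d_1\neq d_2$, since adding a multiple of $f_1$ to the second coordinate destroys the bound $\deg f_2\leq d_2$. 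So genericity of the coefficients inside the constrained family $\PdCt$ does not translate into genericity of the projection, and this translation is precisely the nontrivial content of the statement. The paper's proof replaces it by a transversality theorem in the space of polynomial maps itself: an analogue of Thom's Transversality Theorem for the jet extension on $\PdCt$ (\cite[Theorem 2.3]{FJR19}), giving transversality of $j^df$ to the Thom--Boardman strata for generic $f$ with the degree bounds kept fixed. Without such a statement (or an argument that graph surfaces with their tautological projections behave like generic projections), your first step does not go through.

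On the enumerative side the two routes are close in spirit: you count cusps via Pl\"ucker/Riemann--Hurwitz data of $\cCf\to\cDf$, whereas the paper counts them as a B\'ezout intersection number of the image of the jet extension with the cusp stratum of complementary dimension in jet space; in both cases $\nu(f)$ is then extracted from the genus of $\cDf$ together with $\kappa(f)$, and your identifications of $\deg\cCf=d_1+d_2-2$ and of the covering degree $d_1d_2$ agree with the paper. However, the ``bookkeeping at infinity'' you defer is not a routine subtraction: equality in these intersection-theoretic counts requires ruling out excess contributions at infinity, and in the paper this rests on a concrete fact established for generic $f$, namely that the projectivization of $\cDf$ is smooth along the line at infinity (\cite[Theorem 4.5]{farnik2020whitney}); the $\gcd$ does enter through the behaviour of $\cDf$ at infinity, as you anticipate, but that smoothness (or the precise local contributions) must be proved, not assumed, before the stated formulas follow.
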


\begin{proof}[Sketch of proof]
The proof analyses the Thom–Boardman strata in the jet space $J^d(2,2)$ ($d:=\max(d_1,d_2)$). The stratum of largest dimension corresponds to the discriminant $\cDf$, whereas the smaller ones describe the corresponding singularities such as cusps and nodes. 
One approach is to show an analogue of the Thom Transversality Theorem on $J^d(2,2)$ (~\cite[Theorem 2.3]{FJR19}). 
Consequently, a generic map in $\PdCt$ has only ordinary cusps and simple nodes as singularities. Computing $\kappa(f)$ amounts to counting the number of intersection points of $\Ima j^k$ with the largest stratum in the jet space of complementary dimension. This is just an application of B\'ezout Theorem if there is no solutions at infinity. Thanks to $f$ being generic, the projectivization of $\cDf$ in $\P^2$ is smooth at the line at infinity~\cite[Theorem 4.5]{farnik2020whitney}. 
The number $\nu(f)$ can then be deduced from the genus and number of cusps using the Riemann-Roch formula.
\end{proof}

Slightly reformulated, Theorem~\ref{thm:polynomial-simple-cusps1} implies  that the subset of polynomial maps in\\ 
$\PdCtt$, whose cusps and nodes are given as in Equation~\eqref{eq:cusps_nodes}, contains a Zariski open subset. One thus might ask whether this set is itself a Zariski open. Note that this becomes false for $\PdCtp$~\cite{FJM18} for some $p\geq 3$. Very recently, Farnik and Jelonek showed that this is indeed the case for maps $\Cttt$.

\begin{theorem}[{{\cite[Theorem 1.1]{farnik2022generic}}}]\label{thm:generic_top-type_planar}
For every $d_1,d_2\in\N$, the set of polynomial maps $f\in\PdCt$, whose numbers $\kappa(f)$ and $\nu(f)$ satisfy~\eqref{eq:cusps_nodes} form a Zariski open subset $\cZ$. 
Furthermore, for any $f,g\in\PdCt$, if $f\in\cZ$, then $f$ is topologically equivalent to $g$ if and only if $g\in\cZ$.
\end{theorem}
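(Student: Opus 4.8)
The plan is to establish two things: that $\cZ$ is a Zariski open subset of $\PdCt\cong\C^N$, and that $\cZ$ is precisely one topological equivalence class, saturated under topological equivalence. For the openness I would write $\cZ$ as the intersection of the loci where: (a) the critical curve $\cC_f=\{\det\operatorname{Jac}_x f=0\}$ is smooth of the expected degree $d_1+d_2-2$; (b) $f|_{\cC_f}$ is birational onto its image and the projective closure $\overline{\cD_f}\subset\P^2$ is smooth along the line at infinity; and (c) every affine singular point of $\cD_f$ is an ordinary cusp or a simple node. Conditions (a) and (b) are visibly Zariski open, and Theorem~\ref{thm:polynomial-simple-cusps1} guarantees that the common locus of (a)--(c) is nonempty and that $\kappa$, $\nu$ take there the values~\eqref{eq:cusps_nodes}. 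The content is that imposing (c) on top of (a)--(b) remains an open condition: under (a)--(b) the normalization of $\overline{\cD_f}$ is the fixed smooth curve $\cC_f$, so the genus formula pins the total delta invariant $\sum_p\delta_p(\cD_f)$ to a constant, while the number of cusps --- the $\Sigma^{1,1,0}$, i.e.\ $A_{\ge 2}$, points --- equals the fixed intersection number from the proof of Theorem~\ref{thm:polynomial-simple-cusps1}; hence $\cZ$ is exactly the locus on which the number of singular points of $\cD_f$ is maximal, and since the singular loci of the discriminants form a closed subscheme of the total space finite over the stratum defined by (a)--(b), the number of geometric points in a fibre is lower semicontinuous and its maximal locus --- which automatically forces all singularities to be the simplest ones, in the prescribed proportion --- is Zariski open. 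As the excerpt notes, this last step genuinely uses $p=2$ and fails for $p\ge 3$ by~\cite{FJM18}.

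For the claim that $\cZ$ is a single topological type, the starting point is that $\cZ$, being the complement of a proper algebraic subset of the affine space $\C^N$, is connected, even path connected in the Euclidean topology, so any $f,g\in\cZ$ lie on a common irreducible curve $T\subset\cZ$. Next one checks that every member of $\cZ$ is a \emph{proper} map: non-properness of a planar polynomial map comes with a degeneration of $\overline{\cD_f}$ at infinity which, via the genus count above, drops the affine cusp/node count strictly below~\eqref{eq:cusps_nodes}, so non-proper maps are excluded from $\cZ$. Proper planar maps are not blowup-like (see the discussion before Theorem~\ref{thm:Jelonek-proper-finite}), so Sabbah's construction behind Theorem~\ref{thm:fin-many-Sab} applies over $T$ and in fact simplifies: one forms the projective closure of the graph as in~\eqref{eq:sequence-1}, obtaining a proper, non-blowup-like sequence $\overline{\cG}\to T\times\P^2\to T$ without the auxiliary blowups of diagram~\eqref{eq:diag-Sabbah}, and equips source and target with compatible Whitney stratifications; since the numerical data is constant along $T$ ($\deg f_i=d_i$, $\cC_f$ smooth, $\kappa(f)$ and $\nu(f)$ fixed, nothing at infinity), these stratifications can be taken equisingular over $T$, so that $T$ is a single stratum of the base, and Thom's Second Isotopy Lemma (Theorem~\ref{thm:Thom-lemma}) gives that $f$ and $g$ are topologically equivalent.

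For saturation, suppose $f\in\cZ$ and $\psi\circ f=g\circ\varphi$ with $\varphi,\psi$ homeomorphisms of $\C^2$. A holomorphic map of the plane is a local homeomorphism exactly where it is a local biholomorphism, i.e.\ exactly off its critical locus, so $\varphi(\cC_f)=\cC_g$ and hence $\psi(\cD_f)=\psi(f(\cC_f))=g(\varphi(\cC_f))=\cD_g$; thus $\psi$ restricts to a homeomorphism of pairs $(\C^2,\cD_f)\to(\C^2,\cD_g)$. The embedded topological type of a plane-curve singularity is recorded by its link in $S^3$ --- unknot for a smooth point, Hopf link for a simple node, trefoil for an ordinary cusp, and a link distinct from these three for anything worse --- so $\cD_g$ has exactly $\kappa(f)$ cusps, $\nu(f)$ nodes and no other singularity, and therefore $g\in\cZ$.

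I expect the main obstacle to be the second paragraph: showing that constancy of the discrete data $(\kappa,\nu,\text{degrees})$ along $T$ forces genuine \emph{topological} equisingularity of the universal family --- equivalently, that the Whitney stratifications can be chosen so that $T$ is a single stratum --- and controlling the geometry at infinity well enough to apply the Second Isotopy Lemma. The auxiliary fact that every map in $\cZ$ is proper, so that the family over $T$ is directly not blowup-like and the plain projective closure suffices, is the simplification that makes this reachable; without it one is pushed into the full blowup machinery of diagram~\eqref{eq:diag-Sabbah}.
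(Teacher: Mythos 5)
Your third paragraph (saturation) is fine and is essentially the standard invariance argument, and your overall shape --- openness by semicontinuity of the singularity count, one topological type via an isotopy argument in a family, saturation via topological invariance of cusps and nodes --- is the right one. But the central step, showing that all of $\cZ$ is a single topological type, rests on two claims you do not justify and which carry the real content. First, you assert that every member of $\cZ$ is proper because ``non-properness comes with a degeneration of $\overline{\cD_f}$ at infinity''; non-properness is a statement about the behaviour of $f$ at infinity, not about the closure of its discriminant, and no argument is given linking $\cS_f\neq\emptyset$ to a drop in the affine cusp/node count. (Properness of all members of $\cZ$ is a \emph{consequence} of the theorem, since properness is preserved under topological equivalence, so invoking it here is close to circular unless proved independently.) Second, even granting properness of the affine maps, it does not follow that the projective closure of the graph over a curve $T\subset\cZ$ is a non--blowup-like stratified family: the collapsing and indeterminacy on the boundary divisor is exactly why Sabbah needed the blowups of~\eqref{eq:diag-Sabbah}, and properness of $f:\C^2\to\C^2$ does not remove it. Finally, the step you yourself flag --- that constancy of the discrete data along $T$ lets one ``take the stratifications equisingular over $T$'' so that $T$ is one stratum --- is precisely the non-obvious point. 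The paper's proof does not argue this way: it takes the explicit three-stratum decomposition $\{Y\setminus\cD_\Phi,\ \cD_\Phi\setminus\Sing(\cD_\Phi),\ \Sing(\cD_\Phi)\}$ of the target of the universal map $\Phi(f,x)=(f,f(x))$ over the whole open set $U$, verifies it is Whitney (using that the singularities are uniformly simple nodes and cusps), and then proves that the projection $U\times\C^2\to U$ is a fibration by means of the Dinh--Jelonek theorem on Thom isotopy and stratified generalized critical values for \emph{non-proper} maps~\cite{dhinh2021thom}, before applying Thom's Second Isotopy Lemma (Theorem~\ref{thm:Thom-lemma}) to an extension of the projection to the projective closure. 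Your plan has no substitute for this tool.

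There is also a gap in the openness part. The theorem's $\cZ$ is defined purely by the equalities~\eqref{eq:cusps_nodes}; you instead prove openness of the locus cut out by your conditions (a)--(c) and show the counts are attained there. To conclude that $\cZ$ itself is Zariski open you must rule out maps outside (a)--(b) (singular or reducible $\cC_f$, non-birational $f|_{\cC_f}$, degenerate behaviour of $\overline{\cD_f}$ at infinity) that nevertheless realize the same numbers. The paper does this by first proving that for \emph{every} $f\in\PdCt$ the quantities in~\eqref{eq:cusps_nodes} are upper bounds for the numbers of nodal and cuspidal singularities, and that attaining them forces the singularities to be simple (a Rouch\'e/conservation-of-number argument); your semicontinuity argument, being carried out only over the stratum where (a)--(b) already hold, does not deliver this global upper bound, so the identification of your open locus with $\cZ$ is incomplete.
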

\begin{proof}[Sketch of proof]
For any map $f\in\PdCtt$, the values  $\nu(f)$ and $\kappa(f)$ appearing in~\eqref{eq:cusps_nodes} are upper bounds for the number of its singularities of nodal and cuspidal type, respectively. Furthermore, if these bounds are reached, then singularities of respective types are simple. This follows from Rouch\'e's Theorem~\cite{chirka1997complex}. 
Now, consider the Zariski open $U\subset\PdCtt$ consisting of all maps $f$ whose critical locus $\cC_f$ is irreducible, has maximal degrees, and such that the bounds in~\eqref{eq:cusps_nodes} is reached. If $\Phi:U\times\C^2\longrightarrow U\times\C^2$ is the map $(f,~x)\longmapsto (f,~f(x))$, and $S:=\{Y_1,Y_2,Y_3\}$ is a decomposition of $Y$ given by 
\[
\{Y_1,Y_2,Y_3\} = \{Y\setminus \cD_{\Phi},~\cD_{\Phi}\setminus\Sing(\cD_{\Phi}),~\Sing(\cD_{\Phi})\},
\] then $Y$ is a Whitney stratification. The other non-obvious fact to show is that the projection $\pi:~U\times\C^2\longrightarrow U$, $(f,~f(x))\longmapsto f$ is fibration. The proof uses the newly-established effective method for computing a superset of the bifurcation set of polynomial dominant maps~\cite{dhinh2021thom}. For $d_1=d_2$, the proof follows by applying Thom's Second Isotopy Lemma on an extension of $\pi$ to the projective closure of its domain. 
The case $d_1\neq d_2$ follows similar steps.
\end{proof}

The theorem above has two consequences. First, to determine whether a polynomial map $\Cttt$ has a generic toplogical type, it is enough to compute its singular points. The second consequence is that such polynomial maps satisfying~\eqref{eq:cusps_nodes} are topologically stable after perturbations. Stable singularities are described as well for polynomial maps $\C^3\longrightarrow\C^3$; each such singularity is one of the below three types $\cA_3$, $\cA_2\cA_1$, and $\cA_1^3$ depicted in Figure~\ref{fig:sing-3d}.\\ 
 
{\scriptsize
 
\begin{tabular}[htb]{ccc}
$\cA_3$ & $\cA_2\cA_1$ & $\cA_1^3$ \\
$
(z,y,z)\mapsto (x,y,z^4 + y^2z + xz)
$
&
$
\begin{cases}
	(x_1,y_1,z_1) & \mapsto (x_1,y_1,x_1^3 + y_1x_1)\\
	(x_2,y_2,z_2) & \mapsto (x_2^2,y_2,z_2)
\end{cases}
$
&
$
\begin{cases}
	(x_1,y_1,z_1) & \mapsto (x_1,y_1,z_1^2)\\
	(x_2,y_2,z_2) & \mapsto (x_2,y_2^2,z_2)\\
	(x_3,y_3,z_3) & \mapsto (x_2^2,y_2,z_2)
\end{cases}
$
\end{tabular}
}

\begin{figure}[htb]
\center
\tikzset{every picture/.style={line width=0.75pt}} %set default line width to 0.75pt        

\begin{tikzpicture}[x=0.75pt,y=0.75pt,yscale=-1,xscale=1]
%uncomment if require: \path (0,292); %set diagram left start at 0, and has height of 292

%Curve Lines [id:da5347306818721232] 
\draw    (92.03,138.08) .. controls (88.29,127.4) and (84.38,118.06) .. (86.04,97.02) ;
%Curve Lines [id:da8074601880473765] 
\draw    (70.7,133.45) .. controls (77.06,123.12) and (81.18,113.87) .. (86.04,97.02) ;
%Curve Lines [id:da5937714743382356] 
\draw    (181.83,110.31) .. controls (178.47,103.19) and (175.51,93.49) .. (166.87,87.05) ;
%Curve Lines [id:da8646791689785] 
\draw [color={rgb, 255:red, 245; green, 166; blue, 35 }  ,draw opacity=1 ][line width=1.5]    (113.61,121.4) .. controls (173.83,78.38) and (204.83,81.38) .. (86.04,97.02) ;
%Curve Lines [id:da3945258243374121] 
\draw    (93.27,162.28) .. controls (101.83,153.74) and (105.84,148.22) .. (113.61,121.4) ;
%Curve Lines [id:da13838616970119377] 
\draw    (122.84,170.11) .. controls (113.94,160.96) and (111.94,142.45) .. (113.61,121.4) ;
%Curve Lines [id:da23466095195854908] 
\draw    (181.83,110.31) .. controls (188.94,120.99) and (153.85,137.7) .. (122.84,170.11) ;
%Straight Lines [id:da32147838112467286] 
\draw    (93.27,162.28) -- (111.49,152.79) ;
%Straight Lines [id:da34458708208385813] 
\draw    (70.7,133.45) -- (85.29,130.24) ;
%Straight Lines [id:da6374491883198085] 
\draw    (92.03,138.08) -- (106.25,133.8) ;
%Shape: Circle [id:dp11833481022926129] 
\draw  [color={rgb, 255:red, 190; green, 35; blue, 35 }  ,draw opacity=1 ][fill={rgb, 255:red, 190; green, 35; blue, 35 }  ,fill opacity=1 ] (164.84,87.05) .. controls (164.84,85.94) and (165.75,85.03) .. (166.87,85.03) .. controls (167.98,85.03) and (168.89,85.94) .. (168.89,87.05) .. controls (168.89,88.17) and (167.98,89.08) .. (166.87,89.08) .. controls (165.75,89.08) and (164.84,88.17) .. (164.84,87.05) -- cycle ;

%Straight Lines [id:da5512742987800966] 
\draw [color={rgb, 255:red, 245; green, 166; blue, 35 }  ,draw opacity=1 ][line width=1.5]    (459.38,127.94) -- (479.8,113.46) ;
%Shape: Parallelogram [id:dp47615217810096067] 
\draw  [fill={rgb, 255:red, 209; green, 205; blue, 205 }  ,fill opacity=0.4 ] (458.56,95.78) -- (459.66,160.11) -- (419.07,160.66) -- (417.97,96.33) -- cycle ;
%Shape: Parallelogram [id:dp9407412209622075] 
\draw  [fill={rgb, 255:red, 209; green, 205; blue, 205 }  ,fill opacity=0.4 ] (498.64,95.5) -- (499.74,159.84) -- (458.88,160.39) -- (457.78,96.05) -- cycle ;
%Shape: Parallelogram [id:dp503441424370884] 
\draw  [fill={rgb, 255:red, 74; green, 74; blue, 74 }  ,fill opacity=0.4 ] (435.9,113.46) -- (517.33,113.46) -- (477.82,142.44) -- (396.39,142.44) -- cycle ;
%Shape: Parallelogram [id:dp2263200644994917] 
\draw  [fill={rgb, 255:red, 255; green, 255; blue, 255 }  ,fill opacity=0.4 ] (439.07,170.98) -- (438.84,106.24) -- (479.15,85.19) -- (479.38,149.94) -- cycle ;
%Straight Lines [id:da51442196407531] 
\draw [color={rgb, 255:red, 245; green, 166; blue, 35 }  ,draw opacity=1 ][line width=1.5]    (416.81,127.95) -- (498.57,127.95) ;
%Straight Lines [id:da9637565753378524] 
\draw [color={rgb, 255:red, 245; green, 166; blue, 35 }  ,draw opacity=1 ][line width=1.5]    (459.75,160.19) -- (459.01,95.7) ;
%Straight Lines [id:da3076891410655326] 
\draw [color={rgb, 255:red, 255; green, 203; blue, 117 }  ,draw opacity=1 ][line width=1.5]    (460.12,160.15) -- (459.38,144.09) ;
%Straight Lines [id:da7137335806937783] 
\draw [color={rgb, 255:red, 255; green, 203; blue, 117 }  ,draw opacity=1 ][line width=1.5]    (416.81,127.95) -- (438.62,127.79) ;
%Straight Lines [id:da49582122108596793] 
\draw [color={rgb, 255:red, 255; green, 203; blue, 117 }  ,draw opacity=1 ][line width=1.5]    (459.38,127.94) -- (498.13,127.94) ;
%Straight Lines [id:da5436328534686004] 
\draw [color={rgb, 255:red, 255; green, 203; blue, 117 }  ,draw opacity=1 ][line width=1.5]    (459.38,127.94) -- (458.65,95.74) ;
%Straight Lines [id:da9165539742085664] 
\draw [color={rgb, 255:red, 255; green, 203; blue, 117 }  ,draw opacity=1 ][line width=1.5]    (438.97,142.43) -- (459.38,127.94) ;
%Shape: Circle [id:dp28456309731841445] 
\draw  [color={rgb, 255:red, 190; green, 35; blue, 35 }  ,draw opacity=1 ][fill={rgb, 255:red, 190; green, 35; blue, 35 }  ,fill opacity=1 ] (457.08,128.09) .. controls (457.08,126.97) and (457.99,126.06) .. (459.11,126.06) .. controls (460.23,126.06) and (461.14,126.97) .. (461.14,128.09) .. controls (461.14,129.21) and (460.23,130.11) .. (459.11,130.11) .. controls (457.99,130.11) and (457.08,129.21) .. (457.08,128.09) -- cycle ;

%Straight Lines [id:da11262195568447753] 
\draw    (254.4,87.47) -- (253.62,153.73) ;
%Straight Lines [id:da4672003168855694] 
\draw    (292.41,100.37) -- (291.62,167.77) ;
%Curve Lines [id:da7913969191336533] 
\draw    (334.33,155.17) .. controls (318.78,154.21) and (294.3,156.66) .. (291.62,167.77) .. controls (288.88,155.89) and (269.16,154.69) .. (253.62,153.73) ;
%Straight Lines [id:da6380584267314839] 
\draw    (334.28,120.58) -- (334.33,155.17) ;
%Straight Lines [id:da38537526127523236] 
\draw    (256.46,104.65) -- (249.66,136.21) ;
%Shape: Parallelogram [id:dp7101697623431953] 
\draw  [fill={rgb, 255:red, 217; green, 217; blue, 217 }  ,fill opacity=1 ] (313.92,149.92) -- (232.41,147.55) -- (262.93,104.04) -- (344.45,106.41) -- cycle ;
%Curve Lines [id:da3247811906745539] 
\draw [color={rgb, 255:red, 0; green, 0; blue, 0 }  ,draw opacity=1 ][fill={rgb, 255:red, 255; green, 255; blue, 255 }  ,fill opacity=1 ][line width=0.75]    (293.19,101.87) .. controls (299.11,90.57) and (312.25,92.1) .. (334.33,92.51) .. controls (334.15,97.31) and (334.99,117.21) .. (334.28,120.58) .. controls (304.02,121.24) and (298.61,120.3) .. (291.82,130.98) .. controls (292.42,121.95) and (264.39,117.85) .. (254.05,115.96) .. controls (254.01,106.92) and (254.4,105.11) .. (254.4,87.47) .. controls (280.45,90.33) and (290.24,91.48) .. (293.19,101.87) -- cycle ;
%Straight Lines [id:da3731482224075994] 
\draw    (293.19,101.87) -- (292.41,129.24) ;
%Curve Lines [id:da2048730990312566] 
\draw [color={rgb, 255:red, 245; green, 166; blue, 35 }  ,draw opacity=1 ][fill={rgb, 255:red, 255; green, 255; blue, 255 }  ,fill opacity=0 ][line width=1.5]    (334.28,120.58) .. controls (317.12,121.21) and (297.01,118.82) .. (291.62,131.39) .. controls (295.43,123.51) and (267.59,117.83) .. (254.05,115.96) ;

%Shape: Circle [id:dp1935513394286208] 
\draw  [color={rgb, 255:red, 190; green, 35; blue, 35 }  ,draw opacity=1 ][fill={rgb, 255:red, 190; green, 35; blue, 35 }  ,fill opacity=1 ] (290.38,129.24) .. controls (290.38,128.12) and (291.29,127.22) .. (292.41,127.22) .. controls (293.52,127.22) and (294.43,128.12) .. (294.43,129.24) .. controls (294.43,130.36) and (293.52,131.27) .. (292.41,131.27) .. controls (291.29,131.27) and (290.38,130.36) .. (290.38,129.24) -- cycle ;

% Text Node
%\draw (124.8,184.57) node [anchor=north west][inner sep=0.75pt]  [font=\scriptsize]  {$fx$};
%% Text Node
%\draw (284.8,184.57) node [anchor=north west][inner sep=0.75pt]  [font=\scriptsize]  {$fx$};
% Text Node
%\draw (452.8,184.57) node [anchor=north west][inner sep=0.75pt]  [font=\scriptsize]  {$fx$};

\end{tikzpicture}
\caption{The (intersections of) discriminants of the multi-germs in each of the singularity types.}\label{fig:sing-3d}
\end{figure}

 Using similar tools as for Theorem~\ref{thm:polynomial-simple-cusps1}, the authors provide the following generalization.
\begin{theorem}[\cite{farnik2021finite}]\label{thm:polynomial-stable-singularities-dim3}
Let $d_1,d_2,d_3\in\N$ such that $\gcd(d_i,d_j)\leq 2$ for $1\leq i\leq j \leq 3$ and $\gcd(d_1,d_2,d_3) = 1$. Then, every generic polynomial map $f\in \PdCth$ has only singularities of types $\cA_3$, $\cA_2\cA_1$, and $\cA_1^3$. Furthermore, their respective numbers $\#\cA_1^3(f)$, $\#\cA_2\cA_1(f)$, and $\#\cA_3(f)$ are computed in terms of $d_1$, $d_2$ and $d_3$ (see~\cite[Theorem 3.1]{farnik2021finite}).
\end{theorem}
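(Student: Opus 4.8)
The plan is to run the machine behind Theorem~\ref{thm:polynomial-simple-cusps1} one dimension higher, inside the jet space $J^d(3,3)$ with $d:=\max(d_1,d_2,d_3)$. For equidimensional maps of $3$-folds the only stable mono-germs are regular points, folds ($\cA_1$), cusps ($\cA_2$) and swallowtails ($\cA_3$), corresponding to the Thom--Boardman strata $\Sigma^{1}$, $\Sigma^{1,1}$ and $\Sigma^{1,1,1}$; besides these, the only stable multi-germs whose target image has codimension at most three are the transverse meeting $\cA_2\cA_1$ of a cuspidal-edge sheet with a fold sheet and the ordinary triple point $\cA_1^3$ of three fold sheets. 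Among all of these, the loci $\cA_3$, $\cA_2\cA_1$ and $\cA_1^3$ are exactly the ones of codimension three, hence the only ones occurring at isolated points for a generic map. The first step is to establish the analogue of the Thom Transversality Theorem~\ref{thm:Thom-transv} for the affine family $\PdCth$, in the spirit of~\cite[Theorem 2.3]{FJR19} and~\cite{farnik2020whitney}: one checks that the relevant $k$-jet evaluation maps and their multi-jet analogues ($k\le 3$), out of $\PdCth\times(\C^3)^{(k)}$, are submersions onto the corresponding jet bundles, so that the set of $f\in\PdCth$ whose jet and multi-jet extensions are transverse to $\Sigma^{1}$, $\Sigma^{1,1}$, $\Sigma^{1,1,1}$ and to the diagonals cutting out $\cA_2\cA_1$ and $\cA_1^3$ is Zariski dense. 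This already yields the qualitative part of the statement, including that the three singularity types that occur are simple.

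For the enumerative part one passes to the closure $\overline{\mathcal{G}}\subset\P^3\times\P^3$ of the graph of $f$, with the induced morphism $\overline{f}\colon\overline{\mathcal{G}}\to\P^3$. The hypotheses $\gcd(d_i,d_j)\le 2$ and $\gcd(d_1,d_2,d_3)=1$ play here the role that $\gcd(d_1,d_2)$ plays in the planar case: for generic $f$ they force $\overline{\mathcal{G}}$ to be smooth along the plane at infinity, the critical surface $\cCf$ to be irreducible, and each degeneracy cycle below to meet infinity in the expected reduced and transverse fashion, so that no component escapes there. Then $\#\cA_3(f)$ is the number of points of $\Ima j^d f$ lying on the codimension-three Thom--Boardman stratum $\Sigma^{1,1,1}\subset J^d(3,3)$; this is the Thom polynomial of $\Sigma^{1,1,1}$ evaluated on the Chern classes of $\overline{f}^{*}T\P^3-T\overline{\mathcal{G}}$, and, there being no solutions at infinity, B\'ezout's Theorem turns it into an explicit polynomial in $d_1,d_2,d_3$. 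The numbers $\#\cA_2\cA_1(f)$ and $\#\cA_1^3(f)$ require the multiple-point calculus on $\overline{\mathcal{G}}$: one first computes the class, degree and arithmetic genus of the cuspidal curve $\Sigma^{1,1}(f)$ and of the double-point curve of $\overline{f}$ by iterated residual and Porteous formulas, then extracts $\#\cA_1^3(f)$ from a triple-point formula of Kleiman--Marar--Mond type (equivalently, from the Euler characteristic of the double-point curve via Riemann--Hurwitz once $\#\cA_2\cA_1(f)$ is known) and $\#\cA_2\cA_1(f)$ from the intersection number of the double-point curve with the cuspidal curve $\Sigma^{1,1}(f)$, subtracting in each case the contributions supported at infinity that the gcd hypotheses annihilate. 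This is the higher-dimensional counterpart of deducing $\nu(f)$ from the genus and from $\kappa(f)$ in Theorem~\ref{thm:polynomial-simple-cusps1}, and it produces the formulas recorded in~\cite[Theorem 3.1]{farnik2021finite}.

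The main obstacle is twofold. On the transversality side, the point is that genericity must be realized \emph{within} the constrained family $\PdCth$ rather than inside $\CXY$: one must verify that restricting to bounded Newton data still leaves enough freedom to push a jet off every Thom--Boardman stratum and off the multi-germ diagonals, and it is precisely here that the arithmetic conditions on the $d_i$ become indispensable --- dropping any of them reintroduces forced tangencies along the plane at infinity that wreck both transversality and the finiteness of the counts. On the enumerative side, the bookkeeping of the multiple-point cycles is substantially heavier than in the plane, since $\Sigma^{1,1}(f)$ and the double-point curve are now curves on a surface rather than points on a curve; this needs the full second-order Thom-polynomial package together with a careful excess-intersection analysis along the plane at infinity. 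Once these two points are settled, the closed formulas for $\#\cA_3(f)$, $\#\cA_2\cA_1(f)$ and $\#\cA_1^3(f)$ follow from a long but mechanical Chern-class computation on $\overline{\mathcal{G}}$.
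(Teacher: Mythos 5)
Your sketch follows essentially the same route the paper indicates: the survey gives no independent proof of this statement, citing \cite{farnik2021finite} and remarking only that the tools are those of Theorem~\ref{thm:polynomial-simple-cusps1}, namely a transversality theorem in the jet space valid within the constrained polynomial family, identification of the codimension-three stable mono- and multi-germs ($\cA_3$, $\cA_2\cA_1$, $\cA_1^3$), and an intersection-theoretic count controlled along the hyperplane at infinity. Your outline matches that blueprint, so it is in line with the paper's (cited) approach.
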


\subsubsection{Counting singularities using polyhedra}\label{sss:polyhedral-singularities}  In this part, we present an extension of the problems solved above to some families in $\PdCt$. 

We have the following combinatorial description on the numbers $\kappa(f)$ and $\nu(f)$ of singularities of a generic map $f\in\PdCt$ in terms of the Newton polytope. It is shown in~\cite{farnik2020whitney} that the set of critical points $\cCf$ form an algebraic curve of degree $d_1+d_2-2$, and its discriminant $\cDf$ is a curve of degree $d_1(d_1+d_2 - 2)$. Then, a simple computation shows that the Newton polytopes $\Sigma(f):=\NP(\cCf)$ and $\Delta(f):=\NP(\cDf)$ are triangles satisfying the following
\begin{align*}
\Sigma(f) = & \conv\big\{(0,0),~(d_1+d_2 - 2,~0),~(0,~d_1+d_2 - 2) \big\}, \text{ and}\\ 
\Delta(f) = & \conv\big\{(0,0),~(d_1~(d_1+d_2 - 2),~0),~(0,~d_2~(d_1+d_2 - 2)~) \big\}.
\end{align*} Then, if $D:=\gcd(d_1,d_2)$, we obtain 
\begin{align}\label{eq:interior-degree}
\hcir\Delta(f) - \hcir\Sigma(f) =~&  \big(d_1 +~d_2 - 2\big) \big(d_1^2d_2 + d_1d_2^2 - 2~d_1~d_2 - 2~d_1 - 2~d_2 - D + 5\big)/2,
\end{align}, using Pick's formula. 
Here, we have $\hcir\Pi$ denote the number of integer points of the relative interior of a polytope $\Pi\subset\R^n$,
\[
\hcir\Pi:= \# (\Pi\setminus\partial \Pi)\cap\Z^n.
\] Next, the right hand side of~\eqref{eq:interior-degree} is equal to $\kappa(f) + \nu(f)$ as given in Theorem~\ref{thm:polynomial-simple-cusps1}. 
Hence, we obtain the following equality for generic maps $f\in\PdCt$
\begin{align*}
\hcir\Delta(f) - \hcir\Sigma(f) =~&  \kappa(f) + \nu(f).
\end{align*} It turns out that the above formula can be generalized as follows. 

\begin{theorem}[{{\cite[Theorem 2.19]{hilany2024polyhedral}}}]\label{thm:poly-type-discriminant}
There exists a large family $\mathscr{C}\subset\cInk$ such that for each $\bmA\in\mathscr{C}$, a generic map $f\in\CA$ has only simple nodes and ordinary cusps as singularities outside $\unze$, and their total number equals $\hcir\Delta(f) - \hcir\Sigma(f)$.
\end{theorem}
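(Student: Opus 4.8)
The strategy is to upgrade the computation sketched just before the statement — where the identity $\hcir\Delta(f) - \hcir\Sigma(f) = \kappa(f) + \nu(f)$ was verified for dense maps in $\PdCt$ by computing the Newton triangles of $\cC_f$ and $\cD_f$ explicitly — to the setting of an arbitrary Newton tuple $\bmA = (A_1, A_2) \in \cInt$ in a suitable family $\mathscr C$. The point is that the two ingredients of that computation both have polyhedral analogues. First, by Bernstein-type genericity (Theorem~\ref{th:Bernstein}) together with the Thom transversality technology of~\cite{FJR19,farnik2020whitney}, for $f$ generic in $\CA$ the critical locus $\cC_f$ is an irreducible curve whose Newton polygon $\Sigma(f) := \NP(\cC_f)$ is a fixed polygon $\Sigma(\bmA)$ depending only on $\bmA$ (obtained as a Minkowski-combinatorial operation on $A_1, A_2$, reflecting that the defining equation of $\cC_f$ is the $2\times 2$ Jacobian determinant $\partial_{x_1}f_1\,\partial_{x_2}f_2 - \partial_{x_2}f_1\,\partial_{x_1}f_2$), and likewise $\cD_f$ is an irreducible curve with $\NP(\cD_f) = \Delta(\bmA)$ a fixed polygon. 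Second, once the Newton polygons are pinned down, $\hcir$ of each is a purely combinatorial quantity, and the difference $\hcir\Delta(\bmA) - \hcir\Sigma(\bmA)$ is computed once and for all via Pick's formula (or Ehrhart reciprocity).

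The first block of steps, then, is: (i) identify the family $\mathscr C$ — the tuples $\bmA$ for which the relevant genericity statements hold; this should be a Zariski-open-dense-type condition ruling out, e.g., tuples whose summand polytopes are too degenerate, and should contain the "standard simplex" tuples so that Theorem~\ref{thm:polynomial-simple-cusps1} is recovered; (ii) prove the transversality statement over $\CA$, i.e.\ that a generic $f \in \CA$ has $j^d f$ transverse to all the relevant Thom--Boardman strata $\Sigma^{r_1,\dots,r_k} \subset J^d(2,2)$, so that its only singularities off $\unze$ are ordinary cusps and simple nodes — this is where one invokes the toric/Newton-polytope refinement of the Thom Transversality Theorem; (iii) compute $\Sigma(\bmA) = \NP(\cC_f)$ and $\Delta(\bmA) = \NP(\cD_f)$ combinatorially from $\bmA$, using that $\cC_f = \{\det \operatorname{Jac} f = 0\}$ and that $\cD_f$ is the image curve — the latter amounts to an elimination/resultant Newton-polytope computation, and this is where a Bernstein/Kushnirenko-style argument controls the polytope of the eliminant.

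The enumerative half runs parallel to the sketch of Theorem~\ref{thm:polynomial-simple-cusps1}: counting cusps $\kappa(f)$ as the number of transverse intersections of $\operatorname{im}(j^d f)$ with the codimension-matching cuspidal stratum, counting nodes $\nu(f)$ from the genus of the normalization of $\cD_f$ via adjunction/Riemann--Roch, and then assembling $\kappa(f) + \nu(f)$. The crux is to show the \emph{toric} identity
\[
\hcir\Delta(\bmA) - \hcir\Sigma(\bmA) \;=\; \kappa(f) + \nu(f),
\]
which in the simplex case was just Pick's formula applied to two triangles; in general one needs the genus formula for a curve on a toric surface (number of interior lattice points of the Newton polygon) and the fact that $\cD_f$, being a generic image curve, has only the predicted singularities and is otherwise "as smooth as its Newton polygon allows," including good behaviour along the toric boundary divisors — i.e.\ a Newton-nondegeneracy statement at infinity analogous to~\cite[Theorem 4.5]{farnik2020whitney}.

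\textbf{Main obstacle.} I expect the hard part to be step (iii) combined with the boundary control: pinning down $\Delta(\bmA) = \NP(\cD_f)$ \emph{exactly} (not just up to inclusion) for all $\bmA$ in the family, and showing that for generic $f$ the curve $\cD_f$ is Newton-nondegenerate, so that its geometric genus is literally $\hcir\Delta(\bmA)$ with no correction terms from the boundary of the polygon. Passing from the very symmetric simplex situation — where $\cC_f$ and $\cD_f$ are plane curves of pure degree and classical Plücker/Bézout bookkeeping applies verbatim — to honest toric surfaces requires replacing each degree-based count by its mixed-volume or lattice-point counterpart and verifying that no solutions escape to the toric boundary; controlling the image curve $\cD_f$ under elimination (its polytope, its singularities, its behaviour at the toric infinity) is the step where the polyhedral combinatorics and the transversality/enumeration must be matched carefully, and it is presumably what forces the restriction to the family $\mathscr C$ rather than all of $\cInt$.
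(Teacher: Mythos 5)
Your plan is essentially the paper's own proof: non-degeneracy of the pair $\bmA$ is used to get transversality of the jet map on $(\C^*)^2\times\CA$ to the Thom--Boardman strata, the curves $\cC_f$ and $\cD_f$ are compactified in the toric surfaces $X_\Sigma$ and $X_\Delta$ of their Newton polygons, and the count $\kappa(f)+\nu(f)=\hcir\Delta(f)-\hcir\Sigma(f)$ comes out of a lattice-point genus computation (the paper invokes Khovanskii's formulas for Betti numbers of toric hypersurfaces where you invoke adjunction/Riemann--Roch on the toric surface, which is the same bookkeeping). One small correction to your ``main obstacle'': the discriminant $\cD_f$ cannot itself be Newton non-degenerate in the torus, since it carries exactly the nodes and cusps you are counting; the paper instead proves smoothness, irreducibility and Newton non-degeneracy for the critical curve $\cC_f$, and then reads off the singularities and boundary behaviour of $\overline{\cD_f}$ by pushing $\overline{\cC_f}$ forward along the toric morphism $X_\Sigma\times X_\Delta\to X_\Delta$, which is the statement you should aim for in place of non-degeneracy of $\cD_f$.
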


\begin{proof}[Sketch of proof]
The proof follows closely that for Theorem~\ref{thm:polynomial-simple-cusps1}; we show that the non-degeneracy of the pair $\bmA$ implies that the map $F_{\bmA}:(\C^*)^2\times\CA\longrightarrow J^{d}(\C^2)$ is transversal to the Thom-Boardman strata. The compactification of $\cCf$ and $\cDf$ are taken in the toric varieties $X_\Sigma$ and $X_\Delta$ of the respective Newton polytopes $\NP(\cCf)$ and $\NP(\cDf)$. 
We then show that $\cCf$ is smooth, irreducible, and Newton non-degenerate for generic $f\in\CA$. 
Observing that the projection $(z,~w)\longmapsto w$ extends to a toric morphism $X_\Sigma\times X_\Delta\longrightarrow X_\Delta$, the singularities of $\overline{\cDf}$ are described from $ \overline{\cCf}$. Then, the number of singular points of $\cDf$ can be deduced thanks to Khovanskii's classical formulae for betti numbers of toric subvarieties~\cite{khovanskii1978newton}. 
\end{proof}

Recall from~\S\ref{sub:prel_newton} the definition of $\cInk$ for the $p$-tuples of lattice polytopes in $\R^n_{\geq 0}$.
A generalization of Theorem~\ref{thm:poly-type-discriminant} is still open. 

\begin{problem}\label{prb:singularities_polynomial_general}
Given two integers $n,p\in\N$, let $\bmA\in\cInk$. Describe the singularities of generic $f\in\CA$ in the vein of Theorem~\ref{thm:poly-type-discriminant}.
\end{problem}
The first step towards the above problem is to verify that Theorem~\ref{thm:polynomial-stable-singularities-dim3} admits a re-formulation in terms of Newton polytopes in the sense of Theorem~\ref{thm:poly-type-discriminant}.

\begin{conjecture}\label{con:singularities-lattice-generalized}
There are integers $m_1$, $m_2$, and $m_3$ that depend only on the respective singularity types $\cA_3$, $\cA_2\cA_1$, and $\cA_1^3$,, that satisfy the following. For any triple $\bmA\in \cIthth$ of integer polytpes, a singularity of a generic map $f\in\CA$ can either be of type $\cA_3$, $\cA_2\cA_1$ or $\cA_1^3$. 
Furthermore, if $\delta_0(f)$ measures the Milnor number of the singularity of $f$ at the origin, then we get
\begin{align*}
\hcir\Delta(f)~ -~\hcir\Sigma(f) =~& m_1~\#\cA_3(f)~+~m_2~\#\cA_2\cA_1(f)~+~ m_3~\#\cA_1^3(f) +\delta_0(f).
\end{align*}

\end{conjecture}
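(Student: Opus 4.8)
The plan is to transplant the proof of Theorem~\ref{thm:poly-type-discriminant} from the plane to three--dimensional source and target, feeding in the classification of stable singularities of maps $\Cttt$ from Theorem~\ref{thm:polynomial-stable-singularities-dim3} in place of Theorem~\ref{thm:polynomial-simple-cusps1}, and replacing the Pick computation on explicit triangles by a toric genus count. I would first single out a ``large'' subfamily $\mathscr{C}\subset\cIthth$ by imposing on $\bmA=(A_1,A_2,A_3)$ a non--degeneracy condition along all proper coherent faces, in the spirit of the condition underlying Theorem~\ref{thm:poly-type-discriminant}; this is meant to play, in the Newton--polyhedral world, the role of the coprimality hypotheses $\gcd(d_i,d_j)\le 2$ and $\gcd(d_1,d_2,d_3)=1$ of Theorem~\ref{thm:polynomial-stable-singularities-dim3}. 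For $\bmA\in\mathscr{C}$ I would then show that the jet evaluation map $F_{\bmA}\colon(\C^*)^3\times\CA\longrightarrow J^{d}(\C^3)$, with $d$ large enough to contain the relevant jets, is transversal to every Thom--Boardman stratum and, more finely, to the multi--jet strata cutting out the $\cA_3$, $\cA_2\cA_1$ and $\cA_1^3$ loci, using the polynomial transversality theorem of~\cite[Theorem 2.3]{FJR19}. Together with the normal forms of Theorem~\ref{thm:polynomial-stable-singularities-dim3}, this forces a generic $f\in\CA$ to have, away from the coordinate hyperplanes, only singularities of those three types, whatever happens over $\unze$ being collected into the correction term $\delta_0(f)$.

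Next, I would compactify $\cCf$ and $\cDf$ inside the toric threefolds $X_\Sigma$ and $X_\Delta$ attached to the polytopes $\Sigma(f)=\NP(\cCf)$ and $\Delta(f)=\NP(\cDf)$, and prove that for generic $f\in\CA$ the surface $\overline{\cCf}\subset X_\Sigma$ is irreducible, Newton non--degenerate and smooth, while the projection $(z,w)\mapsto w$ onto the target coordinates extends to a toric morphism $X_\Sigma\times X_\Delta\to X_\Delta$ whose restriction $\overline{\cCf}\to\overline{\cDf}$ is birational and realises $\overline{\cCf}$ as the normalisation of $\overline{\cDf}$. The singular locus of $\overline{\cDf}$ then splits into one--dimensional strata (cuspidal edges and double curves) together with the finitely many points of types $\cA_3$, $\cA_2\cA_1$ and $\cA_1^3$, all of which can be read off the pair $\big(\overline{\cCf},\ \overline{\cCf}\to\overline{\cDf}\big)$.

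For the count I would use that $\hcir\Pi$ is the dimension of the space of global sections of the adjoint line bundle on $X_\Pi$; by adjunction this gives $\hcir\Sigma(f)=p_g(\overline{\cCf})$, while $\hcir\Delta(f)$ is the analogous adjoint invariant of $\overline{\cDf}$, so that $\hcir\Delta(f)-\hcir\Sigma(f)$ is the total genus defect of the singularities of $\overline{\cDf}$. Khovanskii's formulae for the Hodge numbers of Newton non--degenerate toric subvarieties~\cite{khovanskii1978newton} turn both terms into explicit lattice--point combinatorics of $\Sigma(f)$ and $\Delta(f)$ (in particular they absorb the contributions of the toric boundary), and these are in turn polynomial expressions in the data $\bmA$. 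The constants $m_1,m_2,m_3$ would then appear as the universal local contributions of a swallowtail point, of an $\cA_2\cA_1$ point, and of a triple point to this defect, with $\delta_0(f)$ accounting for the contribution at the origin; comparing the two sides yields the identity. As a by--product one obtains that $\#\cA_3(f)$, $\#\cA_2\cA_1(f)$ and $\#\cA_1^3(f)$ are themselves polynomials in the polytope data, generalising Theorem~\ref{thm:polynomial-stable-singularities-dim3}.

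The genuinely new difficulty, compared with Theorem~\ref{thm:poly-type-discriminant}, is that the discriminant is now a surface whose singular locus is not zero--dimensional: it carries a curve of cuspidal edges and a curve of transverse self--intersections, and for the clean formula to hold their contributions to $\hcir\Delta(f)-\hcir\Sigma(f)$ must reorganise entirely into the counts $\#\cA_3(f),\#\cA_2\cA_1(f),\#\cA_1^3(f)$ and the polytope data. This is the two--dimensional analogue of the step in Theorem~\ref{thm:polynomial-simple-cusps1} where $\nu(f)$ is recovered from the genus and $\kappa(f)$ by Riemann--Roch: one needs the double-- and triple--point formulas for the map $\overline{\cCf}\to\overline{\cDf}$, i.e.\ one must express the genus and self--intersection of the cuspidal edge and of the double curve on the smooth surface $\overline{\cCf}$ in terms of $\bmA$ and of the isolated singular points lying on them. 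Controlling this multiple--point calculus, and checking that the non--degeneracy of $\bmA$ indeed keeps all intermediate loci smooth of the expected dimension, is where I expect the bulk of the work to lie; a secondary technical point is to verify, via the effective bifurcation bounds of~\cite{dhinh2021thom}, that no contribution escapes ``to infinity'' in $X_\Delta$, i.e.\ that $\overline{\cDf}$ is a member of the linear system attached to $\Delta(f)$.
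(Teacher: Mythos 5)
There is no proof in the paper to compare against: the statement you are addressing is Conjecture~\ref{con:singularities-lattice-generalized}, which the paper explicitly leaves open (it is presented as the first step towards Problem~\ref{prb:singularities_polynomial_general}). So the question is whether your sketch actually settles it, and it does not. What you have written is the natural research program — transplant the transversality-plus-toric-compactification strategy of Theorem~\ref{thm:poly-type-discriminant}, feed in the $\cA_3$, $\cA_2\cA_1$, $\cA_1^3$ classification of Theorem~\ref{thm:polynomial-stable-singularities-dim3}, and convert $\hcir\Sigma(f)$ and $\hcir\Delta(f)$ into geometric genera via Khovanskii — but the decisive step is missing. The whole content of the conjecture is the claim that the genus defect $\hcir\Delta(f)-\hcir\Sigma(f)$ decomposes as $m_1\#\cA_3+m_2\#\cA_2\cA_1+m_3\#\cA_1^3+\delta_0(f)$ with \emph{universal} integer coefficients, and this is precisely the point you defer: the singular locus of $\overline{\cDf}$ now contains one-dimensional strata (the cuspidal edge and the double curve), whose contribution to the defect depends a priori on their genera, degrees and mutual intersections, not on a count of isolated points. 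You acknowledge that one must show these contributions ``reorganise entirely'' into the three point counts and polytope data via a multiple-point calculus for $\overline{\cCf}\to\overline{\cDf}$, but you give no argument that they do, and no mechanism forcing the coefficients $m_1,m_2,m_3$ to be independent of $\bmA$. Until that identity is established, the sketch is a plausible plan, not a proof.

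Two further mismatches with the statement itself deserve attention. First, you restrict at the outset to a non-degenerate subfamily $\mathscr{C}\subset\cIthth$, whereas the conjecture as stated quantifies over \emph{any} triple $\bmA\in\cIthth$; your approach would at best yield the analogue of Theorem~\ref{thm:poly-type-discriminant} (a statement for a large family), so either the conjecture must be reinterpreted or the degenerate tuples must be handled separately — transversality of $F_{\bmA}$ genuinely fails for sufficiently degenerate Newton tuples, which is exactly why Theorem~\ref{thm:polynomial-stable-singularities-dim3} carries its gcd hypotheses. Second, the identification $\hcir\Sigma(f)=p_g(\overline{\cCf})$ requires $\overline{\cCf}$ to be Newton non-degenerate with Newton polytope exactly $\Sigma(f)$, and in dimension three the critical locus is defined by a determinant whose non-degeneracy is not automatic from that of $f$; this needs a proof, not an assertion, before Khovanskii's formulae can be invoked.
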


\subsection{The Jacobian Conjecture}\label{sub:Jacobian_conjecture} 
The determinant of the Jacobian matrix of a polynomial map $f:\Cntn$ is a complex polynomial in $\C[x_1,\ldots,x_n]$. We denote it by $f'(x):=\det \Jac_x f$, and say that $f$ satisfies the \emph{Jacobian hypothesis} if  $f'(x)\neq 0$ for each $x\in \C^n$. Clearly, if $f$ is invertible, then the Jacobian hypothesis is satisfied for $f$. Keller formulated the following conjecture in the 1930s~\cite{keller1939ganze}.

\begin{conjecture}[Jacobian Conjecture]\label{con:Jacobian}
Let $f:\Cntn$ be a polynomial map such that $f'(x)=1$ for all $x\in\C^n$. Then, the map $f$ is invertible.
\end{conjecture}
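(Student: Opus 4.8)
Strictly speaking this is the Jacobian Conjecture, which is open for $n\ge 2$, so what follows is a strategy rather than a proof. The case $n=1$ is immediate: $f'\equiv 1$ forces $\deg f=1$, hence $f$ is affine and invertible. For $n\ge 2$ the plan is to route the problem through \emph{properness}. Suppose one could establish that every Keller map $f\colon\C^n\to\C^n$ (one with $f'\equiv 1$) is proper. Since $f'$ never vanishes, $\Jac_x f$ has full rank at every point, so $f$ is a dominant local biholomorphism; being proper in addition, $f$ is closed, and since it is also an open map its image is clopen in the connected space $\C^n$, so $f$ is surjective; moreover the number of preimages is locally constant, so $f$ is a finite unramified covering of $\C^n$ (this is exactly the mechanism used in the proof of Theorem~\ref{thm:Jelonek-proper-finite}). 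As $\C^n$ is simply connected the covering degree is $1$, so $f$ is bijective; a bijective morphism onto the normal variety $\C^n$ is an isomorphism by Zariski's main theorem, i.e. $f$ is a polynomial automorphism. Thus the whole conjecture reduces to the single implication ``$f'\equiv 1 \Rightarrow f$ is proper'', equivalently, the non-properness set of a Keller map is empty.

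The next step is to feed this into Jelonek's structure theory: for any dominant $f\colon\C^n\to\C^n$ the non-properness set $S_f\subset\C^n$ is either empty or a nonempty, uniruled hypersurface. So the task becomes to show that \emph{no Keller map can carry such a hypersurface} $S_f$. A point of $S_f$ is witnessed by a sequence $x_k\to\infty$ in the source whose images $f(x_k)$ stay bounded; choosing a smooth compactification $\C^n\hookrightarrow X$ and a log resolution, one would analyse the extension of $f$ to a rational map on $X$ along the boundary divisor and pin down which boundary components are ``collapsed toward infinity.'' The hoped-for mechanism is that the pointwise identity $\det\Jac_x f\equiv 1$, restricted to a branch running to infinity, is incompatible with the image remaining bounded: tracking the orders of vanishing and poles of the coordinates $f_i$ and of the partials $\partial f_i/\partial x_j$ along each boundary component should yield a numerical contradiction.

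To make those at-infinity combinatorics as rigid as possible one would first apply the standard normalizations: the Bass--Connell--Wright reduction to maps of the form $x\mapsto x+H$ with $H$ cubic homogeneous (at the cost of raising $n$), and then Dru\.{z}kowski's further reduction to ``cubic-linear'' maps, in which every coordinate of $H$ is the cube of a linear form and the Jacobian of $H$ is nilpotent. In that form the behavior at the hyperplane at infinity is governed by a nilpotent matrix, and one may hope to exploit this nilpotency together with $\det\Jac f\equiv 1$ to force $S_f=\emptyset$.

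The main obstacle is precisely this last point, and it is why the conjecture has resisted since the 1930s: the hypothesis $\det\Jac_x f\equiv 1$ is an \emph{affine}, purely local statement, while properness (equivalently global injectivity) is a statement about behavior at infinity, and no one has produced a bridge between the two that survives passage to a compactification. Each known reformulation --- Dixmier's conjecture on endomorphisms of the Weyl algebra, the nilpotency of the Jacobian of the top-degree homogeneous part, the reductions above, the gradient (symmetric) case --- merely relocates this difficulty, and even the first open case $n=2$, where one would expect the most control, is famously strewn with incorrect proofs.
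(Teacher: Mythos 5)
You have been asked to prove a statement that the paper itself presents as an open problem: Conjecture~\ref{con:Jacobian} is Keller's Jacobian Conjecture, unresolved for every $n\geq 2$, and the paper offers no proof of it — it only surveys partial results (the quadratic case via Wang's theorem combined with Theorem~\ref{thm:Blyn-Bir-Rosen}, the mixed-volume-one case of Corollary~\ref{cor:MV=1}, and the planar special cases collected in Theorem~\ref{thm:Jac-con_true}). So there is no ``paper proof'' to match, and your proposal, which says up front that it is a strategy rather than a proof, is the honest response; it cannot be graded as a correct proof of the statement.

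As a strategy, what you outline is sound and in fact coincides with the route the paper itself highlights in \S\ref{sss:non-prop_Jacobian}: a Keller map is \'etale, hence an unramified covering over $\C^n\setminus\cS_f$, and since $\C^n$ is simply connected, emptiness of the non-properness set $\cS_f$ is equivalent to invertibility (your proper-covering-of-degree-one argument, finished off by Theorem~\ref{thm:Blyn-Bir-Rosen} or Zariski's main theorem, is exactly this mechanism, which also appears in the proof of Theorem~\ref{thm:Jelonek-proper-finite}). The genuine gap is the one you name yourself: the implication ``$f'\equiv 1\Rightarrow\cS_f=\emptyset$'' is not a reduction that makes the problem smaller — it is logically equivalent to the conjecture, and neither Jelonek's structure theory (uniruledness of $\cS_f$, non-abelian $\pi_1(\C^n\setminus\cS_f)$, self-intersections of $\cS_f$ for planar \'etale maps) nor the Bass--Connell--Wright and Dru\.zkowski normal forms is currently known to rule out a nonempty $\cS_f$; the paper records the same state of affairs, including the still-open topological real variant Conjecture~\ref{con:topological_real_Jacobian} which would imply the complex statement. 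So your write-up should be read as a correct account of a known reformulation, not as progress toward, let alone a proof of, the statement.
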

Since linear functions are the only univariate polynomials without critical points, the Jacobian Conjecture is true for $n=1$. For higher values of $n$, however, it has proved very difficult to prove, and there is no consensus as to whether it is true or not. Several incorrect proofs were published in the 1950s and 1960s~\cite[\S I.3]{BCW82}. This put work on the Jacobian Conjecture into a lull until some interest was revived later in the 1980s. In particular, a number of important results were obtained which re-motivated the theory and initiated further open questions on the topology of polynomial maps~\cite{BCW82,van2012polynomial}. 

Pertinent to this theory is the following classical result of Bi\l ynicki-Birula and Rosenlicht.
\begin{theorem}[\cite{bailynicki1962injective}]\label{thm:Blyn-Bir-Rosen}
Let $\K$ be an algebraically closed field of characteristic zero. Let $F:\KKntn$ be a polynomial map. If $f$ is injective, then $f$ is surjective and the inverse is a polynomial map, i.e. $f$ is a polynomial automorphism.
\end{theorem}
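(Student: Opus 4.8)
The plan is to split the statement into three claims and prove them in order: that injectivity forces $F=(F_1,\dots,F_n)$ to be \emph{birational}, that it forces $F$ to be \emph{surjective}, and that birationality together with surjectivity upgrades the set-theoretic inverse to a polynomial map. For the first claim I would argue that $F$ is dominant: since $F$ is injective, every fibre is a single point, so by the theorem on fibre dimension the image $F(\mathbb{K}^n)$ has dimension $n$ and hence is Zariski dense. Therefore $F^{\ast}\colon\mathbb{K}(y_1,\dots,y_n)\hookrightarrow\mathbb{K}(x_1,\dots,x_n)$ is a finite extension whose degree is the number of points of a generic fibre times the inseparability degree; the first factor is $1$ by injectivity and the second is $1$ because $\mathbb{K}$ has characteristic zero (this is the only place characteristic zero is essential --- Frobenius shows the statement fails in characteristic $p$). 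So $F$ is birational and restricts to an isomorphism onto a dense open set $U\subseteq\mathbb{K}^n$. Writing $G_i=P_i/Q_i\in\mathbb{K}(y_1,\dots,y_n)$ in lowest terms for the coordinates of the inverse of that restriction, one gets the identity of rational functions $G_i\circ F=x_i$, hence the polynomial identity $P_i\circ F=x_i\,(Q_i\circ F)$ in $\mathbb{K}[x_1,\dots,x_n]$ (note $Q_i\circ F\neq 0$ since $F$ is dominant).

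For the second claim I would invoke the Ax--Grothendieck theorem: an injective polynomial self-map of affine $n$-space over an algebraically closed field is surjective. The cleanest route is the Lefschetz principle --- for each fixed degree this is a first-order statement, and it holds over every $\overline{\mathbb{F}_p}$ because a polynomial map defined over a finite subfield $\mathbb{F}_q$ restricts to an injective, hence bijective, self-map of each finite set $\mathbb{F}_{q^m}^n$, whose union exhausts the relevant points.

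For the third claim I would combine the two previous facts. Let $y_0$ be any point of the hypersurface $\{Q_i=0\}$; by surjectivity there is $x_0$ with $F(x_0)=y_0$, and evaluating the polynomial identity above at $x_0$ forces $P_i(y_0)=x_{0,i}\,Q_i(y_0)=0$. Hence $\{Q_i=0\}\subseteq\{P_i=0\}$, and since $\mathbb{K}[y_1,\dots,y_n]$ is a UFD with $\gcd(P_i,Q_i)=1$, the Nullstellensatz (applied to an irreducible factor of $Q_i$, whose zero set is nonempty and irreducible since $\mathbb{K}$ is algebraically closed) rules this out unless $Q_i$ is a nonzero constant. Therefore each $G_i$ is a polynomial, $G:=(G_1,\dots,G_n)$ satisfies $G\circ F=\operatorname{id}$, and by surjectivity of $F$ also $F\circ G=\operatorname{id}$; thus $F$ is a polynomial automorphism.

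The main obstacle is the surjectivity step: the birationality and the final UFD argument are both ``soft,'' but surjectivity genuinely uses \emph{both} hypotheses on $\mathbb{K}$ (algebraically closed and characteristic zero) and has no purely elementary proof --- one must either pass through Ax--Grothendieck as above, or route through Zariski's Main Theorem (a quasi-finite birational morphism to a normal variety is an open immersion, and a finite birational morphism to a normal variety is an isomorphism), but even the latter leaves surjectivity to be argued separately.
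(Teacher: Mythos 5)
Your proposal is correct and complete. Note, however, that the survey does not prove this statement at all: it is quoted as a classical theorem with a citation to Bia\l ynicki-Birula and Rosenlicht, so there is no in-paper argument to compare against; what you have written is a sound reconstruction of the standard modern proof. Each of your three steps checks out: injectivity gives a generically one-point fibre, so the function-field extension $\mathbb{K}(x_1,\ldots,x_n)/F^{*}\mathbb{K}(y_1,\ldots,y_n)$ has separable degree one, and characteristic zero upgrades this to degree one, i.e.\ birationality; Ax--Grothendieck gives surjectivity; and the identity $P_i\circ F=x_i\,(Q_i\circ F)$ evaluated at a preimage of any zero of $Q_i$, combined with $\gcd(P_i,Q_i)=1$, the UFD property and the Nullstellensatz applied to an irreducible factor of $Q_i$, forces $Q_i$ to be constant, after which $G\circ F=\mathrm{id}$ and surjectivity give $F\circ G=\mathrm{id}$. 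The only inaccuracy is in your closing commentary, not in the proof: surjectivity does \emph{not} genuinely use characteristic zero. Ax--Grothendieck holds over algebraically closed fields of every characteristic (your own finite-field argument proves it directly over $\overline{\mathbb{F}_p}$, and the Lefschetz transfer merely exports it to characteristic zero); the Frobenius map is injective \emph{and} surjective, and what fails in characteristic $p$ is only the polynomiality of the inverse. So characteristic zero enters exactly once, in the separability step that yields birationality, which is where your proof in fact uses it.
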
 Wang showed in~\cite{wang1980jacobian} that the Jacobian hypothesis guarantees the injectivity of $f$ whenever $\deg f = 2$. Then, it follows from Theorem~\ref{thm:Blyn-Bir-Rosen} that the Jacobian Conjecture holds true for all quadratic polynomial maps $\Cntn$. 

Instead of fixing the degree, one may we consider polynomial maps with prescribed mixed volumes of their Newton polytopes.
This gives an interesting family of polynomial maps satisfying the Jacobian Conjecture. 

\begin{theorem}\label{affber}
If $\bmA\in \cInn,$ and $f\in\CA$, then the number of isolated solutions in $\C^n$ of the system $f=\unze,$ counted with multiplicites, cannot exceed the mixed volume $V(\bmA).$
\end{theorem}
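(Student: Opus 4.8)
The plan is to derive this affine bound from the toric Bernstein bound of Theorem~\ref{th:Bernstein} by deforming $f$ to a generic member of $\CA$. Fix $\bmA\in\cInn$ and, for $g\in\CA$, write $\mu(g)$ for the sum of the local intersection multiplicities of the isolated points of $V_{\C}(g)$; this is a well-defined non-negative integer even when $V_{\C}(g)$ has positive-dimensional components. The first step is that $\mu$ attains its maximum on a dense Zariski-open $U\subseteq\CA$: for a generic $h\in\CA$ and small generic $\varepsilon$ the perturbation $f-\varepsilon h$ lies in $U$, and letting $\varepsilon\to0$ the roots of $f-\varepsilon h$ in $\C^n$ either converge to roots of $f$ or escape to infinity, so $\mu(f)\le\mu(g)$ for $g\in U$. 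The rigorous version is the standard conservation-of-number argument: compactify the incidence variety $\{(g,x)\in\CA\times\C^n:g(x)=\unze\}$ inside $\CA\times\P^n$, pass to the locus over $\CA$ where the fibre is finite, and invoke upper semicontinuity of fibre length along the resulting finite flat family. It thus suffices to bound $\mu$ on $U$.

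Second, I would show that for a generic $f\in\CA$ every isolated point of $V_{\C}(f)$ already lies in the torus $(\C^*)^n$. Fix a nonempty $S\subseteq[n]$ and let $\Gamma$ be the coordinate stratum on which $x_i=0$ for $i\in S$ and $x_j\neq0$ otherwise; the restriction of $f$ to $\Gamma$ is a system of $n$ equations in only $n-|S|<n$ torus coordinates, hence overdetermined, and for generic coefficients an overdetermined system has no isolated solution on $\Gamma$. Ranging over all proper coordinate strata leaves every isolated point of $V_{\C}(f)$ inside $(\C^*)^n$, and Theorem~\ref{th:Bernstein} then bounds their number, counted with multiplicity, by $V(\bmA)$. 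Together with the first step this gives $\mu(f)\le V(\bmA)$ for every $f\in\CA$.

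The step that requires care — and which I expect to be the real obstacle — is the claim that a generic member has no isolated roots on the coordinate hyperplanes. This is not automatic: if some $A_i$ misses a coordinate subspace, then $f_i$ restricts identically to zero on the corresponding stratum, the effective number of equations there drops, and one can be left with a genuine zero-dimensional complete intersection, i.e.\ with honest isolated boundary roots that $V(\bmA)$ fails to see. The clean remedy is the harmless normalisation $\unze\in A_i$ for every $i$ (equivalently, permitting a constant term in each $f_i$), which keeps every restricted system overdetermined; absent that, one replaces $V(\bmA)$ by the mixed volume of the polytopes $\conv(A_i\cup\{\unze\})$, which absorbs the boundary contribution. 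With such a normalisation in force the two-step scheme above goes through, its only substantial inputs being Theorem~\ref{th:Bernstein} and the conservation-of-number principle.
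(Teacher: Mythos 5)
The paper itself contains no proof of Theorem~\ref{affber}: it is quoted as a known affine Bernstein-type bound (the BKK bound in $\C^n$ of Li--Wang type, taken from the cited preprint \cite{EH-dAndrea_affine}), so there is no internal argument to compare yours against, and I judge the proposal on its own. Your two-step scheme --- reduce to a generic member of $\CA$ by a conservation-of-number argument, show that a generic member has no zeros on the proper coordinate strata, and then apply Theorem~\ref{th:Bernstein} in the torus --- is the standard route to this bound and is essentially sound. Moreover, the caveat you raise is a genuine defect of the statement rather than of your proof: for arbitrary $\bmA\in\cInn$ the inequality is false as written (take $n=2$, $A_1=\{(1,0)\}$, $A_2=\{(0,1)\}$, $f=(x,~y)$: one isolated root of multiplicity one, while $V(\bmA)=0$), so either the normalisation $\unze\in A_i$, i.e.\ $\bmA\in\scIznn$, or the replacement of $V(\bmA)$ by the mixed volume of the polytopes $\conv(A_i\cup\{\unze\})$ is really needed. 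Note that Corollary~\ref{cor:MV=1}, the only place the theorem is used, assumes exactly $\bmA\in\scIznn$, and in that setting your argument goes through.

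Two repairs to the write-up. First, the justification of your first step points the wrong way: upper semicontinuity of fibre length for a finite family makes the \emph{special} fibre at least as long as the generic one, and your $f$ need not even lie over the locus with finite fibres; likewise ``roots of $f-\varepsilon h$ converge to roots of $f$ or escape to infinity'' controls the perturbed roots, not the roots of $f$. What you need, and what is classical, is the local statement: if $x_0$ is an isolated zero of $f$ of multiplicity $m$, then for a fixed small ball $B\ni x_0$ every system sufficiently close to $f$ in $\CA$ has zeros of total multiplicity exactly $m$ inside $B$ (stability of the local degree of a holomorphic germ). Summing over the isolated zeros of $f$, and noting that for generic $g\in\CA$ the whole set $V_{\C}(g)$ is finite (your stratum dimension count, together with the analogous count in the torus), gives $\mu(f)\le\mu(g)$, and Bernstein bounds $\mu(g)$ by $V(\bmA)$. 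Second, Theorem~\ref{th:Bernstein} as quoted assumes the tuple is independent; in the dependent case $V(\bmA)=0$, and you must instead invoke the remark recorded right after Theorem~\ref{th:Bernstein} that such square systems have no isolated torus solutions, which together with your stratum argument settles that case as well.
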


Let $\scIznn\subset\cInn$ denote the subset of all tuples $\bmA:=(A_1,\ldots,A_n)$ such that for each $i\in[n]$, we have $\{\tupze\}\subset A_i\subset\R^n_{\geq 0}$. 
We obtain the following result. 

\begin{corollary}[{{\cite[Theorem 1.2]{EH-dAndrea_affine}}}]\label{cor:MV=1}
Let $n\in\N$ and $\bmA\in\scIznn$ such that $V(\bmA)=1$. Then, the Jacobian Conjecture holds true for every $f\in\CA$.
\end{corollary}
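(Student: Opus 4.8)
The plan is to reduce the statement to an application of Bernstein's Theorem (Theorem~\ref{th:Bernstein}, in the affine form of Theorem~\ref{affber}) together with the Bi\l ynicki-Birula--Rosenlicht criterion (Theorem~\ref{thm:Blyn-Bir-Rosen}). The key observation is that if $f\in\CA$ satisfies the Jacobian hypothesis and $V(\bm A)=1$, then $f$ must be \emph{injective}: indeed, for a generic target value $c\in\C^n$ the fiber $f^{-1}(c)$ consists of isolated points counted by the intersection-theoretic degree of $f$, and by Theorem~\ref{affber} this number is at most $V(\bm A)=1$. Since the Jacobian hypothesis forces every fiber to be reduced (no multiple points) and forces $f$ to be dominant, the generic fiber has exactly one point, so the topological (and hence, by Theorem~\ref{thm:Blyn-Bir-Rosen}-type reasoning, the algebraic) degree of $f$ is $1$; a degree-one dominant map between affine spaces with the Jacobian hypothesis is injective. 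Once injectivity is established, Theorem~\ref{thm:Blyn-Bir-Rosen} immediately gives that $f$ is a polynomial automorphism, which is precisely the assertion of the Jacobian Conjecture for $f$.

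First I would fix $f = (f_1,\dots,f_n) \in \CA$ with $\mathrm{Jac}_x f \equiv c \neq 0$ a nonzero constant (it suffices to treat the normalized case $f'(x)=1$, but the argument only uses nonvanishing). Consider the shifted system $g := f - c$ for a point $c\in\C^n$; its Newton tuple still lies in $\bm A$ because $\mathbf{0}\in A_i$ for every $i$ (this is exactly where the hypothesis $\bm A\in\scIznn$ is used: subtracting a constant only adds the monomial $x^{\mathbf 0}=1$, whose exponent vector $\mathbf 0$ already belongs to each $A_i$). Hence $V_\C(g) = f^{-1}(c)$ has, by Theorem~\ref{affber}, at most $V(\bm A) = 1$ isolated points counted with multiplicity. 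The next step is to argue that for generic $c$ the fiber $f^{-1}(c)$ is finite and nonempty: finiteness follows because the Jacobian hypothesis makes $f$ a local diffeomorphism everywhere, so $\dim f^{-1}(c) = 0$ for every $c$ in the image, and nonemptiness (dominance) follows likewise since a map with everywhere-invertible differential has open image, whose Zariski closure is all of $\C^n$. Combining, for generic $c$ we get $\#f^{-1}(c) = 1$ with multiplicity one, i.e. $\tau(f) = 1$.

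The final step is to promote "$\tau(f)=1$ together with the Jacobian hypothesis" to injectivity. Here one uses that a dominant polynomial map $f\colon\C^n\to\C^n$ whose generic fiber is a single reduced point defines a birational map, and when moreover $\mathrm{Jac}_x f$ never vanishes there can be no fiber with more than one point: a fiber $f^{-1}(c)$ with two points $a\neq b$ would, together with properness of $f$ over a Zariski-open set and the absence of ramification, contradict that the covering degree equals $1$ — more carefully, one observes that $f$ restricted to the complement of a proper closed set is an unramified cover of degree $\tau(f)=1$, hence an isomorphism there, and the Jacobian hypothesis rules out the map collapsing or folding on the exceptional locus, forcing global injectivity. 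Then Theorem~\ref{thm:Blyn-Bir-Rosen} applies verbatim and yields that $f$ is a polynomial automorphism.

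\textbf{Main obstacle.} The delicate point is the last step: passing from "generic fiber has one point" to "\emph{every} fiber has one point," i.e. ruling out that the Jacobian-hypothesis map could still identify two points over some special value of $c$. The clean way around this is to invoke the known fact (a standard consequence of the Jacobian hypothesis, see~\cite{BCW82}) that such an $f$ is a finite map onto its image precisely when it is proper, and that properness plus unramifiedness plus degree one forces injectivity; alternatively one can cite that under the Jacobian hypothesis $f$ is automatically proper in this setting, or simply observe that the degree-one birational morphism $\C^n\to\C^n$ with the Jacobian hypothesis is forced to be an open immersion onto $\C^n$, hence an isomorphism. This is the step where I would need to be most careful to avoid circular reasoning with the Jacobian Conjecture itself; everything else is a direct application of Bernstein's bound and Theorem~\ref{thm:Blyn-Bir-Rosen}.
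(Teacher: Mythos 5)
Your proposal uses the same two ingredients as the paper (the affine Bernstein bound of Theorem~\ref{affber} together with Theorem~\ref{thm:Blyn-Bir-Rosen}), but it contains a genuine gap at exactly the step you flag as the main obstacle: passing from ``the generic fiber is a single reduced point'' to injectivity. The routes you sketch for closing it do not work as stated. Invoking properness is circular: a map satisfying the Jacobian hypothesis that is proper is an unramified covering of the simply connected space $\C^n$, hence already an isomorphism, so ``$f$ is automatically proper in this setting'' is essentially the conclusion you are trying to prove. The ``degree-one birational plus \'etale implies open immersion'' route needs Zariski's Main Theorem, is not actually carried out, and the phrase ``the Jacobian hypothesis rules out the map collapsing or folding on the exceptional locus'' is an assertion, not an argument.

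The idea you are missing is that no generic/special dichotomy is needed at all, because the bound of Theorem~\ref{affber} applies to \emph{every} fiber. For an arbitrary $y\in\C^n$ the tuple $f-y$ still lies in $\CA$ (this is where $\bmA\in\scIznn$ enters, exactly as you observed), and the Jacobian hypothesis guarantees that every solution of $f-y=\mathbf{0}$ is isolated of multiplicity one, since the Jacobian of $f-y$ equals that of $f$ and is nonzero at each solution. Theorem~\ref{affber} therefore gives $\#f^{-1}(y)\leq V(\bmA)=1$ for every $y\in\C^n$, so $f$ is injective outright, and Theorem~\ref{thm:Blyn-Bir-Rosen} concludes; this is precisely the paper's proof, and it makes your detour through dominance, topological degree and generic unramified covers unnecessary. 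If you do insist on arguing only about the generic fiber, the clean repair of your last step is elementary and avoids properness and ZMT: two distinct points $a\neq b$ with $f(a)=f(b)=y_0$ would, since $f$ is a local biholomorphism at both $a$ and $b$, produce at least two preimages for every value near $y_0$, contradicting the generic count of one.
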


\begin{proof}
Let $f\in\CA$ be any map satisfying the Jacobian hypothesis. Since the Jacobian does not depend on $y$ for any $y\in\C^n$, the tuple $f-y$ also satisfies the Jacobian Hypothesis. 
Then, as the Jacobian is non-zero at each such solution, all solutions to $f-y=\unze$ in $\C^n$ are isolated. 
Hence, Theorem~\ref{affber} implies  that there is at most one isolated solution in $\C^n$, which must have multiplicity $1$. 
Consequently, we get that the map $f:\C^n\to\C^n$ is injective, as otherwise there would be $y_0\in\C^n$ such that $f-y_0=\unze$ has more than one isolated solution. 
 From Theorem \ref{thm:Blyn-Bir-Rosen} we deduce the claim.
\end{proof}

Corollary~\ref{cor:MV=1} may be used to classify larger families of polynomial maps satisfying the Jacobian Conjecture;
one approach is to consider the group $\text{PAut}(\C,n)$ of polynomial automorphisms acting on the space $\PCnn$ of polynomial maps $\Cntn$. 
\textit{How large are the orbits in $\PCnn$ corresponding to the polynomial maps whose mixed volume equals one?} According to Corollary~\ref{cor:MV=1}, such a class of orbits contains maps satisfying the Jacobian conjecture, but might not have mixed volume one as the following example shows.
\begin{example}\label{ex:jacobian_mv1}
The map
\begin{align}\label{eq:mv-map-example_main}
(x,~y)& \longmapsto (1-x-y-x^2,~1 +2x +y +x^2), 
\end{align} has Newton tuple with mixed volume equals to two. 
It is invertible since
\begin{align*}
f^{-1}(a,b) =~ & \left(\frac{a+b}{2},~1-\frac{a+b}{2} - \frac{(a+b)^2}{4} - a \right).
\end{align*} However, the automorphism $\Pol_{\C}(n,n)\longrightarrow\Pol_{\C}(n,n)$, $(f_1,~f_2)\longmapsto (f_1,~f_1+f_2)$ sends~\eqref{eq:mv-map-example_main} to a map whose Newton pair has mixed volume equals to one.
\end{example}
We will not elaborate on further results related to the Jacobian Conjecture, as these can be found in several excellent surveys such as~\cite{BCW82} and~\cite{van2012polynomial}. 
We finish this part by mentioning only a handful of results that are known for the case $n=2$. 
They are summarized in the following theorem.

\begin{theorem}\label{thm:Jac-con_true}
Let $f:\Cttt$ be a polynomial map such that $f'(x)\neq 0$ for all $x\in\C^2$. 
Then, invertibility of $f$ holds true in each of the following cases:

\begin{enumerate}

	\item $\deg f\leq 100$,~\cite{moh1983jacobian}
	
	\item $\gcd(\deg f_1,~\deg f_2)=1$,~\cite{nagata1989some}	and
	
	\item for every line $L\subset\C^2$, the map $\fr_L:L\longrightarrow f(L)$ is injective~\cite{abhyankar1975embeddings,gwozdziewicz1993injectivity}.
	
\end{enumerate}

\end{theorem}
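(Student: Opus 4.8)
The three statements all concern the planar Jacobian Conjecture, and the plan is to handle them on a common footing. First, since a polynomial on $\C^2$ with empty zero set is a nonzero constant (Nullstellensatz), the hypothesis $f'(x)\neq 0$ for all $x$ is equivalent to $f'\equiv c\in\C^*$, and rescaling one coordinate reduces to the normalised form $f'\equiv 1$ of Conjecture~\ref{con:Jacobian}. By Theorem~\ref{thm:Blyn-Bir-Rosen} it then suffices, in each case, to prove that $f$ is \emph{injective}, since injectivity automatically promotes a Jacobian pair to a polynomial automorphism. The recurring technical device is an analysis of $f$ at infinity: because $f'$ is a nonzero constant, the Jacobian matrix of $f$ is everywhere invertible, so neither $f_1$ nor $f_2$ has a critical point, and the top-degree forms $\bar f_1,\bar f_2$ satisfy $\partial_x\bar f_1\,\partial_y\bar f_2-\partial_y\bar f_1\,\partial_x\bar f_2\equiv 0$; being binary forms, they are powers of a common form, $\bar f_i=\alpha_i\,h^{d_i/e}$ with $e=\deg h$ dividing $\gcd(d_1,d_2)$ and $d_i=\deg f_i$. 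Homogenising, this says the fibres of $f_1$ and of $f_2$ meet the line at infinity along the zero locus of $h$, where one studies their branches via Newton--Puiseux expansions; the identity $f'\equiv 1$ then imposes rigid compatibility between the expansions coming from $f_1$ and those from $f_2$.

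\medskip
\noindent\textbf{Case (2) (coprime degrees).} If $\gcd(d_1,d_2)=1$, then $e=1$, so $h$ is linear; after a linear change of source coordinates we may take $\bar f_i=\alpha_i x^{d_i}$, so that every fibre of $f_1$ has its unique point at infinity at $[0:1:0]$. The Puiseux analysis there --- this is the work of~\cite{nagata1989some} --- shows that the generic fibre $\{f_1=t\}$ is isomorphic to $\C$; then $f_2$ restricts to that fibre as a fixed-point-free immersion of $\C$, hence as an automorphism of it, and running this over $t$ exhibits $f$ as invertible. (If $\min(d_1,d_2)=1$, this is immediate: normalising $f_1=x$, the identity $f'\equiv 1$ forces $\partial_y f_2\equiv 1$, so $f=(x,\,y+g(x))$ is triangular.)

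\medskip
\noindent\textbf{Case (1) (bounded degree).} Here I would keep the full Newton--Puiseux bookkeeping at infinity for the pencil $\{f_1=t\}$ together with $f_2$ along it. A non-invertible Jacobian pair forces the principal parts of the branches of $f_1$ and of $f_2$ at infinity to agree up to a tightly controlled truncation, and the resulting degree identities --- a B\'ezout count at infinity, together with the constraints coming from $f'\equiv 1$ --- become a system of Diophantine inequalities in the Puiseux exponents. Moh's contribution in~\cite{moh1983jacobian} is an exhaustive analysis of this system showing that any solution forces $\deg f>100$, so that no non-invertible Jacobian pair has degree at most $100$. I expect this case-by-case optimisation to be the genuinely hard part, with no conceptual shortcut.

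\medskip
\noindent\textbf{Case (3) (injectivity on lines), and the main obstacle.} Suppose $f|_L\colon L\to f(L)$ is injective for every line $L\subset\C^2$. Since the Jacobian matrix of $f$ is invertible, $(f|_L)'$ never vanishes, so $f|_L$ is an injective immersion of $L\cong\C$; one shows that $f(L)$ is then a smooth affine curve with a single place at infinity and that $f|_L$ is a closed embedding $\C\hookrightarrow\C^2$. By the Abhyankar--Moh embedding theorem~\cite{abhyankar1975embeddings}, such an embedded line is rectifiable: some polynomial automorphism $\Phi$ of $\C^2$ carries $f(L)$ onto a coordinate line. Replacing $f$ by $\Phi\circ f$ (still a Jacobian pair) reduces to the situation where $f$ maps some line onto a coordinate line, and a short direct computation with the identity $f'\equiv 1$ --- or an induction on $\deg f$ --- then forces $f$ to be an automorphism; this last step is the argument of Gwo\'zdziewicz~\cite{gwozdziewicz1993injectivity}. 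Across the three items, the real obstacle is Case (1): Cases (2) and (3) reduce, after the common ``linearise the leading form''/``rectify the embedded line'' step, to structural inputs (a one-point-at-infinity fibre analysis, respectively Abhyankar--Moh), whereas the bound $100$ in Case (1) emerges only from a delicate combinatorial search over the admissible Puiseux data.
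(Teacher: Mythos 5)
First, a point of comparison: the paper itself offers no proof of this statement --- it is a survey item compiling results of Moh~\cite{moh1983jacobian}, Magnus--Nagata~\cite{nagata1989some}, and Abhyankar--Moh/Gwo\'zdziewicz~\cite{abhyankar1975embeddings,gwozdziewicz1993injectivity}, and points the reader to~\cite{BCW82,van2012polynomial} for details. So your sketch can only be measured against the cited literature. Its skeleton is sound and consistent with it: the hypothesis forces $f'$ to be a nonzero constant by the Nullstellensatz, Theorem~\ref{thm:Blyn-Bir-Rosen} reduces everything to injectivity, the leading forms of a Jacobian pair are constant multiples of powers of a common form, Abhyankar--Moh rectification is indeed the decisive input for item (3), and Moh's bound in item (1) does come from an exhaustive analysis of numerical data at infinity.

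Two places need attention. In case (2) your argument treats only the \emph{generic} fibre of $f_1$, while injectivity of $f$ requires injectivity of $f_2$ on \emph{every} fibre; as written, ``running this over $t$'' establishes only that $f$ is birational, so you must either control the special fibres as well or add the standard complement that a birational Keller map is an automorphism (by Zariski's Main Theorem an \'etale birational endomorphism of $\mathbb{C}^2$ is an open immersion whose image has complement of codimension at least two, hence is onto). Moreover, the assertion that the generic fibre $\{f_1=t\}$ is isomorphic to $\mathbb{C}$ is exactly the hard step --- one needs the curve to be rational with a single \emph{branch} at infinity, not merely a single point at infinity --- and attributing this route to~\cite{nagata1989some} is a guess: the classical treatments of the coprime-degree case proceed via divisibility constraints on $(d_1,d_2)$ extracted from the graded pieces of the identity $f'\equiv 1$, followed by degree-lowering elementary transformations, rather than via a fibre-isomorphism statement. (Minor: ``fixed-point-free immersion'' should read ``critical-point-free''; what you actually use is that $f_2$ restricted to a fibre isomorphic to $\mathbb{C}$ has nowhere-vanishing derivative, hence is affine-linear in a coordinate of that fibre.) With these caveats --- and with the understanding that items (1) and (3) legitimately defer the genuinely hard combinatorics, respectively the rectification argument, to the cited papers --- your proposal is a reasonable survey-level reconstruction rather than a self-contained proof, which is also all the paper itself provides.
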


\subsubsection{Jacobian Conjecture for real maps}\label{sub:discr-real}
The \emph{Weak Real Jacobian Conjecture} is a mutatis mutandis analogue of the Jacobian Conjecture~\ref{con:Jacobian}, except that we replace $\C$ with $\R$. 
The two conjectures are related by splitting a complex polynomial map $f:\Cntn$ into its real and imaginary parts to obtain a real polynomial map $\R f:=(\Re f_1,~\Im f_1, \ldots, \Re f_n,~\Im f_n):\R^{2n}\longrightarrow\R^{2n}$. From~\cite{yu1995jacobian}, we have\\
 $f'(x) = (\det \Jac_{(z,w)} \R f)^2$ for all $x=z+i~w\in\C^n$. 
 Hence, the Jacobian Conjecture is true if and only if the Weak Real Jacobian Conjecture is true for all $n\geq 2$. 

Even though there were some progress by Yu in the 1990s~\cite{yu1995jacobian}, the aforementioned weak version remains open.
So is another, topological, version of the Real Jacobian Conjecture is open and implies the Complex Jacobian Conjecture. 
The latter will be discussed in~\S\ref{sss:non-prop_Jacobian} below.
As for now, we introduce yet another variant of the real Jacobian conjecture:
\begin{conjecture}[Strong Real Jacobian Conjecture]\label{con:strong_real_Jacobian}
Let $f:\Rntn$ be a polynomial map such that $f'(x)>0$ for all $x\in\R^n$. Then, the map $f$ is invertible.
\end{conjecture}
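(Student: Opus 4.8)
The plan for Conjecture~\ref{con:strong_real_Jacobian} is to peel off a soft topological reduction and then confront what remains, which is a statement about the behaviour of $f$ at infinity. First note that $f'(x)=\det\Jac_x f>0$ everywhere on $\R^n$ forces $(df)_x$ to be invertible at every point, so $f$ is everywhere an orientation-preserving local diffeomorphism; in particular it is an open map with discrete fibres and local degree $+1$ at each point. I would then argue the implication: \emph{if, in addition, $f$ is proper, then $f$ is invertible}. Indeed, a proper local homeomorphism $\R^n\to\R^n$ is a closed map, so its image is clopen and hence all of $\R^n$; a surjective proper local homeomorphism onto a connected, locally compact Hausdorff space is a covering map; and since $\R^n$ is simply connected this covering has a single sheet, i.e.\ $f$ is a diffeomorphism of $\R^n$, in particular a bijection. (One should not expect a polynomial inverse here: already $x\mapsto x+x^3$ shows that the conclusion of Theorem~\ref{thm:Blyn-Bir-Rosen} fails once the Jacobian is merely positive rather than constant as in Conjecture~\ref{con:Jacobian}, so ``invertible'' must be read here as ``bijective''.) Thus Conjecture~\ref{con:strong_real_Jacobian} is equivalent to the assertion that every $f$ with $f'>0$ on $\R^n$ is proper, equivalently that its non-properness set (the locus over which $f$ fails to be a locally ramified cover, see~\S\ref{sec:sing_infty}) is empty. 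For $n=1$ this is immediate: a polynomial with positive derivative is strictly increasing and necessarily of odd degree, hence a bijection of $\R$; so the whole difficulty is concentrated in the range $n\ge 2$.

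For $n\ge 2$ I would bring in the structure theory of the non-properness set $S_f$ surveyed in~\S\ref{sec:sing_infty}. Assume $S_f\neq\emptyset$. By Jelonek's theorems $S_f$ is then a nonempty real hypersurface, and crossing it transversally along a generic arc changes the number of points in the real fibre of $f$. Such a change cannot be caused by two real preimages colliding at a finite point, since that would force $\det\Jac_x f=0$, contradicting the hypothesis; so it must be produced by preimages escaping to, or arriving from, infinity. The idea is therefore to compactify the source, extending $f$ to a morphism $\overline f\colon X\to\P^n$ where $X$ is a projective compactification of $\R^n$, or, more efficiently, a toric variety built from the Newton polytopes $\NP(f_i)$ in the spirit of~\S\ref{sub:prel_newton}; one then studies the critical divisor of $\overline f$ along the boundary $X\setminus\R^n$ and tries to show that a nonempty $S_f$ would force a component of the critical locus of $\overline f$ to meet the affine chart, where $f'>0$ makes this impossible. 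A degree or Euler-characteristic count on the toric boundary, in the manner of the proofs of Theorems~\ref{thm:polynomial-simple-cusps1} and~\ref{thm:poly-type-discriminant}, could be the engine of such a contradiction; in the planar case $n=2$ one could also fall back on the classical Reeb-type techniques used for injectivity problems of maps $\R^2\to\R^2$.

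The step I expect to be the genuine obstacle is precisely this last one: converting the positivity of $f'$ into a constraint on $\overline f$ at infinity. The hypothesis $f'(x)>0$ is a purely affine, local, semialgebraic condition, and at present it casts no usable shadow on the geometry of $\overline f$ near the divisor at infinity. The available tools --- Newton polytopes and toric compactifications, the ruled structure of $S_f$, Jelonek's criteria for (non)properness --- describe with great precision \emph{where} non-properness is allowed to occur, but give no leverage for excluding it using sign-definiteness of the Jacobian alone; there is no evident reason why being a local diffeomorphism throughout $\R^n$ should rule out pathological behaviour near infinity. Bridging this gap, uniformly in $n\ge 2$, is the heart of the conjecture, and it is the part for which no method is currently known.
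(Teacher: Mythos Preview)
Your reduction is correct, and it is precisely this correctness that shows the proposal cannot be completed: the conjecture is \emph{false}. As the paper records immediately after stating Conjecture~\ref{con:strong_real_Jacobian}, Pinchuk~\cite{pinchuk1994counterexamle} constructed a polynomial map $f\colon\R^2\to\R^2$ with $f'(x)>0$ for all $x\in\R^2$ that is not injective. Your argument that a \emph{proper} local diffeomorphism $\R^n\to\R^n$ is a bijection is sound, so Pinchuk's map must fail to be proper --- and indeed it does: its non-properness set is nonempty. Thus the step you flagged as ``the genuine obstacle'' is not merely hard but impossible; positivity of the Jacobian does \emph{not} force $\cS_f=\emptyset$.

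Two smaller points. First, you invoke ``Jelonek's theorems'' to say that $\cS_f$ is a real hypersurface, but Theorem~\ref{thm:real-maps-non-prop} only gives $1\le\dim\cS_f\le n-1$ in the real setting; the hypersurface conclusion is specific to the complex case (Theorem~\ref{thm:non-prop-comp}). Second, your heuristic that crossing $\cS_f$ forces either a collision of real preimages or an escape to infinity is correct in spirit, and it is exactly the second alternative that Pinchuk's example realises: real preimages run off to infinity without any affine critical point being created. The moral is the one you nearly articulated at the end --- the sign condition on $f'$ is purely local and affine, and casts no shadow at infinity --- but the conclusion to draw is not that a new tool is needed, rather that the conjecture is refuted.
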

In the 1990s~\cite{pinchuk1994counterexamle} Pinchuk has disproven Conjecture~\ref{con:strong_real_Jacobian} by constructing a non-injective polynomial map $(f_1,f_2):\RtR$ satisfying $f'(x)>0$ for all $x\in \R^2$.
This counterexample generated considerable interest in the classification of polynomial maps $\Rttt$ with everywhere positive Jacobian determinant. 
Such maps are called \emph{Pinchuk maps}.
The original Pinchuk map is of degree $40$, and has $\deg f_1 = 10$.
Accordingly, the next task was to find a Pinchuk map $f$, assuming $\deg f_1\leq \deg f_2$, with the smallest possible value for  $\deg f_1$. 

The first lower bound was given by Braun and dos Santos Filho in 2010; they showed in~\cite{braun2010real} that no Pinchuk maps exist whenever $\deg f_1 \leq 3$. 
This was following a result of Gwo\'zdziewicz in~\cite{gwozdziewicz2001real}, stating that the strong Jacobian Conjecture holds true for real polynomial maps on the plane of degree three and less.
Gwo\'zdziewicz's work was also extended by  Braun and Oréfice-Okamoto to maps of degree four~\cite{braun2016polynomial}.
Following Cambell's~\cite{campbell2011asymptotic}, and later, Fernandes'~\cite{fernandes2022new} discovering of Pinchuk maps with smaller values for $\deg q$ ($25$ and $15$ respectively), the construction with the smallest pairs of degrees is a result of Braun and Fernandes~\cite{braun2023very}, with $(\deg f_1,\deg f_2) = (9,15)$. 
The question remains open whether there exists a Pinchuk map with $\deg f_1 \in\{5,6,7,8\}$.

None of the Pinchuk maps above are surjective.
This raises the question on whether non-surjectivity is necessary for non-injectivity.
This was recently addressed by Fernandes and Jelonek in~\cite{fernandes2024pinchuk} by discovering two Pinchuk maps, in which one is surjective, while the other one has non-dense image.

The tools for constructing the original Pinchuk map relied on understanding the set of all Newton pairs $\bmA\in\cItt$ for which there is pair of polynomials $(f_1,f_2)\in\RA$ such that $f'(x)>0$ for all $x\in\R^2$.
Under the latter condition, we say that $f_1$ and $f_2$ are \emph{real Jacobian mates}. 
Even though this term was coined by Gwo\'zdziewicz in 2016,
a partial classification of Newton pairs for real Jacobian mates was initiated by Abhyankar already in the 1970s~\cite[Theorem 19.4]{abhyankar1977lectures}. 
Another partial classification would appear much later in~\cite{Gwozdziewicz2016}.

\begin{theorem}[{{\cite[Theorem 1]{Gwozdziewicz2016}}}]\label{thm:Gwoz}
Assume that the Newton polygon of a polynomial $f_1\in\R[x_1,x_2]$ has an edge that: begins at $(0, 1)$, has a positive inclination, and has no lattice points in its relative interior. 
Then, there does not exist a polynomial $f_2\in\R[x_1,x_2]$ for which $f_1$ and $f_2$ are Jacobian mates.
\end{theorem}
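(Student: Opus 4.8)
The plan is to isolate an initial form of the Jacobian $J:=\det\operatorname{Jac}(f_{1},f_{2})$ along the given edge, use $J>0$ to make that initial form nonnegative on $\R^{2}$, and then contradict this by exploiting the rigidity of the Jacobian of a binomial. Suppose $f_{1}$ had a (real) Jacobian mate $f_{2}$, i.e. $J>0$ on all of $\R^{2}$. By hypothesis the edge $E$ of $\NP(f_{1})$ runs from the vertex $(0,1)$ to a vertex $(a,b)$ with $a\ge 1,\ b\ge 2$ and primitive edge vector $(a,b-1)$ (``no interior lattice point'' means $\gcd(a,b-1)=1$); write $\alpha,\beta\in\R^{*}$ for the coefficients of $x_{2}$ and $x_{1}^{a}x_{2}^{b}$ in $f_{1}$, so $f_{1,E}=\alpha x_{2}+\beta x_{1}^{a}x_{2}^{b}=\beta x_{2}\bigl(x_{1}^{a}x_{2}^{b-1}-\rho_{0}\bigr)$ with $\rho_{0}:=-\alpha/\beta$. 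The vector $w:=(1-b,a)$, with $w_{1}<0<w_{2}$, is the outer conormal of $E$ pointing away from $\NP(f_{1})$ (this uses that $(0,1)$ lies on the $x_{2}$-axis), so with the convention that $\operatorname{in}_{w}(\cdot)$ retains the terms of maximal $w$-weight one has $\operatorname{in}_{w}(f_{1})=f_{1,E}$.

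First I would record two inputs. (i) $\operatorname{in}_{w}$ is multiplicative, and $\operatorname{in}_{w}\!\bigl(\det\operatorname{Jac}(u,v)\bigr)=\det\operatorname{Jac}(\operatorname{in}_{w}u,\operatorname{in}_{w}v)$ whenever the right side is nonzero. (ii) Substituting $x_{1}=s_{1}t^{\,1-b},\ x_{2}=s_{2}t^{\,a}$ and letting $t\to+\infty$ gives $t^{-\deg_{w}J}\,J(s_{1}t^{1-b},s_{2}t^{a})\to(\operatorname{in}_{w}J)(s_{1},s_{2})$ for every $(s_{1},s_{2})\in\R^{2}$; since $J$ is everywhere positive this forces $\operatorname{in}_{w}J\ge 0$ on $\R^{2}$, and $\operatorname{in}_{w}J\not\equiv0$ because $J$ is a nonzero polynomial.

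Next comes a reduction modulo powers of $f_{1}$. As long as $\det\operatorname{Jac}(f_{1,E},\operatorname{in}_{w}f_{2})=0$, algebraic dependence together with the fact that $f_{1,E}$ is not a proper power of any polynomial forces $\operatorname{in}_{w}f_{2}=c\,f_{1,E}^{\,j}=\operatorname{in}_{w}(c\,f_{1}^{\,j})$ for some $j\ge 0,\ c\in\C$; then $f_{2}-c\,f_{1}^{\,j}$ is again a Jacobian mate (the Jacobian is unchanged) of strictly smaller $w$-degree. Since the attainable $w$-degrees form a fixed finite set, this stops, either with $f_{2}$ constant — impossible, as then $J\equiv 0$ — or with a Jacobian mate for which $P:=\det\operatorname{Jac}(f_{1,E},g)\neq0$, where $g:=\operatorname{in}_{w}f_{2}$. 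By (i), $P=\operatorname{in}_{w}J$, so $P\ge 0$ on $\R^{2}$ and $P$ is $w$-homogeneous. Setting $h:=x_{1}^{a}x_{2}^{b-1}$ (a $w$-weight-$0$ monomial), every $w$-homogeneous polynomial of a fixed weight is a monomial times a polynomial in $h$, so $g=x^{\gamma}\psi(h)$ for suitable $\gamma\in\N^{2}$ and $\psi\in\C[s]$, and a direct computation with $\nabla h=h\bigl(\tfrac{a}{x_{1}},\tfrac{b-1}{x_{2}}\bigr)$, the Leibniz rule, and Euler's identity yields $P=\tfrac{\beta x^{\gamma}}{x_{1}}\,R(h)$ with
\[
R(s)=\psi(s)\bigl[(a\gamma_{2}-b\gamma_{1})s+\gamma_{1}\rho_{0}\bigr]-a\,s(s-\rho_{0})\,\psi'(s),
\]
and $R\not\equiv 0$ since $P\neq0$; on the curve $\{f_{1,E}=0\}$ this collapses to the transparent identity $P=-\tfrac{\alpha\,\deg_{w}g}{x_{1}}\,g$.

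The last step is to contradict $P\ge 0$ on $\R^{2}$. Because $\gcd(a,b-1)=1$, at least one of $a,b-1$ is odd, so $h$ attains both signs on $\R^{2}$; along each fibre $\{h=\mathrm{const}\}$ the factor $\beta x^{\gamma}/x_{1}$ has constant sign, but that sign is toggled by the coordinate reflections $x_{i}\mapsto-x_{i}$, which either preserve the fibre or exchange $\{h=s_{0}\}$ with $\{h=-s_{0}\}$. Comparing $P\ge 0$ at a point and at its reflections, and exploiting the constrained shape of $R$ (in particular $R(0)=\gamma_{1}\rho_{0}\psi(0)$ and $R(\rho_{0})=-\rho_{0}\,\deg_{w}(g)\,\psi(\rho_{0})$, and the fact that $R$ is built from the single one-variable polynomial $\psi$), I expect to force $R\equiv0$, contradicting $R\not\equiv0$. \textbf{The hard part} is precisely this sign analysis: a crude comparison at reflected points is inconclusive for some parity patterns of $(a,b-1)$, and one must separately dispose of the degenerate case in which $g$ and $f_{1,E}$ share the irreducible factor $x_{1}^{a}x_{2}^{b-1}-\rho_{0}$ (which would only force high powers of that factor into $P$). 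The leverage is that $E$ begins exactly at $(0,1)$, so $\partial_{x_{2}}f_{1,E}$ is a nonzero constant on $\{f_{1,E}=0\}$ — this is what makes the coprimality hypothesis decisive and closes the argument.
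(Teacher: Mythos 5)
The survey states this theorem without proof (it is quoted directly from~\cite{Gwozdziewicz2016}), so your argument has to stand on its own, and as written it does not: the decisive step is missing. Your frame is reasonable and your computation checks out --- with $h:=x_1^ax_2^{b-1}$, $g=x^{\gamma}\psi(h)$ and $f_{1,E}=\beta x_2(h-\rho_0)$ one indeed gets $\operatorname{Jac}(f_{1,E},g)=\beta x_1^{\gamma_1-1}x_2^{\gamma_2}R(h)$ with your $R$ --- but the theorem is precisely the assertion that such an expression cannot be everywhere nonnegative and nonzero, and that is exactly what you do not prove. You only say you \emph{expect} the reflection argument to force $R\equiv 0$, and you yourself concede it is inconclusive for some parities of $(a,b-1)$ and that the degenerate case where $g$ shares the factor $h-\rho_0$ (or $x_2$) needs separate treatment. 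Note that nonnegativity of a monomial times $R(h)$ does not by itself force $R\equiv 0$ (for instance $x_1\cdot R(x_1x_2^{2})$ with $R(s)=s$ is a square), so any contradiction must exploit the specific differential shape of $R$ --- the coupling with $\psi$, the coefficients $a\gamma_2-b\gamma_1$ and $\gamma_1\rho_0$, and the location of $\gamma$ --- and none of that analysis is carried out. As it stands, this is a plan whose hard core is unexecuted, so it cannot be accepted as a proof.

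There are also technical slips, all repairable but worth flagging. First, the orientation of the weight: for an edge of positive inclination from $(0,1)$ to $(a,b)$ the outer normal is proportional to $(b-1,-a)$, so $E$ \emph{minimizes} your $w=(1-b,a)$ on the Newton polygon; with the max-weight convention $\mathrm{in}_w f_1\neq f_{1,E}$ in general, and you must instead take $w=(b-1,-a)$ (and substitute $x_1=s_1t^{\,b-1}$, $x_2=s_2t^{-a}$) so that the identity $\mathrm{in}_wf_1=f_{1,E}$ and the limit argument giving $\mathrm{in}_wJ\ge 0$ refer to the same face. Second, termination of the descent: the $w$-degrees of the iterates $f_2-Q(f_1)$ do \emph{not} lie in a fixed finite set (their supports grow and $w$ has a negative entry); you need a uniform lower bound such as $\deg_w f_2\ \ge\ \deg_w J-\deg_w f_1+w_1+w_2$, which follows from $\deg_w\operatorname{Jac}(u,v)\le\deg_w u+\deg_w v-w_1-w_2$, to stop an infinite strictly decreasing sequence. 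Third, ``$f_{1,E}$ is not a proper power'' is too weak to yield $\mathrm{in}_wf_2=c\,f_{1,E}^{\,j}$ from algebraic dependence; what one actually uses is that $f_{1,E}=\beta x_2(x_1^ax_2^{b-1}-\rho_0)$ is squarefree with the binomial factor irreducible (this is where $\gcd(a,b-1)=1$ enters), for example via the relation $u^{q}=c\,v^{p}$ for algebraically dependent quasi-homogeneous polynomials together with unique factorization; and $c$ should be real. Finally, the claim that $\beta x^{\gamma}/x_1$ has constant sign along each fibre $\{h=\mathrm{const}\}$ is false when $a$ or $b-1$ is even (such a fibre meets more than one quadrant); the reflection bookkeeping you sketch is the correct substitute, but it is precisely the part you have not done.
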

The term \emph{positive inclination} for an edge is used whenever it has an outer normal vector $v:=(v_1,v_2)$ such that $v_1 > 0$ and $v_2 < 0$. 

Analyzing Newton pairs for counting the number of singularities at infinity of the polynomial $f_1$ was the method adopted in~\cite{van1994note} for constructing the above-mentioned Pinchuk map with $(\deg f_1,\deg f_2) = (9,15)$~\cite{braun2023very}.
This signifies the importance of the following more general problem.

\begin{problem}\label{prb:Gwo-n}
Give necessary conditions for all collections $\bmA\in\cInk$, such that for each $f\in\RA$, there exists $x\in\R^n$ satisfying $f'(x)=0$.
\end{problem}

\section{Singularities at infinity of generically-finite maps}\label{sec:sing_infty}
In analogy to affine singularities, the complexity of analytic maps can also be understood via the behavior of their fibers ``at infinity''. 
In this section, we elaborate on this phenomenon for a class of polynomial maps, whose generic fibers are finite.

\begin{definition}\label{def:finite-polynomial}
Let $f:\XtY$ be a dominant polynomial map between irreducible normal affine varieties over an algebraically closed field $K$. 
Then $f$ is said to be \emph{finite} if $K[X]$ is integral over the subring $f^*K[Y]$ of $K[X]$.
That is, if for each polynomial function $h$ in the coordinate ring $f^* K[Y]$, there exists $m\in\N$ and a collection $\{g_1,\ldots,g_m\}\subset  K[X]$ such that the equation $h^m + g_1~h^{m-1} + \cdots +g_m$ is the zero-function~\cite[Ch. I, \S 3]{shafarevich1994basic}.
\end{definition} 
If $f$ is a finite map, then every point $y\in Y$ has only finitely-many inverse images~\cite[pp. 48, Ch. I, \S 3]{shafarevich1994basic}.
Hence, a necessary condition for finiteness is that $\dim X\leq \dim Y$. 
In topological terms, a finite analytic map is a locally analytic branched covering over each point in $Y$ (see e.g.,~\cite[Theorem 2]{gunning1973lectures} or~\cite{milnor2016singular}). 
Then, for analytic maps satisfying $\dim X\leq \dim Y$, we may describe singularities at infinity as those points in $Y$ over which the map is not a local analytic cover. 
As $K[X]$ and $K[Y]$ are integrally closed, given any finite polynomial map, there is a Zariski open $U\subset Y$, such that the restricted mapping
\begin{align}\label{eq:generically}
\fr_{f^{-1}(U)}:= & f^{-1}(U)\longrightarrow U
\end{align} is an unramified covering of degree equal to the index $[K(X):f^*K(Y)]$~\cite[Ch. II, \S 5]{shafarevich1994basic}. This is called the \emph{topological degree of $f$}. Then, the discriminant $\cDf$ is the union of local ramification loci of $f$. 
Accordingly, we refer to $f$ as \emph{generically finite} so to distinguish the case if $f$ is not finite. 
For example, even though we have the equality $\dim \bl^{-1}(\unze)=1$, the blowup map $\bl:(x,~y)\longmapsto(x,~xy)$ is generically finite by taking $U$ to be $\C^2\setminus\{\text{vertical line}\}$.

Let $\ccSf$ denote the set of all $y\in Y$ at which for every neighborhood $V$ containing $y$, the restricted map $\fr_{V}$ is not finite. Jelonek initiated in the 1990s the study of topological characterization of finite polynomial maps over the field of complex numbers~\cite{Jel93,Jel99}, where he later showed that $\ccSf$ is an algebraic hypersurface in $Y$~\cite{jelonek2001topological}. Whenever $K=\C$ or $K=\R$, the set $\ccSf$ coincides with the set of points $y\in Y$ for which there exists an infinite sequence of points $\{x_k\}_{k\in\N}\subset X$ satisfying $\Vert x_k\Vert\longrightarrow \infty$ and $f(x_k)\longrightarrow y$.

As the definition indicates, the non-properness set is an important object associated to $f$ that captures the asymptotic behavior of $f$. 
Its study was first initiated by Jelonek in his two 1990s papers~\cite{Jel93,Jel99} on complex polynomial maps $\Cntn$; he proved structural results describing its topological and algebraic properties, as well as computational methods. 
He also showcased the importance of the non-properness set in solving problems from affine geometry. 

The goal of this section is to present computational and structural results on the non-properness set, as well as generalizations and applications;~\S\ref{sub:non-prop_computational} is devoted to the various methods  for computing the non-properness set of complex and real polynomial maps. 
Then, in~\S\ref{sub:non-prop_struc}, we describe some of the  interesting topological and geometrical properties. 
Finally, in~\S\ref{sub:non-prop_applications}, we highlight some of the many surprising appearances of the non-properness set in different problems from affine geometry, which has lead to useful applications.

\subsection{Computational results}\label{sub:non-prop_computational} Given a polynomial map $f:\Cntn$, consider the maps $F_i:~\C^n\longrightarrow\C^n\times\C$, such that $F_i(x):=(f(x),x_i)$, for $i \in  [n]$. Also let
\[ 
\sum\nolimits_{k=0}^{N_i}A_k^i(f) \, x_i^{N_i -k}~\in \C[f_1,\ldots,f_n,x_i], 
\]
be the defining polynomial for the hypersurface $F_i(\C^n)\subset \C^n\times\C$.
Then, $A_0^i(f)$ is the \emph{$i$-th non-properness polynomial of $f$}. 

\begin{theorem}[{{\cite[Proposition 7]{Jel93}}}]\label{th:Jeln}
 The non-properness set, $\ccSf$, of a dominant polynomial map $f:~\Cntn$,  is $\{\prod_{i=1}^nA_0^i(y)=0\}$, where each $A_0^i$ is the $i$-th non-properness polynomial of $f$.
\end{theorem}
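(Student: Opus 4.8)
The plan is to recognize, for each $i$, the non-properness polynomial $A_0^i$ as the leading coefficient in the variable $x_i$ of the irreducible equation of the hypersurface $\overline{F_i(\C^n)}$, and then to interpret the vanishing of $A_0^i$ at a point $y_0$ as the statement that this hypersurface has a branch escaping to infinity in the $x_i$-direction over $y_0$. Throughout I use the characterization recalled above that $y_0\in\ccSf$ precisely when there is a sequence $\{x_m\}_{m\in\N}\subset\C^n$ with $\Vert x_m\Vert\to\infty$ and $f(x_m)\to y_0$, so that the theorem amounts to the two inclusions $\ccSf\subseteq\{\prod_i A_0^i=0\}$ and $\{\prod_i A_0^i=0\}\subseteq\ccSf$.

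I would first set up the geometry. Since $f$ is dominant, $f(\C^n)$ has dimension $n$, hence so does the constructible set $F_i(\C^n)\subset\C^n\times\C$; being the image of the irreducible variety $\C^n$, it is irreducible, so its Zariski closure $V_i:=\overline{F_i(\C^n)}$ is an irreducible hypersurface, cut out by an irreducible polynomial $P_i=\sum_{k=0}^{N_i}A_k^i(y)\,x_i^{N_i-k}$ with $N_i=\deg_{x_i}P_i$ and leading coefficient $A_0^i\not\equiv 0$. Homogenizing $P_i$ in $x_i$ with a new variable $t$ gives $\widetilde P_i(y,x_i,t)=\sum_{k=0}^{N_i}A_k^i(y)\,x_i^{N_i-k}t^{k}$ on $\C^n\times\P^1$; because the $t^0$-coefficient $A_0^i(y)\,x_i^{N_i}$ is nonzero, $t$ does not divide $\widetilde P_i$, the polynomial $\widetilde P_i$ is irreducible, and $\{\widetilde P_i=0\}$ is exactly the closure $\overline{V_i}$ of $V_i$ in $\C^n\times\P^1$. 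The key elementary computation is then $\widetilde P_i(y_0,1,0)=A_0^i(y_0)$: the fibre of $\overline{V_i}$ over $y_0$ contains the point at infinity $[1:0]$ of the $x_i$-line if and only if $A_0^i(y_0)=0$.

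With this the two inclusions are short. For $\ccSf\subseteq\{\prod_i A_0^i=0\}$: given $y_0\in\ccSf$, choose $x_m$ with $\Vert x_m\Vert\to\infty$ and $f(x_m)\to y_0$, and pass to a subsequence along which some coordinate satisfies $|(x_m)_i|\to\infty$ (pigeonhole on $\max_j|(x_m)_j|$). Evaluating $P_i$ at $F_i(x_m)\in V_i$, dividing by $(x_m)_i^{N_i}$, and letting $m\to\infty$ annihilates every term with $k\geq 1$ while the $k=0$ term tends to $A_0^i(y_0)$, so $A_0^i(y_0)=0$. For $\{\prod_i A_0^i=0\}\subseteq\ccSf$: if $A_0^i(y_0)=0$ for some $i$, then $(y_0,[1:0])\in\overline{V_i}$ by the computation above; since $F_i(\C^n)$ is a constructible set that is Zariski dense in the irreducible variety $\overline{V_i}$, it contains a Zariski-open dense subset of $\overline{V_i}$, and such a subset is automatically dense for the Euclidean topology; hence there are points $F_i(x_m)\in F_i(\C^n)$ converging to $(y_0,[1:0])$ in $\C^n\times\P^1$, and reading off coordinates, $f(x_m)\to y_0$ while $|(x_m)_i|\to\infty$, whence $\Vert x_m\Vert\to\infty$ and $y_0\in\ccSf$.

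I expect the main obstacle to lie in the second inclusion: upgrading Zariski density of $F_i(\C^n)$ inside the compactified hypersurface $\overline{V_i}$ to the existence of an honest Euclidean-convergent sequence of image points approaching the point at infinity over $y_0$, and, alongside it, the bookkeeping needed to confirm that $\overline{V_i}$ itself — and not some spurious component supported at $t=0$ — carries that point at infinity exactly when $A_0^i(y_0)=0$. Both are standard facts about complex algebraic varieties, but they are where the actual content of the statement resides; the rest (dominance forcing $F_i(\C^n)$ to be a hypersurface, and the one-line limit computation) is routine.
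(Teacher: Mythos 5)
Your proof is correct, but there is nothing in the survey to compare it with: Theorem~\ref{th:Jeln} is stated here as a quoted result of Jelonek~\cite{Jel93}, and the paper gives no proof or sketch of it. Judged on its own terms, your argument is complete modulo standard facts that you correctly identify. Dominance makes each $x_i$ algebraic over $\C(f_1,\ldots,f_n)$, so the defining polynomial of $\overline{F_i(\C^n)}$ genuinely has positive degree $N_i$ in $x_i$ and $A_0^i\not\equiv 0$; the inclusion $\ccSf\subseteq\{\prod_i A_0^i=0\}$ is exactly your pigeonhole-plus-limit computation; and the reverse inclusion rests on two points you flag yourself: (i) since $t$ does not divide the homogenization $\widetilde P_i$, no irreducible component of the hypersurface $\{\widetilde P_i=0\}\subset\C^n\times\P^1$ can be contained in the $n$-dimensional locus $\{t=0\}$ (it would have to coincide with it), so $\{\widetilde P_i=0\}$ is precisely the closure of $V_i$ and the point $(y_0,[1:0])$ lies on that closure exactly when $A_0^i(y_0)=0$; and (ii) a nonempty Zariski-open subset of an irreducible complex variety is dense in the Euclidean topology, which upgrades Zariski density of the constructible image $F_i(\C^n)$ to the convergent sequence you need. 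This is also essentially the mechanism of Jelonek's original argument: there, properness over points with all $A_0^i(y)\neq 0$ is obtained by bounding the roots of $\sum_k A_k^i(f(x))\,x_i^{N_i-k}=0$ when the leading coefficients stay away from zero (your first inclusion in contrapositive form), while non-properness over $\{A_0^i=0\}$ is produced by an escape-to-infinity argument that your compactification in $\C^n\times\P^1$ packages cleanly. One small caveat: you lean on the sequence characterization of $\ccSf$ recalled in the survey; its equivalence with the definition via finiteness of restrictions is itself a (nontrivial) result from~\cite{Jel93}, so a fully self-contained write-up should either adopt the sequence description as the definition or include a proof of that equivalence.
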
 

Results in Theorem~\ref{th:Jeln} were generalized by A. Valette~\cite{Sta02} to polynomial maps $f:\XtY$ between arbitrary affine varieties over an algebraically closed field~\cite{Sta02}.
Despite its difficulty, some computational results for the non-properness set of real maps were presented, albeit only restricted to some special cases~\cite{Jel01c,Sta07}. Recently, a full method has been developed by the author and Tsigaridas for planar maps. 

\begin{theorem}[\cite{EHT21}]\label{thm:ET-real-n=2}
Let $f:\RtR$ be a dominant polynomial map. Then, a decomposition $\cS_f=S_1\cup\cdots\cup S_r$ of the non-properness set into $\R$-uniruled semi-algebraic curves can be computed effectively.
\end{theorem}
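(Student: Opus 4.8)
The plan is to split the argument into a \emph{structural} part, identifying $\cS_f$ with the image under $f$ of a controlled family of arcs escaping to infinity, and an \emph{algorithmic} part that makes this identification effective and organizes the output into $\R$-uniruled pieces. The starting point is the arc description of the non-properness set. Since $\cS_f=\bigcap_{R>0}\overline{f(\{\|x\|>R\})}$ is a closed semi-algebraic set, the curve selection lemma applied to the semi-algebraic set $\{(x,y,s):f(x)=y,\ s\|x\|=1\}$ shows that $y\in\cS_f$ if and only if there is a real Puiseux arc $x(t)=(x_1(t),x_2(t))$, $t\in(0,\varepsilon)$, with $\|x(t)\|\to\infty$ and $\lim_{t\to 0}f(x(t))=y$. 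Moreover, since $f$ is dominant, its complexification is dominant, so from $\cS_f\subset\cS_{f_{\C}}\cap\R^2$ together with the results of Jelonek over $\C$ recalled in Theorem~\ref{th:Jeln}, the set $\cS_f$ is contained in a real algebraic curve; being closed and of dimension at most one, it is a finite union of semi-algebraic arcs and points.

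The next step is to classify the relevant arcs by their Newton data — the combinatorial backbone being that each cone below corresponds to a toric divisor at infinity of a compactification of $\R^2$ adapted to $\NP(f_1),\NP(f_2)$. Writing $v=(v_1,v_2)=(-\operatorname{ord}_t x_1,-\operatorname{ord}_t x_2)\in\mathbb{Q}^2\setminus\{0\}$ for the vector of growth rates, one has $\operatorname{ord}_t f_j(x(t))=-h_{\NP(f_j)}(v)$ unless the leading part of the arc is a zero of the face polynomial $f_{j,\Gamma}$, where $\Gamma\prec\NP(f_j)$ is the face selected by $v$; the limit of $f_j$ along the arc is then the $t^0$-coefficient of $f_j(x(t))$. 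The admissible $v$ — those with $h_{\NP(f_1)}(v)\le 0$, $h_{\NP(f_2)}(v)\le 0$ and at least one positive coordinate — form a finite union of rational polyhedral cones read off from the normal fans of the two Newton polygons, while the cancellation cases correspond to proper faces and are treated by recursion on a strictly decreasing dimension. For each cone one substitutes the corresponding parametrized arc into $f$, expands, and reads off the limit map $\psi\colon U\to\R^2$: a rational map in the free coefficients of the arc, subject to finitely many sign and equality constraints cutting out a semi-algebraic domain $U$. The Zariski closure of $\cS_f$ is the union of the closures of the images $\psi(U)$; discarding the finitely many pieces that are points, or that are not attained by a genuine real arc with finite real limit, produces the decomposition $\cS_f=S_1\cup\cdots\cup S_r$.

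Each $S_i$ is $\R$-uniruled because it arises as the image of a parametrized one-dimensional family: restricting a coordinate of $U$ yields, through every point of $S_i$, a non-constant rational — and, after clearing denominators and reparametrizing, polynomial — curve $\R\to S_i$, which is exactly Jelonek's real uniruledness statement made constructive; a further finite subdivision removes the isolated-real-point pathology of real rational curves. All ingredients are effective: Newton polytopes and normal fans are computed combinatorially; the face recursions terminate; each limit map $\psi$ is obtained by symbolic expansion; the Zariski closures of the images $\psi(U)$ are computed by elimination (resultants or Gr\"obner bases); and the passage from these closures to the actual semi-algebraic images $\psi(U)$, together with the test of which points are reached by real arcs, is a cylindrical algebraic decomposition computation. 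Composing these steps gives the algorithm, and the two inclusions established above prove its correctness.

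The main obstacle I anticipate is the genuinely real bookkeeping. Over $\C$ one may pass freely to Zariski closures, and Jelonek's non-properness polynomials $A_0^i$ give the answer outright; over $\R$ the image of a real arc family is only semi-algebraic, so one must determine exactly which portion of each candidate curve is attained — a real quantifier-elimination problem — while keeping track of complex-conjugate pairs of arcs whose \emph{common} real image does belong to $\cS_f$. Compounding this, guaranteeing that the final pieces are not merely real rational curves but are honestly $\R$-uniruled forces a careful finite refinement near their isolated real points. Controlling this interplay — the combinatorics of faces at infinity, the face recursion, and the real semi-algebraic carving — so that the procedure both terminates and outputs a correct $\R$-uniruled decomposition is the technical heart of the proof.
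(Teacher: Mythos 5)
Your overall strategy---describing $\cS_f$ as the set of limits of $f$ along real Puiseux arcs escaping to infinity, organized by faces of the Newton pair---is in the same spirit as the paper, but the decisive step is not established. Everything hinges on the cancellation case: when the leading coefficient vector annihilates the face polynomial $f_{j,\Gamma}$, the order of $f_j$ along the arc is no longer $-h_{\NP(f_j)}(v)$ and you must refine the expansion. That refinement is a recursion on the \emph{depth} of the Puiseux expansion, not, as you write, ``on a strictly decreasing dimension'' (in two variables the selected faces are already edges or vertices after one step), and nothing in your sketch bounds the depth or the denominators of the exponents that may be needed. Consequently neither the finiteness of your list of limit maps $\psi\colon U\to\R^2$ nor the inclusion $\cS_f\subseteq\bigcup\psi(U)$ for the finitely many families you actually compute is proved---and with them the termination and the correctness of the whole algorithm. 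This is exactly the point the paper treats differently: it passes to the complexification and to the toric compactification $X_{\bmA}$ of $(\R^*)^2$, where a solution of $f-w=\bm{0}$ escaping to infinity must converge to a one-dimensional boundary orbit $\cO_{\bGam}$ attached to a dicritical coherent face $\bGam\prec\bmA$, and the components of $\cS_{\C f}$ are detected by when a solution appears on, or increases its multiplicity along, such an orbit---a finite, face-indexed bookkeeping with no open-ended refinement (compare Theorems~\ref{th:Jeln} and~\ref{thm:algebraic_correspondence}); the real locus $\cS_f\subset\cS_{\C f}\cap\R^2$ is then singled out by testing preimages of finitely many sample points near $\cS_{\C f}\cap\cD_f$, rather than by quantifier elimination over your parameter domains.

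A second, smaller gap is the uniruledness step. From a rational parametrization of a piece $S_i$ you cannot obtain a polynomial one by ``clearing denominators and reparametrizing'': the image of a nonconstant polynomial map $\R\to\R^2$ is unbounded, so for instance bounded real rational arcs are not $\R$-uniruled, and rational parametrizability does not imply $\R$-uniruledness. The conclusion is true, but it comes from Jelonek's theorem that the non-properness set of a real generically-finite polynomial map is $\R$-uniruled (Theorem~\ref{thm:real-maps-non-prop}, \cite{Jel02}), whose proof resolves the indeterminacy of $f$ at infinity so that the pieces are images of real projective lines; your decomposition must be shown compatible with that statement (or uniruledness reproved for your $\psi(U)$), not fixed by an unspecified ``finite subdivision'' near isolated real points. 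A minor adjustment: when $\bm{0}\in\NP(f_j)$ one has $h_{\NP(f_j)}(v)\geq 0$ for all $v$, so your admissibility condition $h_{\NP(f_j)}(v)\leq 0$ must be read as equality together with the cancellation branches, which again feeds back into the unresolved recursion above.
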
 

The workings of the method behind Theorem~\ref{thm:ET-real-n=2} relies on toric varieties. 

\begin{proof}[Sketch of proof of Theorem~\ref{thm:ET-real-n=2}]
Let $\bmA\in\cItt$ be a pair of polytopes, let $f\in\CA$, and let $X_{\bmA}$ denote the real toric variety associated to ${\bmA}$~\cite{sottile334toric}. A real solution to the polynomial system $f - w=0$ in the toric compactification of $(\R^*)^2\hookrightarrow X_{\bmA}$ is either at the $(\R^*)^2$-orbit or somewhere at the one-dimensional orbits $\cO_{\bGam}$ of its boundary $\partial X_{\bmA}$. The latter is given by the pair dicritical faces $\bGam\prec \bmA$. Since a solution $z\in(\R^*)^2$ to $f-w=0$ in $(\R^*)^2$ goes to infinity in the vicinity of generic points $w\in\cS_f$, the point $z$ converges to an orbit $\cO_{\bGam}\subset X_{\bmA}$. 
Then, either $w$ gives rise to a new solution at $\cO_{\bGam}$ or it increases the multiplicity of an already existing one.
This way, we get a correspondence between the set of coherent faces $\bGam\prec \bmA$ and the components of $\cS_{\C f}$, where $\C f:\CtC$ is the canonical complexification of $f$. Furthermore, those components can be computed effectively by measuring the multiplicities of solutions at the orbits. In order to distinguish points in the real non-properness set $\cS_f\subset\cS_{\C f}\cap \R^2$, it is enough to test the preimages of points near the intersection $\cS_{\C f}\cap \cD_f$.
\end{proof}

Theorem~\ref{thm:ET-real-n=2} leaves out the following open problem.

\begin{problem}\label{prb:non-prop_computing}
Design a method that computes the non-properness set of any polynomial map $\RtRn$.
\end{problem}

\subsubsection{Non-properness from face-resultants}\label{sss:face-resultants}
More recently in~\cite{EH2022tropinon}, the author generalized the proof of Theorem~\ref{thm:ET-real-n=2} to generic higher-dimensional polynomial maps higher dimensional maps over an algebraically closed field $\K$ of characteristic zero. Notably, the relation between coherent faces of the Newton tuple $\bmA$, and the boundary solutions in the toric compactification $X_{\bmA}$. This is represented in the following result.

A coherent face $\bmG$ of $\bmA$ is called \emph{dicritical} if an  outward normal vector $ (\alpha_1,\ldots,\alpha_n)\in\R^n$ of one of its supporting hyperplanes satisfies $\alpha_i>0$ and $\alpha_j\leq 0$ for some $i,j\in\{1,\ldots,n\}$. 
For a proper coherent face $\bmG$ of $\bmA$, we use $X_{\bmG}$ to denote the set  
\[
\left\{(z,w)\in ( \K^*)^n\times  \K^n~\left|~(f(z)-w)_{\bmG} =0\right\}\right. .
\] The \emph{face-resultant} of the map $f$ at the face ${\bmG}$, denoted by $\cR_{\bmG}(f)$, is the Zariski closure in $\K^n$ of the image of  $X_{\bmG}$ under the projection $\pi:~(z,w)\longmapsto w$.

\begin{theorem}[{{\cite[Theorem 9]{EH2022tropinon}}}]\label{thm:algebraic_correspondence}
Let $\bmA:=(A_1,\ldots,A_n)\in\cInn$ be a tuple of $n$ polytopes in $\R^n_{\geq 0}$ such that $\bm{0}\in A_i$ ($i=1,\ldots,n$). 
Then, a generic map $f\in \K^{\bmA}$ satisfies
\begin{equation}\label{eq:non-properness=crsg}
\cS(f)= \bigcup \cR_{\bmG}(f),
\end{equation} where the union runs over all dicritical coherent faces of $\bmA$.
\end{theorem}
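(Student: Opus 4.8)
The plan is to realize both sides of~\eqref{eq:non-properness=crsg} inside a single toric compactification and compare limits of the solution variety. First I would fix a complete fan $\Sigma$ in $\R^n$ that refines the normal fans of $A_1,\dots,A_n$ (equivalently, a refinement of the normal fan of $\sum\bmA$) and let $X_\Sigma$ be the associated smooth, complete toric variety containing the torus $(\K^*)^n$. Consider the incidence variety $\mathcal{G}:=\overline{\{(z,w)\in(\K^*)^n\times\K^n\mid f(z)=w\}}$ inside $X_\Sigma\times\K^n$, together with the projections $p\colon\mathcal{G}\to X_\Sigma$ and $q\colon\mathcal{G}\to\K^n$. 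Since each $f_i$ has Newton polytope inside $A_i$, the closure of the graph of $f-w$ meets the toric boundary only in the orbits $\cO_\tau$ of cones $\tau\in\Sigma$, and on $\cO_\tau$ the limiting equations are exactly the face-restrictions $(f(z)-w)_{\bmG(\tau)}=0$, where $\bmG(\tau)$ is the coherent face of $\bmA$ selected by a ray generator in the relative interior of $\tau$. This is the geometric content of the paragraph preceding the statement, and it is the step I would lean on Notation~\ref{not:restricted} and the discussion of coherent tuple-faces for.

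Next I would characterize $\cS(f)$ via this picture. By definition (and the sequential description recalled in~\S\ref{sec:sing_infty}), $w_0\in\cS(f)$ iff there is a sequence $z_k\in(\K^*)^n$ — or, in the algebraically closed setting, a point of $\mathcal{G}\setminus((\K^*)^n\times\K^n)$ — with $f(z_k)\to w_0$ and $z_k$ escaping every compact set of $(\K^*)^n$, i.e. $p(z_k)\to\partial X_\Sigma$. Thus $w_0\in\cS(f)$ iff $w_0\in q(p^{-1}(\partial X_\Sigma))$, at least up to the subtlety that we must discard boundary points that do not genuinely record non-properness — namely those where the fiber over $w_0$ still retains a solution in $(\K^*)^n$ of the \emph{same} multiplicity, so that the covering degree does not drop. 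Genericity of $f\in\K^{\bmA}$ enters precisely here: I would invoke the Bernstein count (Theorem~\ref{th:Bernstein}, together with Theorem~\ref{affber}) to say that for generic $f$ the torus fiber $V^*_\K(f-w)$ has exactly $V(\bmA)$ simple points for $w$ outside a proper subvariety, and that a solution migrates to the orbit $\cO_\tau$ precisely when the face system $(f-w)_{\bmG(\tau)}=0$ acquires a torus solution there; by Bernstein's equality clause this is a codimension-one event on $w$ exactly when $\bmG(\tau)$ is a proper face contributing a genuine resultant, and an \emph{honest} loss of a solution (rather than a spurious coincidence) exactly for the dicritical faces, because a non-dicritical proper face has all outward normals in a single orthant and the corresponding limit can be absorbed by a toric automorphism/rescaling without the point leaving the relevant torus chart. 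Identifying $q(p^{-1}(\cO_\tau))$ with $\cR_{\bmG(\tau)}(f)$ is then just unwinding the definition of the face-resultant as the Zariski closure of $\pi(X_{\bmG})$.

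The inclusion $\cS(f)\subseteq\bigcup\cR_{\bmG}(f)$ then follows because every escaping sequence limits into some boundary orbit, hence its $w$-limit lies in some $\cR_{\bmG(\tau)}(f)$, and one checks the relevant $\tau$ can be taken with $\bmG(\tau)$ dicritical (a non-dicritical face yields a limit that, after the rescaling above, is not actually a point at infinity). For the reverse inclusion $\bigcup\cR_{\bmG}(f)\subseteq\cS(f)$ I would take a generic $w_0\in\cR_{\bmG}(f)$ for a dicritical $\bmG$, lift it to a torus solution $z_0$ of $(f-w_0)_{\bmG}=0$ on the corresponding orbit, and use the local structure of $\mathcal{G}\to\K^n$ near $(z_0,w_0)$ — smoothness of $X_\Sigma$ and the generic transversality supplied by choosing $f$ outside the discriminant of the incidence correspondence — to produce an analytic (or formal Puiseux) arc $w(t)\to w_0$ whose torus lifts $z(t)$ satisfy $\|z(t)\|\to\infty$, witnessing $w_0\in\cS(f)$; one then passes to the Zariski closure. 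The main obstacle, and the place where I expect the real work to sit, is the bookkeeping of \emph{which} coherent faces actually contribute a top-dimensional component and the proof that non-dicritical faces contribute nothing: this requires a careful genericity argument showing the face systems for non-dicritical $\bmG$ are either inconsistent in the torus for generic coefficients, or produce $w$-loci of codimension $\geq 2$, or are reabsorbed by the torus action — essentially an application of Theorem~\ref{th:Bernstein}'s equality criterion face-by-face, uniformly over the stratification of $\K^{\bmA}$ by which faces degenerate. Controlling this uniformly is what forces the word ``generic'' in the statement, and it is the step I would write out in the most detail.
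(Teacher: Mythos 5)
Your overall route---compactify the source torus, match boundary orbits of $X_\Sigma$ with coherent faces of $\bmA$, and use Bernstein-type genericity to decide which faces contribute---is the same strategy behind the cited result: the paper describes that proof as the higher-dimensional generalization of the toric sketch given for Theorem~\ref{thm:ET-real-n=2}, carried out through a refined analysis of the mixed resultants of~\cite{GKZ08}. The gaps sit exactly where the dicriticality condition has to do its work. First, your identification ``$w_0\in\cS(f)$ iff $w_0\in q(p^{-1}(\partial X_\Sigma))$, up to discarding points where the covering degree does not drop'' conflates the boundary of the \emph{torus} compactification with infinity of $\K^n$: the map lives on $\K^n$, so a family of solutions with some coordinate $z_i\to 0$ escapes every compact of $(\K^*)^n$ and limits into $\partial X_\Sigma$ while remaining in a bounded part of $\K^n$, and it says nothing about properness. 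The boundary orbits that must be discarded are those whose normal directions have all coordinates non-positive; this is one half of the dicriticality condition and cannot be recovered from your ``same multiplicity / covering degree'' criterion. Second, your reason for excluding the remaining non-dicritical faces (``all outward normals in a single orthant, so the limit is absorbed by a toric rescaling'') fails for faces whose normal cone lies in the strictly positive orthant: those orbits are genuinely at infinity of $\K^n$ and nothing is absorbed. The correct reason---and the place where the hypothesis $\bm{0}\in A_i$ enters---is that for such a direction a linear form with strictly positive coefficients is maximized on each $A_i$ away from the origin, so the face system $(f-w)_{\bmG}=0$ involves no $w$ at all; for generic $f$ it has no torus solutions, so $\cR_{\bmG}(f)=\emptyset$ and along such degenerations the image cannot stay bounded. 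You list this as one of three possible mechanisms in your last paragraph but never commit to it, and the dichotomy as you state it earlier is not correct; note that the same point is needed already for your forward inclusion, when you claim the limiting face may be taken dicritical.

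For the reverse inclusion $\bigcup\cR_{\bmG}(f)\subseteq\cS(f)$, your arc-lifting is plausible but is precisely the step that requires the multiplicity bookkeeping you postpone: one must show that a torus solution of the face system at a dicritical orbit is a limit of solutions of $f=w$ in $(\K^*)^n$, equivalently that over such $w$ the count of torus solutions drops below the Bernstein number $V(\bmA)$ of Theorem~\ref{th:Bernstein}, and not merely that the boundary system is consistent. This conservation-of-number argument, with multiplicities at the orbits controlled via the face resultants in the sense of~\cite{GKZ08}, is the technical core of the actual proof. Finally, since $\K$ is only assumed algebraically closed of characteristic zero, the sequential, norm-based description of $\cS(f)$ you lean on is unavailable; the argument has to be phrased through the closure of the graph or through valuations/Puiseux arcs, which your write-up only gestures at.
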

The proof of Theorem~\ref{thm:algebraic_correspondence} uses a refined analysis on methods from multivariate resultants established in~\cite{GKZ08}. An immediate consequence of this result is an effective method for computing the non-properness set of generic maps in $\K^{\bmA}$. Even though the earlier-presented known methods (Theorem~\ref{th:Jeln}) achieve this purpose as well, the sets $\cR_{\bmG}(f)$ usually require significantly fewer input data from $f$.

\subsection{Structural results}\label{sub:non-prop_struc} 

Numerous questions on the topology of the non-properness set remain open, however, several of its characteristics have been uncovered. 
In what follows, we present some of these results.

\subsubsection{Uniruledness}\label{sss:uniruled} An affine variety $X$ over any field $\K$ is said to be $\K$-\emph{uniruled} if for every $x\in X$, there exists a polynomial map $\varphi:\K\longrightarrow X$ satisfying $\varphi(0)= x$.

\begin{theorem}[\cite{Jel93,jelonek2001topological,jelonek2020quantitative}]\label{thm:non-prop-comp}
Assume that $\K$ is algebraically closed, and let $X$ and $Y$ be two affine varieties over $\K$. Then, the non-properness set $\ccSf$ of a generically-finite polynomial map $f:=(f_1,\ldots,f_p):\XtY$ is an algebraic hypersurface in $Y$ of degree at most 
\[
\frac{\deg X\cdot \prod_i \deg f_i - \mu(f) }{\min_i\deg f_i},
\] where $\mu(f)$ is the topological degree of $f$. Furthermore, if $X$ is $\K$-uniruled (e.g. $X=\K^n$), then $\ccSf$ is also $\K$-uniruled.
\end{theorem}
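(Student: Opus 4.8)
We need to prove several things: that $\ccSf$ is an algebraic hypersurface, the degree bound, and the uniruledness transfer. Let me think about how I'd approach each.

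First, the structure of $\ccSf$ as a hypersurface. This is essentially the content that was already referenced — Jelonek showed $\ccSf$ is an algebraic hypersurface. The key is: $\ccSf$ is the set of points $y$ where $f$ fails to be finite locally. Using the description (also mentioned earlier, Theorem 4.3 / Jelonek's Prop 7): over $\C$ or more generally over algebraically closed fields, $\ccSf = \{y : \exists \{x_k\}, \|x_k\|\to\infty, f(x_k)\to y\}$. To show this is algebraic, one compactifies: embed $X$ into some projective variety $\bar X$ (say via a projective embedding), take the closure $\Gamma$ of the graph of $f$ in $\bar X \times Y$, and let $\Gamma_\infty = \Gamma \cap ((\bar X \setminus X) \times Y)$ be the "part at infinity." Then $\ccSf = \overline{\pi_Y(\Gamma_\infty)}$ where $\pi_Y$ is the projection. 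Since $\Gamma_\infty$ is a closed (hence projective-over-$Y$) subvariety and $\pi_Y$ restricted to it is proper, the image is closed; and by a dimension count (the generic fiber of $f$ is finite, so $\dim \Gamma_\infty \le \dim X - 1 = \dim Y - 1$ when $\dim X = \dim Y$, more generally $\dim\Gamma_\infty \le \dim Y - 1$) the image has codimension $\ge 1$. Showing it has codimension exactly $1$ (i.e. is a hypersurface, not smaller) is the genuinely substantive part — this uses that $f$ is not finite exactly along a divisor, which follows from the valuative criterion of properness together with the fact that $K[X]$ is integrally closed (normality hypothesis): the locus of non-properness is "pure codimension one" because a non-integral extension fails integrality along a height-one prime.

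For the degree bound: this is the quantitative heart. I would use the face-resultant / toric approach sketched for Theorem~\ref{thm:algebraic_correspondence}, or more classically, a Bézout-type argument. Consider the graph compactification inside $\P^n \times \cdots$ built from the degrees $\deg f_i$. The non-properness divisor $\ccSf$ pulls back (or relates) to components "at infinity" of the total transform. The total "count" is governed by $\deg X \cdot \prod_i \deg f_i$ (a Bézout number for the system $f_1 = y_1, \ldots, f_p = y_p$ cut on $X$), from which one subtracts $\mu(f)$ — the number of genuine affine solutions over a generic point, i.e., the topological degree — leaving the contribution "lost to infinity," which bounds $\deg \ccSf$ after dividing by $\min_i \deg f_i$ (the division accounts for the minimal weight with which $\ccSf$ can appear as a component of a hypersurface section). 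Concretely: pick a generic point $y$ near $\ccSf$ and a generic line $\ell$ through it; count intersection points of $f^{-1}(\ell)$ inside $X$ versus at infinity; the deficit is $\ge (\deg \ccSf)\cdot(\min_i \deg f_i)$ on one side and $\le \deg X \prod\deg f_i - \mu(f)$ on the other. The bookkeeping on exactly which compactification makes this sharp (and why $\min_i$ rather than some other combination) is where I'd expect to spend the most care.

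For the uniruledness transfer — this I find the cleanest. Assume $X$ is $\K$-uniruled. Take any point $s \in \ccSf$. By the sequential/valuative characterization, there is a curve going to infinity in $X$ whose image tends to $s$; more robustly, by the hypersurface structure, $s$ lies in the image of $\Gamma_\infty$, so there is a point $\gamma_\infty \in \Gamma_\infty \subset \bar X \times Y$ over $s$. Pull this back along a resolution/normalization of $\Gamma$ if needed. Now the key trick (this is Jelonek's, and uses uniruledness of $X$): through a general point of $X$ near infinity there passes a polynomial image of $\K$, i.e. an affine line $\varphi: \K \to X$; as the base point of $\varphi$ runs toward infinity in $X$, the composite $f \circ \varphi : \K \to Y$ produces, in the limit, a polynomial map $\K \to Y$ through $s$ landing in $\ccSf$ — one uses that $\ccSf$ is itself the image of the "infinity locus", and that a family of affine lines degenerating toward the boundary specializes to an affine line inside the non-properness divisor (this is where one needs $\ccSf$ to be a hypersurface: the dimension count forces the limiting line to lie in $\ccSf$, not escape it). Hence $\ccSf$ is $\K$-uniruled. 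The main obstacle across the whole proof is the degree bound — getting the exact combination $(\deg X \prod_i \deg f_i - \mu(f))/\min_i \deg f_i$ rather than a cruder estimate requires the careful choice of compactification and a precise multiplicity analysis of the boundary components, and I would lean on the toric/face-resultant machinery of Theorem~\ref{thm:algebraic_correspondence} together with Khovanskii-type intersection formulas to pin it down.
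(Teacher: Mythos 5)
Your first step---compactifying the graph, setting $R:=\overline{G}\setminus G$ and taking $\cS_f=\pi(R)$---is exactly the paper's route to algebraicity and the dimension estimate, and you are right that the purity statement (codimension exactly one) is the substantive point there, which both you and the paper's sketch leave to Jelonek's cited work. The genuine gap is in the degree bound, which is the quantitative heart of Theorem~\ref{thm:non-prop-comp}. The paper does not obtain it from a B\'ezout deficit on generic lines: it reads the bound off Perron's classical theorem on the degree of the algebraic-dependence relation appearing in Definition~\ref{def:finite-polynomial} (via P\l oski's Theorem~1.5), applied to the coordinate functions---no intersection count on $f^{-1}(\ell)$ is involved. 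Your alternative plan rests on two inequalities you assert but never justify: that each branch of $f^{-1}(\ell)$ escaping to infinity over a point of $\cS_f\cap\ell$ costs at least $\min_i\deg f_i$, and that the total count lost to infinity is at most $\deg X\cdot\prod_i\deg f_i-\mu(f)$; as you yourself concede, this bookkeeping is where all the content lies, so the bound is not actually proved. Leaning on the face-resultant machinery of Theorem~\ref{thm:algebraic_correspondence} does not repair this, because that result only concerns \emph{generic} maps with a prescribed Newton tuple, whereas the bound here must hold for every generically finite $f:\XtY$.

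The uniruledness argument also has a concrete flaw. Your stated reason that the limiting curve lies in $\cS_f$---``the dimension count forces the limiting line to lie in $\cS_f$, not escape it'' because $\cS_f$ is a hypersurface---is not a valid inference; nothing about codimension one prevents a limit of curves through points near $s$ from leaving $\cS_f$. The correct reason is the sequential characterization of the non-properness set: every point of the limit curve is a limit of values $f(\varphi_k(t))$ with $\varphi_k(t)$ tending to infinity in $X$, hence belongs to $\cS_f$. Moreover two ingredients needed to make the limit exist and be useful are missing: a uniform bound on the degrees of the covering curves $\varphi_k$ (this is where $\K$-uniruledness is used quantitatively, so that the composites $f\circ\varphi_k$ vary in a fixed finite-dimensional family), and an argument, via normalizing the parametrizations, that the limit curve is non-constant. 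Without these the uniruledness transfer does not close, so beyond the first (correct, and same-as-paper) step the proposal has real gaps in both remaining claims.
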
 

The idea behind the proof of Theorem~\ref{thm:non-prop-comp} is taking the closure $\overline{G}_{\K}$ in $\P_\K^n\times \K^p$ of the graph $G_{\K}\subset \K^n\times \K^p$ of $f$. If $R:=\overline{G}_{\K}\setminus {G}_{\K}$, then $\cS_f=\pi(R)$, where $\pi:\P_\K^n\times \K^p\longrightarrow \K^p$ is the projection forgetting the first $n$ coordinates. As $R$ is algebraic, then so is $\cS_f$. Since $\dim R<n$, we get $\dim \cS_f<n$. The statement on uniruledness follows by further analyzing the projection $\pi$, whereas the degree is deduced from a classical theorem of O. Perron in the 1930s for finding the degree of the algebraic dependence in Definition~\ref{def:finite-polynomial}~\cite[Theorem 1.5]{ploski1986algebraic}.

Jelonek was also the first to consider the case for real maps.

\begin{theorem}[\cite{Jel02}]\label{thm:real-maps-non-prop}
The non-properness set $\cS_f$ of a real generically-finite polynomial map $f:\Rntp$ is either empty or a closed $\R$-uniruled semi-algebraic subset. Furthermore, we have $1\leq \dim \cS_f \leq n-1$.
\end{theorem}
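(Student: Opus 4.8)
The plan is to transpose the compactification argument behind Theorem~\ref{thm:non-prop-comp} to the field $\R$, replacing ``algebraic'' by ``closed semi-algebraic'' throughout and calling on the Tarski--Seidenberg theorem, the curve selection lemma, and resolution of singularities over $\R$ wherever the complex proof uses closedness of constructible images or the existence of rational curves. Embed $\R^n$ as the standard affine chart of $\P^n_\R$, set $H_\infty:=\P^n_\R\setminus\R^n$, let $G:=\{(x,f(x)):x\in\R^n\}\subset\R^n\times\R^p$ be the graph of $f$, let $\overline G\subset\P^n_\R\times\R^p$ be its closure in the Euclidean topology, and let $\pi\colon\P^n_\R\times\R^p\to\R^p$ be the second projection. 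The first step is the identity
\[
\cS_f=\pi\bigl(\overline G\cap(H_\infty\times\R^p)\bigr)=:\pi(R).
\]
Both inclusions follow from the sequential description of $\cS_f$ recalled before Theorem~\ref{th:Jeln} together with the compactness of $\P^n_\R$: a point $y\in\cS_f$ is a limit of $f(x_k)$ with $\|x_k\|\to\infty$, and a convergent subsequence of $(x_k,f(x_k))$ has limit in $R$; conversely a point of $R$ is a limit of graph points whose first coordinates leave every compact set. The same reasoning gives $\overline G\cap(\R^n\times\R^p)=G$, so that $R=\overline G\setminus G$.

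The two ``formal'' conclusions come next. Since $\overline G$ is the Euclidean closure of the semi-algebraic set $G$, it is closed and semi-algebraic, and hence so is $R=\overline G\cap(H_\infty\times\R^p)$. As $\P^n_\R$ is compact, $\pi$ is a proper semi-algebraic map, so by Tarski--Seidenberg $\cS_f=\pi(R)$ is a closed semi-algebraic subset of $\R^p$. For the dimension, $G$ is semi-algebraic of dimension $n$, whence $\dim\overline G=n$ and the frontier $R=\overline G\setminus G$ satisfies $\dim R\le n-1$; therefore $\dim\cS_f=\dim\pi(R)\le\dim R\le n-1$.

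It remains to prove $\R$-uniruledness, that is, to exhibit through each $w_0\in\cS_f$ a nonconstant polynomial map $\R\to\cS_f$; the bound $\dim\cS_f\ge1$ (when $\cS_f\neq\emptyset$) follows at once, the image of such a map being a one-dimensional semi-algebraic subset of $\cS_f$. The point is that $R=\overline G\setminus G$ is the divisor at infinity of the relative completion $G\hookrightarrow\overline G$ (proper over $\R^p$ via $\pi$) of the $\R$-uniruled variety $G\cong\R^n$. By the real analogue of the standard fact that every component of the boundary divisor of a normal completion of $\K^n$ is $\K$-uniruled --- which rests on $\R^n$ being swept out by affine lines and on exceptional divisors of real blow-ups being real projective bundles, and uses resolution over $\R$ to reduce to the normal case --- the set $R$ is $\R$-uniruled. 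One then transports the ruling to $\cS_f$ along $\pi|_R\colon R\to\cS_f$: since $\pi|_R$ is dominant and (as one must check) generically finite, a ruling curve through a general point of $R$ is not contracted, so its $\pi$-image is a nonconstant polynomial curve in $\cS_f$ through a general point; this yields $\R$-uniruledness over a dense semi-algebraic subset of $\cS_f$, which a semi-algebraic stratification argument then upgrades to all of $\cS_f$. This is the route Jelonek follows in~\cite{Jel02}, using the real-algebraic counterparts of the uniruledness results behind Theorem~\ref{thm:non-prop-comp}.

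The only nonformal part is this last step. Over $\C$ one freely uses that rational curves move in families, that the uniruled locus is well behaved under specialisation, and that dominant generically finite maps preserve uniruledness; over $\R$ none of this is automatic, so the argument must be run with explicit semi-algebraic (Nash) parametrisations and the curve selection lemma, together with a careful analysis of which curves on $R$ survive the projection $\pi$ without being contracted. A secondary technical point, needed precisely to control those contractions, is to verify that $\pi|_R$ is generically finite, equivalently that $\dim R=\dim\cS_f$; I expect this, and the real-algebraic bookkeeping around it, to be where the bulk of the work lies.
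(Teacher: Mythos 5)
Your first half coincides with the paper's argument: the identification $\cS_f=\pi(R)$ with $R=\overline G\setminus G$ the frontier of the graph closure in $\P^n_\R\times\R^p$, closedness and semi-algebraicity via properness of $\pi$ and Tarski--Seidenberg, and $\dim\cS_f\le n-1$ from $\dim R\le n-1$ is exactly how the paper deduces these statements. The divergence, and the genuine gap, lies in the uniruledness part. You propose to (a) get $\R$-uniruledness of $R$ from a ``real analogue'' of the fact that boundary divisors of completions of $\K^n$ are uniruled, and (b) push the ruling forward along $\pi|_R$, assuming $\pi|_R$ is generically finite, and then upgrade from a dense subset of $\cS_f$ to all of $\cS_f$ by a ``semi-algebraic stratification argument.'' Step (b) does not go through as stated: $\R$-uniruledness is a condition at \emph{every} point, and over $\R$ there is no automatic passage from ``ruled through a dense subset'' to ``ruled through every point'' --- limits of polynomial parametrizations yield polynomial parametrizations only under a uniform bound on their degrees, and you never produce such a bound (controlling the degrees of the ruling curves is precisely the technical heart of Jelonek's uniruledness statements). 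Moreover, the generic finiteness of $\pi|_R$ that you need so that ruling curves are not contracted is neither proved nor clearly true, and the points of $\cS_f$ lying only under contracted curves or positive-dimensional fibers of $\pi|_R$ are exactly the points your argument misses. Finally, step (a) uses as a black box a statement that is essentially the substance of the result being proved.

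The paper's route (following Jelonek, sketched for $n=2$) avoids all of this: extend $f$ to a rational map $\tilde f$ from $\P^2_\R$ to $\P^p_\R$ and resolve its indeterminacy by blowups, obtaining a smooth surface $Z$ and a regular map $F:Z\longrightarrow\P^p_\R$ that restricts to $f$ on $\R^2$. The boundary $Z\setminus\R^2$ is a union of copies of $\P^1_\R$ (the strict transform of the line at infinity together with the exceptional curves), $\cS_f$ is the union of the affine parts of their images, and restricting $F$ to each such circle directly produces the polynomial parametrizations witnessing $\R$-uniruledness --- no pushforward-of-uniruledness lemma, no generic finiteness of $\pi|_R$, and no density upgrade are needed. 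To salvage your route you would have to prove uniruledness of $\cS_f$ with explicit degree bounds (so that rulings survive passage to limits) and deal separately with contracted and positive-dimensional fibers of $\pi|_R$; the resolution-of-indeterminacy argument is the cleaner path.
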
 
\begin{proof}[Sketch of proof]
From the construction of the above set $R$, we deduce the statement about semi-algebraicity and dimension. 
As for the uniruledness, we only present it for $n=2$: 
Extend $f$ to a rational map $\tilde{f}:\P_\R^2\longrightarrow\P^p_\R$, and resolve the points of indeterminacy of $\tilde{f}$ by a sequence of blowups. 
In this way, we obtain a smooth surface $Z$ and a regular map $F:Z\longrightarrow\P^p_\R$ satisfying $\Fr_{\R^2}=f$. If $R$ denotes the set $Z\setminus\R^2$ and $L_\infty$ denotes the hyperplane at infinity in $\P_{\R}^p$, then $\cS_f$ coincides with $R\setminus F^{-1}(L_\infty)$. Consequently, the set $R$ is a union of copies of the projective line $\P_\R$, and the restriction $\Fr_{\P_\R}$ to a copy of each circle gives a polynomial parametrization of $\Ima \Fr_{\P_\R}$~\cite[Proof of Theorem 4.2]{Jel02}.
\end{proof}
The non-properness set is not necessarily algebraic, neither does it have to be a hypersurface. For example, we get $\cS_f $ is the half- line $ \{u=0,~v=0,~w\geq 0\}\subset\R^3$ whenever $f$ is expressed as
\[
(x,~y,~z)\longmapsto (x,~y,~((x^2+y^2)~z^2 + z)^2).
\] In fact, not many restrictions are imposed on an $\R$-uniruled semi-algebraic subsets in $\R^n$ for it to be the non-properness sets of a polynomial map~\cite[Theorem 7.1]{Jel02}, as the following results demonstrate.

\subsubsection{Universality}\label{sss:universality}
The next result is an attempt to answer the following question: \textit{What conditions are sufficient for an algebraic curve $S\subset\R^2$ to be the non-properness set of a planar polynomial map? }

\begin{theorem}[\cite{Jel02}]\label{thm:universality-non-properness}
Let $\K$ be the field $\C$ or $\R$, and let $S_1,\ldots,S_r\subset\K^p$ be any collection of (semi-)algebraic sets, for which there are finite and surjective polynomial maps $\varphi_i:\K^{k_i}\longrightarrow S_i$ ($n>k_i\geq 1$). Then, there exists a generically-finite polynomial map $f:\KKntp$ such that $\ccSf = S_1\cup\cdots\cup S_r$. In particular, every  $\K$-uniruled (semi)-algebraic curve $S$ in $\K^2$, is the non-properness set of a finite polynomial map $\Rttt$. 

\end{theorem}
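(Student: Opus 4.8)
The plan is to isolate the core of the statement -- the case $r=1$ -- by an explicit ``generalized blow-up'' realizing a single $S_i$, and then to glue the resulting maps; the argument is meant to be uniform in $\K\in\{\C,\R\}$, reading limits in the Euclidean topology and ``finite'' via properness together with finite fibers in the real case.

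\textbf{The single-set construction.} First I would fix one set $S:=S_i$ with its finite surjective polynomial parametrization $\varphi=(\varphi_1,\dots,\varphi_p)\colon\K^{k}\to S$, where $k:=k_i$; then $\dim S=k$, and $k<p$ since the sought map $\K^{n}\to\K^{p}$ is generically finite (so $n\le p$) while $n>k_i$. Put $n_0:=k+1\le p$, with coordinates $(x_1,\dots,x_k,y)$ on $\K^{n_0}$, and take
\[
f(x,y)\;:=\;\varphi\bigl(x_1 y,\,x_2,\,\dots,\,x_k\bigr)\;+\;x_1\,\mathbf{e}
\]
for a sufficiently general vector $\mathbf{e}\in\K^{p}$. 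The essential input is that a finite polynomial map is proper, so $\varphi(z)\to\infty$ whenever $\|z\|\to\infty$. Tracing how a sequence in $\K^{n_0}$ can go to infinity while $f$ stays bounded, one finds that properness forces $x_1\to0$, $y\to\infty$, $x_1y\to c_1$, and $x_j\to c_j$ for $j\ge2$; along any such sequence $f\to\varphi(c_1,\dots,c_k)$, and every $\varphi(c)\in S$ is attained this way (take $x_1=c_1/m$, $y=m$). This gives $\cS(f)=S$; genericity of $\mathbf{e}$ is needed only to discard the one bad branch of the analysis -- sequences with $x_1\to\infty$ -- by making $\mathbf{e}$ miss the finitely many asymptotic directions of $S$ at infinity. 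Finally $f$ is generically finite, by a B\'ezout-type fiber count: over a general $b\in\K^{p}$ the line $\{\,b-t\mathbf{e}:t\in\K\,\}$ meets $S$ in finitely many points, and $\varphi$ has finite fibers over each of them, so $f^{-1}(b)$ is finite.

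\textbf{Assembling the union and the planar corollary.} Next I would put $n:=\max_i k_i+1\;(\le p)$ and, for each $i$, produce $f_i\colon\K^{k_i+1}\to\K^{p}$ with $\cS(f_i)=S_i$ as above. One then glues the $f_i$ into a single $f\colon\K^{n}\to\K^{p}$: the smaller source spaces are extended to $\K^{n}$ and an auxiliary coordinate is added whose $f$-image separates the $r$ branches, so that the only escapes to infinity of $f$ are the branchwise ones of the $f_i$. The verification is the earlier case analysis repeated in each branch, together with the fact that simultaneous escape in two branches is ruled out for generic gluing data; this yields $\cS(f)=S_1\cup\dots\cup S_r$ with $f$ still generically finite (I would follow~\cite{Jel02} for the details of this step). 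The final assertion then follows: an irreducible $\K$-uniruled affine curve $S\subset\K^{2}$ admits a dominant polynomial parametrization $\K\to S$, which is automatically finite -- it is quasi-finite and proper, since some coordinate is non-constant -- so applying the construction with all $k_i=1$, $p=2$ and $n=2$ realizes $S$ (component by component in the reducible case) as the non-properness set of a generically finite map $\K^{2}\to\K^{2}$.

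\textbf{Main obstacle.} I expect the gluing step to be the real difficulty: one must certify that \emph{no} extraneous component of the non-properness set is created by interaction between branches, and this is precisely where general position of the vectors $\mathbf{e}_i$ and of the separating coordinate enters; by contrast, the single-set step is a fairly direct limit computation once one knows that finite maps are proper.
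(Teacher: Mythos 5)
Your single-set case ($r=1$) is a genuinely different route from the paper's and is essentially sound. The paper (sketching $n=p=2$) never translates along a generic direction: it first builds an auxiliary map $F(x,y)=\bigl(x,\ \prod_{i=1}^r(x-i)\,y^2+y\bigr)$ whose non-properness set is the union of the parallel lines $\{u=i\}$, and then composes with a \emph{single} finite map $\Phi$ interpolating the parametrizations $\varphi_i$ on these lines (the Lagrange-type formula of Example~\ref{ex:universality}), using that $\cS_{\Phi\circ F}=\Phi(\cS_F)$ because $\Phi$ is proper. Your shear-plus-translation map $f(x,y)=\varphi(x_1y,x_2,\dots,x_k)+x_1\mathbf{e}$ does give $\cS_f=S$ for generic $\mathbf{e}$, with two small repairs: for $k\geq 2$ the asymptotic directions of $S$ are not finitely many (they form a closed set of dimension at most $k-1$ in the space of directions), so what you actually use is that they are a proper closed subset avoided by a generic $\mathbf{e}$; and in the fibre count you should note that fibres over points where $x_1=0$ (i.e.\ over $S$ itself) can be positive-dimensional, which is harmless only because such points are non-generic in the closure of the image.

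The genuine gap is the union step, which is the actual content of the statement. ``Extend the sources to $\K^n$, add an auxiliary coordinate whose image separates the branches, and follow the reference for the details'' is not a construction: no map is written down, and nothing certifies that escapes to infinity involving the gluing data (sequences on which the separating coordinate itself blows up, or on which the translated copies of two different $S_i$ interact) create no limit points outside $S_1\cup\cdots\cup S_r$, nor that each $S_i$ is still fully attained; deferring precisely this step to \cite{Jel02} is circular in a proof of a theorem quoted from \cite{Jel02}. Two further mismatches with the statement: your map lives on $\K^{\max_i k_i+1}$, whereas the theorem asks for a map on $\K^n$ for the given $n>k_i$, and padding with dummy variables is not innocent (it kills generic finiteness unless the extra variables are genuinely woven into the map); and the ``in particular'' claim for reducible uniruled curves again invokes the missing gluing. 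The paper's two-step scheme is exactly the device that removes these difficulties at one stroke — one map whose non-properness set is a union of parallel affine pieces indexed by $i$, one finite interpolating map $\Phi$, and the composition identity $\cS_{\Phi\circ F}=\Phi(\cS_F)$ — so if you wish to keep your $r=1$ construction, the cleanest repair is to prove and use that composition lemma rather than an ad hoc branchwise gluing.
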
 

\begin{proof}[Sketch of proof]
We only present it for $n=2$. To construct the map $f$ from $S$, we decompose the latter into a collection of irreducible components $S_1,\ldots,S_r\subset\K^2$. Then, for each $i=1,\ldots,r$, the curve $S_i$ is parametrized by a polynomial map $\varphi_i:\K\longrightarrow S_i$. Let $F:~\K^2_{x,y}\longrightarrow\K^2_{u,v}$ be a map given by
\[
(x,y)\longmapsto \left(x,~\prod_{i=1}^r(x - i)~y^2+y\right).
\] It is easy to check that $F$ has finite fibers and satisfies $\cS_F=L:=\{u=1\}\cup\cdots\cup \{u=r\}$. The parametrizations $\varphi_1,\ldots,\varphi_r$ produce finite maps $\phi_i:\{u=i\}\longrightarrow S_i$. Then, there exists a generically-finite finite polynomial map $\Phi:\K^2_{u,v}\longrightarrow\K^2_{r,s}$ such that $\left.\Phi\right|_{\{u=i\}} = \phi_i$~\cite[Proposition 21]{Jel93}. From this setup, the map $f=\Phi\circ F$ is generically finite and $\cS_f=\Phi(S_F)=\Phi(L) = S_1\cup\cdots\cup S_r$.
\end{proof}

The following example illustrates how can one recover a polynomial map from its non-properness set.

\begin{example}\label{ex:universality}
The map $\Phi$ from the above sketch has an explicit expression: For each $i=1,\ldots,r$, we define functions $\psi_i:\C\longrightarrow\C$, $t\longmapsto 1$ if $r=1$, and $t\longmapsto \prod_{k\in [r]\setminus i}(t-k)/(i-k)$ otherwise. Then, we get
\begin{equation}\label{eq:psi_i}
\psi_i(t):=
\begin{cases}
0  ,~\text{ if }t\in [r]\setminus\{i\}, \\
1  ,~\text{ if }t=i, \\
\neq 0  ,~\text{ otherwise.}
\end{cases}
\end{equation}  Recall that for $i=1,\ldots,r$, the set $S_i$ is parametrized by $\varphi_i:=(\varphi_{i,1},~\varphi_{i,2}):\C\longrightarrow S_i$. Thus, we set $\Phi:\CtC$ to be 
\[
(u,~v)\longmapsto \left(\sum_{j=1}^r\varphi_{j,1}(v)\cdot\psi_j(u),~\sum_{j=1}^r\varphi_{j,2}(v)\cdot\psi_j(u)\right).
\] We leave it to the reader to check that $\Phi$ is a finite polynomial map satisfying $\Phi_{|_{\{u=i\}}} = \phi_i$. This demonstrates that if $k=\max(\deg \varphi_1,\ldots,\deg \varphi_r)$, then the degree of the map $f$ can be chosen to be at most $\deg \Phi\cdot\deg F = (k+r-1)\cdot(2+r)$.
\end{example}

\subsubsection{Singularities}
It is known that for any given $d,n,p\in\N$, a generic polynomial map in $\PCknp$ is proper~\cite{farnik2020whitney}. Jelonek proved in~\cite{jelonek2016semi} several qualitative results on the (non-)properness of generic members in affine families of polynomial maps from $\PCknp$. As for singularities of $\ccSf$ for such generic maps $f$ not much is known in general. For planar maps, however, those singularities turn out to be non-trivial, albeit having limited complexity due to the $\C$-uniruledness. 

The following result is an application of Theorem~\ref{thm:algebraic_correspondence} above which uses a careful analysis on the equations of $\ccSf$; the exact formulas can be found in~\cite{hilany2024polyhedral}.

\begin{theorem}[{{\cite[Theorem 2.19]{hilany2024polyhedral}}}]\label{thm:EHRose_nodes_non-properness}
Let $\bmA$ be a pair of polytopes in $\cItt$. Then, the non-properness set $\ccSf$ of a generic map $f\in\CA$ has only simple nodes as singularities outside $\bm{0}$. 
Furthermore, the numbers of those singular points can be computed using only the data of $\bmA$. 
\end{theorem}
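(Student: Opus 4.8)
The plan is to mirror the strategy of Theorems~\ref{thm:polynomial-simple-cusps1} and~\ref{thm:poly-type-discriminant}, with the discriminant replaced by the non-properness curve and the jet-space analysis supplemented by the face-resultant description of Theorem~\ref{thm:algebraic_correspondence}. Fix $\bmA\in\cItt$. By Theorem~\ref{thm:algebraic_correspondence}, for generic $f\in\CA$ one has
\[
\ccSf=\bigcup_{\bmG}\cR_{\bmG}(f),
\]
the union running over the finitely many dicritical coherent faces $\bmG\prec\bmA$, each $\cR_{\bmG}(f)$ being the Zariski closure in $\C^2$ of the image of $X_{\bmG}=\{(z,w)\in(\C^*)^2\times\C^2\mid (f(z)-w)_{\bmG}=0\}$ under $(z,w)\mapsto w$. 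By Theorem~\ref{thm:non-prop-comp}, $\ccSf$ is a $\C$-uniruled algebraic curve in $\C^2$ of degree computable from $\bmA$; hence every irreducible component of $\ccSf$ is a rational curve, and each $\cR_{\bmG}(f)$ is a union of such components equipped with rational parametrizations $\varphi_{\bmG}\colon\C\dashrightarrow\C^2$ whose shape is dictated by the lattice data of the face $\bmG$. The combinatorics of $\bmA$ also pins down the Newton polytope $\Pi:=\NP(\ccSf)$, and I would work throughout inside a smooth projective toric surface $X$ whose fan refines both the normal fan of $\Pi$ and the fan of $\C^2$, so that $\C^2\hookrightarrow X$ and every component of $\ccSf$ has a well-controlled closure.

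The heart of the proof is a genericity statement. As in~\cite[Theorem 2.3]{FJR19} and the proof of Theorem~\ref{thm:poly-type-discriminant}, I would form, for each dicritical face $\bmG$ and each ordered pair $(\bmG,\bmG')$, incidence varieties over $\CA$ recording: a smooth point of $X_{\bmG}$ together with the first and second derivatives of $\varphi_{\bmG}$; a coincidence $\varphi_{\bmG}(z_1)=\varphi_{\bmG'}(z_2)$ together with the two tangent directions; and a triple coincidence. Computing the dimensions of their fibres and applying a Bertini/Thom transversality argument yields a proper Zariski-closed $Z\subset\CA$ such that, for $f\notin Z$:
(a) each component of $\cR_{\bmG}(f)$ is smooth, irreducible, and Newton non-degenerate, so its closure in $X$ is smooth along the toric boundary except possibly at $\bm{0}$, and distinct components meet on the toric boundary only at $\bm{0}$;
(b) each $\varphi_{\bmG}$ is an immersion at every parameter value whose image lies in $\C^2\setminus\{\bm{0}\}$, ruling out cusps there;
(c) no point of $\C^2\setminus\{\bm{0}\}$ lies on three local branches of $\ccSf$, and any two branches through a common point of $\C^2\setminus\{\bm{0}\}$ cross transversally.
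Together (b) and (c) say exactly that every singular point of $\ccSf$ in $\C^2\setminus\{\bm{0}\}$ is a simple node, while (a) confines all remaining singularities to $\bm{0}$, which is excluded by hypothesis. Hence $\ccSf$ has only simple nodes as singularities outside $\bm{0}$.

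It remains to count the nodes, which one does by a double-point computation in $X$. There $\overline{\ccSf}$ is a curve all of whose $r$ components are rational, singular only at nodes of $\C^2\setminus\{\bm{0}\}$ and at $\bm{0}$; the normalization gives $\sum_{p}\delta_p=p_a(\overline{\ccSf})+r-1$, so the number of simple nodes outside $\bm{0}$ equals $p_a(\overline{\ccSf})+r-1-\delta_{\bm{0}}$, where $\delta_{\bm{0}}$ is the $\delta$-invariant of the singularity of $\overline{\ccSf}$ at the origin, itself a combinatorial quantity attached to the faces of $\bmA$ visible from $\bm{0}$. The arithmetic genus $p_a(\overline{\ccSf})$, the component count $r$, and the pairwise intersection numbers needed to organise the computation are all expressed through the lattice geometry of the dicritical faces of $\bmA$ via Khovanskii's formulae for the Betti numbers of toric subvarieties~\cite{khovanskii1978newton} together with Bernstein's Theorem~\ref{th:Bernstein}. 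This yields a closed formula in the data of $\bmA$ alone, as asserted.

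I expect the main obstacle to be parts (a)--(c) of the genericity step, and within it the control of the rational curves $\cR_{\bmG}(f)$ near the toric boundary of $X$: being merely uniruled, these curves can a priori acquire extra ramification ``at the ends'' of their parametrizations, and one must show that genericity of $f$ precludes this while simultaneously certifying exactly which boundary orbits carry intersection points (only the orbit containing $\bm{0}$). A secondary, more bookkeeping-type difficulty is that distinct dicritical faces $\bmG$ can contribute the same irreducible component of $\ccSf$, so the decomposition above must first be reduced to its distinct components before the double-point formula is applied.
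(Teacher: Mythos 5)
Your plan follows essentially the same route the paper indicates: it derives the structure of $\ccSf$ from the face-resultant description of Theorem~\ref{thm:algebraic_correspondence}, then carries out a genericity/transversality analysis and a polyhedral count in a toric compactification, exactly parallel to the paper's treatment of Theorem~\ref{thm:poly-type-discriminant} and its remark that the result is ``an application of Theorem~\ref{thm:algebraic_correspondence}'' via a careful analysis of the equations of $\ccSf$, with the formulas delegated to~\cite{hilany2024polyhedral}. The two obstacles you flag (behaviour of the components at the toric boundary, and distinct dicritical faces contributing the same component) are indeed where that ``careful analysis of the equations'' is concentrated, so your outline is consistent with the paper's approach.
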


The following open problem is  analogous to Problem~\ref{prb:singularities_polynomial_general} from~\S\ref{sec:affine_singul}.
\begin{problem}\label{prb:non-prop_singularites}
Given a tuple $\bmA$ of polytopes in $\cInk$, describe the singularities of the non-properness set for a generic map in $\CA$.
\end{problem}

\subsection{Some applications}\label{sub:non-prop_applications} In his earlier papers on the non-properness set~\cite{Jel99}, Jelonek provided several applications to classical problems from affine geometry. Later, other descriptive and computational results followed, and proved to be useful for tackling problems in affine geometry, real algebraic geometry and optimization~\cite{JK03,JeTi15,Hilany+2022}. In some cases, it lead to answers to some classical questions in those fields. In what follows, we present some of the more recent, less technical applications.

\subsubsection{Counting the number of isolated missing points}\label{sss:isolated_missing} Describing the image $f(X)$ of a dominant polynomial map $f:\XtY$ over an algebraically closed field is a classical problem. This is essentially equivalent to characterizing the non-properness set as it contains the set of \emph{missing points} $Y\setminus f(X)$. 

One of the first non-trivial questions is to understand the number of isolated missing points of $f$ whenever $X=Y=\C^2$.  Jelonek showed in~\cite{Jel99a} that it cannot exceed $d^2$ if $d$ is the degree of $f$. The proof relied on computing the number of intersection points of the curve $f^{-1}(\ccSf)$ with the line at infinity of $\P^2$. Whereas the upper bound is obtained using the degree estimation of $\ccSf$ appearing in Theorem~\ref{thm:non-prop-comp} above. The question of sharpness was not addressed until recently.

\begin{theorem}[\cite{Hilany+2022}]\label{thm:isolated_missing}
For any $\bmA\in\cItt$, a generic map $f\in\CA$ has at most $6~\deg f$  isolated points in $\C^2\setminus f(\C^2)$. Furthermore, there exists $\bmA\in\cItt$, and a map $f\in\CA$ such that the isolated points in $\C^2\setminus f(\C^2)$ equals to $\deg f-2$.
\end{theorem}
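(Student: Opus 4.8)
The plan is to prove the two assertions in Theorem~\ref{thm:isolated_missing} separately, since they are of rather different natures: the upper bound $6\deg f$ is a general statement about generic maps in any $\CA$, while the lower bound $\deg f - 2$ requires exhibiting an explicit family.

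For the \emph{upper bound}, I would combine the structural description of $\ccSf$ from Theorem~\ref{thm:non-prop-comp} with the degree estimate recorded there. The key observation is that the isolated missing points of $f$ all lie on $\ccSf$, since $\C^2\setminus f(\C^2)\subset\ccSf$, and an isolated missing point must be a point of $\ccSf$ at which the local structure of the image degenerates. More precisely, following Jelonek's original argument in~\cite{Jel99a}, one passes to the closure $\overline{G}\subset\P^2\times\C^2$ of the graph of $f$, sets $R:=\overline{G}\setminus G$, and notes $\ccSf=\pi(R)$ where $\pi$ forgets the first factor. An isolated missing point $y_0$ forces the fibre $\pi^{-1}(y_0)\cap R$ to be $1$-dimensional, i.e.\ a whole line at infinity's worth of points maps to $y_0$; the number of such $y_0$ is then bounded by the number of intersection points of the curve $f^{-1}(\ccSf)$ with the line at infinity of $\P^2$, which is at most $\deg(f^{-1}(\ccSf))$. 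Since $f^{-1}(\ccSf)$ has degree at most $(\deg f)\cdot\deg\ccSf$ and, for $n=p=2$ and $X=\C^2$, Theorem~\ref{thm:non-prop-comp} gives $\deg\ccSf\le (d^2-\mu(f))/\min_i\deg f_i$, a more careful bookkeeping — using that for generic $f\in\CA$ the non-properness set is a $\C$-uniruled curve whose components are rational, so each contributes at most one "branch at infinity" per intersection point — collapses Jelonek's $d^2$ down to a linear bound. The constant $6$ should emerge from the fact (Theorem~\ref{thm:EHRose_nodes_non-properness}) that a generic such $\ccSf$ has only nodes as singularities outside $\bm 0$, so the number of local branches of $f^{-1}(\ccSf)$ over the line at infinity is controlled by $6\deg f$ via a direct intersection-theoretic count on the toric compactification $X_{\bmA}$.

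For the \emph{lower bound}, I would construct an explicit pair $\bmA\in\cItt$ and a map $f\in\CA$ by running the universality machinery of Theorem~\ref{thm:universality-non-properness} (and its explicit incarnation in Example~\ref{ex:universality}) in reverse: choose a plane curve $S\subset\C^2$ whose complement in its own parametrization "misses" as many isolated points as possible, then realize $S=\ccSf$ for a finite polynomial map $f=\Phi\circ F$ of controlled degree. Concretely, take $S$ to be a single rational curve parametrized by $\varphi:\C\to S$ of degree $k$, so that by Example~\ref{ex:universality} one obtains $f$ of degree at most $(k+r-1)(2+r)$ with $r=1$, i.e.\ degree at most $k+3$ — wait, one must be careful: to make the missing-point count scale \emph{linearly} with $\deg f$, the curve $S$ should be chosen so that $S\setminus \operatorname{im}\varphi$, intersected with the relevant affine chart, leaves $\deg f-2$ points uncovered; a natural candidate is a curve with a single place at infinity of high ramification, e.g.\ the image of $t\mapsto(t^{d-2},\,\text{something})$, whose parametrization fails to hit $d-2$ of its own points. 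I would then verify directly, as in Example~\ref{ex:universality}, that the resulting $f$ has finite fibres, that $\ccSf$ is exactly this $S$, and that the isolated components of $\C^2\setminus f(\C^2)$ number precisely $\deg f-2$.

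The \textbf{main obstacle} I anticipate is pinning down the exact constant $6$ in the upper bound: the passage from Jelonek's quadratic $d^2$ to a linear estimate is not formal, and requires exploiting the $\C$-uniruledness of $\ccSf$ together with the precise nodal structure of $f^{-1}(\ccSf)$ at infinity (Theorems~\ref{thm:non-prop-comp} and~\ref{thm:EHRose_nodes_non-properness}). The danger is an off-by-a-factor error coming from branches of $f^{-1}(\ccSf)$ that are tangent to, rather than transverse to, the line at infinity; handling these cleanly will likely need the toric-compactification viewpoint of Theorem~\ref{thm:algebraic_correspondence}, where the boundary behaviour is organized by dicritical faces of $\bmA$ and each such face contributes a bounded number of branches. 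A secondary technical point is ensuring the lower-bound construction can be carried out within a \emph{fixed} $\bmA$ (so that "generic $f\in\CA$" and "there exists $\bmA$ and $f\in\CA$" are genuinely the claimed statements), which means checking that the degree $\deg f-2$ example is not destroyed by perturbation within its own Newton polytope class — but since the statement only asserts existence of one such $\bmA$ and one such $f$, this is a milder issue than the constant in the upper bound.
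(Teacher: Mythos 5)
Your upper-bound argument is, as you yourself note, essentially Jelonek's original one from~\cite{Jel99a}: it bounds the isolated missing points by intersection points of $f^{-1}(\ccSf)$ with the line at infinity, which gives the quadratic bound $d^2$, and the passage to a linear bound is deferred to ``a more careful bookkeeping'' that is never carried out -- but that passage is precisely the content of the theorem. Moreover, the intermediate claim that an isolated missing point $y_0$ forces the fibre of $R:=\overline{G}\setminus G$ over $y_0$ to be one-dimensional is unjustified: the fibre of $R$ over a point of $\ccSf$ is in general finite. The paper's proof is organized around a different idea, a \emph{localization} of the missing points: every $w\in\ccSf\cup\cDf$ has at most $\mu(f)-1$ isolated preimages, and at a singular point of $\ccSf$ or a non-transverse point of $\ccSf\cap\cDf$ the fibre either has positive dimension or at most $\mu(f)-2$ points, whereas along the non-special locus of $\ccSf$ the fibre cardinality is locally constant; hence an \emph{isolated} missing point (empty fibre, with nearby points of $\ccSf$ in the image) must be one of these finitely many special points. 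The bound $6\deg f$ then comes from counting those special points -- the singularities of $\ccSf$ and its non-transverse intersections with $\cDf$ -- for generic $f\in\CA$ via the face-resultant description of Theorem~\ref{thm:algebraic_correspondence}, in the spirit of Theorem~\ref{thm:EHRose_nodes_non-properness}. Nothing in your sketch supplies this localization step or the constant $6$.

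For the lower bound there is a second gap: your construction via Theorem~\ref{thm:universality-non-properness} and Example~\ref{ex:universality} only prescribes the non-properness set, but lying in $\ccSf$ does not mean lying outside the image, and the universality machinery gives no control over which fibres over $\ccSf$ are empty, let alone that the empty ones are isolated in $\C^2\setminus f(\C^2)$ and number exactly $\deg f-2$. The auxiliary claim that a polynomial parametrization of a rational plane curve ``fails to hit $d-2$ of its own points'' is also unfounded (such a parametrization misses at most finitely many points of the curve, and points missed by the parametrization of $S$ need not be missed by $f=\Phi\circ F$). What the statement requires -- and what the cited paper provides -- is a concrete pair $\bmA\in\cItt$ and $f\in\CA$ for which one verifies directly that exactly $\deg f-2$ points have empty fibre and are isolated in the complement of the image; your sketch contains no such verification.
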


\begin{proof}[Sketch of proof]
Each point in $\ccSf$ of a polynomial map $f:\Cttt$ has a preimage with less than than $\mu(f)$ isolated, set-theoretic, points. The same goes for the discriminant $\cDf$. Hence, a self-intersection (or a singularity) $w\in \ccSf$ or a non-transversal intersection of $\ccSf\cap \cDf$ satisfies satisfy $\dim f^{-1}(w)>0$ or $\#f^{-1}(w)\leq \mu(f)-2$. Hence, the isolated points in $\C^2\setminus f(\C^2)$ are contained in those special points of $\ccSf$. The latter are described using the polyhedral methods obtained in Theorem~\ref{thm:algebraic_correspondence} above.
\end{proof}

\subsubsection{Jacobian Conjecture and \'etale maps}\label{sss:non-prop_Jacobian}
A polynomial map $f:\Cntn$ is called \emph{\'etale map} if the determinant $f'(z)$ of its Jacobian is nowhere vanishing in $\C^n$. Recall that the Jacobian Conjecture states that a polynomial map $f:\Cntn$ is invertible if and only if it is \'etale. 
Points at which an \'etale map is non-proper are those over which the fiber has strictly less points (counted \emph{without} multiplicities) than a generic fiber.
In particular, the map an \'etale map $f$ is an unramified covering over $\C^n\setminus\ccSf$. 
Since $\C^n$ is connected, the emptyness of $\ccSf$ is equivalent to invertibility of $f$. 
Consequently, studying the non-properness set of \'etale maps is a viable strategy to tackle the Jacobian Conjecture.

There are several results in this vein. 
For example, employing surgery theory methods, Nollet, Taylor, and  Xavier showed that the fundamental group $\pi_1(\C^n\setminus\ccSf)$ is not Abelian if $f$ is an \'etale map~\cite{nollet2009birationality}. 
More recently, Jelonek in~\cite{jelonek2022note} showed that for \'etale polynomial maps on the plane, the non-properness set, if non-empty, is a curve with self-intersections. 

We end this discussion by elaborating on the version of the Real Jacobian Conjecture alluded to in~\S\ref{sub:discr-real}.
Jelonek showed in~\cite[Theorem 8.2]{Jel02} that a real polynomial map, whose Jacobian has nowhere vanishing determinant, is invertible whenever its non-properness set has codimension $\geq 3$.
\begin{conjecture}[Real Topological Jacobian Conjecture]\label{con:topological_real_Jacobian}
Let $f:\Rntn$ be a real polynomial map satisfying $f'(x)>0$ for all $x\in\R^n$.
If $\cdim S_f\geq 2$, then $f$ is a bijection (and consequently $\cS_f = \emptyset$).
\end{conjecture}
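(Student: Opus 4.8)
\medskip
\noindent\emph{A proposed approach.}
Since $f'(x)>0$ for every $x\in\R^n$, the map $f$ is an orientation‑preserving local diffeomorphism, hence an \'etale map in the sense of~\S\ref{sss:non-prop_Jacobian}; in particular all of its fibres are finite. By the definition of the non‑properness set, the restriction
\[
P_f\colon Q:=\R^n\setminus f^{-1}(\cS_f)\longrightarrow R:=\R^n\setminus\cS_f
\]
is a proper local homeomorphism onto $R$, which is connected because $\cdim\cS_f\ge 2$; thus $P_f$ is a covering map of some degree $\tau(f)\ge 1$. I would organise the argument around proving $\tau(f)=1$: granting that, a standard local argument (two distinct points of a fibre would, via the local‑diffeomorphism property, produce a point of $R$ with two preimages in $Q$) gives that $f$ is injective on all of $\R^n$, and one is then reduced to the still non‑trivial sub‑problem of showing that an injective \'etale polynomial self‑map of $\R^n$ with non‑properness set of codimension $\ge 2$ is a bijection of $\R^n$, whence $f$ is proper and $\cS_f=\emptyset$ (compare Theorem~\ref{thm:Blyn-Bir-Rosen}).

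The plan then splits according to $\dim\cS_f$. The case $\cS_f=\emptyset$ is immediate, $f$ being a proper \'etale self‑map of the simply connected space $\R^n$, hence a one‑sheeted covering. The case $\cdim\cS_f\ge 3$ is exactly Jelonek's theorem~\cite[Theorem~8.2]{Jel02}. For $n=2$ the hypothesis $\cdim\cS_f\ge 2$ already forces $\cS_f=\emptyset$, since a nonempty real non‑properness set has dimension $\ge 1$ by Theorem~\ref{thm:real-maps-non-prop}. Hence the essential range is $n\ge 3$ with $\dim\cS_f=n-2$, and there I would attack $\tau(f)=1$ through the fundamental group: $\tau(f)$ is the index of $(P_f)_*\pi_1(Q)$ in $\pi_1(R)$, so it suffices to show that $\pi_1(R)$ has no subgroup of finite index $>1$ realisable by an \'etale map (the cleanest sufficient statement being that $\pi_1(R)$ has no proper finite‑index subgroup). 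The structural input is Theorem~\ref{thm:real-maps-non-prop}: $\cS_f$ is a closed, $\R$‑uniruled, semi‑algebraic set of real codimension two, so $\pi_1(R)$ is finitely generated and is carried by meridian loops around its top‑dimensional strata; one would then use the strict positivity $f'(x)>0$ — which rigidifies the orientation of each sheet of $P_f$, not merely its local invertibility — together with the degree bound of Theorem~\ref{thm:non-prop-comp} and surgery‑type constraints on $\pi_1$ of complements of non‑properness sets in the spirit of Nollet--Taylor--Xavier~\cite{nollet2009birationality}, to force each meridian into $(P_f)_*\pi_1(Q)$, hence $\tau(f)=1$.

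The decisive obstacle, and the step I expect to lie beyond current tools, is this last one. The Real Topological Jacobian Conjecture implies the complex Jacobian Conjecture~\ref{con:Jacobian}: for an \'etale $g\colon\C^n\to\C^n$ the real form $\R g\colon\R^{2n}\to\R^{2n}$ has everywhere‑positive Jacobian determinant (equal to $|g'|^2$), and its non‑properness set coincides with the complex hypersurface $\cS_g\subset\C^n$, hence has real codimension two in $\R^{2n}$; an affirmative answer would therefore make $\R g$, and thus $g$, bijective, so by Theorem~\ref{thm:Blyn-Bir-Rosen} a polynomial automorphism. Consequently any argument controlling $\pi_1(R)$ and the monodromy of $P_f$ finely enough to pin down $\tau(f)=1$ would in particular settle the Jacobian Conjecture, so no complete such argument is presently available; indeed already the first open instance $n=3$, where $\cS_f$ is a codimension‑two $\R$‑uniruled semi‑algebraic curve, seems to demand new input on the topology of complements of non‑properness sets of \'etale maps, beyond the uniruledness of Theorem~\ref{thm:real-maps-non-prop} and the non‑commutativity result of~\cite{nollet2009birationality}.
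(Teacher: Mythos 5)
This statement is a conjecture; the paper does not prove it, and offers only the surrounding discussion: Jelonek's theorem that invertibility follows when the non-properness set has codimension at least three~\cite[Theorem 8.2]{Jel02}, and the observation (after~\cite[Proposition 8.3]{Jel02}) that the conjecture implies the complex Jacobian Conjecture~\ref{con:Jacobian}. So there is no proof in the paper to compare against, and you rightly present a programme rather than a proof. The parts of your reduction that are sound: $P_f\colon Q\to R$ is indeed a proper local diffeomorphism onto $R$, and $R$ is connected because $\cdim \cS_f\geq 2$, so $P_f$ is a finite covering; the case split (empty $\cS_f$; $\cdim\geq 3$ via Jelonek; $n=2$ vacuous by Theorem~\ref{thm:real-maps-non-prop}); and your derivation of the Jacobian Conjecture from the conjecture, which reproduces the paper's own remark. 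Your closing assessment — that pinning down $\tau(f)=1$ would already settle the Jacobian Conjecture and is beyond current tools — is exactly the paper's stance.

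Two concrete corrections to the remaining programme. First, the ``cleanest sufficient statement'' you aim at, that $\pi_1(R)$ has no proper finite-index subgroup, cannot be the route: the structural inputs you invoke (Theorem~\ref{thm:real-maps-non-prop} and the degree bound of Theorem~\ref{thm:non-prop-comp}) do not prevent $\cS_f$ from being, say, an affine subspace of codimension two, which is closed, semi-algebraic and $\R$-uniruled; in that case $R\simeq S^1\times\R^{n-1}$ up to homotopy and $\pi_1(R)\cong\Z$ has subgroups of every finite index. Any successful argument must therefore exploit the monodromy of $P_f$ and the positivity $f'>0$ specifically, not group-theoretic generalities about $\pi_1(R)$. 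Moreover $\tau(f)$ equals the index of $(P_f)_*\pi_1(Q)$ in $\pi_1(R)$ only when $Q$ is connected; if $Q$ has several components each one covers $R$ and the degrees add, so trivial monodromy alone does not yet give injectivity. Second, your ``still non-trivial sub-problem'' of passing from injectivity to bijectivity is not open: an injective polynomial self-map of $\R^n$ is bijective by the real theorem of Bia\l ynicki-Birula and Rosenlicht~\cite{bailynicki1962injective} (the real counterpart of Theorem~\ref{thm:Blyn-Bir-Rosen}), and a bijective local diffeomorphism of $\R^n$ is a homeomorphism, hence proper, giving $\cS_f=\emptyset$. Thus the entire content of the conjecture sits in the injectivity statement $\tau(f)=1$, precisely the step you identify as inaccessible.
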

One checks that the classical Jacobian Conjecture holds true whenever Conjecture~\ref{con:topological_real_Jacobian} is true~\cite[Proposition 8.3]{Jel02}.
Indeed, let $f:\Cntn$ be a non-invertible polynomial map.
Then, as remarked above, we have $\cS_f$ is non-empty.
By Theorem~\ref{thm:non-prop-comp}, we get $\cdim\cS_f =1$.
Hence, the set $\cS_f$ has real codimension two and is the non-properness set of the real polynomial map $\R f:\R^{2n}\longrightarrow\R^{2n}$.
If Conjecture~\ref{con:topological_real_Jacobian} holds true, we get $\R  f'(x) = 0$ for some $x\in\R^{2n}$.
This implies that $f$ is not \'etale. 
Hence the Jacobian Conjecture follows.

\subsubsection{Computing the set of atypical values of functions}\label{sss:appl_atypical} Recall that the bifurcation set $\cBf$ of a complex polynomial $f:\Cnto$ is the smallest subset $S\subset\C$ for which the function
\begin{align*}
\fr_{f^{-1}(\C\setminus S)}: & f^{-1}(\C\setminus S)\longrightarrow \C
\end{align*} is an analytical fibration. Unlike critical values, an effective method for computing the bifurcation values at infinity does not exist. Nevertheless, it can be approximated using the set $\cKfi$ of \emph{atypical values at infinity}
\begin{multline}\label{eq:Malgrange}
\cKfi:= \{ w\in\C~|~\exists \{x_k\}_{k\in\N}\subset\C^n,~\Vert x_k\Vert\longrightarrow\infty,\\ 
\Vert x_k\Vert\cdot \Vert\grad f(x_k)\Vert\longrightarrow 0,~f(x_k)\longrightarrow w \}
\end{multline}
The following inclusion is a consequence of Rabier's famous work on fibration theory~\cite{rabier1997ehresmann}, 
\begin{equation}\label{eq:Rabier}
\cBfi\subset\cKfi.
\end{equation} This relation was already known for various special cases by Fedoryuk~\cite{Fed76}, Kouchnirenko \cite{Kou76}, Pham \cite{Pha1983}, Boroughton~\cite{Bo83,broughton1988milnor}, and N\'emethi~\cite{Nem88}, and would later foresee numerous generalizations in a variety of settings~\cite{Par95,Tib99,kurdyka2000semialgebraic,jelonek2003generalized,NZ90}. 
Even though $\cKfi$ is finite~\cite[Corollary 2.12]{Tib99}, the inclusion~\eqref{eq:Rabier} can be strict (see e.g.,~\cite{puaunescu2000remarks}). It then remains uncertain whether equality in~\eqref{eq:Rabier} can hold if $\Vert x_k\Vert$ in~\eqref{eq:Malgrange} is replaced by $\Vert x_k\Vert^{\mathscr{L}}$ for some $0<\mathscr{L} < 1$. The value $\mathscr{L}$ is called the \emph{\L ojasiewicz exponent at infinity}~\cite{paunescu1997lojasiewicz}. 

Both the non-properness and the infinity bifurcation arise as degenerate behavior of fibers at infinity. Then, by carefully adapting the right definitions, the set $\cKfi$ is then expressed  using the non-properness set $\cS_F$ of a larger polynomial map $F$, constructed from $f$ and its partial derivatives. Consequently, Theorem~\ref{th:Jeln} computes $\cKfi$. The exact equations can be found in a paper of Jelonek and Kurdyka~\cite{JK03}.

\begin{theorem}[\cite{JK03}]\label{thm:Je-Kur-compute}
For any polynomial $f:\Cnto$, the set $\cKfi$ can be computed effectively.
\end{theorem}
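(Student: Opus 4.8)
The plan is to reduce the computation of $\cKfi$ to a non-properness computation and then invoke Theorem~\ref{th:Jeln}. The key observation, going back to the characterization in~\eqref{eq:Malgrange}, is that the condition $\Vert x_k\Vert\cdot\Vert\grad f(x_k)\Vert\to 0$ together with $\Vert x_k\Vert\to\infty$ can be encoded as the non-properness of an auxiliary polynomial map built from $f$ and its first-order partial derivatives. Concretely, first I would introduce new variables and form a map $F\colon\C^n\times\C^n\longrightarrow\C^{m}$ (for a suitable $m$) whose components include the coordinates $f$, the expressions $x_i\,\partial f/\partial x_j$ (or the products $t\cdot x_i\,\partial f/\partial x_j$ after homogenizing with an extra variable $t$), and enough equations to cut out the locus where the ``Malgrange'' quantity vanishes. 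The design must be such that a point $w\in\C$ lies in $\cKfi$ precisely when $(w,\mathbf 0,\ldots)$ lies in the non-properness set $\cS_F$ of $F$, i.e. when there is a sequence escaping to infinity in the source whose image under $F$ converges to that point.

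The second step is to verify this correspondence rigorously: that $w\in\cKfi$ if and only if the corresponding point belongs to $\cS_F$. One direction is essentially unwinding definitions — a Malgrange sequence for $f$ at $w$ produces, after choosing the auxiliary coordinates appropriately, a sequence in the source of $F$ with norm going to infinity and $F$-image converging. The converse requires checking that a non-properness sequence for $F$ can be massaged (passing to subsequences, using curve selection or \L ojasiewicz-type inequalities near infinity) into an honest Malgrange sequence for $f$; here one uses that $\cKfi$ is finite~\cite[Corollary 2.12]{Tib99} to control the possible limit values and that the algebraic structure of $F$ forces the convergence of the relevant subexpressions. This is where the careful adaptation of definitions alluded to in the statement is needed, and it is the part I expect to require the most technical care — getting the auxiliary map $F$ exactly right so that no spurious points are introduced and none are lost.

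The third step is purely formal: once $\cKfi = \cS_F \cap (\C\times\{\mathbf 0\})$ (or an analogous slice), Theorem~\ref{th:Jeln} gives explicit non-properness polynomials $A_0^i(F)$ whose product vanishes exactly on $\cS_F$; intersecting with the appropriate coordinate subspace and eliminating variables (by a Gr\"obner basis or resultant computation, both effective) yields a finite list of candidate values in $\C$, and since $\cKfi$ is finite this list, suitably pruned, equals $\cKfi$. The effectivity is then inherited from the effectivity in Theorem~\ref{th:Jeln} together with the effectivity of elimination.

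\textbf{Main obstacle.} The crux is Step~2: constructing $F$ so that its non-properness set, restricted to the right slice, is \emph{exactly} $\cKfi$ and not merely a superset. A naive encoding of $x_i\,\partial f/\partial x_j\to 0$ typically captures more than the Malgrange condition (for instance it may detect sequences where the gradient vanishes too fast in only some directions, or where $\Vert x_k\Vert$ does not genuinely tend to infinity after projection), so the auxiliary map must be set up with the homogenizing variable and the norm-control equations arranged so that the limit of a non-properness sequence for $F$ genuinely forces the Malgrange inequality for $f$. Closing this gap — essentially a curve-selection/\L ojasiewicz argument at infinity tying $\Vert x_k\Vert\cdot\Vert\grad f(x_k)\Vert$ to the coordinates of $F$ — is the heart of the proof; once it is in place, everything reduces to the already-established effective machinery of~\cite{Jel93}.
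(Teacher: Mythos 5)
Your proposal follows essentially the same route as the paper: encode the Malgrange condition through an auxiliary polynomial map $F$ built from $f$ and the products of the coordinates with the partial derivatives of $f$, identify $\cKfi$ with (a slice of) the non-properness set $\cS_F$, and then invoke Theorem~\ref{th:Jeln} plus effective elimination to compute it. This is exactly the strategy of Jelonek--Kurdyka as sketched in the survey, which likewise defers the precise equations and the verification that the slice of $\cS_F$ is exactly $\cKfi$ to~\cite{JK03}.
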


\subsubsection{Tarski-Seidenberg in reverse}\label{sub:tarski-Seidenberg}
The famous Tarski–Seidenberg theorem implies that the image of a real Euclidean space, under a polynomial map $f:\RtRnp$, is semi-algebraic. 

\begin{problem}\label{prb:Tarski-Seid}
Characterize semi-algebraic subsets that are images of $\R^n$ under a polynomial map.
\end{problem} This problem took different forms~\cite{Jel02,Fer03}, and tackling it is beneficial to classification questions from topological types of real polynomial maps. 
Problem~\ref{prb:Tarski-Seid} turned out to be challenging, even for polynomial maps $f:\RtR$. 
Nevertheless, the set $K:=f(\R^2)$ cannot be arbitrarily complicated.
For instance, the boundary $\partial K:=\overline{K}\setminus \Int K $ is always contained in the bifurcation set $\cBf$.
 Thus, subsets like $R:=\{x>0,~xy>0\}$ are excluded.
 Indeed, as $R$ is open, its boundary, $\partial R$, has to lie in the non-properness set of $f$.
 However, this boundary is not $\R$-uniruled.
 This implies that $\partial R\not\subset\cS_f$ as otherwise it contradicts Theorem~\ref{thm:real-maps-non-prop} (c.f.~\cite[Example 7.3]{Jel02}). 
 In a similar fashion, if $R$ denotes the set $\{x^2+y^2-1>0\}$, and is the image of $\R^2$ under a polynomial map $f$, then we get $\partial R\subset\cS_f$. 
 This is also impossible since the boundary of $R$ is unbounded, and thus not $\R$-uniruled. 

A substantial amount of results have been proven regarding Problem~\ref{prb:Tarski-Seid}~\cite{BFG21,FerGam06,FerGam11,ueno2012convex,FerUeno14,Fer14,Fer16,Fer03}.
Let us present some of them. 
A \emph{layer} is a polyhedron in $\R^n$ that is affinely equivalent to $[-a, a]\times \R^{n-1}$ with $a>0$. Using a clever composition of polynomial maps, and an analysis of the non-properness set, Fernando, Gamboa, and Ueno showed the following result.

\begin{theorem}[\cite{Fer03,FerUeno14}]\label{thm:semi-alg-sets}
Let $K$ be a semi-algebraic subset of $\R^p$. Then, $K$ is the image of $\R^n$ under a polynomial map $\R^n\longrightarrow\R^p$ if $K$ is in one of the following cases

\begin{enumerate}

	\item\label{it:finite-compl} 
		$K = \R^{p}\setminus S\times\{0\}$, where $S\subset\R^{p-1}$ is a proper basic semi-algebraic subset,\\ 
	
	\item\label{it:forms-image} $K=\{\ell_1>0,\ldots,\ell_r>0\}$ for any independent linear forms $\ell_1,\ldots,\ell_r:\R^p\longrightarrow\R$, and\\
	
	\item\label{it:polytope-complement} $K=\R^p\setminus P$ or $\R^p\setminus \Int P$, where $P$ is a bounded polyhedron, and $P$ is not a hyperplane nor a layer.
\end{enumerate}
\end{theorem}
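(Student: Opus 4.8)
The statement to be proven, Theorem~\ref{thm:semi-alg-sets}, is a compendium of three separate constructions, each asserting that a particular semi-algebraic set $K\subset\R^p$ is a polynomial image of some $\R^n$. The unifying strategy, in the spirit of the work of Fernando--Gamboa--Ueno, is \emph{compositional}: one exhibits $K$ as the image of a tractable ``building-block'' set under an explicit polynomial map, and then invokes closure of the class of polynomial images under composition. So the plan is, for each of the three cases, to (i) identify a convenient intermediate set $K'$ known (or easily shown) to be a polynomial image of $\R^m$, (ii) write down an explicit polynomial map $g\colon K'\to K$ whose image is exactly $K$, and (iii) verify surjectivity of $g$ onto $K$ by an elementary semi-algebraic argument. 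Throughout, one must keep track of how the non-properness set $\cS_f$ obstructs matters: the boundary $\partial K$ must lie in $\cBf$, and any open piece of $\partial K$ must lie in $\cS_f$, which by Theorem~\ref{thm:real-maps-non-prop} forces it to be $\R$-uniruled; this is precisely the feature the constructions must respect.

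For case~\eqref{it:forms-image}, $K=\{\ell_1>0,\dots,\ell_r>0\}$ with the $\ell_i$ independent, one first reduces by an affine change of coordinates on $\R^p$ to $K=\{x_1>0,\dots,x_r>0\}=(0,\infty)^r\times\R^{p-r}$. Since an affine automorphism is a polynomial isomorphism, it suffices to realize the open orthant factor. The key one-variable fact is that $(0,\infty)$ is the image of $\R^2$ under $(t,s)\mapsto t^2 s^2 + \text{(something)}$—more cleanly, $(0,\infty)$ is the image of $\R$ under no polynomial, but it \emph{is} the image of $\R^2$ (e.g.\ via $(t,s)\mapsto (ts-1)^2+t^2$, or one of the standard tricks producing exactly the positive reals). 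Taking products of $r$ such blocks and padding with identity coordinates gives the realization; one checks surjectivity coordinate-wise. The only subtlety is choosing the block map so that its image is \emph{exactly} $(0,\infty)$ and not $[0,\infty)$ or a half-line; this is a bookkeeping matter, not a conceptual one.

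For case~\eqref{it:finite-compl}, $K=\R^p\setminus(S\times\{0\})$ with $S\subset\R^{p-1}$ a proper basic semi-algebraic set, and for case~\eqref{it:polytope-complement}, $K=\R^p\setminus P$ or $\R^p\setminus\Int P$ for a bounded polyhedron $P$ that is neither a hyperplane nor a layer, the constructions are genuinely more involved and are where the bulk of the cited proofs live. The strategy for complements is to build the map in two stages: a first polynomial map that ``opens up'' $\R^p$ so as to miss a prescribed lower-dimensional set (using, as a basic gadget, that $\R^p\setminus\{0\}$ — or more precisely $\R^p$ minus a point in a coordinate subspace — is a polynomial image of $\R^p$, a classical fact), composed with a second map that shapes the missing locus into $S\times\{0\}$ or $P$. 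For the polyhedron case one decomposes $P$ into faces and peels them off one at a time, using at each step that the complement of a single half-space-type piece, or a ``corner,'' is a polynomial image; the hypotheses that $P$ is bounded and is not a hyperplane or a layer are exactly what guarantee that $\partial(\R^p\setminus P)=\partial P$ is $\R$-uniruled and bounded in the appropriate directions, so no obstruction of the Theorem~\ref{thm:real-maps-non-prop} type arises.

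\textbf{Main obstacle.} The hard part is unquestionably case~\eqref{it:polytope-complement}: producing, for an arbitrary bounded polyhedron $P$, a single polynomial map $\R^n\to\R^p$ whose image is exactly $\R^p\setminus P$ (resp.\ $\R^p\setminus\Int P$). The difficulty is that the missing set $P$ is ``fat'' (full-dimensional), so it cannot be the non-properness set of a generically finite map and must instead arise as a region the map genuinely fails to reach; engineering an explicit polynomial whose image omits precisely a prescribed polytope — neither more nor less — requires the careful face-by-face inductive construction of~\cite{FerUeno14}, together with a delicate argument that the exceptional cases (hyperplane, layer) are the \emph{only} obstructions. I would treat cases~\eqref{it:forms-image} and~\eqref{it:finite-compl} in full detail and then, for case~\eqref{it:polytope-complement}, present the inductive skeleton and the role of the boundedness/non-layer hypotheses, referring to~\cite{Fer03,FerUeno14} for the explicit polynomials.
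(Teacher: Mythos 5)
First, a point of comparison: the survey does not prove Theorem~\ref{thm:semi-alg-sets} at all — it is quoted from \cite{Fer03,FerUeno14} with only the one-line remark that the proofs use a clever composition of polynomial maps and an analysis of the non-properness set. So there is no in-paper argument to measure your proposal against, and it must stand on its own. As such, it only carries real weight in case~\eqref{it:forms-image}: after a linear change of coordinates your gadget $(t,s)\mapsto (ts-1)^2+t^2$ does have image exactly $(0,\infty)$, and taking products and padding with free coordinates realizes $(0,\infty)^r\times\R^{p-r}$ as a polynomial image of $\R^{p+r}$. Be aware, though, that in the cited papers the point of this case is the much stronger statement that such a set is the image of $\R^p$ \emph{itself} (the open quadrant problem); your extra-variable product trick does not touch that, and if the intended reading of the theorem fixes $n=p$, case~\eqref{it:forms-image} is not proved by your argument either.

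The genuine gaps are in cases~\eqref{it:finite-compl} and~\eqref{it:polytope-complement}. For~\eqref{it:finite-compl} your two-stage plan — first realize $\R^p$ minus a point (or a small set), then compose with a map that ``shapes the missing locus into $S\times\{0\}$'' — is circular: the image of the composition is the image of the first set under the second map, so the second map must already have image exactly $\R^p\setminus(S\times\{0\})$, which is the original problem; nothing in the sketch explains how the basic inequalities defining $S$ enter the construction. Case~\eqref{it:polytope-complement} you explicitly defer to \cite{FerUeno14}, so it is not proved. Finally, the heuristic you offer for why the hypotheses on $P$ are the right ones is incorrect: for a bounded $P$ the boundary $\partial P$ is compact and therefore can never be $\R$-uniruled, so the hypotheses cannot be ``exactly what guarantee that $\partial P$ is $\R$-uniruled.'' The correct resolution of the apparent conflict with Theorem~\ref{thm:real-maps-non-prop} is that $\overline{K}\setminus K\subset\cS_f$ forces $\partial P\subset\cS_f$, but uniruledness is a property of the whole closed set $\cS_f$, which may strictly contain $\partial P$ and carry the required polynomial curves outside it; the excluded cases (hyperplane, layer) are obstructed for different, geometric reasons in \cite{FerUeno14}. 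In short: an acceptable write-up would either reproduce the explicit constructions of \cite{Fer03,FerUeno14} for~\eqref{it:finite-compl} and~\eqref{it:polytope-complement} or present them as cited results, and should state precisely which source dimension $n$ is being achieved in~\eqref{it:forms-image}.
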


Theorem~\ref{thm:semi-alg-sets} tackles Problem~\ref{prb:Tarski-Seid} for semi-algebraic subsets of degree one, and those belonging to a hyperplane. To the author's knowledge, an analogue of Theorem~\ref{thm:semi-alg-sets} for families of full-dimensional semi-algebraic subsets of higher degrees has not yet been found.

\section{On classifying polynomial functions}\label{sec:functions}

In this section, we present some of the classical results that characterize the topology of polynomial functions $f:\Cnto$ using combinatorial objects. 
Furthermore, we demonstrate how these techniques can be employed to construct a multitude of non-trivial topological types and to estimate their number. 
The idea behind most topological characterization results often relies on analyzing the behavior of the function around its infinity bifurcation values $\cBfi$ (see e.g.,~\S\ref{sss:fibrations}). 
Consequently, results that describe the bifurcation set are instrumental in addressing Question~\ref{que:main00} for polynomial functions. Even though the bifurcation set is generally unyielding when it comes to computation, there has been significant advancement in this area~\cite{Fed76,Kou76,Nem86,Nem88,Bro88,ZaNe90,Par95,Par97,Zah96,Tib99,
ABLMH00,ALM00,JK03,DRT12,JelKur14,DT15,JelTib17}. 
However, in most cases, the existing methods impose certain limitations on the functions considered.

In contrast to the discriminant, the set of infinity bifurcation values is empty for many large families of polynomials. 
Kouchnirenko~\cite{Kou76} identified one such family using Newton polytopes. Broughton~\cite{Bro88}  and Nem\'ethi~\cite{Nem86,Nem88} distinguished other such families using the sizes of a polynomial's gradient, extending known methods for testing irregularities at infinity. 
Subsequently, Parusi\'nski demonstrated that whenever a function possesses only isolated singularities at ``infinity'', the change in the Euler characteristic of preimages can serve as a means of identifying bifurcation points~\cite{Par95}. In the 1990s, more systematic methods for computing exclusively non-trivial bifurcation points were presented. Alexandru and N\'emethi~\cite{ZaNe90} proposed an estimation for the bifurcation set based on gradients of the polynomial and a method to compute it using (in some cases) the faces of the Newton polytope. Subsequently, Zaharia demonstrated that this superset provides a reasonable approximation of the bifurcation set~\cite{Zah96}.

In the bivariate setting, i.e., for functions $\Ctto$, there are specialized tools for computing non-trivial bifurcation points~\cite{Dur98}, as well as results that provide tight upper bounds on their number~\cite{VO94,JK03,JeTi15,Gwo13}. 
For instance, the problem can be approached as a combinatorial one by studying the jumps in the  \emph{Euler-Poincar\'e characteristic} characteristics between different fibers, which are discrete topological invariants of curves~\cite{Suz74,HL84}. In a similar vein, colored trees and monodromy groups can be employed to identify non-trivial bifurcation values and, furthermore, to classify the various topological types of polynomial functions. 
These techniques have been utilized in the context of detecting bifurcation values and classifying topological types of polynomial functions, as well as Milnor numbers and level curves~\cite{Fou96,Dur98,Tib99}.

In~\S\ref{sub:Euler} and~\S\ref{sub:Newton-non-deg}, we elaborate on two of the results mentioned above for general $n$. The first result uses the Newton polytope to compute the bifurcation set effectively, while the second result uses Euler characteristic calculations instead. Subsequently, in~\S\ref{sub:planar_functions}, a result of Fourrier for the bivariate setting is described, which represents topological types using certain colored graphs.

In the course of this section, we describe how the aforementioned results can be combined to produce an effective method for constructing topological types of polynomial functions $\Ctto$ of a given degree $d$. Consequently, this provides lower bounds on these types. Finally, in~\S\ref{sub:univar}, we present one such lower bound.

For the sake of brevity, we will omit numerous important results in the theory of the bifurcation set, as we will focus only on those that give rise to methods for constructing topological types of complex polynomial functions.
We refer the reader to the monograph~\cite{Tib07} for an extensive survey on the topic.

\subsection{Bifurcation set from Euler characteristic}\label{sub:Euler} 
Let $f:\C^n\longrightarrow\C$ be a polynomial of degree $d$. 
We consider the decomposition $f = f_0 + f_1 + \cdots + f_d$ of $f$ into homogeneous components.
Consider the homogenization of $f$, given by
\[
\tilde{f} = z_0^df(z_1/z_0,\cdots,z_n/z_0),
\] and the hyperplane $H_\infty:=\{z_0=0\}\subset\P^n$ at infinity.  We define $A_f$ to be the set
\begin{equation}\label{eq:sing-infty}
\{z\in H_\infty~|~\partial f_d/\partial z_1 = \cdots = \partial f_d/\partial z_n = f_{d-1}=0\}.
\end{equation}

We say that $f$ has \emph{isolated singularities at infinity} if $A_f$ is either empty or a finite set. 

\begin{theorem}[\cite{Par95}]\label{thm:Parusinski}
Let $f:\Cnto$ be a polynomial with isolated singularities at infinity, and let $t_0$ be a value in $\cBf\setminus\cBfz$. 
Then, for every $t\in\C\setminus \cBf$ the following relation of Euler characteristics holds
\[
\chi(f^{-1}(t)) \neq \chi(f^{-1}(t_0)).
\] Furthermore, one obtains $f^{-1}(t)$ from $f^{-1}(t_0)$, up to homotopy, by attaching finitely many $(n-1)$-handles.
\end{theorem}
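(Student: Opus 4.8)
The statement is Parusiński's theorem on polynomials with isolated singularities at infinity: a value $t_0$ in $\mathcal{B}_f \setminus \mathcal{B}^0_f$ (a bifurcation value at infinity but not a critical value) is detected by a jump in the Euler characteristic of the fibers, and moreover the generic fiber is obtained from the atypical one by attaching $(n-1)$-handles. The proof proposal is to localize the degeneration ``at infinity'' by working in the projective closure, extract a Milnor-type fibration argument near each singular point of $A_f$ on the hyperplane $H_\infty$, and then assemble the local contributions additively via the Euler characteristic.

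First I would set up the geometry. Homogenize $f$ to $\tilde f$ and consider the closure $\overline{X_t}$ in $\mathbb{P}^n$ of the affine fiber $f^{-1}(t)$; its intersection with $H_\infty$ is the fixed projective hypersurface $\{f_d = 0\} \cap H_\infty$, independent of $t$. The isolated-singularities-at-infinity hypothesis says exactly that the family $\{\overline{X_t}\}$ is equisingular along $H_\infty$ away from the finite set $A_f$, so all the atypical behaviour is concentrated at the finitely many points of $A_f$. The key local object is, at each $p \in A_f$, a Milnor fibration of the germ of $\tilde f$ (suitably dehomogenized) transverse to $H_\infty$; one shows that for $t$ near a regular value the local fiber is a bouquet of spheres of dimension $n-1$, with the number of spheres given by a local Milnor-type number $\lambda_p(t)$, and that this number can only \emph{drop} as $t$ specializes to an atypical value $t_0$ (this monotonicity is where the ``handles are attached going from $t_0$ to generic $t$'' comes from). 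This is the technical heart and also the main obstacle: making precise the local conic/Milnor structure at infinity and proving the semicontinuity of the local vanishing numbers, which requires a careful choice of a Milnor-type neighbourhood at infinity (a ``tube'' around $H_\infty$) and control of the monodromy — essentially redoing Milnor's fibration theorem in this non-compact, at-infinity setting. I would cite Parusiński's construction here rather than reprove it, but the sketch must flag that this localization is the nontrivial input.

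Next I would globalize via the Euler characteristic. Using the additivity of $\chi$ over a stratification, write
\[
\chi(f^{-1}(t)) = \chi(f^{-1}(t_0)) + \sum_{p \in A_f} \bigl(\chi(\text{local fiber at } p \text{ for } t) - \chi(\text{local fiber at } p \text{ for } t_0)\bigr),
\]
where the global parts away from tubes around $A_f$ contribute equally to both sides because there the family is a trivial $\mathcal{C}^\infty$-fibration (Thom's First Isotopy Lemma applied to a Whitney stratification of the total space, using that $A_f$ captures all failure of equisingularity at infinity). Each local summand equals $\pm(\lambda_p(t) - \lambda_p(t_0))$ up to the sign $(-1)^{n-1}$ coming from the dimension of the vanishing cycles. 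Since $t_0 \in \mathcal{B}_f \setminus \mathcal{B}^0_f$, the value $t_0$ is genuinely atypical at infinity, so at least one $\lambda_p$ strictly jumps, and by the semicontinuity all jumps have the same sign; hence the total sum is a nonzero multiple of $(-1)^{n-1}$, giving $\chi(f^{-1}(t)) \neq \chi(f^{-1}(t_0))$. Finally, the ``attaching $(n-1)$-handles'' statement follows because the local Milnor fibers differ by attaching cells of the middle dimension $n-1$ (a bouquet-of-spheres computation at each $p$), and handle attachments in the affine fiber realize this difference — I would invoke the local conic structure one more time to upgrade the homotopy statement to a handle-attachment statement.

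**Where the difficulty lies.** Everything reduces to one delicate point: establishing a good Milnor fibration ``at infinity'' at each point of $A_f$ and proving the semicontinuity (one-sided jump) of the associated vanishing numbers. The additivity of $\chi$ and the trivial-fibration-away-from-$A_f$ part are routine given Thom's First Isotopy Lemma (Theorem~\ref{thm:Thom-lemma-first}) applied to the closure $\overline{\mathcal{G}}$ of the graph; the sign bookkeeping is b{\'e}zout-style arithmetic. So in writing this up I would spend most of the effort on the local picture at $H_\infty$, and merely assemble the rest.
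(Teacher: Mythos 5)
Your localization at the points of $A_f$ and the globalization via additivity of $\chi$ are fine in spirit, but the argument has a genuine gap at the decisive step: you write that since $t_0\in\cBf\setminus\cBfz$ ``the value $t_0$ is genuinely atypical at infinity, so at least one $\lambda_p$ strictly jumps.'' That implication \emph{is} the theorem, not a consequence of your setup. The bifurcation set is defined by failure of local triviality of the fibration, whereas your $\lambda_p$ are numerical invariants of local fibers at infinity; semicontinuity of the $\lambda_p$ only guarantees that the local contributions all carry the same sign, hence that $\chi$ jumps if and only if some $\lambda_p$ jumps. What remains --- and what your sketch never addresses --- is the converse implication that constancy of all the local numbers (equivalently of $\chi$) forces the family to be a locally trivial fibration near $t_0$, i.e.\ $t_0\notin\cBf$. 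This is a L\^{e}--Ramanujam/Thom--Mather type statement and is exactly where the real work lies: in Parusi\'{n}ski's argument it is carried out by analyzing the polar curves of $V(f-t)\subset\P^n\times\C$ and verifying a Malgrange-type condition at infinity, which is also what yields the handle-attachment statement via Morse theory. Citing ``Parusi\'{n}ski's construction'' for the local Milnor fibration and the semicontinuity does not cover this step; it is a separate, and harder, part of his proof, and without it your chain of implications does not close.

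For comparison, the proof sketched in the paper does not construct Milnor fibrations at infinity at all: it applies the generalized Milnor--Euler formula~\eqref{eq:Milnor-Euler} to the projective closures $\overline{X}_t$ and $\overline{X}_{t_0}$ (two members of the same linear system, with isolated singularities all contained in $A_f$ because neither $t$ nor $t_0$ is an affine critical value), obtaining $\chi(X_t)-\chi(X_{t_0})=(-1)^n\sum_{p\in A_f}\bigl(\mu_p(\overline{X}_t)-\mu_p(\overline{X}_{t_0})\bigr)$ directly by additivity of $\chi$; the nonvanishing of the right-hand side for atypical $t_0$, and the handle statement, are then extracted from the polar-curve/Malgrange analysis. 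So the Euler-characteristic bookkeeping you treat as the easy half is indeed easy (and can even be done more globally, without local fibrations at infinity), but the part you defer is precisely the part that cannot be deferred.
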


\begin{proof}[Sketch of proof]
Let $Y$ and $Z$ be two projective hypersurfaces that are equivalent as divisors (i.e. they are two zero-sets of sections of the same line bundle), and assume that they have isolated singularities. Then, the classical Euler-Milnor relation holds~\cite[Corollary 1.7]{parusinski1988generalization}
\begin{equation}\label{eq:Milnor-Euler}
 (-1)^n(\chi(Y) - \chi(Z)) = \sum_{p\in\Sing Y}\mu_p(Y) - \sum_{q\in\Sing Z}\mu_q(Z).
\end{equation} 
Let $C_\infty$ denote the hypersurface at infinity defined by $f_d$, let $X_t$ denote the fibers $f^{-1}(t)$ in $\C^n$, and let $\overline{X}_t$ denote its projective compactification in $\P^n$.
Then, in $\P^n$, we have $\overline{X}_t = C_\infty \cup X_t$ for each $t$.
Let $A_f$ to denote the singular locus of $\overline{X}_0$ at $C_\infty$. 
Then, we have $X_t$ is smooth for any $t\in\C\setminus\cB_f$, and thus $\Sing(\overline{X}_t) \subset A_f$.
Then, for any $t\not\in\cBf$ and $t_0\in\cBfi$, we get
\begin{equation}\label{eq:charact}
\chi(\overline{X}_t) - \chi(\overline{X}_{t_0}) = (-1)^n\cdot\sum_{p\in A_f}\left(\mu_p(\overline{X}_t) - \mu_p(\overline{X}_{t_0}) \right)
\end{equation}  from~\eqref{eq:Milnor-Euler}. 
Since $\overline{X}_t=X_t\cup C_\infty$ for every $t$, by the additivity of the Euler characteristic, the left-hand-side of~\eqref{eq:charact} equals $\chi(X_t) - \chi(X_{t_0})$.

Proving that the right-hand side of~\eqref{eq:charact} is non-zero is done by describing the polar curves of $V(f- t)\subset \P^n\times\C$ (see~\cite[\S 3.2]{Par95} for more details).
The second statement follows from the Malgrange condition and Morse theory.
\end{proof}

Note that Theorem~\ref{thm:Parusinski} may fail in presence of non-isolated singularities at infinity. 
As an example, consider $f := x + x^2~y~z$~\cite{10.1155/S1073792898000580}~\cite[Example 3.1]{vui2008critical}. Using Equation~\eqref{eq:sing-infty}, the set $A_f$ forms a cone $\{x=0\}\subset H_\infty$. The preimage $f^{-1}(0)$ is a disjoint union of a torus $(\C^*)^2\cong\{xy = 1\}$ and a complex plane $\C^2\cong\{x=0\}$. As for $t\neq 0$, the preimage $f^{-1}(t)$ is a disjoint union of $(\C\setminus\{t,0\})\times\C^*\cong\cup_{x\neq t,0}\{(y,z)~|~ yz = (t-x)/x^2\}$ with a singular conic $\{x\cdot y = 0\}$. Therefore, we get  $\chi(f^{-1}(0)) = \chi(P^{-1}(t)) = 1$. However, the function $f$ is clearly not a fibration around $0$.

Another observation for Theorem~\ref{thm:Parusinski} can be made regarding functions $\C^2\longrightarrow\C$; its singularities at infinity are always isolated. Hence, Theorem~\ref{thm:Parusinski} holds true for bivariate polynomials with no additional assumptions. This fact was already known by H\`a and L\^e in~\cite{HL84}. 

Lastly, Theorem~\ref{thm:Parusinski} is an excellent starting points to address the Question~\ref{que:main00} for polynomial functions. 

\begin{example}\label{ex:Parusinski}
Fix an integer polytope $\Delta\subset(\R_{\geq 0}^n)$. Then, for any generic $f\in\C^{\Delta}$, the Euler characteristic of a generic fiber is determined fully by $\Delta$~\cite{DanKho86}. Then, for each $d\in\N$ thanks to Theorem~\ref{thm:Parusinski}, a classification of all above integer polytopes in a simplex $T_d$ gives rise to a set of topologically distinct polynomial functions in $\PCkno$.
\end{example}

In the following part, we present yet another result that enhances the approach in Example~\ref{ex:Parusinski} for obtaining lower bounds on the number of topological types.

\subsection{Bifurcation set from Newton polytopes}\label{sub:Newton-non-deg}
Let $f:=\sum_{a\in\N^n}c_az^a$ be any multivariate polynomial, and let $\Gamma$ be a face of its Newton polytope. 
We say that $f$ is \emph{non-degenerate at $\Gamma$} if the set of solutions $\Sigma_\Gamma$ in $(\C^*)^n$ to 
\begin{equation}\label{eq:grad}
\frac{\partial f_\Gamma}{\partial z_1} = \cdots = \frac{\partial f_\Gamma}{\partial z_n} = 0
\end{equation} is empty.
It is called \emph{Newton non-degenrate} if $f$ is non-degenerate at every face $\Gamma$ of $\NP(f)$ that does not contain the origin. 
It follows from~\eqref{eq:grad} that Newton non-degenerate maps form a Zariski open subset in $\C^\Delta$~\cite{Kou76}. 
Newton polytopes turned out to be very useful in the study of polynomial maps~\cite{Est10,Est13,chen2014invertible}. Let us mention one of the earlier relevant results that was discovered by N\'emethy and Zaharia in the 1990s.
Recall in~\S\ref{sss:face-resultants} the definition of dicritical coherent faces of a tuple of polytopes.

\begin{theorem}[\cite{NZ90}]\label{thm:bifurcation-polytopes}
Let $\mathscr{D}(\Delta)$ denote the set of dicritical faces of $\Delta$ and let $f$ be a Newton non-degenerate polynomial $\C^n\longrightarrow\C$ satisfying $f(\bm{0})\neq 0$. Then, we have 
\begin{equation}\label{eq:bifur-polytopes}
\cBf\subset \{f(\bm{0})\}\cup \cBfz \cup \bigcup_{\Gamma\in\mathscr{D}(\Delta)}f_{\Gamma}(\Sigma_\Gamma).
\end{equation}
That is, bifurcation points of generic functions depend only on the coefficients at the faces of the corresponding Newton polytope.
\end{theorem}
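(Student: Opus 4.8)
The plan is to reduce the claim to a statement about bifurcation values at infinity, then to locate the ``escaping'' sequences of points witnessing such a value inside a toric compactification adapted to $\Delta$, and to recognise their limits as points of the sets $\Sigma_\Gamma$. Since $\cBf\setminus\cBfz\subseteq\cBfi$ while $\cBfz$ already sits on the right-hand side of~\eqref{eq:bifur-polytopes}, it is enough to show that every $t_0\in\cBfi$ is either $f(\bm{0})$ or lies in $f_\Gamma(\Sigma_\Gamma)$ for some dicritical face $\Gamma\prec\Delta$. First I would invoke the inclusion $\cBfi\subseteq\cKfi$ of~\eqref{eq:Rabier}: fixing $t_0\in\cBfi\setminus\{f(\bm{0})\}$, there is a sequence $(x_k)\subset\C^n$ with $\|x_k\|\to\infty$, $f(x_k)\to t_0$ and $\|x_k\|\cdot\|\grad f(x_k)\|\to 0$.

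Next I would set up a toric compactification. Take a smooth toric variety $X$ containing $\C^n$ and whose fan refines the normal fan of $\Delta$ (one may equally compactify $(\C^*)^n$ and dispose of the coordinate hyperplanes by an induction on $n$). Then the boundary $X\setminus\C^n$ is a simple normal crossing divisor whose torus orbits $O_\sigma$ are indexed by the cones $\sigma$ of the fan lying outside $\R^n_{\ge 0}$, and to each such $\sigma$ there is an associated face $\Gamma_\sigma\prec\Delta$, namely the face on which the linear forms in the relative interior of $\sigma$ are maximised over $\Delta$. In monomial charts centred on $O_\sigma$ the pullback of $f$ factors as a monomial times $f_{\Gamma_\sigma}$, plus terms vanishing to higher order on the boundary, and for $t\ne f(\bm{0})$ this monomial factor is a unit on the chart precisely when $\bm{0}\notin\Gamma_\sigma$. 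After passing to a subsequence, $x_k$ converges in the compact space $X$ to a point $q$; as $\|x_k\|\to\infty$ in $\C^n$ while $q\notin\C^n$, the cone $\sigma$ with $q\in O_\sigma$ has a generator with a strictly positive entry.

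Then I would transport the Malgrange condition into the chart around $q$. After dividing out the monomial factor and rescaling by the weight attached to $\sigma$, the hypothesis $\|x_k\|\cdot\|\grad f(x_k)\|\to 0$ forces each partial derivative $\partial f_{\Gamma_\sigma}/\partial z_i$, evaluated along $(x_k)$, to vanish in the limit on the torus through $q$; Euler's relation for the weighted-homogeneous polynomial $f_{\Gamma_\sigma}$ then promotes this to $q\in\Sigma_{\Gamma_\sigma}\subset(\C^*)^n$ in the sense of~\eqref{eq:grad}. Since $f$ is Newton non-degenerate, $\Sigma_\Gamma=\emptyset$ for every face not containing $\bm{0}$, so necessarily $\bm{0}\in\Gamma_\sigma$; combined with the existence of a positive generator of $\sigma$, this forces $\Gamma_\sigma$ to have a supporting hyperplane with mixed-sign outer normal, i.e.\ $\Gamma_\sigma$ is dicritical (the sub-case $\Gamma_\sigma=\{\bm{0}\}$, where $f_{\Gamma_\sigma}$ is the constant $f(\bm{0})$, is exactly the one producing the value excluded above). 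Finally $t_0=\lim_k f(x_k)=f_{\Gamma_\sigma}(q)\in f_{\Gamma_\sigma}(\Sigma_{\Gamma_\sigma})$, which is the required conclusion.

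I expect the main obstacle to be the local analysis of the preceding paragraph at boundary orbits $O_\sigma$ of arbitrary codimension: one must show that the asymptotic critical values of $f$ along $O_\sigma$ are governed by the single face polynomial $f_{\Gamma_\sigma}$ and by its critical set inside the subtorus $O_\sigma$, and that no slower cancellation among the non-leading monomials manufactures additional atypical values. Making this rigorous requires a careful choice of fan, an induction on $\dim\sigma$, and a uniform comparison of $\|x_k\|$ with the monomial valuations occurring in the chart --- and it is exactly here that Newton non-degeneracy is used in an essential way. A secondary, more bookkeeping-type difficulty is to separate the genuinely-at-infinity rays of the fan from the coordinate-hyperplane rays, whose orbits lie at finite distance and contribute only to $\cBfz$, and to handle the degenerate case $\Gamma_\sigma=\{\bm{0}\}$ that accounts for the isolated exceptional value $f(\bm{0})$. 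An alternative route, closer to the original argument of~\cite{NZ90}, would avoid~\eqref{eq:Rabier} altogether and instead construct, for each $t_0$ outside the right-hand side of~\eqref{eq:bifur-polytopes}, a stratified vector field on $X$ whose flow trivialises the family of fibres near $t_0$; the analytic core is the same.
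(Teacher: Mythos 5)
Your plan is correct in outline, but it follows a genuinely different route from the paper's sketch, which reproduces N\'emethi--Zaharia's original argument: the paper never passes through the Rabier inclusion $\cBfi\subset\cKfi$; instead it starts from the tangency (Milnor) set $M(f)=\{z\mid \grad f(z)=\lambda z\}$ and the set $S_f$ of asymptotic values of $f$ along $M(f)$, reduces by the curve selection lemma to analytic curves $p(t)\subset M(f)$ with $\Vert p(t)\Vert\to\infty$, and reads everything off Puiseux expansions: the leading exponent $\alpha$ of $p(t)=a\,t^{\alpha}+\cdots$ selects the face $\Gamma$, the identity $\grad f(p(t))=\lambda(t)\,p(t)$ forces $\grad f_{\Gamma}(a)=0$, and non-degeneracy plus the escape to infinity force $\bm{0}\in\Gamma$ and $\Gamma$ dicritical. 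You instead bound the Malgrange set $\cKfi$ (a different, in general not smaller, superset of $\cBfi$ than $S_f$) and localize the escape on boundary orbits of a toric compactification adapted to the normal fan of $\Delta$. Your analytic step does close, and in fact more directly than you suggest: writing $\ell:=\min_{m\in\operatorname{supp}(f)}\langle\alpha,m\rangle\le 0$ (using $\bm{0}\in\operatorname{supp}(f)$, i.e.\ $f(\bm{0})\neq 0$), the order of $\Vert p\Vert\cdot|\partial_i f(p)|$ is $\min_j\alpha_j+\ell-\alpha_i\le\ell\le 0$, so the Malgrange condition kills every $\partial_i f_{\Gamma}(a)$ outright --- no Euler relation is needed, and that is the only slightly misplaced step in your plan. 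What the paper's route buys is economy and robustness: it uses only the classical sphere-transversality fibration criterion (no Rabier), and the curve-selection/Puiseux formalism handles automatically the delicate situation where the limiting ``leading coefficient'' leaves $(\C^*)^n$, by restricting $f$ to coordinate subspaces. What your route buys is a cleaner geometric picture (faces of $\Delta$ appear as toric boundary orbits) and a formally stronger conclusion, namely the same face-wise bound for all of $\cKfi$; the price is exactly the work you flag yourself: one must refine the fan or invoke curve selection at infinity to make the rate comparison in a chart legitimate for mere sequences, and one must treat limits landing on intersections of orbits at infinity with closures of coordinate hyperplanes, where the induction on coordinate subspaces is unavoidable.
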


\begin{proof}[Sketch of proof]
Let $M(f)\subset\C^n$ and $S_f\subset\C$ denote the two subsets 
\begin{align*}
M(f):=&~\{z~|~\exists~\lambda\in\C,~\grad f=\lambda\cdot z\},\text{and}\\ S_f:=&~\{ w~|~\exists \{z_k\}_{k\in\N}\subset M(f),~\Vert z_k\Vert\rightarrow\infty,~f(z_k)\rightarrow w\}
\end{align*} Then, it is enough to show the claim: \textit{if $f$ is a Newton nondegenerate polynomial with $f(\bm{z}) \neq 0$, and if $\{p(t)\}_{t\in ]0,1[}\subset M(f)$ is an analytic curve such that 
\[
\lim_{t\rightarrow 0}\Vert p(t)\Vert = \infty\text{\quad and \quad} \lim_{t\rightarrow 0}f(p(t))\in\C,
\] then $\lim_{t\rightarrow 0}f(p(t))$ belongs to the right-hand side of~\eqref{eq:bifur-polytopes}.} 

Under these assumptions, there is an analytic curve $\{\lambda(t)\}_{t\in]0,1[}\subset\C$ for which $p(t)$ and $\lambda (t)$ satisfy 
\begin{equation}\label{eq:identity-grad}
\frac{\partial f(p(t))}{\partial t} = \left\langle\frac{\partial p(t)}{\partial t},~\grad f(p(t))\right\rangle,\quad\text{and}\quad \grad f(p(t)) = \lambda(t)\cdot p(t).
\end{equation}  Thanks to the Curve Selection Lemma~\cite{milnor2016singular}, the values $p(t)$, $\lambda(t)$, and $f(p(t))$ are approximated as Puiseux series. Now, let $a~t^\alpha$ be the first order term of $p(t)$, with $a\in(\C^*)^n$ and $\alpha\in\Q^n$. Then, as $t\longrightarrow 0$, thanks to~\eqref{eq:identity-grad} we obtain \textit{(i)} the exponent vector $\alpha$ minimizes a face $\Gamma\prec\Delta$, and \textit{(ii)} the coefficients vector $a$ satisfies $\grad f_{\Gamma}(a) = 0$ (see~\cite[\S 3]{NZ90}). The limit $\Vert p(t)\Vert\rightarrow \infty$ and Newton non-degeneracy give restrictions on the direction of $\alpha$, and on the choice of $\Gamma$. Namely, the latter has to contain the origin. Altogether, this yields that $\Gamma$ is a dicritical face.
\end{proof}

Several generalizations of this result can be found throughout the literature; these include Esterov's results for polynomial maps on the complex torus~\cite{Est13}, and Chen \emph{et. al}'s work on real/mixed polynomial maps~\cite{chen2014invertible}

Those polynomials in which all monomials $z^\alpha$, $\alpha_1+\cdots+\alpha_n\leq d$ appear with a non-zero coefficient form a full-dimensional torus $T$ in the space $\PCdno$ of all polynomials $\C^n\longrightarrow\C$ of degree $d$. Clearly, all polynomials in $T$ have no dicritical faces. 
Accordingly, since Newton non-degenerate polynomials are also dense in $\PCdno$, above Theorem~\ref{thm:bifurcation-polytopes}, yields the following consequence. 

\begin{corollary}[\cite{Kou76}]\label{cor:Zariski-open-isolated}
For any $d,n\in\N$, the bifurcation set at infinity of a generic polynomial in $\PKkno$ is empty.
\end{corollary}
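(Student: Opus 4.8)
The plan is to derive Corollary~\ref{cor:Zariski-open-isolated} directly from Theorem~\ref{thm:bifurcation-polytopes} by exhibiting, for each pair $(d,n)$, a nonempty Zariski open subset of $\PKkno$ whose members have empty bifurcation set at infinity. Since the Zariski topology on $\PKkno\simeq\Bbbk^N$ is irreducible, any nonempty Zariski open subset is dense, so producing one such subset suffices to justify the word ``generic'' in the statement. The two properties I need a generic polynomial to enjoy are: (i) Newton non-degeneracy, and (ii) having no dicritical faces of its Newton polytope.

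First I would handle property (ii) by the torus argument sketched just before the corollary. Let $T_d:=\operatorname{conv}\{\mathbf 0, d e_1,\ldots,d e_n\}$ be the standard simplex, and consider the subset $\mathcal T\subset\PKkno$ consisting of those polynomials $f=\sum_{a\in T_d\cap\Z^n} c_a z^a$ with \emph{every} $c_a\neq 0$; this is a nonempty Zariski open set, and for such $f$ one has $\NP(f)=T_d$. The key combinatorial observation is that $T_d$ has no dicritical faces: a face $\Gamma\prec T_d$ either contains the origin, or lies on one of the coordinate hyperplanes $\{z_i=0\}$ together with the facet $\{x_1+\cdots+x_n=d\}$; in either case every outward normal of a supporting hyperplane of $T_d$ at $\Gamma$ has all coordinates of the same sign (nonpositive on the facets through $\mathbf 0$, and of mixed-but-controlled sign only in ways that still fail the defining inequality $\alpha_i>0$, $\alpha_j\le 0$ of Section~\ref{sss:face-resultants}). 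I would spell out this elementary polytope computation, concluding $\mathscr D(T_d)=\emptyset$. In fact it is cleaner to note that the \emph{recession/normal} structure of $T_d$ only sees the facet normals $-e_1,\dots,-e_n$ and $(1,\dots,1)$, none of which is dicritical, and a face is dicritical iff it is cut out by a dicritical supporting hyperplane.

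Next, property (i): Newton non-degeneracy. By Kouchnirenko~\cite{Kou76} (quoted in Section~\ref{sub:Newton-non-deg}), the set of Newton non-degenerate polynomials with fixed Newton polytope $T_d$ is Zariski open in $\C^{T_d}$, and it is nonempty — for instance a suitably generic coefficient choice works, or one invokes that the ``bad'' locus where $\partial f_\Gamma/\partial z_1=\cdots=\partial f_\Gamma/\partial z_n=0$ has a torus solution for some $\Gamma$ is a proper Zariski closed set by Bernstein's Theorem~\ref{th:Bernstein} applied face-by-face. Intersecting with $\mathcal T$ gives a nonempty Zariski open $U\subset\PKkno$ all of whose members are Newton non-degenerate with $\NP(f)=T_d$ and hence, by (ii), have $\mathscr D(\NP(f))=\emptyset$. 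Shrinking $U$ further to avoid the measure-zero locus where $f(\mathbf 0)=0$ is harmless (that condition is a single nonzero linear equation on coefficients).

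Finally I would plug this into Theorem~\ref{thm:bifurcation-polytopes}. For $f\in U$ the right-hand side of~\eqref{eq:bifur-polytopes} reads $\{f(\mathbf 0)\}\cup\cBfz\cup\bigcup_{\Gamma\in\emptyset}f_\Gamma(\Sigma_\Gamma)=\{f(\mathbf 0)\}\cup\cBfz$, so $\cBf\subset\{f(\mathbf 0)\}\cup\cBfz$. Intersecting with the complement of the discriminant, $\cBfi=\cBf\setminus\cBfz\subset\{f(\mathbf 0)\}$; and since $f(\mathbf 0)$ is a regular value for generic $f$ (a further nonempty Zariski open condition, or absorbed into $U$), it is not a bifurcation value, giving $\cBfi=\emptyset$. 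The real-field case $\Bbbk=\R$ is identical once one notes the genericity statements and Theorem~\ref{thm:bifurcation-polytopes} apply (or one complexifies and restricts). The only genuinely delicate point — the ``main obstacle'' — is the polytope computation that $T_d$ has no dicritical faces; everything else is a routine chain of ``nonempty Zariski open $\Rightarrow$ dense'' reductions and a direct substitution into the already-established Theorem~\ref{thm:bifurcation-polytopes}.
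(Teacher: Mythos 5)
Your route is the same as the paper's: the paper derives the corollary in two sentences by observing that polynomials with full support of degree $d$ have Newton polytope the standard simplex, claiming this polytope has no dicritical faces, invoking density of Newton non-degenerate polynomials, and plugging into Theorem~\ref{thm:bifurcation-polytopes}. So the overall architecture of your argument matches; the issue is with the one step you yourself single out as the delicate point. Under the definition of dicritical face actually given in~\S\ref{sss:face-resultants} (existence of an outward normal $\alpha$ of \emph{some} supporting hyperplane of the face with $\alpha_i>0$ and $\alpha_j\le 0$), it is \emph{not} true that $\mathscr{D}(T_d)=\emptyset$: any face of $T_d$ lying on the facet $\{x_1+\cdots+x_n=d\}$ and on at least one coordinate hyperplane --- for instance the vertex $d\,e_1$ --- has normal cone generated by $(1,\ldots,1)$ together with some of the $-e_j$, and this cone contains mixed-sign vectors such as $(1,-1,0,\ldots,0)$. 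Your claim that every supporting normal at every face of $T_d$ is one-signed is therefore false, and your ``cleaner'' reduction to the facet normals $-e_1,\ldots,-e_n,(1,\ldots,1)$ is invalid: dicriticality of a lower-dimensional face is tested against its whole normal cone of supporting hyperplanes, not only against the facet-defining ones. (The paper's own ``clearly no dicritical faces'' suffers from the same imprecision, but since you flagged this computation as the crux, the justification you give does not stand as written.)

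The gap is easily repaired, in either of two ways. First, note that for $T_d$ no dicritical face can contain the origin: if $\alpha_i>0$ then $\langle\alpha,d\,e_i\rangle>0=\langle\alpha,\bm{0}\rangle$, so a supporting hyperplane with such a normal meets $T_d$ only at faces avoiding $\bm{0}$. Hence every dicritical face $\Gamma$ of $T_d$ omits the origin, and Newton non-degeneracy says precisely that $\Sigma_\Gamma=\emptyset$ for such faces, so every term $f_\Gamma(\Sigma_\Gamma)$ in~\eqref{eq:bifur-polytopes} is empty regardless of whether $\mathscr{D}(T_d)$ is empty. Alternatively, one can work with the N\'emethi--Zaharia notion of bad face, where the supporting hyperplane is required to pass through the origin; then your (correct) observation that the normal cone at any face of $T_d$ containing $\bm{0}$ consists of nonpositive vectors does show there are no such faces. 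With this repair, the rest of your argument --- nonempty Zariski open conditions for full support, non-degeneracy (via Bernstein), $f(\bm{0})\neq 0$ and $f(\bm{0})$ a regular value --- goes through and reproduces the paper's derivation; I only add that the identification $\cBfi=\cBf\setminus\cBfz$ is not literally forced by the definitions (a critical value could in principle also be atypical at infinity), though this is at the same level of informality as the paper's own deduction from Theorem~\ref{thm:bifurcation-polytopes}.
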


Theorem~\ref{thm:bifurcation-polytopes} can also be used to tackle the classification Question~\ref{que:main00}. 

\begin{example}[Using Theorem~\ref{thm:bifurcation-polytopes}]\label{ex:Newton}
We also consider here an integer polytope $\Delta\subset(\R_{\geq 0}^n)$. 
Then, the bifurcation sets of two functions $f,g\in\PCkno$ (in particular, the number of points therein) distinguish  the topological types of $f$ and $g$. 
On the one hand, Theorem~\ref{thm:bifurcation-polytopes} shows that as long as $f$ is Newton non-degenerate, the bifurcation set can be computed using data from only the dicritical faces. 
On the other hand, similarly to Example~\ref{ex:Parusinski}, the Euler characteristic of a generic fiber of any Newton non-degenerate polynomial depends only on $\Delta$. 
Then, a refined construction method of Example~\ref{ex:Parusinski} can be established that takes into account all possibilities of $\cBfi$ of Newton non-degenerate functions $\PCkno$. Such a method becomes far more practical and effective for the bivariate case as polygons are much simpler than polytopes.
\end{example}

\subsection{Resolution trees for bivariate polynomials}\label{sub:planar_functions} In the 1990s, L. Fourrier has developed a method that distinguishes the topological type of a bivariate polynomial functions using resolution of singularities~\cite{fourrier1994classification,Fou96}; two such functions $f,g:\Ctto$ are \emph{topologically equivalent at infinity} if there exists two compact subsets $K,L\subset\C^2$ and  homeomorphisms $H:\C^2\setminus \overset{\circ}{K}\longrightarrow\C^2\setminus \overset{\circ}{L}$ and $h:\Coto$ making the below diagram commutative
\begin{equation*}
\begin{tikzcd}
\C^2\setminus\overset{\circ}{K} \arrow{r}{H} \arrow[swap]{d}{f} & \C^2\setminus\overset{\circ}{L}  \arrow{d}{g} \\%
\C \arrow{r}{h}&  \C
\end{tikzcd}
\end{equation*} where the interior $\overset{\circ}{X}$, of a subset $X\subset\C^2$ is given using the Euclidean topology. By considering the resolution $\tilde{C}$ of singularities of $f\in\C[x,y]$, where $C:=\{f=0\}$, at the line at infinity in $\P^2$, one defines a weighed, colored graph $\RRTf$, called a \emph{reduced resolution tree of $f$} (see Figure~\ref{fig:resolution_tree}). 
The vertices of $\RRTf$ is given by the divisors, marked by weights reflecting their self-intersection numbers of the lifted curve, and every edge joins two intersecting divisors. 
The tree is furthermore colored by two types of arrows to reflect which the types of divisors they intersect the strict transforms. 
Two reduced resolution trees are said to be \emph{isomorphic} if 
if there is a bijection between the two trees (viewed as graphs), preserving colors and weights, and if their respective resolutions of singularities are equivalent up to blow-ups and blow-downs.

\begin{figure}[htb]\label{fig:resolution_tree}
\includegraphics[scale=.6]{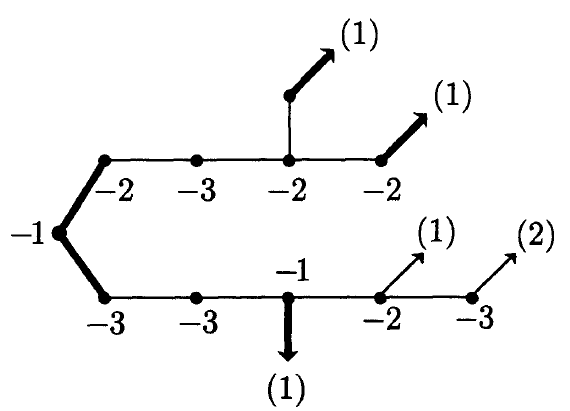}
\caption{Taken from~\cite{Fou96}: An example of a reduced resolution tree of $x^5y^3 - x^2y$.}
\end{figure}

We say that a polynomial $\Ctto$ is \emph{primitive} if a generic fiber is connected. The function $(x,~y)\longmapsto x^2$, for example, is not primitive. Through an analysis on sequences of blowups at points at infinity, the following result was shown.

\begin{theorem}[{{\cite{fourrier1994classification},\cite[Theorem 2]{Fou96}}}]\label{thm:Fourrier}
If two primitive polynomials $f,g:\Ctto$ are topologically equivalent at infinity, their respective reduced resolution trees $\RRTf$, and $\RRTg$ are isomorphic. 
\end{theorem}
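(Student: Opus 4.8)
The plan is to show that \emph{topological equivalence at infinity} of $f$ and $g$ forces a bijection between the geometric data produced by resolving the curves $C=\{f=0\}$ and $C'=\{g=0\}$ at the line at infinity $L_\infty\subset\P^2$, and that this bijection is exactly the kind of combinatorial correspondence encoded in an isomorphism of reduced resolution trees. First I would pass from the homeomorphisms $H\colon\C^2\setminus\overset{\circ}{K}\to\C^2\setminus\overset{\circ}{L}$ and $h\colon\C\to\C$ to a homeomorphism of neighborhoods of $L_\infty$ inside $\P^2$: since $H$ covers $h$, it carries the fibers of $f$ near infinity to fibers of $g$ near infinity, and in particular sends a small deleted tubular neighborhood of $C\cap L_\infty$ (inside a generic fiber, or inside the total space) onto the corresponding one for $C'$. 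The key object is the \emph{link at infinity} of $f$: the isotopy class of the oriented multilink obtained by intersecting $C$ (together with the fibers $f=t$ for $t$ large) with a large sphere $S^3_R\subset\C^2$. A topological equivalence at infinity gives an orientation-preserving homeomorphism of these links together with the Milnor-fibration structure on their complements, hence an isomorphism of the corresponding splice diagrams / Eisenbud–Neumann multilink data.

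Next I would invoke the standard dictionary between (a) the embedded resolution of $\overline{C}$ at the points of $C\cap L_\infty$ together with the strict transform of $L_\infty$, (b) the dual weighted graph of that resolution with self-intersection weights, and (c) the JSJ/splice decomposition of the link at infinity. The reduced resolution tree $\RRTf$ is precisely (b) after the reduction moves (blow-ups and blow-downs of $(-1)$-curves meeting few other components), with the two colors of arrows recording where the strict transform of $C$ and where the strict transform of $L_\infty$ (equivalently, the "dicritical" directions along which generic fibers escape to infinity) attach. Because an isomorphism of resolution trees is defined up to exactly these blow-up/blow-down moves, it suffices to produce \emph{some} isomorphism of weighted colored dual graphs of \emph{some} common resolutions. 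The homeomorphism of links from the previous paragraph does this: a homeomorphism of two graph-manifolds (the links are graph manifolds, being boundaries of plumbings) that respects the two distinguished families of fibered pieces induces an isomorphism of their minimal plumbing graphs, and the self-intersection weights are recovered as the Euler numbers of the $S^1$-bundle pieces, which are topological invariants of the graph manifold together with its canonical plumbing structure (Waldhausen, Neumann). The hypothesis that $f$ and $g$ are \emph{primitive} — generic fiber connected — is what guarantees the link at infinity is connected in the appropriate sense and that there is no extra ambiguity coming from a disconnected generic fiber (which would correspond to $f=P(f_0)$ for a univariate $P$ of degree $\geq 2$ and would split the tree into several copies); this is where primitivity enters essentially.

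Concretely, the steps in order are: (1) from $H$ and $h$ extract a germ of homeomorphism of $(\P^2,L_\infty)$-neighborhoods intertwining the pencils $|f=t|$ and $|g=t|$ for $|t|\gg0$; (2) restrict to a large sphere to get an orientation-preserving homeomorphism of links at infinity carrying the Milnor fibration of $f$ to that of $g$, and carrying the components coming from $C$ to those coming from $C'$; (3) identify each link at infinity as the boundary of the plumbing prescribed by the embedded resolution, so it is a graph manifold with a canonical JSJ decomposition; (4) apply the uniqueness of minimal plumbing graphs of graph manifolds (Neumann's calculus) to conclude the two dual resolution graphs agree up to the plumbing calculus moves, which are exactly blow-ups/blow-downs; (5) check that under this graph isomorphism the colored arrows match, since a color records whether an exceptional component meets the strict transform of $C$ (resp.\ of $L_\infty$), both of which are preserved by the link homeomorphism from step (2); (6) observe that the weights (self-intersection numbers) are read off as Euler numbers of the corresponding Seifert pieces, hence preserved. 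Assembling (4)–(6) yields an isomorphism $\RRTf\cong\RRTg$ in the sense of the definition given before the theorem, since the definition already allows modification up to blow-ups and blow-downs.

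The main obstacle I anticipate is step (2)–(3): one must be careful that a topological equivalence \emph{at infinity} — which a priori only knows about the map $f$ on $\C^2\setminus\overset{\circ}{K}$ and says nothing near $C\cap\C^2$ for the bounded part of $C$ — nevertheless controls the resolution \emph{at the line at infinity}, and one must argue that the relevant multilink (fibers of $f$ together with $C$) is determined up to isotopy by the homeomorphism type of $(\C^2\setminus\overset{\circ}{K}\to\C)$ for $K$ large enough, independently of the choice of $K$; this requires knowing that $f$ restricted to the complement of a large ball is a locally trivial fibration over the complement of its (finite) bifurcation set, which for $n=2$ is automatic since singularities at infinity of a bivariate polynomial are isolated (as noted after Theorem~\ref{thm:Parusinski}, following H\`a–L\^e). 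A secondary subtlety is that the reduction procedure defining $\RRTf$ is not canonical as a single graph, only as an equivalence class under blow-up/blow-down; one must phrase everything at the level of that equivalence class, which is precisely why the theorem is stated with "isomorphic" meaning "equivalent up to blow-ups and blow-downs" rather than "equal". Finally, orientation issues (the link homeomorphism must be orientation-preserving, which it is because $H$ is a homeomorphism of oriented 4-manifolds) need to be tracked so that the Euler number signs come out right.
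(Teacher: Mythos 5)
The survey itself contains no proof of Theorem~\ref{thm:Fourrier}: the result is quoted from Fourrier, with only the remark that it is obtained ``through an analysis on sequences of blowups at points at infinity,'' so there is no detailed argument in the paper to compare yours against line by line. Your route --- stabilized links at infinity, Eisenbud--Neumann splice/multilink data, and plumbing-graph rigidity --- is a genuinely different-looking strategy, close in spirit to Neumann's work on links at infinity rather than to a direct comparison of blowup sequences; it is a reasonable plan, and it buys ready-made uniqueness statements from $3$-manifold topology at the price of translating the weighted colored tree (dicritical versus non-dicritical divisors, self-intersections, arrows to strict transforms) into fibered multilink data and back.

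As written, however, the proposal has concrete gaps. First, step (4) is not correct as stated: the ambient $3$-manifold at infinity is $S^3$, which is exactly one of the exceptional cases in which a graph manifold does \emph{not} have a unique minimal plumbing graph, and Neumann's calculus contains moves beyond blowups and blowdowns; the uniqueness you need is for the pair consisting of $S^3$ together with a fibered multilink (Eisenbud--Neumann), and to recover \emph{all} vertices and weights of $\RRTf$ you must feed in the link data of the whole pencil near infinity --- a generic fiber together with every fiber that is atypical at infinity, with multiplicities --- not just $\{f=0\}$ and one nearby fiber; the Milnor-fibration-at-infinity of a generic value alone does not see the atypical fibers, and this is precisely where the hard content of the theorem sits. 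Second, the homeomorphism $H$ does not carry $C=\{f=0\}$ to $C'=\{g=0\}$ but to $g^{-1}(h(0))$; since $\RRTf$ is attached to the resolution of the \emph{map} $f$ at infinity, the whole argument must be phrased for the germ of $f$ at infinity (equivalently for the pencil), otherwise your step (5), matching ``components coming from $C$'' with those of $C'$, is simply false as stated. Third, the orientation argument is wrong: a self-homeomorphism of $\C^2$ (or of $\C$) need not preserve orientation --- conjugation is allowed by the definition in the text --- so ``because $H$ is a homeomorphism of oriented $4$-manifolds'' proves nothing; you must either reduce to the orientation-preserving case or check the statement survives orientation reversal (where splice weights and Euler numbers change sign). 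Finally, primitivity is invoked only through the vague claim that the link at infinity is ``connected in the appropriate sense''; nowhere in the outline is this hypothesis actually used to discharge a proof obligation, so its role remains unexplained. These points do not show the strategy is hopeless, but each one requires a genuine additional argument before the sketch becomes a proof.
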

In fact, the formulation of Theorem~\ref{thm:Fourrier} in~\cite{Fou96} is an equivalence; Fourrier shows that topological equivalence at infinity occurs precisely when the resolution trees are isomorphic and a monodromy condition at infinity is satisfied for both polynomials~\cite[\S 4.2]{Fou96}. We omit the latter condition as it is too technical to define.

 Fourrier's Theorem~\ref{thm:Fourrier} is one of the best methods to tackle Question~\ref{que:main00}: A classification of resolution trees at infinity, gives rise to a set of topologically distinct polynomial functions in $\PCkno$ for some $n,k\in\N$. This, in turn, imposes lower bounds on the number of those topological types. Let us present a more straightforward method for computing one such bound.

\subsection{Lower bounds from univariate polynomials}\label{sub:univar}

A univariate polynomial $f:\Coto$ of degree $d$ describes the topological type of its canonical extension $\Ctto$ given by $(x,~y)\longmapsto f(x)$. In this part, we show that there are at least $d^{d-3}$ topological types.

The critical values $c_1,\ldots,c_k\in\C$ of $f$ give rise to a collection of corresponding partitions $\bm{\Lambda}:=(\Lambda_1,\ldots,\Lambda_k)$ of $d$, where each $\Lambda_i$ is given by the orders of the roots of $f(z) - c_i$. The Riemann-Hurwitz Theorem implies that
\begin{equation}\label{eq:Riemann-Hurwitz}
\sum_{i=1}^k (d - \ell(\Lambda_i)) = d-1,
\end{equation} where $\ell(\Lambda_i)$ is the number of elements in a partition $\Lambda_i$. As a ramified covering of $\C$ with ramification profiles $\bm{\Lambda}$, the function $f$ gives rise to a labeled graph on the $2$-sphere $\bS^2$ in the following way (see Figure~\ref{fig:dessins}).

Pick any non-self-intersecting oriented loop $\gamma$ inside the complex projective line $\P^1:=\C\cup\{\infty\}$ that contains $\infty$ and the values $c_1,\ldots,c_k\in\C$. 
Then, the polynomial $f$ extends projectively to a meromorphic function $\tilde{f}:\P^1\longrightarrow\P^1$ representing a ramified cover of $\P^1$ with branch locus $\{c_1,\ldots,c_k,\infty\}$ and corresponding ramification profiles $(\Lambda_1,\ldots,\Lambda_k,(d))$. Thus, the pre-image $\Gamma_f:=\tilde{f}^{-1}(\gamma)\subset\P^1\cong\bS^2$ is a labeled, oriented, connected graph called the \emph{polynomial dessin associated to $f$}~\cite{IZ18}.  

Polynomial dessins constitute a particular case of Grothendieck's dessins d'enfant~\cite{grothendieck1997esquisse} which were independently discovered by several other mathematicians~\cite{jones1978theory,voisin1977cartes}. They turned out to be a powerful tool in numerous contexts such as Galois theory~\cite{lando2004graphs} and real enumerative algebraic geometry~\cite{Br06,TopologyofAlgebraicCurves,B15,EH17,IZ18}. We say that two polynomial dessins $\Gamma_1,\Gamma_2\subset\bS^2$ are \emph{homeomorphic} if there exists a homeomorphism $\varphi:\bS^2\longrightarrow\bS^2$ sending $\Gamma_1$ to $\Gamma_2$ and preserving the edge labels and orientations. 
\begin{figure}[htb]
\input{dessins}
\end{figure}

\begin{theorem}[\cite{IZ18}]\label{thm:polynomial-dessins}
Two polynomial functions $\C\longrightarrow\C$ are topologically equivalent if and only if their associated polynomial dessins are homeomorphic.
\end{theorem}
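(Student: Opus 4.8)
The plan is to establish both directions of the equivalence by relating the topological type of a polynomial function $f\colon\C\to\C$ to the combinatorial data of its polynomial dessin $\Gamma_f\subset\bS^2$. For the easy direction—homeomorphic dessins imply topologically equivalent functions—I would first recover from $\Gamma_f$ the cell decomposition of the target: the vertices of $\Gamma_f$ lying over the black-and-white vertices of $\gamma$, the edges of $\Gamma_f$ lying over the edges of $\gamma$, and the complementary regions (the faces of the decomposition of $\bS^2\setminus\Gamma_f$) each mapping homeomorphically or as a branched cover onto a disk bounded by $\gamma$. Given a homeomorphism $\varphi\colon\bS^2\to\bS^2$ sending $\Gamma_f$ to $\Gamma_g$ preserving labels and orientations, one transports this cellular structure: on each face $\varphi$ conjugates the two branched covers of the disk (which agree because the boundary identifications and branching data match), producing a homeomorphism $\Phi\colon\bS^2\to\bS^2$ of the source spheres with $\tilde g\circ\Phi = h\circ\tilde f$ for the induced homeomorphism $h$ of the target. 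Restricting $\Phi$ and $h$ to the complements of the preimages of $\infty$ yields the required homeomorphisms $\C\to\C$ realizing the topological equivalence of $f$ and $g$.

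For the converse, suppose $f$ and $g$ are topologically equivalent via homeomorphisms $\varphi\colon\C\to\C$, $\psi\colon\C\to\C$ with $\psi\circ f = g\circ\varphi$. First I would argue that $\varphi$ and $\psi$ extend to homeomorphisms of $\P^1$ fixing $\infty$: a polynomial of degree $d\ge 1$ is a proper map, so the topological equivalence extends over the point at infinity, and since $f,g$ each have a single pole of full order $d$ at $\infty$, the extensions $\tilde f,\tilde g\colon\P^1\to\P^1$ satisfy $\tilde\psi\circ\tilde f = \tilde g\circ\tilde\varphi$ with $\tilde\varphi(\infty)=\infty$, $\tilde\psi(\infty)=\infty$. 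The branch locus of $\tilde f$ in $\C$ is carried by $\tilde\psi$ to that of $\tilde g$, and the ramification profiles $\Lambda_i$ are preserved because local degrees are a topological invariant. One then needs to see that the choice of the loop $\gamma$ is immaterial up to homeomorphism of $\bS^2$: any two non-self-intersecting loops through $\infty$ and the (finitely many) branch values are related by an ambient homeomorphism of $\P^1$ fixing those marked points, and such a homeomorphism can be taken compatible with $\tilde\psi$. Pulling back, $\tilde\varphi$ (after an isotopy absorbing the change of $\gamma$) sends $\Gamma_f = \tilde f^{-1}(\gamma)$ to $\Gamma_g = \tilde g^{-1}(\gamma')$, and the edge labels and orientations—being defined by the combinatorics of how sheets of the cover are glued along $\gamma$—are preserved because local degrees and the orientation of $\bS^2$ are. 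Hence $\Gamma_f$ and $\Gamma_g$ are homeomorphic as labelled oriented graphs.

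The main obstacle I anticipate is the bookkeeping of the labels and orientations on the dessin in the converse direction: a purely topological equivalence $\psi$ need not send the reference loop $\gamma$ to itself, nor even to a loop with the branch values in the same cyclic order, so one must first normalise by composing with a homeomorphism of the target sphere that repositions $\gamma$ and the branch points, and then verify that this normalisation can be performed simultaneously upstairs without destroying the label structure. The Riemann–Hurwitz relation~\eqref{eq:Riemann-Hurwitz} guarantees the combinatorial data is finite and consistent, and the classical theory of branched covers of $\bS^2$ (a connected branched cover is determined up to isomorphism by its branch points and monodromy representation) supplies the rigidity needed to conclude; but marrying that classification with the homeomorphisms $\varphi,\psi$ coming from the hypothesis, rather than with biholomorphisms, is the delicate point. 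A secondary technical step is to confirm that $\Gamma_f$ is connected—this is exactly where primitivity (connectedness of the generic fiber) enters, and for a polynomial it holds automatically since $\C$ is connected and $\tilde f$ is a connected branched cover.
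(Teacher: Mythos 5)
Your overall route is the same as the paper's (which is itself only a sketch): both directions rest on exchanging homeomorphisms of $\C$ with homeomorphisms of $\bS^2\cong\P^1$ by adding or deleting the point at infinity, together with the fact that the dessin records the entire branched-cover structure of $\tilde{f}$. Your forward direction, a face-by-face transport of the cell decomposition cut out by $\Gamma_f$, is an acceptable self-contained replacement for the paper's citation of the construction of a polynomial from a dessin in~\cite{IZ18}; note only that since all branch values lie on $\gamma$, every face maps homeomorphically (not merely as a branched cover) onto one of the two open disks bounded by $\gamma$, which simplifies the gluing, and that connectedness of $\Gamma_f$ comes from total ramification over $\infty$ (every lift of a path in $\gamma$ ending at $\infty$ ends at the unique pole), not just from connectedness of the covering space.

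The genuine gap is in the converse, exactly at the point you flag. The normalisation claim you lean on --- that any two non-self-intersecting loops through $\infty$ and the branch values of $g$ are related by an ambient homeomorphism of $\P^1$ fixing those marked points --- is false in general: such a homeomorphism restricts to a homeomorphism between the two loops carrying each marked point to itself, hence preserves (up to reversal) the cyclic order in which the loop visits the marked points, and for $k\geq 3$ critical values two loops can realise genuinely different cyclic orders. Moreover the labelled graph $\tilde{g}^{-1}(\gamma')$ really does depend on that cyclic order: changing it performs a braid (Hurwitz) move on the tuple of monodromy permutations attached to the arcs, which in general changes the tuple up to simultaneous conjugation, so the discrepancy cannot be absorbed by the classification of branched covers as you hope. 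What the paper's ``follows from the definitions'' silently does is to avoid fixing $\gamma'$ independently: from $\tilde{\psi}\circ\tilde{f}=\tilde{g}\circ\tilde{\varphi}$ one gets $\tilde{\varphi}(\Gamma_f)=\tilde{g}^{-1}(\tilde{\psi}(\gamma))$, and $\tilde{\psi}(\gamma)$ is itself an admissible loop for $g$, so the dessins are homeomorphic once the dessin of $g$ is computed with respect to this (or a compatibly ordered) loop. So you must either build the choice of loop --- equivalently the cyclic order of the critical values --- into the statement, as in~\cite{IZ18} where $\gamma$ is the real projective line and the critical values come with a canonical order, or prove that the homeomorphism class of the dessin is independent of that choice; your argument as written does neither and fails at that step.
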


\begin{proof}[Sketch of proof]
Given an isomorphism class of a polynomial dessin $\Gamma\subset\bS^2$, one can construct the corresponding respective polynomial $f:\Coto$. This is explained in~\cite[proof of Proposition 2.7]{IZ18}. 

Notice that homeomorphisms $\Coto$ and homeomorphisms $\bS^2\longrightarrow\bS^2$ are interchangeable by adding/removing the point at infinity since $\bS^2\cong\P$. Then, two isomorphic dessins correspond to two topologically equivalent polynomial functions $\Coto$. The converse follows from the definitions.
\end{proof}

Theorem~\ref{thm:polynomial-dessins} allows to classify  polynomials $\C\longrightarrow\C$ of a given degree and ramification type. 
A lower bound is known for the number of non-homeomorphic polynomials thanks to the work of Looijenga~\cite[Proposition 2.4]{looijenga1974complement} (see also V.I. Arnol'd~\cite{arnol1996topological} and Lyashko~\cite{lyashko1983geometry}).

\begin{theorem}[\cite{looijenga1974complement}]\label{thm:LL-bound}
The number of topological types of polynomials $\Coto$ of degree $d$ is at least $d^{d-3}$. 
\end{theorem}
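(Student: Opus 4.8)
The plan is to reduce the count to a combinatorial one via Theorem~\ref{thm:polynomial-dessins} and then extract $d^{d-3}$ pairwise inequivalent classes from the Lyashko--Looijenga correspondence. First I would invoke Theorem~\ref{thm:polynomial-dessins}: two polynomials $\Coto$ are topologically equivalent exactly when their polynomial dessins are homeomorphic, so it suffices to exhibit at least $d^{d-3}$ pairwise non-homeomorphic polynomial dessins of degree $d$; the bound for functions $\Ctto$ then follows by the canonical extension $(x,y)\mapsto f(x)$.

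Next I would set up the Lyashko--Looijenga covering. Since affine changes of the source leave the topological type unchanged, one restricts to monic polynomials $f(z)=z^{d}+a_{2}z^{d-2}+\cdots+a_{d}$ with no $z^{d-1}$-term, which form a copy of $\C^{d-1}$; the map $\mathcal{L}$ sends $f$ to the unordered tuple of its $d-1$ critical values (encoded by elementary symmetric functions, and, after also centering the critical values, landing in $\C^{d-2}$). The key input --- Looijenga's computation for the simple singularity $A_{d-1}$, equivalently D\'enes' enumeration $d^{d-2}$ of the minimal factorizations of a $d$-cycle into $d-1$ transpositions (Cayley's formula for labelled trees on $d$ vertices) --- is that $\mathcal{L}$ is finite of degree $d^{d-2}$. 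Reading off the monodromy of $f$ along a distinguished system of loops identifies a generic fibre $\mathcal{L}^{-1}(c)$ with the $d^{d-2}$ labelled trees, the monodromy of the covering acting by Hurwitz moves together with relabelling of the $d$ sheets.

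The last step is to pass from these $d^{d-2}$ labelled objects to the number of homeomorphism classes of dessins. Dessins that differ only by a relabelling of the $d$ sheets are homeomorphic, and the relabellings compatible with the totally ramified point at infinity form a cyclic group of order $d$, so the naive expectation is $d^{d-2}/d=d^{d-3}$. Turning this into a theorem is precisely the content of Looijenga's Proposition, and I expect the main difficulty to lie here: over a \emph{generic} $\mathcal{L}$-fibre the Hurwitz action is transitive, hence identifies all Morse polynomials of degree $d$ into a single topological type, so one must not argue over generic fibres. Instead, the argument has to be localized to a suitable stratum of the polynomial space (prescribed coincidences of critical values and/or degenerate critical points) on which the Hurwitz monodromy acquires at least $d^{d-3}$ orbits, and one must produce an honest topological invariant of the dessin separating these orbits --- a reduced resolution tree in the sense of Theorem~\ref{thm:Fourrier}, refined by the monodromy at infinity, is the natural candidate. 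Once the separation is established, the $d^{d-3}$ resulting dessins are pairwise non-homeomorphic, and Theorem~\ref{thm:polynomial-dessins} yields the claimed lower bound.
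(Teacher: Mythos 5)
Up to the point where you divide $d^{d-2}$ by $d$ you are following the paper's own proof: it, too, restricts to normalized polynomials $t^{d}+u_{2}t^{d-2}+\cdots+u_{d}$, quotes the fact that the Lyashko--Looijenga map is a proper polynomial map of topological degree $d^{d-2}$, and quotients a generic fibre by the cyclic group $\Z/d\Z$ of coordinate changes $t\mapsto\alpha t$. But the paper then finishes in one stroke: the resulting $d^{d-3}$ classes inside a single generic fibre correspond to pairwise distinct labelled polynomial dessins, and Theorem~\ref{thm:polynomial-dessins} converts non-homeomorphic dessins into distinct topological types. You decline this step, on the grounds that the braid monodromy (realizable by homeomorphisms of the target) acts transitively on a generic fibre, and you replace it by a programme: pass to a degenerate stratum on which the monodromy has at least $d^{d-3}$ orbits and separate those orbits by a resolution-tree invariant. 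That replacement is the genuine gap. You never exhibit such a stratum, never show that any stratum carries $d^{d-3}$ monodromy orbits (on the natural candidate strata the orbit counts are governed by passports and plane-tree enumerations, and nothing you say indicates they reach $d^{d-3}$), and never construct the separating invariant; Theorem~\ref{thm:Fourrier} concerns the behaviour at infinity of bivariate polynomials and is not an off-the-shelf invariant for univariate dessins. As written, your text is a critique together with a research plan, not a proof of the stated bound.

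That said, the objection you raise is substantive, and it is exactly the point on which the paper's one-line conclusion leans. The rigidity in the paper's argument comes from the labels: in~\cite{IZ18} the dessins classify polynomials with \emph{prescribed} critical values (a Hurwitz-type count), and it is the label-preserving notion of dessin homeomorphism in Theorem~\ref{thm:polynomial-dessins} that blocks the Hurwitz identification you describe; reconciling this with the survey's definition of topological equivalence, in which the target homeomorphism is free to braid the critical values, is precisely the tension you have put your finger on. Within the survey's framework, however, Theorem~\ref{thm:polynomial-dessins} is the licence for the final step: a proof must either invoke it, as the paper does, or supply a completed substitute, and your proposal does neither.
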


\begin{proof}[Sketch of proof]
The \emph{Lyashko-Looijenga} map $L\!L:\C^{d-1}\longrightarrow\C^{d-1}$ assigns to a complex \emph{normalized} polynomial 
\begin{align}\label{eq:univar-LL}
p= &~t^{d} + u_{2}t^{d-2}+\cdots + u_{d-1}t + u_{d}
\end{align} the polynomial $q=s^{d-1} + v_1s^{d-2}+\cdots + v_{d-2}s + v_{d-1}$, whose roots are the critical values of $p$~\cite{looijenga1974complement,lyashko1983geometry,arnol1996topological,zvonkine1997multiplicities}. It is well-known that $L\!L$ is a proper polynomial map whose topological degree is $d^{d-2}$. For an elaborate proof of this fact, we refer the reader to~\cite[\S 5.1.3]{lando2004graphs}. Note that the group $G:=\Z/d\Z$ acts on every generic fiber $L\!L^{-1}(v)$. Indeed, two polynomials of the form~\eqref{eq:univar-LL} are isomorphic if and only if they are related by a coordinate change of the form $t\longmapsto\alpha t$ for some $\alpha\in\C^*$ that is the $d$-th root of unity. 
This shows that $G$ is given by the map
\[
(u_2,\cdots,u_{d})\longmapsto (\alpha^{2}u_2,\ldots,~\alpha^{d-1}u_{d-1},~u_{d}).
\] Then, there are $d^{d-2}/d$ degree-$d$ normalized polynomials up to above transformations $t\longmapsto\alpha\cdot t$. Each of them represents one polynomial dessin~\cite{lando2004graphs}. 
This completes the proof by Theorem~\ref{thm:polynomial-dessins}.
\end{proof}

\section{On constructing topological types of polynomial maps on the plane}\label{sec:top-types_construct}

Constructing topological types of polynomial maps requires an effort to extend the existing toolbox used in the literature. 
The best strategy for developing new tools is to combine methods from algebraic geometry with those from polyhedral geometry, commutative algebra, and toric geometry
Ultimately, one of the main goals in this direction 
is to obtain correspondence theorems that yield structural and classification results, primarily via the analysis of combinatorial data such as Newton polytopes.
Another goal is to obtain implementable methods for computing topological invariants, and improve previously-known ones. This becomes evermore essential as existing tools frequently compute more objects than needed, making their implementation too expensive.

Throughout the literature, one finds a plethora of results relating geometrical structures and singularities of varieties to the combinatorial and numerical data obtained from their equations. 
Accordingly, a recurrent type of problem appearing in the literature is to classify all geometric objects sharing the same Newton tuple or other invariants, such as homology groups, Picard numbers, Chern classes, etc.  
On numerous occasions, many relevant topological invariants of a generic member of such families can be determined solely from its combinatorial data. 

One can pose the following question for the space $\KA$ with given tuple $\bmA\in\cInk$:  \textit{What kind of topological invariants are exclusively shared among generic members in $\KA$?} 
\textit{Can we compute those invariants using only $\bmA$?} One can also ask the converse: \textit{How should the tuple $\bmA$ look like if a generic member satisfies a prescribed topological property?} 

Some of these questions have been examined in previous sections, for example in the context of computing singularities of generic maps~\S\ref{sss:polyhedral-singularities}, the bifurcation set of generic polynomial functions~\S\ref{sub:Newton-non-deg}, the Jacobian Conjecture ~\S\ref{sub:Jacobian_conjecture},
computing the non-properness set~\S\ref{sub:non-prop_computational}, etc. These results emanate from theories based from algebraic geometry, where numerous established classical tools utilize the polyhedral structure of polynomials tuples to describe data from topology, singularities, divisors, sheafs, etc. Such results are deep and require fundamental notions from algebraic geometry such as Koszul complexes~\cite{Kou76}, Exact cohomology sequences~\cite{khovanskii1978newton}, Hodge theory~\cite{DanKho86}, toric geometry~\cite{Est10}, tropical geometry~\cite{Est13} etc. Other results that do not come from algebraic geometry have roots in works of singularity theory, such as Reeb graphs~\cite{farley2005discrete}, dessins d'enfant~\cite{Looijenga1999}, blow-up graphs~\cite{Fou96}, etc.

In this section we illustrate how results presented in~\S\ref{sec:functions},~\S\ref{sec:affine_singul}, and~\S\ref{sec:sing_infty} can be applied to classify topological types of quadratic polynomial maps $\Cttp$, to construct topological types of higher degrees, to identify generic ones, and to find lower bounds on their number. 
In~\S\ref{sub:dist-invars}, we elaborate on some of the numerical data of algebraic invariants of maps that distinguish topological types. 
We then illustrate how these invariants were used effectively to classify quadratic polynomial maps $\Kttp$, and to determine topological invariants from combinatorics of Newton pairs. 
We finish this section by presenting in~\ref{sub:some_lower} a lower bound on the number of topological types.

\subsection{Topological invariants from algebra}\label{sub:dist-invars} L\^e showed in~\cite[Theorem 3.3]{le1973topologie} that an ambient homeomorphism around a neighborhood of an isolated singular point at the origin preserves the Milnor number. Using this result, Gaffney and Mond showed in the 1990s that, if an analytic map $f:\CtC$ is excellent (see~\S\ref{sub:history_cusps}), the respective numbers of ordinary cusps and nodes $\kappa(f)$ and $\nu(f)$ are its topological invariants~\cite{gaffney1991cusps}. 
This was an extension of Fukuda and Ishikawa's theorem for stable smooth maps in the plane; it stated that the parity of the number of cusps is a topological invariant~\cite{fukuda1987number}.

The fact that singularities of maps are topological invariants turned out to be useful in some classification results; 
recently Farnik and Jelonek, and later Farnik, Jelonek and Migus~\cite{FJM18} used this approach to classify quadratic topological types of real and complex maps in $\Kttp$. We only state one of those results.

\begin{theorem}[\cite{FJ17}]\label{thm:quadratic-types}
The number of topological types of polynomial maps in $\PCttt$ is equal to $13$.
\end{theorem}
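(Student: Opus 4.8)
The plan is to classify the quadratic polynomial maps $f\in\PCttt$ up to topological equivalence by producing, on the one hand, a finite list of explicit normal forms, and, on the other, a collection of topological invariants that are strong enough to separate the normal forms and coarse enough that every quadratic map falls into one of the thirteen classes. Since $\deg f\le 2$, we may first normalize $f$ using the action of the affine group on the source $\C^2$ and the affine group on the target $\C^2$ (these are honest homeomorphisms, so they do not change the topological type). After an affine change on the target we may assume $f(\mathbf 0)=\mathbf 0$ and that the linear part of $f$ has a prescribed Jordan-type normal form; writing $f=L+Q$ with $L$ linear and $Q$ purely quadratic, the first dichotomy is on $\rank L\in\{0,1,2\}$. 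The case $\rank L=2$ immediately forces $f$ to be a local diffeomorphism at $\mathbf 0$, and a short argument (essentially Wang's theorem, cited in the excerpt after Theorem~\ref{thm:Blyn-Bir-Rosen}, together with a look at whether the Jacobian $f'$ vanishes) splits this into the invertible case (topologically the identity), the nonproper finite-degree-$2$ cover case, and the cases with a one-dimensional fiber. For $\rank L\le 1$ one carries out a similar but more involved normalization of the quadratic part $Q$ under the residual stabilizer of $L$, reducing to finitely many orbit representatives.

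Next I would compute, for each candidate normal form, the discrete topological data that Part II of the survey singles out: the topological degree $\tau(f)$ (Theorem--type statement preceding Theorem~\ref{thm:Bisi}), the discriminant $\cD_f$ together with the numbers $\kappa(f)$ of ordinary cusps and $\nu(f)$ of simple nodes — which are genuine topological invariants by the Gaffney--Mond result quoted in~\S\ref{sub:dist-invars} — the non-properness set $\cS_f$ with its homeomorphism type (an $\R$- or $\C$-uniruled curve, by Theorem~\ref{thm:non-prop-comp}), the set of missing points $\C^2\setminus f(\C^2)$, and, where these still fail to separate, the fundamental group $\pi_1(\C^2\setminus(\cD_f\cup\cS_f))$ and the induced monodromy data on a generic fiber (the mechanism used in the proof sketch of Theorem~\ref{thm:Jelonek-proper-finite}). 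For a quadratic map the discriminant is a conic and the non-properness set is at most a conic, so this invariant bookkeeping is entirely explicit. One assembles a table whose rows are the thirteen normal forms and whose columns are these invariants; checking that no two rows agree on all columns gives the lower bound $\#\{\text{types}\}\ge 13$.

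For the matching upper bound, the point is that the normalization procedure above is \emph{exhaustive}: every $f\in\PCttt$ is affinely equivalent to one of the listed representatives, so there are at most thirteen orbits under the affine$\times$affine action, hence at most thirteen topological types. Here one must be careful that two \emph{different} affine normal forms are never topologically equivalent by some more exotic (non-affine) homeomorphism — this is exactly what the invariants in the previous paragraph rule out — and, conversely, that no collapsing occurs, i.e.\ distinct affine orbits really do remain distinct; a few of the borderline pairs (for instance maps with the same conic discriminant but differing in whether the conic is smooth, a double line, or a pair of lines) need a direct argument comparing $\cD_f\cup\cS_f$ as a stratified set. Combining the two bounds yields exactly $13$.

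\textbf{Main obstacle.} The hard part will not be the normalization (that is finite linear algebra over $\C$ with a small case tree) but rather the \emph{completeness of the invariants}: proving that two quadratic maps with identical $(\tau,\kappa,\nu,\text{type of }\cS_f,\text{missing points},\pi_1)$ data are actually topologically equivalent, i.e.\ constructing the pair of ambient homeomorphisms. This is where one must lean on the fibration-theoretic machinery — Thom's First and Second Isotopy Lemmas (Theorems~\ref{thm:Thom-lemma-first} and~\ref{thm:Thom-lemma}) applied to the graph compactification as in~\eqref{eq:sequence-1} and~\eqref{eq:diag-Sabbah}, so that over each stratum of the target the map is a locally trivial fibration, and then glue fiber-preserving homeomorphisms stratum by stratum using the extension-across-the-bifurcation-locus argument from the proof of Theorem~\ref{thm:Jelonek-proper-finite}. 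Making the gluing coherent across the (possibly singular) strata of the conic discriminant, and handling the blow-up–like strata that occur for the maps with one-dimensional fibers, is the genuinely delicate step; everything else is bookkeeping.
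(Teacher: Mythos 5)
Your overall architecture --- affine left--right normal forms, separated by invariants attached to the triple $(\cCf,\cDf,\cS_f)$ and by the Gaffney--Mond invariance of $\kappa(f)$ and $\nu(f)$ --- is the same as in~\cite{FJ17}, but the step your upper bound rests on is wrong. You claim that the normalization yields at most thirteen orbits of the affine left--right action and that ``no collapsing occurs, i.e.\ distinct affine orbits really do remain distinct''. In fact the affine (linear) classification of quadratic maps of the plane is \emph{strictly} finer than the topological one, so both claims fail. Concretely, take $f(x,y)=(x,\,y^{2})$ and $g(x,y)=(x,\,y^{2}+xy)$. For any affine isomorphisms $\varphi,\psi$ of $\C^{2}$, each component of $\psi\circ f\circ\varphi$ has degree-two homogeneous part proportional to the square of a single linear form, whereas $y^{2}+xy=y(x+y)$ has two distinct factors; hence $f$ and $g$ lie in different affine orbits. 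Yet $g=\psi\circ f\circ\varphi$ with the affine map $\varphi(x,y)=(x,\,y+x/2)$ and the non-affine polynomial automorphism $\psi(u,v)=(u,\,v-u^{2}/4)$, so $f$ and $g$ are topologically equivalent. Since the thirteen topological types already supply thirteen pairwise affinely inequivalent maps, adding $g$ gives at least fourteen affine orbits: the inference ``at most thirteen orbits, hence at most thirteen types'' is unavailable, and collapsing of distinct affine orbits is not something to be ruled out but something that must happen, and must be proved, for the count to come down to thirteen.

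So the missing ingredient is the merging half of the upper bound: after the finite linear classification (which necessarily has more than thirteen classes) one must exhibit explicit topological equivalences between the linearly inequivalent normal forms that share a type --- for instance by non-affine polynomial automorphisms as above, or by trivializing the map over the complement of $\cDf\cup\cS_f$ --- and only then does your invariant table pin the number down to exactly thirteen. Your ``main obstacle'' paragraph does gesture at constructing such homeomorphisms, but it contradicts the ``no collapsing'' assertion and plays no role in your stated upper-bound argument. This is precisely how the paper's sketch is organized: the orbits of $\cG\!A$ on $\cQ$ are analyzed through the possible triples $(\cCf,\cDf,\cS_f)$, the topological invariance of cusps and nodes gives the separation, and the remark that ``linear equivalence is finer than topological equivalence'' encodes the identifications among orbits needed on the other side. (A smaller caveat: the coincidence $\dim\cG\!A=\dim\cQ=12$ does not by itself imply finitely many orbits, so finiteness of the affine classification has to come out of your case analysis rather than being assumed.)
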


\begin{proof}[Sketch of proof]
We present the sketch for the complex case only. Two maps $f,g:\CtC$ are said to be \emph{linearly equivalent} if there are affine linear isomorphisms $\varphi,\psi:\CtC$ satisfying $g = \psi\circ f\circ\varphi$. 
Let $\cQ$ be the space of quadratic polynomial maps $\CtC$, 
and let $\cG\!A$ be the space of all left-right affine compositions on maps $\CtC$. It makes up for a group action $\cG\!A\times\cQ\longrightarrow\cQ$, $((\psi,\varphi),~f)\longmapsto \psi\circ f\circ\varphi$. Since $\cG\!A$ and $ \cQ$ are two complex Euclidean spaces with the same dimension ($=12$), the orbits of $\cG\!A$ subdivide $ \cQ$ into families of maps having the same linear types. Using the above-mentioned result of Gaffney and Mond~\cite{gaffney1991cusps}, a classification of the orbits of $\cG\!A$ is obtained by carefully analyzing all topological types that the triple $(\cCf,\cDf,\cS_f)$ can have. Since linear equivalence is finer than topological one, this leads to a classification of all topological types in $\cQ$.
\end{proof}

In what follows, we present an extension of Gaffney and Mond's result for polynomial maps $f:\Cttt$. Recall that $\cCf$ and $\cDf$ denote the critical locus and the discriminant of $f$ respectively. Let $C$ be an irreducible component of the critical locus $\cCf$. 
Then, for all but finitely-many $z\in C$, the map $\fr_{U}:U\longrightarrow f(U)$ is a ramified covering of degree $m$, where $U\subset \C^2$ is any small enough neighborhood around $z$.
\begin{definition}\label{def:top-mult-crit}
The \emph{topological multiplicity} of an irreducible component $C\subset\cCf$ of the critical locus is $m-1$, where $m$ is the degree of the covering $\fr_U$. 
The \emph{topological multiplicity} of an irreducible component $D\subset\cDf$ is defined to be the sum of all topological multiplicities of components $C\subset \cCf$ mapping to $D$.
\end{definition}

The line $\{x = 0\}$, for example, is the critical locus of $(x,~y)\longmapsto (x^n,~y)$ and has topological multiplicity $n-1$. 
We analogously define the topological multiplicity for the non-properness set of $\ccSf$: Let $S\subset\ccSf$ be any irreducible component. 
Then, from~\cite{Jel99}, there is a Zariski open subset $\cZ\subset S$, such that for each $w\in\cZ$, the number of isolated points in $f^{-1}(w)$ is independent of $w$. We call any such point $w\in \cZ$ \emph{distinguished}. 

\begin{definition}\label{def:top-mult-non-prop}
The \emph{topological multiplicity} of an irreducible component of $\cS_f$ is the difference $\mu(f)-\#f^{-1}(w)$, where $w$ is any distinguished point of $S$ and $\mu(f)$ is the topological degree of $f$.
\end{definition}

\begin{example}\label{ex:top-mult-non-prop}
The polynomial map $(x,~y)\longmapsto(y^2-y,~x^2y^2)$ has topological degree $4$, and satisfies $\cS_f=\{u=0\}$. The topological multiplicity of $\cS_f$ is equal to $2$ since $\#f^{-1}(0,t) = 2$ for every $t\neq 0$. 
\end{example}

We have the following observation.

\begin{claim}[{{\cite[Proposition 2.7]{hilany2024polyhedral}}}]\label{thm:separator}
Let $f,g:\CtC$ be two topologically equivalent polynomial maps. Then, for every sequence of integers $ m,\iota,s,\mu\in\N$, every pair of curves from the set $\{(\cCf,\cCg),(\cDf,\cDg),(\cS_f,\cS_g)\}$ has the same number of irreducible components that have the same topological multiplicity $m$, geometric genus $\iota$, and $s$ singularities of Milnor number $\mu$.
\end{claim}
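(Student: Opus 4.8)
The plan is to build the argument directly from the definitions of topological equivalence together with the three ``topological multiplicity'' notions introduced just above (Definitions~\ref{def:top-mult-crit} and~\ref{def:top-mult-non-prop}), using only two external inputs: the fact that a homeomorphism of germs preserves the Milnor number of an isolated singularity (L\^e's theorem~\cite{le1973topologie}, cited in~\S\ref{sub:dist-invars}), and the elementary fact that a homeomorphism of complex curves preserves the number and genus of irreducible components. Let $f,g\colon\C^2\to\C^2$ be topologically equivalent via homeomorphisms $\varphi\colon\C^2\to\C^2$ and $\psi\colon\C^2\to\C^2$ with $\psi\circ f=g\circ\varphi$.

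First I would establish that $\varphi$ carries $\cCf$ onto $\cCg$ and $\psi$ carries $\cDf$ onto $\cDg$ and $\cS_f$ onto $\cS_g$. For the critical locus, the point is that being a ``ramification point'' is a purely topological property: $z\in\C^2$ lies in $\cCf$ (up to the measure-zero issue) precisely when $f$ fails to be a local homeomorphism at $z$, equivalently fails to be a locally trivial covering near $z$; since $\varphi,\psi$ are homeomorphisms and $\psi\circ f=g\circ\varphi$, the map $f$ is a local homeomorphism at $z$ iff $g$ is one at $\varphi(z)$. One must be slightly careful because $\cCf$ and $\cDf$ are the \emph{Zariski closures} of the ``bad'' loci, so I would argue that $\varphi$ maps the topologically-defined set of non-locally-homeomorphic points onto its $g$-counterpart, and then invoke that these sets are Zariski dense in $\cCf$, $\cDf$ respectively and that a homeomorphism preserves closure. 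The same reasoning, applied to the failure of $f$ to be proper over a neighborhood (Definition of $\ccSf$ in~\S\ref{sec:sing_infty}, which is manifestly preserved under composition with homeomorphisms of source and target), shows $\psi(\cS_f)=\cS_g$. Consequently $\varphi$ (resp.\ $\psi$) restricts to homeomorphisms $\cCf\to\cCg$, $\cDf\to\cDg$, $\cS_f\to\cS_g$, and in particular induces a bijection between the sets of irreducible components preserving genus (genus of a component of a curve is the genus of the normalization, a topological invariant of the underlying space) and, component-by-component, the number of singular points of each Milnor number $\mu$ (by L\^e's theorem, since $\varphi$ restricts to an ambient homeomorphism of small neighborhoods around an isolated singular point of the curve).

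The remaining ingredient is that the topological multiplicity $m$ of a component is preserved. For a component $C\subset\cCf$ with topological multiplicity $m-1$, the integer $m$ is the degree of the branched cover $f|_U\colon U\to f(U)$ for a generic $z\in C$ and small $U\ni z$; equivalently, $m$ is the number of local sheets of $f$ coming together along $C$ near $z$, which can be read off as the local degree at $z$ of the restriction of $f$ to a small transverse disk. Since $\psi\circ f=g\circ\varphi$ and $\varphi,\psi$ are homeomorphisms, this local covering datum transports: $f|_U$ and $g|_{\varphi(U)}$ are topologically conjugate branched covers, so they have the same degree, hence $\varphi(C)$ has the same topological multiplicity as $C$. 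For $\cDf$ the multiplicity is defined as a sum over components of $\cCf$ mapping to a given component of $\cDf$; since $\varphi$ maps $\cCf\to\cCg$ compatibly with $f,g$ and $\psi$, it matches up these ``fibers of components over components'' and preserves the summation, giving equality of $\cDf$-multiplicities. For $\cS_f$, the multiplicity $\mu(f)-\#f^{-1}(w)$ at a distinguished point: the topological degree $\mu(f)=\mu(g)$ because $\psi\circ f=g\circ\varphi$ forces $\#f^{-1}(y)=\#g^{-1}(\psi(y))$ for generic $y$; and $\#f^{-1}(w)=\#g^{-1}(\psi(w))$ for $w$ distinguished in a component $S$, because $\psi(w)$ is then distinguished in the corresponding component $\psi(S)$ (distinguishedness is the genericity condition that the cardinality of the isolated part of the fiber is locally constant, which is preserved under $\psi$). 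Combining all of this: the component-matching bijections $\cCf\leftrightarrow\cCg$, $\cDf\leftrightarrow\cDg$, $\cS_f\leftrightarrow\cS_g$ preserve the quadruple (topological multiplicity, genus, and the multiset of Milnor numbers of singularities), which is exactly the assertion.

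The main obstacle I anticipate is the first step — rigorously identifying the critical locus, the discriminant, and the non-properness set in purely topological terms and handling the Zariski-closure subtleties. The naive statement ``$\varphi$ maps $\cCf$ to $\cCg$'' is true but needs the observation that the set where a dominant polynomial map fails to be a local homeomorphism is the complement in $\cCf$ of a proper Zariski-closed subset (so it is Euclidean-dense in $\cCf$), and that the discriminant, though defined as $f(\cCf)$, agrees off a lower-dimensional set with the topologically-intrinsic ``branch locus''; for $\cS_f$ one similarly wants its topological characterization as the set of $y$ admitting a sequence $x_k\to\infty$ with $f(x_k)\to y$ (recorded in~\S\ref{sec:sing_infty}), which is visibly invariant under $\psi$. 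Once the three sets are known to be transported homeomorphically and compatibly with the maps, the genus/Milnor-number/multiplicity bookkeeping is routine. A secondary subtlety is that ``topological multiplicity'' is defined via a generic point and one must check it does not depend on the choice, and that the genericity locus on $C$ and on $\varphi(C)$ correspond under $\varphi$ — again this follows from the conjugacy $\psi\circ f=g\circ\varphi$ but should be stated carefully.
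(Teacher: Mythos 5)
Your argument follows essentially the same route as the paper's sketch: transport the critical locus, discriminant and non-properness set by the homeomorphisms $\varphi,\psi$, match irreducible components, preserve topological multiplicities via preimage/local-degree counts, and invoke L\^e's theorem for the Milnor numbers. The one point worth flagging is that the fact you label ``elementary'' --- that a homeomorphism of complex curves matches up irreducible components (and hence genera) --- is precisely where the paper appeals to Gau--Lipman~\cite[Lemma A8]{gau1983differential}; it is true but not elementary, so cite it rather than assert it.
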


\begin{proof}[Sketch of proof]
Recall that the critical loci, discriminants and non-properness sets are preserved under ambient homeomorphisms. By a Theorem of Gau and Lipman~\cite[Lemma A8]{gau1983differential}, irreducible components are sent to irreducible components. 
As the number of preimages has to be preserved under homeomorphisms, and thus topological multiplicities are preserved as well. 
Finally, the last claim of the theorem follows from the result of L\^e~\cite[Theorem 3.3]{le1973topologie} mentioned at the beginning of~\S\ref{sub:dist-invars}.
\end{proof}

\begin{remark}\label{rem:further}
In addition to those appearing in Theorem~\ref{thm:separator}, further properties of $f$ can distinguish its topological type; we mention the fundamental groups of~$\C^2\setminus \cCf$ or of~$\C^2\setminus\cDf$, and the homeomorphism types of~$\cDf\cap\cS_f$ or of~$\C^2\setminus f(\C^2)$. Some progress has been made towards the study of these invariants~\cite{dolgachev2006fundamental}.
\end{remark}

All the invariants in Theorem~\ref{thm:separator} were computed for generic maps in $\PdCt$~\cite{FJR19}; such a map: (i) represents one topological type, (ii) is proper, (iii) its critical locus is smooth, irreducible of topological multiplicity one, and (iv) each of the genus of $\cCf$, the topological degree, and the values $\mu(f)$ and $\kappa(f)$ can be expressed in terms of $d_1$ and $d_2$. 
The next step is to extend this description to affine families $\sF\subset\PdCt$.

Let $\bmA:=(A_1,A_2)\in\cItt$ be a pair of lattice polytopes in $\R^2_{\geq 0}$, where neither $A_1$ nor $A_2$ contains the origin $\bm{0}$, and each of $A_1$ and $A_2$ contains a set $S\subset\N^2$, such that where no four points of $S$ belong to a line. 
Such a pair $\bmA$ is called \emph{conical}.

\begin{theorem}[\cite{hilany2024polyhedral}]\label{thm:EK1}
Let $\bmA\in\cItt$ be a conical pair. Then, for every generic map $f\in\CA$, the formulas for the following invariants are computed in terms of combinatorics of $\bmA$: The topological degree of $f$, the genus, number of irreducible components, and topological multiplicities of $\cCf$, and of $\cDf$, the value $\nu(f)+\kappa(f)$, and the number of singular points of $\ccSf$, together with their Milnor numbers. The exact formulas can be found in~\cite[Theorem 2.19]{hilany2024polyhedral}, and the software implementation in\\ \url{https://mathrepo.mis.mpg.de/PolyhedralTypesOfPlanarMaps}.
\end{theorem}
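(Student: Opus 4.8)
The plan is to follow the blueprint established in the sketch of Theorem~\ref{thm:poly-type-discriminant} (and Theorems~\ref{thm:polynomial-simple-cusps1} and~\ref{thm:EHRose_nodes_non-properness}), extending the analysis so that \emph{all} of the listed invariants — not only the singularity count of $\cD_f$ or of $\cS_f$ — are read off from the combinatorics of a conical pair $\bm{A}$. First I would fix $\bm{A}=(A_1,A_2)\in\cItt$ conical and let $f\in\CA$ be generic. The two geometric objects to control are the critical locus $\cC_f=\{f'(x)=0\}$ and the discriminant $\cD_f=f(\cC_f)$; for the non-properness part one uses $\cS_f$, whose structure is governed by the dicritical coherent faces of $\bm{A}$ via Theorem~\ref{thm:algebraic_correspondence}. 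The first key step is to compactify: take the toric variety $X_\Sigma$ associated to $\Sigma(f):=\NP(\cC_f)$ and $X_\Delta$ associated to $\Delta(f):=\NP(\cD_f)$, and observe that the map $(z,w)\mapsto w$ extends to a toric morphism $X_\Sigma\times X_\Delta\to X_\Delta$, exactly as in the proof of Theorem~\ref{thm:poly-type-discriminant}. One then shows that, for generic $f\in\CA$, the curve $\cC_f$ is smooth, irreducible, Newton non-degenerate, and of topological multiplicity one — the conical hypothesis (no four points of the common lattice subset on a line) is precisely what forces this, by a dimension count against the Thom–Boardman strata analogous to~\cite[Theorem 2.3]{FJR19}.

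The second step is to extract each invariant. The \emph{topological degree} $\mu(f)$ is the generic fibre cardinality, which by Bernstein's Theorem~\ref{th:Bernstein} equals $V(A_1,A_2)$ once one checks the face-solvability condition holds generically (again guaranteed by conicality, so that no proper face system $f_{\bm{\Gamma}}=\bm0$ has torus solutions). The \emph{genus} and \emph{number of irreducible components} of $\cC_f$ follow from the adjunction/genus formula for Newton non-degenerate curves in $X_\Sigma$, i.e. the genus is $\hcir\Sigma(f)$ and irreducibility comes from the generic-smoothness step above; the \emph{topological multiplicities} of $\cC_f$ and $\cD_f$ are then both $1$ for the generic member (or, more precisely, are determined by the local ramification data at the fixed faces, which is combinatorial). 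For $\cD_f$, its genus, component count, and the singularity count $\nu(f)+\kappa(f)=\hcir\Delta(f)-\hcir\Sigma(f)$ all come from pushing $\cC_f$ forward under the toric morphism and applying Khovanskii's formulae for Betti numbers of toric subvarieties~\cite{khovanskii1978newton}, exactly as in Theorem~\ref{thm:poly-type-discriminant}; here one uses that $\overline{\cD_f}$ has, by genericity, only ordinary cusps and simple nodes away from the boundary (the refined statement of Theorem~\ref{thm:polynomial-simple-cusps1}) plus smoothness at the toric boundary. Finally the \emph{singular points of $\cS_f$ with their Milnor numbers}: by Theorem~\ref{thm:algebraic_correspondence}, $\cS_f=\bigcup_{\bm{\Gamma}}\cR_{\bm{\Gamma}}(f)$ over dicritical coherent faces, and by Theorem~\ref{thm:EHRose_nodes_non-properness} these singularities are simple nodes outside $\bm0$ whose number is combinatorial in $\bm{A}$; one then assembles the counts and invokes L\^e's invariance~\cite{le1973topologie} together with Claim~\ref{thm:separator} to confirm these are genuine topological invariants.

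The main obstacle I expect is the \emph{boundary bookkeeping}: when one passes to the toric compactifications $X_\Sigma$, $X_\Delta$, one must verify that the closures $\overline{\cC_f}$ and $\overline{\cD_f}$ meet the toric boundary divisors transversally (or in prescribed, combinatorially-controlled ways), so that \emph{no} cusps, nodes, or non-properness singularities are lost or created at infinity, and so that Khovanskii's Betti number formulae apply verbatim. The conical hypothesis on $\bm{A}$ is exactly the tool to handle this — it rules out the degenerate face configurations responsible for boundary pathologies — but checking it suffices for \emph{all} of the listed invariants simultaneously (rather than one at a time, as in the earlier theorems) is where the real work lies. A secondary, more technical point is reconciling the toric-boundary contributions with the dicritical-face contributions to $\cS_f$: the faces of $\bm{A}$ that are dicritical for the non-properness analysis and the faces giving boundary solutions for $\cC_f$ must be tracked in a single combinatorial framework, and one must be careful that the correspondence in Theorem~\ref{thm:algebraic_correspondence} remains valid under the conicality assumption rather than merely the genericity assumption stated there. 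Once these compatibilities are established, the explicit formulas in~\cite[Theorem 2.19]{hilany2024polyhedral} follow by bookkeeping, and the software implementation is then a direct translation of the resulting Pick/Khovanskii-type expressions.
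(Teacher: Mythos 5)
Your proposal follows essentially the same route as the paper: the survey itself presents Theorem~\ref{thm:EK1} as a byproduct of the analysis behind Theorems~\ref{thm:poly-type-discriminant} and~\ref{thm:EHRose_nodes_non-properness} — toric compactification of $\cCf$ and $\cDf$, Newton non-degeneracy with Khovanskii-type lattice-point/genus formulas, a Bernstein-type count for the topological degree, and the dicritical-face description of $\cS_f$ from Theorem~\ref{thm:algebraic_correspondence} — which is exactly your blueprint. One small correction: since a conical pair excludes the origin from $A_1$ and $A_2$, the generic fibre $f^{-1}(w)$ is governed by the supports $A_i\cup\{\bm{0}\}$, so the topological degree is the affine Bernstein count for $\conv(A_i\cup\{\bm{0}\})$ rather than $V(A_1,A_2)$ itself, a detail absorbed into the exact formulas of~\cite[Theorem 2.19]{hilany2024polyhedral}.
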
 This theorem is a byproduct of the study done for the sake of above Theorems~\ref{thm:poly-type-discriminant} and~\ref{thm:EHRose_nodes_non-properness}. 

\subsection{Lower bounds and asymptotics for topological types}\label{sub:some_lower}

The space $\PdCt$ contains a set of affine families $\{\sF_{\bmA}\}_{\bmA}$ running over conical pairs $\bmA$ for which the maps $f:=(f_1,f_2)\in\sF_{\bmA}\cong\CA$ have degree pairs $(d_1,d_2)$. Thanks to a result of Sabbah~\cite{Sab83}, each $\sF_{\bmA}$ contains a Zariski open set of topologically-equivalent maps. The topological invariants of the latter can be determined from $\bmA$. Therefore, thanks to Theorem~\ref{thm:EK1}, we obtain a list of numerical invariants that characterizes the topological type of a generic $f\in\sF_{\bmA}$, and represent it as a vector $\Psi(A)\in\N^{12}$, which is called the \emph{polyhedral type} of $f$, given by a map $\Psi:\ccCgeq\to\N^{12}$. To this end, lower bounds on topological types of planar maps are obtained by enumerating the number of possible polyhedral types arising from conical pairs. 

For any $d\in\N$, let $\cT(d)$ denote the number of topological types of polynomial maps in $\Cttt$ of degree $d$.

\begin{theorem}[{{\cite[Theorem 1.1]{hilany2024polyhedral}}}]\label{thm:main-bound}
   It holds that $26~\leq~\cT(3)$ and $3~217\ \leq\ \cT(4)$.
\end{theorem}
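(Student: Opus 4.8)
The plan is to derive both inequalities from a single lower-bound principle: every conical pair $\bmA$ of the appropriate degree produces, via Sabbah's finiteness result and Theorem~\ref{thm:EK1}, a well-defined topological type whose polyhedral type $\Psi(\bmA)\in\N^{12}$ is a topological invariant by Theorem~\ref{thm:separator}; hence the number of distinct $\Psi$-values attained by such pairs is a lower bound for $\cT(d)$, and the values $26$ and $3\,217$ are obtained by carrying out this count for $d=3$ and $d=4$.

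First I would fix $d\in\{3,4\}$ and let $\mathcal{P}_d$ be the finite set of conical pairs $\bmA=(A_1,A_2)\in\cItt$ with $\bm{0}\notin A_i$ and $\max(e_1,e_2)=d$, where $e_i:=\min\{\,e\in\N \mid A_i\subseteq T_e\,\}$ and $T_e:=\conv\{(0,0),(e,0),(0,e)\}$. For such $\bmA$ a generic $f\in\CA$ has every lattice point of $A_i$ occurring in $f_i$, so $\NP(f_i)=A_i$ and $\deg f=\max(e_1,e_2)=d$; moreover, by Sabbah's theorem~\cite{Sab83} (cf.\ Theorem~\ref{thm:fin-many-Sab}) the irreducible space $\CA$ contains a Zariski-dense open subset of pairwise topologically equivalent maps, whose common class I call the \emph{generic type over $\bmA$}. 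By Theorem~\ref{thm:EK1} the twelve invariants forming $\Psi(\bmA)$ depend only on $\bmA$, and by Theorem~\ref{thm:separator} --- together with the Gaffney--Mond theorem, which makes the coordinate $\nu(f)+\kappa(f)$ a topological invariant --- each of these numbers is preserved under topological equivalence. Consequently $\Psi(\bmA)\neq\Psi(\bmA')$ forces the generic types over $\bmA$ and $\bmA'$ to be distinct; since each is realised by a degree-$d$ map of $\Cttt$, we obtain
\[
\cT(d)\ \ge\ \#\,\Psi(\mathcal{P}_d).
\]
This stays a valid lower bound even though $\Psi$ need not be injective on $\mathcal{P}_d$ (it is, for instance, unaffected by the swap $A_1\leftrightarrow A_2$, and different pairs may well share a generic topological type), because any such coincidence can only shrink $\#\Psi(\mathcal{P}_d)$ and never inflate $\cT(d)$.

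What remains is a finite computation. Since $T_3$ has $10$ and $T_4$ has $15$ lattice points, the lattice subpolytopes of $T_d$ avoiding $\bm{0}$ can be listed explicitly; one then retains only those satisfying the conicality hypothesis of Theorem~\ref{thm:EK1}, forms all pairs with $\max(e_1,e_2)=d$, and for each pair evaluates the closed formulas of~\cite[Theorem 2.19]{hilany2024polyhedral} --- the topological degree; the genera, numbers of irreducible components, and topological multiplicities of $\cCf$ and of $\cDf$; the value $\nu(f)+\kappa(f)$; and the number and Milnor numbers of the singular points of $\ccSf$ --- to obtain $\Psi(\bmA)$. Counting the distinct vectors, exactly what the implementation at \url{https://mathrepo.mis.mpg.de/PolyhedralTypesOfPlanarMaps} does, yields $\#\Psi(\mathcal{P}_3)=26$ and $\#\Psi(\mathcal{P}_4)=3\,217$, which are the claimed bounds.

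I expect the principal difficulty to be certified bookkeeping rather than any single estimate: one must guarantee that the enumeration of $\mathcal{P}_d$ is exhaustive and correctly screened --- so that Theorems~\ref{thm:EK1} and~\ref{thm:separator} apply verbatim to every member --- and that all twelve polyhedral-type formulas are implemented faithfully, which for $d=4$ amounts to trusting a computation with several thousand distinct outputs. A secondary point is the behaviour of the formulas on degenerate summands, for instance when some $A_i$ is a segment or when $e_1\neq e_2$, where parts of the twelve-tuple collapse and one must check they still encode the correct topological data; as in the proof of Theorem~\ref{thm:generic_top-type_planar}, the cases $e_1=e_2$ and $e_1\neq e_2$ may need separate treatment.
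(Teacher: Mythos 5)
Your proposal is correct and follows essentially the same route as the paper: enumerate conical pairs of the given degree, use Sabbah's finiteness together with Theorem~\ref{thm:EK1} to attach the polyhedral type $\Psi(\bmA)\in\N^{12}$ to the generic map over each pair, invoke Theorem~\ref{thm:separator} (with Gaffney--Mond for $\nu+\kappa$) to see that distinct vectors force distinct topological types, and count the distinct vectors by the implemented formulas to get $26$ and $3\,217$. Your remark that non-injectivity of $\Psi$ can only weaken, never invalidate, the lower bound matches the paper's reasoning.
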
 
		
 The lower bounds obtained by counting the polyhedral types (as done in Theorem~\ref{thm:main-bound}) are not expected to be optimal for higher degrees. 
 This is due to a combination of two observations. 
 First, the number of distinct pairs of integer polytopes in $\cItt$ of degree $d$ is at most polynomial in $d$~\cite{MR2891244}. 
 Second, we have the following lower bound.

\begin{corollary}\label{cor:top_types_unmixed}
For every $d\in\N$, it holds that
\begin{align}\label{eq:lower_1}
		\frac{d^{2(d-3)}}{2} \leq & ~\cT(d,~2).
\end{align}
\end{corollary}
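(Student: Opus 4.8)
The plan is to reduce the statement to the lower bound of Theorem~\ref{thm:LL-bound} on the number of topological types of univariate polynomials, by means of the \textbf{product construction}
\[
F_{f,g}\colon\ \mathbb{C}^2\longrightarrow\mathbb{C}^2,\qquad (x,y)\longmapsto\bigl(f(x),\,g(y)\bigr),
\]
attached to a pair $f,g\colon\mathbb{C}\to\mathbb{C}$ of polynomial functions. First I would fix, using Theorem~\ref{thm:LL-bound}, a set $\mathcal{P}$ of pairwise topologically inequivalent degree-$d$ polynomials $\mathbb{C}\to\mathbb{C}$ with $|\mathcal{P}|\geq d^{d-3}$. Whenever $f,g\in\mathcal{P}$ the map $F_{f,g}$ has degree $\max(\deg f,\deg g)=d$ and, being a product of two finite maps $\mathbb{C}\to\mathbb{C}$, is proper, so that $\mathcal{B}_{F_{f,g}}=\mathcal{D}_{F_{f,g}}$. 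A direct computation gives $\mathcal{C}_{F_{f,g}}=\bigcup_i\{x=a_i\}\cup\bigcup_j\{y=b_j\}$, with $a_i,b_j$ the critical points of $f,g$, hence $\mathcal{D}_{F_{f,g}}=\bigcup_i\bigl(\{f(a_i)\}\times\mathbb{C}\bigr)\cup\bigcup_j\bigl(\mathbb{C}\times\{g(b_j)\}\bigr)$ is an arrangement of complex lines splitting into two pencils whose incidence graph is complete bipartite. I would also record the trivial compatibilities $F_{f,g}\sim F_{f',g'}$ when $f\sim f'$ and $g\sim g'$ (take products of the corresponding homeomorphisms) and $F_{f,g}\sim F_{g,f}$ (swap the coordinates of source and of target); thus $\{[f],[g]\}\mapsto[F_{f,g}]$ is a well-defined map from unordered pairs of topological types of degree-$d$ univariate polynomials to topological types of degree-$d$ maps $\mathbb{C}^2\to\mathbb{C}^2$.

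The heart of the argument is to show that this map is \emph{injective}, i.e.\ that the topological type of $F_{f,g}$ determines the unordered pair $\{[f],[g]\}$. Here I would use that discriminants, and their irreducible components, are preserved by the ambient homeomorphisms realizing a topological equivalence (see \S\ref{sub:dist-invars} and the references therein, in particular Claim~\ref{thm:separator}): an equivalence $\psi\circ F_{f,g}=F_{f',g'}\circ\varphi$ carries the line arrangement $\mathcal{D}_{F_{f,g}}$ to $\mathcal{D}_{F_{f',g'}}$, hence sends the two pencils of lines of the first to the two pencils of the second, possibly interchanging them (after which I would, if necessary, interchange the roles of $f'$ and $g'$). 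Restricting $F_{f,g}$ over a generic affine line $L$ of the target that meets every line of one pencil transversally and misses the other pencil produces a disjoint union of $d$ copies of $f$; the homeomorphisms then carry this restriction to the restriction of $F_{f',g'}$ over $\psi(L)$, and using that the topological type of a degree-$d$ branched covering $\mathbb{C}\to\mathbb{C}$ — equivalently, of a polynomial dessin (Theorem~\ref{thm:polynomial-dessins}) — is a complete invariant, one concludes $[f]=[f']$, and symmetrically $[g]=[g']$. Granting injectivity,
\[
\mathcal{T}(d,2)\ \geq\ \binom{|\mathcal{P}|}{2}+|\mathcal{P}|\ =\ \frac{|\mathcal{P}|\,(|\mathcal{P}|+1)}{2}\ \geq\ \frac{|\mathcal{P}|^{2}}{2}\ \geq\ \frac{d^{2(d-3)}}{2},
\]
which is the asserted bound.

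The main obstacle is precisely this injectivity step. While it is clear that the line arrangement $\mathcal{D}_{F_{f,g}}$ and the $d^{2}$-sheeted covering over its complement are topological invariants, some care is needed to rule out homeomorphisms that scramble the two pencils of lines in a way not accounted for by swapping $f$ and $g$, and to check that the restriction of $F_{f',g'}$ over the image $\psi(L)$ of a generic line is again a disjoint union of copies of $f'$ (or of $g'$) rather than a map with varying fibers. I expect this to be handled either geometrically, by analysing how $\psi$ acts on the two pencils and on a transverse line, or group-theoretically: the monodromy of $F_{f,g}$ over $U:=(\mathbb{C}\setminus\mathcal{D}_f)\times(\mathbb{C}\setminus\mathcal{D}_g)$ is the external product $\rho_f\times\rho_g$ acting on $\pi_1(U)=\pi_1(\mathbb{C}\setminus\mathcal{D}_f)\times\pi_1(\mathbb{C}\setminus\mathcal{D}_g)$, a direct product of free groups, and any isomorphism between two such direct products of (centreless, freely indecomposable) free groups respects the direct-product decomposition up to a swap — so the factors $\rho_f,\rho_g$, together with their ramification data at infinity, are recovered up to order, which is exactly the required conclusion.
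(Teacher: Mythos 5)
Your proposal is correct and follows essentially the same route as the paper: both use the product (``unmixed'') maps $(x,y)\mapsto(f(x),g(y))$, apply Theorem~\ref{thm:LL-bound} twice, divide by $2$ for the coordinate swap, and reduce everything to showing that the planar topological type recovers the two univariate types up to order. The only difference is technical bookkeeping at that recovery step — you separate the two pencils of discriminant lines and invoke monodromy/dessin data over the complement, while the paper restricts the equivalence to tubular neighborhoods of the preimages of generic horizontal and vertical lines — and both arguments are sketched at a comparable level of detail.
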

\begin{proof}
Let $f:=(f_1,f_2):\CtC$ be an \emph{unmixed} polynomial map  of bi-degree $(d_1,d_2)$. 
That is, $f_1$ and $f_2$ are univariate polynomials of degrees $d_1$ and $d_2$ respectively. 
Then, the preimage of a generic horizontal line $L$ is a collection of horizontal lines $\cK:=\{K_1,\ldots,K_{d_2}\}$.
Furthermore, for any tubular neighborhood $V\supset L$, we have $f^{-1}(V)$ is a collection of disjoint tubular neighborhoods of $\cK$. 
Now, pick one such neighborhood $U\subset f^{-1}(V)$. 
The ramified covering $\fr_U:U\longrightarrow V$ is determined by the topological type of the function $f_1:\C\longrightarrow\C$. 
Let $g$ is another unmixed polynomial map satisfying $\psi\circ f=g\circ \varphi$ for some homeomorphisms $\varphi,\psi:\CtC$. 
Then, the restricted map $\left.g\right|_{\varphi(U)}:\varphi(U)\longrightarrow\psi(V)$ is also topologically equivalent to $\fr_U$. 
Hence, either $\deg g_2 \leq 1$ or $\varphi(U)$ does not contain points $(a,b)$, where $b$ is a critical value of $g_2$. 
Applying this argument for vertical lines as well, we deduce that $f_1$ and $f_2$ have the same topological types as $g_1$ and $g_2$ respectively (up to index re-enumeration). 
The lower bound~\eqref{eq:lower_1} then follows from Theorem~\ref{thm:LL-bound} applied twice. 
The denominator accounts for the coordinate permutation.
\end{proof}

The bound in Corollary~\ref{cor:top_types_unmixed} has room for improvement by, for example, considering the non-proper maps. However, it is unclear whether it is asymptotically sharp: 

\begin{question}
 Is it true that there exists $c\in\N$ such that for large enough $d\in\N$, we have $\cT(d,~2) \leq d^{2d+c}$? 
\end{question}

\subsection*{Acknowledgments} 
The author thanks Dmitry Kerner, Khazhgali Kozhasov and Timo de Wolff  for helpful discussions and valuable input during the preparation of this survey. 
The author would also like to thank the anonymous referee for their valuable comments and suggestions, which helped improve the manuscript.

\subsection*{Contact}\ \\
  Boulos El Hilany,\\
  Institut f\"ur Analysis und Algebra, \\
TU Braunschweig, Universit\"atsplatz 2.\\
 38106 Braunschweig, Germany.\\
  \href{mailto:b.el-hilany@tu-braunschweig.de}{b.el-hilany@tu-braunschweig.de}, \href{mailto:boulos.hilani@gmail.com}{boulos.hilani@gmail.com},\\
 \href{https://boulos-elhilany.com}{boulos-elhilany.com}.

\def\cprime{$'$}


\begin{thebibliography}{100}

\bibitem{abhyankar1975embeddings}
S.~S. Abhyankar and T.-t. Moh.
\newblock Embeddings of the line in the plane.
\newblock {\em Journal fur die reine und angewandte Mathematik},
  1975(276):148--166, 1975.

\bibitem{abhyankar1977lectures}
S.~S. Abhyankar and B.~Singh.
\newblock {\em Lectures on expansion techniques in algebraic geometry},
  volume~57.
\newblock Tata Institute of Fundamental Research Bombay, 1977.

\bibitem{aoki1980topological}
K.~Aoki and H.~Noguchi.
\newblock On topological types of polynomial map germs of plane to plane.
\newblock {\em Memoirs of the School of Science and Engineering, Waseda
  University}, 44:133--156, 1980.

\bibitem{arnol1996topological}
V.~I. Arnol'd.
\newblock Topological classification of trigonometric polynomials and
  combinatorics of graphs with an equal number of vertices and edges.
\newblock {\em Funktsional'nyi Analiz i ego Prilozheniya}, 30(1):1--17, 1996.

\bibitem{ABLMH00}
E.~Artal~Bartolo, I.~Luengo, and A.~Melle-Hern\'{a}ndez.
\newblock Milnor number at infinity, topology and {N}ewton boundary of a
  polynomial function.
\newblock {\em Math. Z.}, 233(4):679--696, 2000.

\bibitem{ALM00}
E.~Artal~Bartolo, I.~Luengo, and A.~Melle-Hern\'{a}ndez.
\newblock On the topology of a generic fibre of a polynomial function.
\newblock {\em Communications in Algebra}, 28(4):1767--1787, 2000.

\bibitem{arzhantsev2023images}
I.~Arzhantsev.
\newblock On images of affine spaces.
\newblock {\em Indagationes Mathematicae}, 34(4):812--819, 2023.

\bibitem{bailynicki1962injective}
A.~Bailynicki-Birula and M.~Rosenlicht.
\newblock Injective morphisms of real algebraic varieties.
\newblock {\em Proceedings of the American Mathematical Society},
  13(2):200--203, 1962.

\bibitem{BFG21}
E.~Baro, J.~F. Fernando, and J.~Gamboa.
\newblock Spectral maps associated to semialgebraic branched coverings.
\newblock {\em Rev. Mat. Complut.}, pages 1--38, 2021.

\bibitem{BCW82}
H.~Bass, E.~H. Connell, and D.~Wright.
\newblock The {J}acobian conjecture: reduction of degree and formal expansion
  of the inverse.
\newblock {\em Bull. Am. Math. Soc.}, 7(2):287--330, 1982.

\bibitem{Ber75}
D.~N. Bernstein.
\newblock The number of roots of a system of equations.
\newblock {\em Funkcional. Anal. i Prilo\v zen.}, 9(3):1--4, 1975.

\bibitem{B15}
F.~Bihan.
\newblock Maximally positive polynomial systems supported on circuits.
\newblock {\em J. Symbolic Comput.}, 68(2):61--74, 2015.

\bibitem{bisi2010proper}
C.~Bisi and F.~Polizzi.
\newblock On proper polynomial maps of {${\Bbb C}^2$}.
\newblock {\em Journal of Geometric Analysis}, 20:72--89, 2010.

\bibitem{bisi2011proper}
C.~Bisi and F.~Polizzi.
\newblock Proper polynomial self-maps of the affine space: state of the art and
  new results.
\newblock {\em Contemporary Mathematics}, 553:15--25, 2011.

\bibitem{boardman1967singularities}
J.~Boardman et~al.
\newblock Singularities of differentiable maps.
\newblock {\em Publications Math{\'e}matiques de l'IH{\'E}S}, 33:21--57, 1967.

\bibitem{borsuk1957k}
K.~Borsuk.
\newblock On the k-independent subsets of the {E}uclidean space and of the
  hilbert space.
\newblock {\em Bull. Acad. Polon. Sci. Cl. III}, 5(4):351--356, 1957.

\bibitem{braun2010real}
F.~Braun and J.~R. dos Santos~Filho.
\newblock The real {J}acobian conjecture on {${\Bbb R}^2$} is true when one of
  the components has degree 3.
\newblock {\em Discrete Contin. Dyn. Syst}, 26(1):75--87, 2010.

\bibitem{braun2023very}
F.~Braun and F.~Fernandes.
\newblock Very degenerate polynomial submersions and counterexamples to the
  real jacobian conjecture.
\newblock {\em Journal of Pure and Applied Algebra}, 227(8):107345, 2023.

\bibitem{braun2016polynomial}
F.~Braun and B.~Or{\'e}fice-Okamoto.
\newblock On polynomial submersions of degree 4 and the real {J}acobian
  conjecture in {${\Bbb R}^2$}.
\newblock {\em Journal of Mathematical Analysis and Applications},
  443(2):688--706, 2016.

\bibitem{Bo83}
S.~Broughton.
\newblock On the topology of polynomial hypersurfaces.
\newblock 40:167--178, 1983.

\bibitem{broughton1988milnor}
S.~A. Broughton.
\newblock Milnor numbers and the topology of polynomial hypersurfaces.
\newblock {\em Inventiones mathematicae}, 92:217--241, 1988.

\bibitem{Bro88}
S.~A. Broughton.
\newblock Milnor numbers and the topology of polynomial hypersurfaces.
\newblock {\em Inventiones mathematicae}, 92(2):217--241, 1988.

\bibitem{Br06}
E.~Brugall{\'e}.
\newblock Real plane algebraic curves with asymptotically maximal number of
  even ovals.
\newblock {\em Duke Math. J.}, 131(3):575--587, 2006.

\bibitem{campbell2011asymptotic}
L.~A. Campbell.
\newblock The asymptotic variety of a pinchuk map as a polynomial curve.
\newblock {\em Applied mathematics letters}, 24(1):62--65, 2011.

\bibitem{MR2891244}
W.~Castryck.
\newblock Moving out the edges of a lattice polygon.
\newblock {\em Discrete Comput. Geom.}, 47(3):496--518, 2012.

\bibitem{chen2014invertible}
Y.~Chen, L.~R.~G. Dias, K.~Takeuchi, and M.~Tib{\u{a}}r.
\newblock Invertible polynomial mappings via newton non-degeneracy.
\newblock In {\em Annales de l'Institut Fourier}, volume~64, pages 1807--1822,
  2014.

\bibitem{chevalley1955schemas}
C.~Chevalley and H.~Cartan.
\newblock Sch{\'e}mas normaux; morphismes; ensembles constructibles.
\newblock {\em S{\'e}minaire Henri Cartan}, 8:1--10, 1955.

\bibitem{chirka1997complex}
E.~M. Chirka, P.~Dolbeault, G.~Khenkin, A.~Vitushkin, and E.~Chirka.
\newblock Complex analytic sets.
\newblock {\em Introduction to Complex Analysis}, pages 117--158, 1997.

\bibitem{ciliberto2011branch}
C.~Ciliberto and F.~Flamini.
\newblock On the branch curve of a general projection of a surface to a plane.
\newblock {\em Transactions of the American Mathematical Society},
  363(7):3457--3471, 2011.

\bibitem{craciun2005multiple}
G.~Craciun and M.~Feinberg.
\newblock Multiple equilibria in complex chemical reaction networks: I. the
  injectivity property.
\newblock {\em SIAM Journal on Applied Mathematics}, 65(5):1526--1546, 2005.

\bibitem{EH-dAndrea_affine}
C.~d'Andrea and B.~El~Hilany.
\newblock Affine characterization of polynomial tuples with small mixed
  volumes.
\newblock {\em Preprint}, 2024.

\bibitem{DanKho86}
V.~I. Danilov and A.~G. Khovanski\u{\i}.
\newblock {N}ewton polyhedra and an algorithm for calculating {H}odge-{D}eligne
  numbers.
\newblock {\em Izv. Akad. Nauk SSSR Ser. Mat.}, 50(5):925--945, 1986.

\bibitem{TopologyofAlgebraicCurves}
A.~Degtyarev.
\newblock {\em Topology of Algebraic Curves: An Approach via Dessins
  d'Enfants}.
\newblock De Gruyter, Berlin, Boston, 14 Jun. 2012.

\bibitem{DRT12}
L.~Dias, M.~Ruas, and M.~Tib{\u{a}}r.
\newblock Regularity at infinity of real mappings and a {M}orse--{S}ard
  theorem.
\newblock {\em Journal of Topology}, 5(2):323--340, 2012.

\bibitem{DT15}
L.~R.~G. Dias and M.~Tib{\u{a}}r.
\newblock Detecting bifurcation values at infinity of real polynomials.
\newblock {\em Math. Z.}, 279(1-2):311--319, 2015.

\bibitem{dolgachev2006fundamental}
I.~Dolgachev and A.~Libgober.
\newblock On the fundamental group of the complement to a discriminant variety.
\newblock In {\em Algebraic Geometry: Proceedings of the Midwest Algebraic
  Geometry Conference, University of Illinois at Chicago Circle, May 2--3,
  1980}, pages 1--25. Springer, 2006.

\bibitem{DKLT-PLMP-19}
T.~Duff, K.~Kohn, A.~Leykin, and T.~Pajdla.
\newblock Plmp-point-line minimal problems in complete multi-view visibility.
\newblock In {\em Proceedings of the IEEE International Conference on Computer
  Vision}, pages 1675--1684, 2019.

\bibitem{Dur98}
A.~H. Durfee.
\newblock Five definitions of critical point at infinity.
\newblock pages 345--360. Springer, 1998.

\bibitem{EH17}
B.~El~Hilany.
\newblock Characterization of circuits supporting polynomial systems with the
  maximal number of positive solutions.
\newblock {\em Discrete Comput. Geom.}, 58(2):355--370, 2017.

\bibitem{Hilany+2022}
B.~El~Hilany.
\newblock Counting isolated points outside the image of a polynomial map.
\newblock {\em Adv. Geom.}, 2022.

\bibitem{EH2022tropinon}
B.~El~Hilany.
\newblock Tropical non-properness set of a polynomial map.
\newblock {\em DCG}, 2024.

\bibitem{hilany2024polyhedral}
B.~El~Hilany and K.~Rose.
\newblock The polyhedral type of a polynomial map on the plane.
\newblock {\em arXiv preprint arXiv:2402.08993}, 2024.

\bibitem{EHT21}
B.~El~Hilany and E.~Tsigaridas.
\newblock Computing the non-properness set of real polynomial maps in the
  plane.
\newblock {\em Vietnam J. Math.}, 2023.

\bibitem{Est10}
A.~Esterov.
\newblock {N}ewton polyhedra of discriminants of projections.
\newblock {\em Discrete Comput. Geom.}, 44(1):96--148, 2010.

\bibitem{Est13}
A.~Esterov.
\newblock The discriminant of a system of equations.
\newblock {\em Advances in Mathematics}, 245:534--572, 2013.

\bibitem{farley2005discrete}
D.~Farley and L.~Sabalka.
\newblock Discrete morse theory and graph braid groups.
\newblock {\em Algebraic \& Geometric Topology}, 5(3):1075--1109, 2005.

\bibitem{FJ17}
M.~Farnik and Z.~Jelonek.
\newblock On quadratic polynomial mappings of the plane.
\newblock {\em Linear Algebra Appl.}, 529:441--456, 2017.

\bibitem{farnik2022generic}
M.~Farnik and Z.~Jelonek.
\newblock On generic topological type of complex plane polynomial mappings.
\newblock {\em International Mathematics Research Notices},
  2022(22):17976--17998, 2022.

\bibitem{FJM18}
M.~Farnik, Z.~Jelonek, and P.~Migus.
\newblock On quadratic polynomial mappings from the plane into the {$n$}
  dimensional space.
\newblock {\em Linear Algebra Appl.}, 554:249--274, 2018.

\bibitem{farnik2020whitney}
M.~Farnik, Z.~Jelonek, and M.~Ruas.
\newblock Whitney theorem for complex polynomial mappings.
\newblock {\em Math. Z.}, 295(3):1039--1065, 2020.

\bibitem{FJR19}
M.~Farnik, Z.~Jelonek, and M.~A.~S. Ruas.
\newblock Whitney theorem for complex polynomial mappings.
\newblock {\em Math. Z.}, Aug 2019.

\bibitem{farnik2021finite}
M.~Farnik, Z.~Jelonek, and M.~A.~S. Ruas.
\newblock Finite {$A$}-determinacy of generic homogeneous map germs in {${\Bbb
  C}^3$}.
\newblock {\em Journal of the Mathematical Society of Japan}, 73(1):211--220,
  2021.

\bibitem{Fed76}
M.~V. Fedoryuk.
\newblock The asymptotics of the {F}ourier transform of the exponential
  function of a polynomial.
\newblock In {\em Doklady Akademii Nauk}, volume 227, pages 580--583. Russian
  Academy of Sciences, 1976.

\bibitem{fernandes2022new}
F.~Fernandes.
\newblock A new class of non-injective polynomial local diffeomorphisms on the
  plane.
\newblock {\em Journal of Mathematical Analysis and Applications},
  507(1):125736, 2022.

\bibitem{fernandes2024pinchuk}
F.~Fernandes and Z.~Jelonek.
\newblock The pinchuk example revisited.
\newblock {\em Comptes Rendus. Math{\'e}matique}, 362(G4):449--452, 2024.

\bibitem{Fer14}
J.~F. Fernando.
\newblock On the one dimensional polynomial and regular images of {${\Bbb
  R}^n$}.
\newblock {\em J. Pure Appl. Algebra}, 218(9):1745--1753, 2014.

\bibitem{Fer16}
J.~F. Fernando.
\newblock On the size of the fibers of spectral maps induced by semialgebraic
  embeddings.
\newblock {\em Math. Nachr.}, 289(14-15):1760--1791, 2016.

\bibitem{Fer03}
J.~F. Fernando and J.~Gamboa.
\newblock Polynomial images of {${\Bbb R}^n$}.
\newblock {\em J. Pure Appl. Algebra}, 179(3):241--254, 2003.

\bibitem{FerGam06}
J.~F. Fernando and J.~Gamboa.
\newblock Polynomial and regular images of {${\Bbb R}^n$}.
\newblock {\em Israel J. Math.}, 153(1):61--92, 2006.

\bibitem{FerGam11}
J.~F. Fernando, J.~Gamboa, and C.~Ueno.
\newblock On convex polyhedra as regular images of {${\Bbb R}^n$}.
\newblock {\em Proc. Lond. Math. Soc.}, 103(5):847--878, 2011.

\bibitem{FerUeno14}
J.~F. Fernando and C.~Ueno.
\newblock On complements of convex polyhedra as polynomial and regular images
  of {${\Bbb R}^n$}.
\newblock {\em Int. Math. Res. Not. IMRN}, 2014(18):5084--5123, 2014.

\bibitem{fourrier1994classification}
L.~Fourrier.
\newblock Classification topologique {\`a} l'infini des polyn{\^o}mes de deux
  variables complexes.
\newblock {\em Comptes rendus de l'Acad{\'e}mie des sciences. S{\'e}rie 1,
  Math{\'e}matique}, 318(5):461--466, 1994.

\bibitem{Fou96}
L.~Fourrier.
\newblock Topologie d'un polyn{\^o}me de deux variables complexes au voisinage
  de l'infini.
\newblock In {\em Annales de l'institut Fourier}, volume~46, pages 645--687,
  1996.

\bibitem{Fuk76}
T.~Fukuda.
\newblock Types topologiques des polyn\^{o}mes.
\newblock {\em Inst. Hautes \'{E}tudes Sci. Publ. Math.}, (46):87--106, 1976.

\bibitem{fukuda1987number}
T.~FUKUDA and G.~ISHIKAWA.
\newblock On the number of cusps of stable perturbations of a plane-to-plane
  singularity.
\newblock {\em Tokyo Journal of Mathematics}, 10(2):375--384, 1987.

\bibitem{gaffney1991cusps}
T.~Gaffney and D.~Mond.
\newblock Cusps and double folds of germs of analytic maps {$
  \mathbb{C}^2\longrightarrow\mathbb{C}^2$}.
\newblock {\em Journal of the London Mathematical Society}, 2(1):185--192,
  1991.

\bibitem{gau1983differential}
Y.-N. Gau and J.~Lipman.
\newblock Differential invariance of multiplicity on analytic varieties.
\newblock {\em Inventiones mathematicae}, 73(2):165--188, 1983.

\bibitem{GKZ08}
I.~M. Gelfand, M.~Kapranov, and A.~Zelevinsky.
\newblock {\em Discriminants, resultants, and multidimensional determinants}.
\newblock Springer Science \& Business Media, 2008.

\bibitem{golubitsky}
M.~Golubitsky and V.~Guillemin.
\newblock {\em Stable Mappings and their Singularities}.
\newblock Number~14 in Graduate Texts in Mathematics. Springer, 1973.

\bibitem{golubitsky2012stable}
M.~Golubitsky and V.~Guillemin.
\newblock {\em Stable mappings and their singularities}, volume~14.
\newblock Springer Science \& Business Media, 2012.

\bibitem{goreskymemoirs}
M.~Goresky.
\newblock John {N}. {M}ather: {A} biographical memory.
\newblock {\em Biographical Memoirs, NAS}, (2025).
\newblock \url{https://www.math.ias.edu/~goresky/pdf/MatherBio.pdf}.

\bibitem{grasegger2024coupler}
G.~Grasegger, B.~El~Hilany, and N.~Lubbes.
\newblock Coupler curves of moving graphs and counting realizations of rigid
  graphs.
\newblock {\em Mathematics of Computation}, 93(345):459--504, 2024.

\bibitem{grothendieck1997esquisse}
A.~Grothendieck.
\newblock Esquisse d'un programme.
\newblock {\em London Mathematical Society Lecture Note Series}, pages 5--48,
  1997.

\bibitem{gunning1973lectures}
R.~Gunning.
\newblock Lectures on complex analytic varieties: Finite analytic mappings.
  princeton university mathematical series, 1973.

\bibitem{gwozdziewicz1993injectivity}
J.~Gwo{\'z}dziewicz.
\newblock Injectivity on one line.
\newblock {\em arXiv preprint alg-geom/9305008}, 1993.

\bibitem{gwozdziewicz2001real}
J.~Gwo{\'z}dziewicz.
\newblock The real jacobian conjecture for polynomials of degree 3.
\newblock {\em Ann. Polon. Math}, 76:121--125, 2001.

\bibitem{Gwo13}
J.~Gwo{\'z}dziewicz.
\newblock Ephraim’s pencils.
\newblock {\em International Mathematics Research Notices},
  2013(15):3371--3385, 2013.

\bibitem{Gwozdziewicz2016}
J.~Gwo\'zdziewicz.
\newblock Real jacobian mates.
\newblock {\em Annales Polonici Mathematici}, 117(3):207--213, 2016.

\bibitem{HL84}
H.~V. HA and D.~T. L{\^e}.
\newblock Sur la topologie des polyn{\^o}mes complexes.
\newblock {\em Acta Math. Vietnam.}, 9:21--32, 1984.

\bibitem{hall1950topology}
M.~Hall.
\newblock A topology for free groups and related groups.
\newblock {\em Annals of Mathematics}, pages 127--139, 1950.

\bibitem{dhinh2021thom}
S.~T. {\DH}inh and Z.~Jelonek.
\newblock Thom isotopy theorem for nonproper maps and computation of sets of
  stratified generalized critical values.
\newblock {\em Discrete \& Computational Geometry}, 65:279--304, 2021.

\bibitem{IZ18}
I.~Itenberg and D.~Zvonkine.
\newblock Hurwitz numbers for real polynomials.
\newblock {\em Comment. Math. Helv.}, 93(3):441--474, 2018.

\bibitem{Jel93}
Z.~Jelonek.
\newblock The set of points at which a polynomial map is not proper.
\newblock In {\em Annales Polonici Mathematici}, volume~58, pages 259--266.
  Instytut Matematyczny Polskiej Akademii Nauk, 1993.

\bibitem{Jel99a}
Z.~Jelonek.
\newblock A number of points in the set {${\Bbb C}^2\backslash F ({\Bbb
  C}^2)$}.
\newblock {\em Bull. Pol. Acad. Sci. Math}, 47(3):257--262, 1999.

\bibitem{Jel99}
Z.~Jelonek.
\newblock Testing sets for properness of polynomial mappings.
\newblock {\em Mathematische Annalen}, 315(1):1--35, 1999.

\bibitem{Jel01c}
Z.~Jelonek.
\newblock Note about the set {$S_f$} for a polynomial mapping {$f\colon{\mathbb
  C}^2\to{\mathbb C}^2$}.
\newblock {\em Bull. Polish Acad. Sci. Math.}, 49(1):67--72, 2001.

\bibitem{jelonek2001topological}
Z.~Jelonek.
\newblock Topological characterization of finite mappings.
\newblock {\em Bull. Polish Acad. Sci. Math}, 49(3):279--283, 2001.

\bibitem{Jel02}
Z.~Jelonek.
\newblock Geometry of real polynomial mappings.
\newblock {\em Mathematische Zeitschrift}, 239(2):321--333, 2002.

\bibitem{jelonek2003generalized}
Z.~Jelonek.
\newblock On the generalized critical values of a polynomial mapping.
\newblock {\em Manuscr. Math.}, 110(2):145--157, 2003.

\bibitem{jelonek2005effective}
Z.~Jelonek.
\newblock On the effective nullstellensatz.
\newblock {\em Invent. Math.}, 162(1):1--17, 2005.

\bibitem{Jel10}
Z.~Jelonek.
\newblock {O}n the {R}ussell problem.
\newblock {\em J. Algebra}, 324(12):3666--3676, 2010.

\bibitem{jelonek2016semi}
Z.~Jelonek.
\newblock On semi-equivalence of generically-finite polynomial mappings.
\newblock {\em Math. Z.}, 283(1):133--142, 2016.

\bibitem{jelonek2017finite}
Z.~Jelonek.
\newblock On finite regular and holomorphic mappings.
\newblock {\em Advances in Mathematics}, 306:1377--1391, 2017.

\bibitem{jelonek2022note}
Z.~Jelonek.
\newblock A note on the jacobian conjecture.
\newblock In {\em Colloquium Mathematicum}, volume 170, pages 85--90. ARS
  POLONA-RUCH KRAKOWSKIE PRZEDMIESCIE 7 PO BOX 1001, 00-068 WARSAW, POLAND,
  2022.

\bibitem{JK03}
Z.~Jelonek and K.~Kurdyka.
\newblock On asymptotic critical values of a complex polynomial.
\newblock {\em J. Reine Angew. Math.}, 565:1--11, 2003.

\bibitem{JelKur14}
Z.~Jelonek and K.~Kurdyka.
\newblock Reaching generalized critical values of a polynomial.
\newblock {\em Math. Z.}, 276(1-2):557--570, 2014.

\bibitem{jelonek2020quantitative}
Z.~Jelonek and M.~Laso{\'n}.
\newblock Quantitative properties of the non-properness set of a polynomial
  map, a positive characteristic case.
\newblock {\em J. Algebra Its Appl.}, 19(10):2050192, 2020.

\bibitem{JeTi15}
Z.~Jelonek and M.~Tib{\u{a}}r.
\newblock Bifurcation locus and branches at infinity of a polynomial {$f:{\Bbb
  C}^2\to {\Bbb C}$}.
\newblock {\em Mathematisches Annalen}, 361:1049–--1054, 2015.

\bibitem{JelTib17}
Z.~Jelonek and M.~Tib{\u{a}}r.
\newblock Detecting asymptotic non-regular values by polar curves.
\newblock {\em International Mathematics Research Notices}, 2017(3):809--829,
  2017.

\bibitem{jones1978theory}
G.~A. Jones and D.~Singerman.
\newblock Theory of maps on orientable surfaces.
\newblock {\em Proceedings of the London Mathematical Society}, 3(2):273--307,
  1978.

\bibitem{Jean-Pierre_Jouanolou}
J.-P. Jouanolou.
\newblock {\em Th\'eor\`emes de Bertini et applications}.
\newblock Birkh\"auser, Boston, 1983.

\bibitem{kang1993analytic}
C.~Kang.
\newblock Analytic classification of plane curve singularities defined by some
  homogeneous polynomials.
\newblock {\em Journal of the Korean Mathematical Society}, 30(2):385--397,
  1993.

\bibitem{keller1939ganze}
O.-H. Keller.
\newblock Ganze cremona-transformationen.
\newblock {\em Monatshefte f{\"u}r Mathematik und Physik}, 47:299--306, 1939.

\bibitem{khovanskii1978newton}
A.~G. Khovanskii.
\newblock Newton polyhedra and the genus of complete intersections.
\newblock {\em Functional Analysis and its applications}, 12(1):38--46, 1978.

\bibitem{Kho16}
A.~G. Khovanskii.
\newblock Newton polytopes and irreducible components of complete
  intersections.
\newblock {\em Izvestiya: Mathematics}, 80(1):263, 2016.

\bibitem{Kou76}
A.~G. Kouchnirenko.
\newblock Poly{\`e}dres de {N}ewton et nombres de {M}ilnor.
\newblock {\em Invent. Math.}, 32(1):1--31, 1976.

\bibitem{kraft1996challenging}
H.~Kraft.
\newblock Challenging problems on affine n-space.
\newblock {\em ASTERISQUE-SOCIETE MATHEMATIQUE DE FRANCE}, 237:295--318, 1996.

\bibitem{kurdyka2000semialgebraic}
K.~Kurdyka, P.~Orro, and S.~Simon.
\newblock Semialgebraic sard theorem for generalized critical values.
\newblock {\em JDG}, 56(1):67--92, 2000.

\bibitem{Lam70}
G.~Laman.
\newblock On graphs and rigidity of plane skeletal structures.
\newblock {\em J. Eng. Math}, 4(4):331--340, 1970.

\bibitem{lamy2005structure}
S.~Lamy.
\newblock Sur la structure du groupe d'automorphismes de certaines surfaces
  affines.
\newblock {\em Publicacions Matem{\`a}tiques}, pages 3--20, 2005.

\bibitem{lando2004graphs}
S.~K. Lando, A.~K. Zvonkin, and D.~B. Zagier.
\newblock {\em Graphs on surfaces and their applications}, volume 141.
\newblock Springer, 2004.

\bibitem{Las15}
J.~B. Lasserre.
\newblock {\em An introduction to polynomial and semi-algebraic optimization}.
\newblock Cambridge Texts in Applied Mathematics. Cambridge University Press,
  Cambridge, 2015.

\bibitem{le1973topologie}
D.~T. L{\^e}.
\newblock Topologie des singularit{\'e}s des hypersurfaces complexes.
\newblock {\em Singularites a Cargese Asterisque}, 7:171--182, 1973.

\bibitem{lojasiewicz1964triangulation}
S.~Lojasiewicz.
\newblock Triangulation of semi-analytic sets.
\newblock {\em Annali della Scuola Normale Superiore di Pisa-Scienze Fisiche e
  Matematiche}, 18(4):449--474, 1964.

\bibitem{looijenga1974complement}
E.~Looijenga.
\newblock The complement of the bifurcation variety of a simple singularity.
\newblock {\em Inventiones mathematicae}, 23(2):105--116, 1974.

\bibitem{Looijenga1999}
E.~Looijenga.
\newblock {A minicourse on moduli of curves}.
\newblock {\em School on algebraic geometry}, (August):267--291, 1999.

\bibitem{lyashko1983geometry}
O.~Lyashko.
\newblock Geometry of bifurcation diagrams.
\newblock {\em Itogi Nauki i Tekhniki. Seriya" Sovremennye Problemy Matematiki.
  Noveishie Dostizheniya"}, 22:94--129, 1983.

\bibitem{malgrange1962theoreme}
B.~Malgrange.
\newblock Le th{\'e}or{\`e}me de pr{\'e}paration en g{\'e}om{\'e}trie
  diff{\'e}rentiable. i. position du probl{\`e}me.
\newblock {\em S{\'e}minaire Henri Cartan}, 15:1--14, 1962.

\bibitem{mather2012notes}
J.~Mather.
\newblock Notes on topological stability.
\newblock {\em Bulletin of the American Mathematical Society}, 49(4):475--506,
  2012.

\bibitem{mather1968stability1}
J.~N. Mather.
\newblock Stability of {$ C^{\infty}$} mappings: I. the division theorem.
\newblock {\em Annals of Mathematics}, pages 89--104, 1968.

\bibitem{mather1969stability2}
J.~N. Mather.
\newblock Stability of {$ C^{\infty}$} mappings: {II}. infinitesimal stability
  implies stability.
\newblock {\em Annals of Mathematics}, pages 254--291, 1969.

\bibitem{mather1969stability4}
J.~N. Mather.
\newblock Stability of {$ C^{\infty}$} mappings: {IV}. classification of stable
  germs by {$R$}-algebras.
\newblock {\em Publications Math{\'e}matiques de l'IH{\'E}S}, 37:223--248,
  1969.

\bibitem{mather1970stability3}
J.~N. Mather.
\newblock Stability of {$ C^{\infty}$} mappings: {III}. finitely determined
  mapgerms.
\newblock {\em Matematika}, 14(1):145--175, 1970.

\bibitem{MATHER1970301}
J.~N. Mather.
\newblock Stability of {$ C^{\infty}$} mappings: V. transversality.
\newblock {\em Advances in Mathematics}, 4(3):301--336, 1970.

\bibitem{mather1973thom}
J.~N. Mather.
\newblock On {T}hom--{B}oardman singularities.
\newblock In {\em Dynamical systems}, pages 233--248. Elsevier, 1973.

\bibitem{mather1973stratifications}
J.~N. Mather.
\newblock Stratifications and mappings.
\newblock In {\em Dynamical systems}, pages 195--232. Elsevier, 1973.

\bibitem{mather2006stability}
J.~N. Mather.
\newblock Stability of ${C}^{\infty}$ mappings: {V}{I}. the nice dimensions.
\newblock In {\em Proceedings of Liverpool Singularities—Symposium I}, pages
  207--253. Springer, 2006.

\bibitem{milnor2016singular}
J.~Milnor.
\newblock {\em Singular Points of Complex Hypersurfaces (AM-61), Volume 61},
  volume~61.
\newblock Princeton University Press, 2016.

\bibitem{moh1983jacobian}
T.-T. Moh.
\newblock On the jacobian conjecture and the configurations of roots.
\newblock {\em Journal f{\"u}r die reine und angewandte Mathematik},
  340:140--212, 1983.

\bibitem{morse1925relations}
M.~Morse.
\newblock Relations between the critical points of a real function of {$n$}
  independent variables.
\newblock {\em Transactions of the American Mathematical Society},
  27(3):345--396, 1925.

\bibitem{morse1931critical}
M.~Morse.
\newblock The critical points of a function of {$n$} variables.
\newblock {\em Trans. Amer. Math. Soc}, 33(1):72--91, 1931.

\bibitem{nagata1989some}
M.~Nagata.
\newblock Some remarks on the two-dimensional jacobian conjecture.
\newblock {\em Chinese Journal of Mathematics}, pages 1--7, 1989.

\bibitem{Nak84}
I.~Nakai.
\newblock On topological types of polynomial mappings.
\newblock {\em Topology}, 23(1):45--66, 1984.

\bibitem{Nem86}
A.~N{\'e}methi.
\newblock Theorie de {L}efschetz pour vari{\'e}t{\'e}s algebriques affines.
\newblock {\em CR Acad. Sc. Paris}, 303:567--570, 1986.

\bibitem{Nem88}
A.~N{\'e}methi.
\newblock Lefschetz theory for complex affine varieties.
\newblock {\em Rev. Roum. Math. Pures Appl.}, 33:233--260, 1988.

\bibitem{NZ90}
A.~N{\'e}methi and A.~Zaharia.
\newblock On the bifurcation set of a polynomial function and newton boundary.
\newblock {\em Publications of the Research Institute for Mathematical
  Sciences}, 26(4):681--689, 1990.

\bibitem{ZaNe90}
A.~N\'{e}methi and A.~Zaharia.
\newblock On the bifurcation set of a polynomial function and {N}ewton
  boundary.
\newblock {\em Publ. Res. Inst. Math. Sci.}, 26(4):681--689, 1990.

\bibitem{nollet2009birationality}
S.~Nollet, L.~R. Taylor, and F.~Xavier.
\newblock Birationality of {\'e}tale maps via surgery.
\newblock {\em Journal f{\"u}r die Reine und Angewandte Mathematik}, 2009(627),
  2009.

\bibitem{parusinski1988generalization}
A.~Parusi{\'n}ski.
\newblock A generalization of the {M}ilnor number.
\newblock {\em Mathematische Annalen}, 281:247--254, 1988.

\bibitem{Par95}
A.~Parusi{\'n}ski.
\newblock On the bifurcation set of complex polynomial with isolated
  singularities at infinity.
\newblock {\em Compositio mathematica}, 97(3):369--384, 1995.

\bibitem{Par97}
A.~Parusi{\'n}ski.
\newblock A note on singularities at infinity of complex polynomials.
\newblock {\em Banach Center Publications}, 39(1):131--141, 1997.

\bibitem{paunescu1997lojasiewicz}
L.~P{\u{a}}unescu and A.~Zaharia.
\newblock On the {{\L}}ojasiewicz exponent at infinity for polynomial
  functions.
\newblock {\em Kodai Mathematical Journal}, 20(3):269--274, 1997.

\bibitem{puaunescu2000remarks}
L.~P{\u{a}}unescu and A.~Zaharia.
\newblock Remarks on the {M}ilnor fibration at infinity.
\newblock {\em Manuscripta Mathematica}, 103:351--361, 2000.

\bibitem{pflaum2002analytic}
M.~J. Pflaum.
\newblock {\em Analytic and geometric study of stratified spaces}.
\newblock Springer, 2002.

\bibitem{Pha1983}
F.~Pham.
\newblock Vanishing homologies and the {$n$} variables saddlepoint method.
\newblock In {\em Proc. Symp. Pure Math.}, volume~40, pages 310--333, 1983.

\bibitem{pinchuk1994counterexamle}
S.~Pinchuk.
\newblock A counterexamle to the strong real {J}acobian conjecture.
\newblock {\em Mathematische Zeitschrift}, 217(1):1--4, 1994.

\bibitem{ploski1986algebraic}
A.~Ploski.
\newblock Algebraic dependence and polynomial automorphisms.
\newblock {\em Bull. Pol. Acad. Sci. Math}, 34:653--659, 1986.

\bibitem{rabier1997ehresmann}
P.~J. Rabier.
\newblock Ehresmann fibrations and {P}alais--{S}male conditions for morphisms
  of {F}insler manifolds.
\newblock {\em Ann. of Math.}, pages 647--691, 1997.

\bibitem{rojas1999toric}
J.~M. Rojas.
\newblock Toric intersection theory for affine root counting.
\newblock {\em J. Pure Appl. Algebra}, 136(1):67--100, 1999.

\bibitem{ruas2022old}
M.~A.~S. Ruas.
\newblock Old and new results on density of stable mappings.
\newblock In {\em Handbook of Geometry and Topology of Singularities III},
  pages 1--80. Springer, 2022.

\bibitem{Sab83}
C.~Sabbah.
\newblock Le type topologique \'{e}clat\'{e} d'une application analytique.
\newblock In {\em Singularities, {P}art 2 ({A}rcata, {C}alif., 1981)},
  volume~40 of {\em Proc. Sympos. Pure Math.}, pages 433--440. Amer. Math.
  Soc., Providence, RI, 1983.

\bibitem{sabbah1983morphismes}
C.~Sabbah.
\newblock Morphismes analytiques stratifi{\'e}s sans {\'e}clatement et cycles
  {\'e}vanescents.
\newblock {\em Ast{\'e}risque}, 101(102):286--319, 1983.

\bibitem{schicho2022and}
J.~Schicho.
\newblock And yet it moves: Paradoxically moving linkages in kinematics.
\newblock {\em Bull. Am. Math. Soc.}, 59(1):59--95, 2022.

\bibitem{shafarevich1994basic}
I.~R. Shafarevich and M.~Reid.
\newblock {\em Basic algebraic geometry}, volume~2.
\newblock Springer, 1994.

\bibitem{sottile334toric}
F.~Sottile.
\newblock Toric ideals, real toric varieties, and the moment map, topics in
  algebraic geometry and geometric modeling, 225--240.
\newblock {\em Contemp. Math}, 334.

\bibitem{Sta02}
A.~Stasica.
\newblock An effective description of the {J}elonek set.
\newblock {\em Journal of Pure and Applied Algebra}, 169(2-3):321--326, 2002.

\bibitem{Suz74}
M.~Suzuki.
\newblock Propri{\'e}t{\'e}s topologiques des polyn{\^o}mes de deux variables
  complexes, et automorphismes alg{\'e}briques de l'espace {$\Bbb C^2$}.
\newblock {\em Journal of the Mathematical Society of Japan}, 26(2):241--257,
  1974.

\bibitem{thom1954quelques}
R.~Thom.
\newblock Quelques propri{\'e}t{\'e}s globales des vari{\'e}t{\'e}s
  diff{\'e}rentiables.
\newblock {\em Commentarii Mathematici Helvetici}, 28(1):17--86, 1954.

\bibitem{thom}
R.~Thom.
\newblock Les singularit\'{e}s des applications diff\'{e}rentiables.
\newblock {\em Ann. Inst. Fourier (Grenoble)}, 6:43--87, 1955/6.

\bibitem{thom1959singularities}
R.~Thom.
\newblock {\em Singularities of differentiable mappings}.
\newblock PhD thesis, University of Bonn, 1959.

\bibitem{thom1962stabilite}
R.~Thom.
\newblock La stabilit{\'e} topologique des applications polynomiales.
\newblock {\em Enseign. Math.(2)}, 8:24--33, 1962.

\bibitem{thom1964local}
R.~Thom.
\newblock Local topological properties of differentiable mappings.
\newblock In {\em Differential Analysis, Bombay Colloq}, pages 191--202, 1964.

\bibitem{Tho69}
R.~Thom.
\newblock Ensembles et morphismes stratifi{\'e}s.
\newblock {\em Bull. Am. Math. Soc.}, 75(2):240--284, 1969.

\bibitem{thom192singularities}
R.~Thom and H.~Levine.
\newblock Singularities of differential mappings, reprinted in proc. liverpool
  singularities symposium i.
\newblock {\em Lecture Notes in Mathematics}, 192.

\bibitem{10.1155/S1073792898000580}
M.~Tib{\u{a}}r.
\newblock {Asymptotic equisingularity and topology of complex hypersurfaces}.
\newblock {\em International Mathematics Research Notices}, 1998(18):979--990,
  01 1998.

\bibitem{Tib99}
M.~Tib{\u{a}}r.
\newblock Regularity at infinity of real and complex polynomial functions.
\newblock {\em London Mathematical Society Lecture Note Series}, pages
  249--264, 1999.

\bibitem{Tib07}
M.~Tib{\u{a}}r.
\newblock {\em Polynomials and vanishing cycles}, volume 170.
\newblock Cambridge University Press, 2007.

\bibitem{ueno2012convex}
C.~Ueno.
\newblock On convex polygons and their complements as images of regular and
  polynomial maps of {${\Bbb R}^2$}.
\newblock {\em Journal of Pure and Applied Algebra}, 216(11):2436--2448, 2012.

\bibitem{Sta07}
A.~Valette-Stasica.
\newblock Asymptotic values of polynomial mappings of the real plane.
\newblock {\em Topology Appl.}, 154(2):443--448, 2007.

\bibitem{van1994note}
T.~L. Van and M.~Oka.
\newblock Note on estimation of the number of the critical values at infinity.
\newblock {\em Kodai Mathematical Journal}, 17(3):409--419, 1994.

\bibitem{VO94}
T.~L. Van and M.~Oka.
\newblock Note on estimation of the number of the critical values at infinity.
\newblock {\em Kodai Mathematical Journal}, 17(3):409--419, 1994.

\bibitem{dEss12}
A.~Van~den Essen.
\newblock {\em Polynomial Automorphisms: and the Jacobian Conjecture}, volume
  190.
\newblock Birkh{\"a}user, 2012.

\bibitem{van2012polynomial}
A.~Van~den Essen.
\newblock {\em Polynomial Automorphisms: and the Jacobian Conjecture}, volume
  190.
\newblock Birkh{\"a}user, 2012.

\bibitem{van1935topological}
E.~R. Van~Kampen.
\newblock The topological transformations of a simple closed curve into itself.
\newblock {\em American Journal of Mathematics}, 57(1):142--152, 1935.

\bibitem{varchenko1972theorems}
A.~N. Varchenko.
\newblock Theorems on the topological equisingularity of families of algebraic
  varieties and families of polynomial mappings.
\newblock {\em Izvestiya Rossiiskoi Akademii Nauk. Seriya Matematicheskaya},
  36(5):957--1019, 1972.

\bibitem{verdier1976stratifications}
J.-L. Verdier.
\newblock Stratifications de {W}hitney et th{\'e}oreme de {B}ertini-{S}ard.
\newblock {\em Invent. Math.}, 36(1):295--312, 1976.

\bibitem{voisin1977cartes}
C.~Voisin and J.~Malgoire.
\newblock Cartes cellulaires.
\newblock 12, 1977.

\bibitem{vui2008critical}
H.~H. Vui and P.~T. Son.
\newblock Critical values of singularities at infinity of complex polynomials.
\newblock {\em Vietnam Journal of Mathematics}, 36(1):1--38, 2008.

\bibitem{wallace1971linear}
A.~Wallace.
\newblock Linear sections of algebraic varieties.
\newblock {\em Indiana Univ. Math. J.}, 20(12):1153--1162, 1971.

\bibitem{wang1980jacobian}
S.~S.-S. Wang.
\newblock A jacobian criterion for separability.
\newblock {\em Journal of Algebra}, 65(2):453--494, 1980.

\bibitem{whitney1955singularities}
H.~Whitney.
\newblock On singularities of mappings of {E}uclidean spaces {I}. {M}appings of
  the plane into the plane.
\newblock {\em Annals of Mathematics}, 62(3):374--410, 1955.

\bibitem{whitney1992analytic}
H.~Whitney.
\newblock Analytic extensions of differentiable functions defined in closed
  sets.
\newblock {\em Hassler Whitney Collected Papers}, pages 228--254, 1992.

\bibitem{yu1995jacobian}
J.-T. Yu.
\newblock On the jacobian conjecture: reduction of coefficients.
\newblock {\em Journal of Algebra}, 171(2):515--523, 1995.

\bibitem{Zah96}
A.~Zaharia.
\newblock On the bifurcation set of a polynomial function and {N}ewton
  boundary. {II}.
\newblock {\em Kodai Math. J.}, 19(2):218--233, 1996.

\bibitem{zvonkine1997multiplicities}
D.~Zvonkine.
\newblock Multiplicities of the {L}yashko-{L}ooijenga map on its strata.
\newblock {\em Comptes Rendus de l'Acad{\'e}mie des Sciences-Series
  I-Mathematics}, 324(12):1349--1353, 1997.

\end{thebibliography}
\end{document}